\tikzset{
    >=stealth',
    rDefine style for boxes
    punkt/.style={
           rectangle,
           rounded corners,
           draw=black, very thick,
           text width=6.5em,
           minimum height=2em,
           text centered},
    pil/.style={
           ->,
           thick,
           shorten <=2pt,
           shorten >=2pt,}
}
\tikzset{every loop/.style={min distance=2mm,in=225,out=135,looseness=10}}
 \newcommand{\bba}{\mathbb{A}}
 \newcommand{\bbe}{\mathbb{E}}
 \newcommand{\bbp}{\mathbb{P}}
 \newcommand{\bbq}{\mathbb{Q}}
 \newcommand{\bbr}{\mathbb{R}}
 \newcommand{\bbs}{\mathbb{S}}
 \newcommand{\bbf}{\mathbb{F}}
\newcommand{\ZM}{{\mathbb{Z}M}}
\newcommand{\ZN}{{\mathbb{Z}N}}
\newcommand{\Z}{\mathbb{Z}}
\newtheorem{theorem}{Theorem}[section]
\newtheorem{Thm}[theorem]{Theorem}
\newtheorem*{theorema}{Theorem A}
\newtheorem*{theoremb}{Theorem B}
\newtheorem{lemma}[theorem]{Lemma}
\newtheorem{Lemma}[theorem]{Lemma}
\newtheorem{Cor}[theorem]{Corollary}
\newtheorem{proposition}[theorem]{Proposition}
\newtheorem{Prop}[theorem]{Proposition}
\theoremstyle{definition}
\newtheorem{remark}[theorem]{Remark}
\newtheorem{definition}[theorem]{Definition}
\newtheorem*{claim}{Claim}
\numberwithin{equation}{section}
\newcommand{\gp}{\mathscr{P}}
\newcommand{\gr}{\mathrel{\mathscr{R}}}
\newcommand{\gl}{\mathrel{\mathscr{L}}}
\newcommand{\gh}{\mathrel{\mathscr{H}}}
\newcommand{\cd}{\mathop{\mathrm{cd}}\nolimits}
\newcommand{\Hom}{\mathop{\mathrm{Hom}}\nolimits}
\newcommand{\til}[1]{\ensuremath{\widetilde {#1}}}
\newcommand{\inv}{^{-1}}
\newcommand{\FPn}{{\rm FP}\sb n}
\newcommand{\F}{{\rm F}}
\newcommand{\FP}{{\rm FP}}
\newcommand{\FPinfty}{{\rm FP}\sb \infty}
\newcommand{\lb}{\langle}
\newcommand{\rb}{\rangle}
\title[Lyndon's identity theorem for monoids]{A Lyndon's identity theorem for \\ one-relator monoids}
\subjclass[2010]{20M50, 20M05, 20J05, 57M07, 20F10, 20F65}
\keywords{
One-relator monoid, 
homological finiteness property,
cohomological dimension,
geometric dimension,
classifying space.
\\
\indent
This work was supported by the
EPSRC grant EP/N033353/1 `Special inverse monoids: subgroups, structure, geometry, rewriting systems and the word problem'.
The second named author was supported by  a PSC-CUNY award and the Fulbright Comission.
This work was supported by the London Mathematical Society Research in Pairs (Scheme 4) grant
(Ref:\ 41844), which funded a 6-day research visit of the second named author to the University of East Anglia (September 2019).
}
\begin{document}
\maketitle

\begin{center}
ROBERT D. GRAY
\footnote{School of Mathematics, University of East Anglia, Norwich NR4 7TJ, England.
Email \texttt{Robert.D.Gray@uea.ac.uk}.
}
and
BENJAMIN STEINBERG\footnote{
Department of Mathematics, City College of New York, Convent Avenue at 138th Street, New York, New York 10031,  USA.
Email \texttt{bsteinberg@ccny.cuny.edu}.}
\\ 
\end{center}

\begin{abstract}
For every one-relator monoid $M = \lb A \mid u=v \rb$ with $u, v \in A^*$
we construct a contractible $M$-CW complex
and use it
to build a projective resolution of the trivial module which is finitely generated in all dimensions.
This proves that all one-relator monoids are of type $\FP_{\infty}$, answering positively a problem posed by Kobayashi in 2000.  
We also apply our results to classify the one-relator monoids of  cohomological dimension at most $2$, and to describe the relation module,
in the sense of Ivanov,
of a torsion-free one-relator monoid presentation as an explicitly given principal left ideal of the monoid ring.  In addition, we prove the topological analogues of these results by showing that all one-relator monoids satisfy the topological finiteness property $\F_\infty$, and classifying the one-relator moniods with geometric dimension at most $2$. These results give a natural monoid analogue of Lyndon's Identity Theorem for one-relator groups.
\end{abstract}

\section{Introduction}
\label{sec:introduction}

Algorithmic problems concerning groups and monoids are a classical topic in algebra and theoretical computer science dating back to pioneering work of Dehn, Thue and Tietze in the early 1900s; see~\cite{BookAndOtto, LyndonAndSchupp, Margolis:1995qo} for references. The most fundamental algorithmic question concerning an algebraic structure is the word problem, which asks whether two expressions over generators represent the same element.  Markov
\cite{Markov1947} and
and Post~\cite{Post1947} proved independently that the word problem for finitely presented monoids is undecidable in general. This result was later extended to cancellative monoids by Turing~\cite{Turing1950} and then later to groups by Novikov~\cite{Novikov1952} and Boone~\cite{Boone1959}.

Since the word problem for finitely presented groups and monoids is in general undecidable, an important theme has been to identify and study classes for which the word problem is decidable.  A classical result of this kind was proved by Magnus~\cite{Magnus1932}
in the 1930s
who showed that all one-relator groups have decidable word problem.
Magnus's work inspired the study of the word problem for other one-relator algebraic structures. For example, Shishov
\cite{Shirshov62} proved that one-relator Lie algebras have decidable word problem.  
Not all one-relator structures are so well behaved; see e.g. the recent result 
\cite{GrayRAAG}. 

In contrast to one-relator groups, far less is currently known about the class of one-relator monoids, that is, monoids defined by presentations of the form  $\lb A \mid u=v \rb$ where $u$ and $v$ are words from the free monoid $A^*$.  Indeed, it is still not known whether the word problem is decidable for one-relator monoids.  This is one of the most fundamental longstanding open problems in combinatorial algebra.  While this problem is open in general, it has been solved in a number of special cases in work of Adjan, Adjan and Oganesyan, and Lallement; see~\cite{Adjan1966,Adyan1987,Lallement1988}. The best known undecidability result for finitely presented monoids with a small number of defining relations is due to  Matiyasevich who proved in~\cite{Matiyasevich1967}
that there is a finitely presented monoid with just three defining relations that has an undecidable word problem.

A problem which is closely related to the word problem for one-relator monoids is the question of whether every one-relator monoid admits a finite complete presentation (meaning a presentation which is confluent and terminating; see~\cite[Chapter~12]{HoltBook}).  A positive answer to this question would solve the word problem for one-relator monoids, since monoids defined by finite complete presentations have decidable word problem.
Another famous open problem in theoretical computer science
related to this
asks if it is decidable whether a one-relator string rewriting system is terminating; see e.g.~\cite{Dershowitz2005, Matiyasevich2005, Shikishima1997}.

The Anick-Groves-Squier theorem~\cite{Anick1986,Brown1989} shows that if a monoid admits a finite complete presentation, then that monoid must satisfy the homological finiteness property $\FP_\infty$.
Given this, and the discussion above, it is natural to ask whether all one-relator monoids are of type $\FP_\infty$.
This problem was investigated by Kobayashi in~\cite{Kobayashi1998,Kobayashi2000} where he proved that all one-relator monoids have finite derivation type in the sense of Squier~\cite{Squier1994}, and hence are of type $\FP_3$.
He then goes on to ask~\cite[Problem~1]{Kobayashi2000} whether all one-relator monoids are of type $\FP_\infty$.
Further motivation for this question comes from Lyndon's Identity theorem for groups~\cite{Lyndon1950} which gives natural resolutions for one-relator groups which, among other things, imply that all one-relator groups are of type $\FP_\infty$.

The first main result of this paper is the following
theorem which gives a positive answer to Kobayashi's problem~\cite[Problem~1]{Kobayashi2000}.

\begin{theorema}\label{theorem:A}
Every one-relator monoid $\lb A \mid u=v \rb$ is of type $\FP_\infty$.
\end{theorema}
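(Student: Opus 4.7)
The plan is to construct, for every one-relator monoid $M = \langle A \mid u = v \rangle$, a contractible left $M$-CW complex whose cells form only finitely many free $M$-orbits in each dimension. Once such a complex is in hand, applying the cellular chain functor produces a resolution of the trivial $\ZM$-module $\Z$ by finitely generated free left $\ZM$-modules, and this is precisely the property $\FP_\infty$. The complex should be built by induction on a complexity measure of the defining relation, modelled on a monoid analogue of the Magnus hierarchy for one-relator groups.

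The inductive scheme would be driven by an Adjan--Oganesyan style compression of the pair $(u,v)$: when $u$ and $v$ share a common prefix or suffix, or can be letter-compressed, I would exhibit $M$ as an extension of a one-relator monoid $M'$ on a smaller alphabet, and construct the $M$-complex by gluing copies of a contractible $M'$-complex along attaching maps that encode the extension, in the spirit of Bass--Serre theory or of Brown's collapsing schemes for monoids. Finite generation of the orbits in each dimension then follows from the inductive hypothesis together with the observation that the gluing data contributes only finitely many new orbits. The base cases are the incompressible ones: the trivial relation $u = v$, where $M$ is free on $A$ and $\Z$ admits a length-one finite free resolution; and relations whose two sides already begin with different letters, which can be analysed directly via Schützenberger graph or prefix automaton methods.

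The principal obstacle I expect is the case of a special relation $u = 1$, which plays the role in the monoid hierarchy that free groups play at the bottom of Magnus's hierarchy. In this case $M$ is not a group, but the difficulty of the relator is absorbed by the group of units, and the contractible $M$-complex must accommodate a classical Lyndon resolution for that one-relator group as a $\ZM$-equivariant subcomplex. Controlling the interaction between the left action of $M$ by partial translations on the Schützenberger graph of the $\GreenR$-class of the identity and the Lyndon $2$-complex of the units will be the most delicate point. Once this is handled, the cyclic symmetries of the relator will contribute the top-dimensional $M$-cells, producing a complex whose shape mirrors that of Lyndon's identity theorem for groups---which is exactly why the resulting statement deserves to be called a Lyndon identity theorem for one-relator monoids, and why it answers Kobayashi's problem positively.
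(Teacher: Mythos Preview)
Your plan has a genuine gap that the paper identifies explicitly. You propose to build, for every one-relator monoid, a contractible $M$-CW complex whose cells form finitely many \emph{free} $M$-orbits in each dimension, and to obtain it by an inductive compression scheme. The compression-based construction the paper carries out does produce a contractible $2$-complex $K$ (Theorem~\ref{t:compress.cayley.graph.general} and Theorem~\ref{thm:not:the:clas:space}), but in the non-subspecial compressible case the set of $2$-cells is the left $M$-set $M[z]_M$, and this is \emph{not} projective (Corollary~\ref{c:not.proj}, proved via Theorem~\ref{t:no.idems} showing $\mathbb{Z}M$ has only trivial idempotents). Hence the cellular chain complex of $K$ is not a projective resolution and your plan stalls exactly there. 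Worse, the paper exhibits torsion-free one-relator monoids such as $\langle a,b,c\mid aba=aca\rangle$ with infinite cohomological dimension (Proposition~\ref{p:inf.proj.dim}), so no finite-dimensional contractible free $M$-CW complex can exist for them; any construction along your lines would have to be infinite-dimensional from the outset, and your sketch gives no mechanism for that.

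You also misidentify the principal obstacle. The special case $u=1$ is not the bottleneck---it was settled in the authors' earlier paper~\cite{GraySteinberg2}, and more generally the entire subspecial case goes through topologically because $M[v]_M$ \emph{is} projective there (Proposition~\ref{p:is.regular}), yielding a genuine equivariant classifying space (Theorem~\ref{t:subspecial.equiv}). The hard case is the non-subspecial compressible one, and the paper resolves it not by improving the complex but by switching to algebra: it identifies the relation module with $\mathbb{Z}M[z]_M$ (Theorem~\ref{t:relation.module.compress}), proves a delicate injectivity lemma for right multiplication by $[u']-[v']$ using monoid pictures and the strict asphericity of the compressed presentation (Lemma~\ref{l:inj.cut.letter}), derives a short exact sequence relating $\mathbb{Z}M[z]_M$ and $\mathbb{Z}M[y]_M$ (Proposition~\ref{p:key.exact}), and then runs a double induction with the Brown--Strebel criterion (Lemma~\ref{l:brown}) to conclude that the relation module is of type $\FP_\infty$ (Theorem~\ref{t:complete.fp.infinity}). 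None of this algebraic layer is anticipated in your outline, and it is precisely what bridges the gap left by the non-projectivity of the natural $2$-complex.
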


This theorem applies to arbitrary one-relator monoids $\lb A \mid u=v \rb$ without any restrictions on the words $u$ and $v$. It should be stressed that there are very few results in the literature that have been proved for arbitrary one-relator monoids.

We note that, motivated in large part by the connection with string rewriting systems and the word problem, there is a extensive body of literature devoted to the study of homological and homotopical finiteness properties of monoids; see e.g.
\cite{Alonso2003, Cohen1992, Guba1998, Pride2004, Squier1987, Squier1994}.
Monoids of type $\FP_n$ are also used to define the higher dimensional BNSR invariants of groups; see~\cite{Bieri1987, Bieri1988}.

As mentioned above, the analogue of Theorem~A for one-relator groups is known to hold as a consequence of results in a highly influential
paper~\cite{Lyndon1950} published by Lyndon in 1950.
As Lyndon states in his paper, his result, which is now known as \emph{Lyndon's Identity Theorem}, may be viewed as a complementary result to Magnus's word problem solution: that of determining all identities among the relations.
More precisely, Lyndon's result 
identifies the relation module of a one-relator group as an explicitly given  
cyclic module. 
When interpreted topologically, Lyndon's results show how to construct certain natural classifying spaces for one-relator groups which are small, in a sense which can be made precise.
Specifically these spaces have finitely many cells in each dimension, which implies that all one-relator groups are of type $\FP_\infty$.
Furthermore, Lyndon's results can be used to prove
that the presentation $2$-complex of a torsion-free one-relator group is aspherical, and thus is a classifying space for the group; see~\cite{Cockcroft1954,
DyerVasquez1973}.
This implies that torsion-free one-relator groups have geometric and cohomological dimension at most two (in fact, exactly two unless they are free).

In this paper we will prove a Lyndon's Identity Theorem for arbitrary one-relator monoids.
We will then apply this to prove Theorem~A.
In addition to this, we shall also apply our results to  classify one-relator monoids of cohomological dimension at most $2$, and to compute explicitly the relation module, in the sense of Ivanov~\cite{Ivanov}, of all torsion-free one-relator monoids.
Our proof uses a range of techniques
and ideas
from algebraic topology,
and
from combinatorial and homological algebra.
In particular, one key ingredient will be some of the results and ideas from our earlier recent papers~\cite{GraySteinberg1, GraySteinberg2} which give a new topological approach to the study of homological finiteness properties of monoids.

One fundamental new insight made in this paper is that we uncover a tree-like structure in the Cayley graph of one-relator monoids, modulo certain subgraphs which have the geometry either of special one-relator monoids, or of strictly aspherical one-relator monoids.
Here a special monoid presentation is one where all of the defining relations are of the form $w=1$, and a monoid  presentation is strictly aspherical if all of its diagram groups,
in the sense of Guba and Sapir~\cite{GubaSapir1997},  are trivial.
These are then both base cases of our induction, in the first instance resolved by general results on finitely presented special monoids from our earlier article~\cite{GraySteinberg2}.

Our topological approach allows us to exploit this geometric information about the structure of the Cayley graph to construct actions of one-relator monoids on suitable contractible CW complexes.  In addition to this, other new tools and innovations introduced in this paper (of independent interest) include:
the development of a generalised compression theory (an induction technique for one-relator monoids) that provides a common framework for both Adjan-Oganesyan compression~\cite{Adjan1966} and Lallement compression~\cite{Lallement1974},
and a new interpretation of this
in terms of
local divisor theory, in the sense of Diekert~\cite{Diekert2016}.

In more detail, for every one-relator monoid $M$ we shall construct a contractible $M$-CW complex, that is, a CW complex with cellular action of $M$, so the augmented cellular chain complex gives rise to a resolution of the trivial module.
In many cases our construction gives an equivariant classifying space for the monoid with finitely many orbits of cells in each dimension.
Here, an equivariant classifying space for a monoid is the monoid-theoretic analogue of the universal cover of a classifying space of a group.
In other cases the space we construct is not an equivariant classifying space for the monoid.
In these cases
additional work is needed, and we shall then need
to combine our topological results with
ideas from the theory of relation modules of moniods in the sense of Ivanov~\cite{Ivanov},
arguments with monoid pictures
in the sense of~\cite{GubaSapir1997},
and a general method from work of Brown ~\cite{Brown1982} and Strebel~\cite{Strebel1983}, in order construct a resolution
of the trivial module
by finitely generated projective modules.

The resolutions that we obtain for one-relator monoids in this paper are in general infinite, even in the case of torsion-free one-relator monoids.  This is necessary since, rather surprisingly, we shall show here that there are torsion-free one-relator monoids with infinite cohomological dimension.  Here a monoid has torsion if it has a non-idempotent element $x$ such that $x$ is equal to a proper power of itself.   Equivalently, a one-relator monoid is torsion-free if and only if all of its subgroups are torsion-free.  Thus in general for a torsion-free one-relator monoid $M$ there need not be a finite dimensional, contractible free $M$-CW complex.  This is in stark contrast to the case of torsion-free one-relator groups which (as discussed above) all have cohomological dimension at most two.
The fact that there are torsion-free one-relator monoids with infinite cohomological dimension is one of things that makes proving Theorem~A far more complicated than the corresponding result for groups.

In addition to proving Theorem~A, we will use the results and constructions described above to prove the following result, which
classifies one-relator monoids with cohomological dimension at most $2$.

\begin{theoremb}\label{theorem:B}
Let $M$ be a monoid defined by a one-relator presentation $\lb A \mid u=v \rb$.
Suppose without loss of generality that $|v| \leq |u|$.
Let $z \in A^*$ be the longest word which is a prefix and a suffix of both $u$ and of $v$.
Then
\begin{itemize}
\item[(i)]
$\mathop{\mathrm{cd}}(M) = \infty$ if $M$ has a maximal subgroup with torsion; and
\item[(ii)]
$\mathop{\mathrm{cd}}(M) \leq 2$ if and only if
$M$ is torsion-free and
either $z$ is the empty word or $z=v$.
\end{itemize}
Furthermore, if $\mathop{\mathrm{cd}}(M) \leq 2$ then $M$ has an equivariant classifying space obtained by attaching $2$-cells to the Cayley graph of $M$.
\end{theoremb}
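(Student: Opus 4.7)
The plan is to leverage the equivariant contractible $M$-CW complex produced in the main construction of the paper and analyse when it can be taken to have dimension exactly two, so that its augmented cellular chain complex yields a length-two projective resolution of $\mathbb{Z}$. For part (i), I would invoke the general fact that $\mathrm{cd}(H) \leq \mathrm{cd}(M)$ for any maximal subgroup $H$ of $M$: picking an idempotent $e$ whose $\mathcal{H}$-class is $H$, the exact functor $V \mapsto eV$ from left $\ZM$-modules to left $\ZH$-modules (where $\ZH$ acts on $eV$ via the group of units of $eMe$) carries a projective $\ZM$-resolution of $\mathbb{Z}$ to a projective $\ZH$-resolution of the same length. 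Since any group with a torsion element has infinite cohomological dimension by Serre's theorem, a torsion-containing maximal subgroup forces $\mathrm{cd}(M) = \infty$, establishing (i) and the necessity of the torsion-free hypothesis in (ii).

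For the ``if'' direction of (ii), the strategy is to show that when $M$ is torsion-free and $z$ is empty or $z = v$, the combinatorial overlap condition prevents any Adjan--Oganesyan or Lallement compression from being applicable. Equivalently, the presentation turns out to be strictly aspherical in the sense of Guba--Sapir, so that all its diagram groups are trivial. Under this hypothesis the Cayley graph of $M$ with a single orbit of $2$-cells attached -- one per left translate of the defining relation $u = v$ -- is already simply connected and acyclic; the tree-like decomposition of the Cayley graph described in the introduction collapses to precisely this $2$-dimensional $M$-CW complex. Its augmented cellular chain complex is then a length-$2$ free resolution of $\mathbb{Z}$, yielding $\mathrm{cd}(M) \leq 2$ together with an equivariant classifying space obtained by attaching $2$-cells to the Cayley graph of $M$, exactly as asserted.

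For the converse, suppose $z$ is nonempty and $z \neq v$. Then $z$ is a proper piece that is simultaneously a prefix and a suffix of both $u$ and of $v$, which is precisely the condition allowing the compression machinery of the paper to replace $\lb A \mid u = v \rb$ by a shorter one-relator presentation in which the relation takes the form $w = 1$, i.e.\ a special one-relator monoid $M'$. By the structural analysis from our earlier paper \cite{GraySteinberg2} the group of units of $M'$ is a nontrivial one-relator group whose cohomology contributes to that of $M$ through the relation-module computation of Ivanov together with the Brown--Strebel spectral sequence mentioned in the introduction. A combinatorial case analysis then rules out the free case under our hypothesis and produces a nonzero class in $H^3(M,-)$, so $\mathrm{cd}(M) \geq 3$. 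The main obstacle will be this final step: exhibiting an explicit non-vanishing class in $H^3(M,-)$ from the relation module of the compressed special presentation and verifying that it persists in the torsion-free case, rather than arising only from a torsion-induced infinite tower of higher cells.
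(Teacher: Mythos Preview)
Your proposal contains two substantive errors that reverse the roles of the subspecial and non-subspecial cases.

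\textbf{The ``if'' direction when $z=v$.} You claim that when $z$ is empty or $z=v$, ``the combinatorial overlap condition prevents any \ldots\ compression from being applicable'' and hence the presentation is strictly aspherical. This is false when $z=v$ with $v$ non-empty: that is precisely the (non-special) subspecial case, and the presentation \emph{is} compressible --- by $v$ itself. The paper explicitly notes (after Corollary~\ref{c:subspecial.case.relation.module.complex}) that the Cayley complex of $\lb a\mid a^2=a\rb$ is not contractible, so strict asphericity fails. What the paper does instead is attach $2$-cells not along a \emph{free} $M$-orbit but along the \emph{projective} $M$-set $M[v]_M$: the crucial input is Proposition~\ref{p:is.regular}, that $[v]_M$ is a regular element, so $M[v]_M\cong Me$ for an idempotent $e$, making the resulting $2$-complex a \emph{projective} $M$-CW complex. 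Contractibility then comes from Theorem~\ref{t:compress.cayley.graph.general} together with the fact that the Lallement compression is special and (in the torsion-free case) strictly aspherical. Your sketch supplies none of this structure.

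\textbf{The ``only if'' direction.} Here the confusion is mirrored. When $z$ is non-empty and $z\neq v$, the presentation is compressible but \emph{not} subspecial, and its Lallement compression is therefore \emph{not} special (see Corollary~\ref{c:stay.sub}): it is an incompressible non-special one-relator monoid, which is strictly aspherical and has trivial group of units. So your plan of extracting a non-trivial one-relator group of units from the compressed monoid and pushing up a class in $H^3$ via Brown--Strebel cannot work --- there is no such group. The paper's argument is entirely different: it proves (Theorem~\ref{t:no.idems}) that $\mathbb ZM$ has only trivial idempotents, hence the principal left ideal $\mathbb ZM[z]_M$ is projective iff right multiplication by $[z]_M$ is injective, which fails because $u,v$ end in $z$ with distinct cofactors. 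Since $\mathbb ZM[z]_M$ is the second syzygy in the exact sequence of Theorem~\ref{t:relation.module.compress}, its non-projectivity forces $\mathrm{cd}(M)\geq 3$.

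Your part~(i) is broadly on the right track, though you should check that the functor $V\mapsto eV$ really sends projectives to projectives; in the paper this rests on the freeness of $xM$ as an $S$-set (dual of Corollary~\ref{c:free.action}), not on a general fact about idempotents.
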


It is well known that a one-relator group has torsion if and only if the word in the defining relator is itself a proper power.  The analog of this for one-relator monoids is a follows.  The one-relator monoid defined by $\lb A \mid u=v \rb$.  (where $|v| \leq |u|$) has torsion if and only if $u \in vA^* \cap A^* v$ and  upon writing $u = vw \in A^*$ the word $w$ is a proper power.
This will be proved in Lemma~\ref{lem:max:subgroups} below.  This means that whether or not the one-relator monoid $\lb A \mid u=v \rb$ is torsion-free is something which can easily be read off from the defining relation $u=v$.

In Theorem~B we use $\cd(M)$ to denote the left cohomological dimension of the monoid $M$. The obvious dual result for right cohomological dimension also holds. Note that in general the left and right cohmological dimensions of a monoid need not be the same; see~\cite{Guba1998}.

In proving Theorem~B one thing that makes the situation necessarily more complicated than that of one-relator groups is that the equivariant classifying space we construct for one-relator monoids of cohomological dimension
at most two
is not in general the Cayley complex of the monoid. In fact, we shall see that 
in general
for torsion-free one-relator monoids of cohomological dimension
at most two
the Cayley complex is not an equivariant classifying space.

Not all torsion-free one-relator monoids satisfy the conditions in part (ii) of Theorem~B. 
Indeed we shall see in Section~\ref{s:cd} that there are torsion-free one-relator monoids with infinite cohomological dimension.
Indeed, many of them have infinite dimension.
We shall prove (see Proposition~\ref{p:inf.proj.dim}) that, with the same notation as in Theorem~B, we have the following: If  $z$ is non-empty and $z\neq v$, and no other non-empty word is a prefix and suffix of both $u$ and of $v$, then $\mathop{\mathrm{cd}}(M)=\infty$.
This is evidence supporting our (as yet unproved)
suspicion that
every torsion-free one-relator monoid with $\cd(M) > 2$
has $\cd(M) = \infty$. This will be discussed further in Section~\ref{s:cd}.

In terms of relation modules, Lyndon's identity theorem for groups identifies the relation module of a one-relator group as an explicitly given  
cyclic module.  Ivanov~\cite{Ivanov} defined and studied relation modules for semigroups.  Here we shall compute the relation module, in the sense of Ivanov, for all torsion-free one-relator monoids; see Theorem~\ref{t:relation.module.compress}.  Specifically our theorem shows that the relation module is isomorphic to an explicitly given principal left ideal in the monoid ring.  This improves on the result~\cite[Corollary 5.4]{Ivanov}.

The paper is organised as follows. After giving some preliminary definitions and results in Section~\ref{sec:preliminaries}, we then give an outline of the proofs of our main results in Section~\ref{sec:proof-outline}.
In
Section~\ref{s:compression}
we prove results on compression and local divisors, and give a key result Theorem~\ref{t:compress.cayley.graph.general} interpreting compression from a topological point of view.
The proofs of our main results for subspecial monoids are given in
Section~\ref{sec_special}.
We compute the relation module of torsion-free one-relator monoids
in
Section~\ref{s:relmodule}.
We prove a key technical lemma for certain strictly aspherical one-relator monoids using monoid pictures
in Section~\ref{s:injectivity}, and then apply this together with other results
in
Section~\ref{sec:resolving:relation:module}
to resolve the relation module and compete the proof of Theorem~A.
Finally in Section~\ref{s:cd} we
compete the proof of Theorem~B,
and we describe a large class of
torsion-free one-relator monoids with infinite cohomological dimension.

\section{Preliminaries}
\label{sec:preliminaries}

We assume the reader has familiarity with standard notions from group theory (see, e.g.,~\cite{LyndonAndSchupp}), algebraic topology (see, e.g.,~\cite{Hatcher2002}), category theory (see, e.g.,~\cite{MacLaneBook}) and topological and homological methods in group theory (see, e.g.,~\cite{BrownCohomologyBook, GeogheganBook}).
To make the article more self contained, the majority of the necessary concepts from monoid theory will be defined at the points where they are needed.
For more background on semigroup theory we refer the reader to~\cite{Howie}.

\subsection*{$M$-CW complexes}

We now give some background from~\cite{GraySteinberg1} which will be needed later.
Let $M$ be a monoid and let $E(M)$ be the set of idempotents of $M$, i.e., the elements $e\in M$ with $e^2=e$.
A \emph{left $M$-space} is a topological space $X$ with a continuous left action $M \times X \rightarrow X$ where $M$ has the discrete topology.
Throughout the article we shall only work with left $M$-spaces and so we usually omit the word left and simply talk about $M$-spaces.
A \emph{free left $M$-set on $A$} is one isomorphic to $M \times A$ with action $m(m',a) = (mm',a)$.  We call $A$ a \emph{basis} for the free $M$-set.  Notice that a left $M$-set $X$ is free on a basis $A\subseteq X$ if and only if each $x\in X$ can be uniquely expressed in the form $x=ma$ with $m\in M$ and $a\in A$.
A \emph{projective left $M$-set $P$} is one which is isomorphic to a left $M$-set of the form
$\coprod_{a\in A} Me_a$ with $e_a \in E(M)$ for all $a \in A$.
As for $M$-spaces, we shall usually omit the word left when talking about $M$-sets, and so by $M$-set we shall always mean left $M$-set unless explicitly stated otherwise. If $X$ is a left $M$-set, then $M\backslash X$ is the quotient of $X$ by the least equivalence relation identifying $x$ and $mx$ for all $m\in M$ and $x\in X$.  
So $M \backslash X$ denotes the set of equivalence classes of this equivalence relation.  
One defines $X/M$ for a right $M$-set analogously.

A \emph{projective $M$-cell} of dimension $n$ is an $M$-space of the form $Me \times B^n$ where $e \in E(M)$ and $B^n$ is the closed unit ball in $\mathbb{R}^n$ on which $M$ is defined to act trivially; we call it a \emph{free $M$-cell} when $e=1$.
A projective $M$-CW complex is defined inductively where projective $M$-cells $Me \times B^n$ are attached via $M$-equivariant maps from $Me \times S^{n-1}$ to the $(n-1)$-skeleton.
Here $S^{n-1}$ denotes the $(n-1)$-sphere, which is the boundary of the $n$-ball $B^n$.
Formally, a \emph{projective relative $M$-CW complex} is a pair $(X,A)$ of $M$-spaces such that $X=\varinjlim X_n$ with $i_n\colon X_n\to X_{n+1}$ inclusions, $X_{-1}=A$, $X_0 = P_0\cup A$ with $P_0$ a projective $M$-set and where $X_n$ is obtained as a pushout of $M$-spaces
\begin{equation}\label{eq:attach}
\begin{tikzcd}P_n\times S^{n-1}\ar{r}\ar[d,hook] & X_{n-1}\ar[d,hook]\\ P_n\times B^n\ar{r} & X_n \end{tikzcd}
\end{equation}
with $P_n$ a projective $M$-set and $B^n$ having a trivial $M$-action for $n\geq 1$.
 The set $X_n$ is the \emph{$n$-skeleton} of $X$ and if $X_n=X$ and $P_n\neq \emptyset$, then $X$ is said to have \emph{dimension} $n$.
If $A=\emptyset$, we call $X$ a \emph{projective $M$-CW complex}.
Note that a projective $M$-CW complex is a CW complex and the $M$-action takes $n$-cells to $n$-cells.
We say that $X$ is a \emph{free $M$-CW complex} if each $P_n$ is a free $M$-set, i.e., $X$ is built up from attaching free $M$-cells.

More generally we shall say that $X$ is an \emph{$M$-CW complex} if $X$ is built up via a sequence of pushouts as above, but where we drop the requirement that $P_n$ be a projective $M$-set and instead allow it to be an arbitrary $M$-set.  In this case, it is convenient to term an \emph{$M$-cell} to be an $M$-space of the form $Mm\times B^n$ 
for some $m \in M$ 
(with trivial action on $B^n$).  It is no longer true at this level of generality that every $M$-CW complex is built up from attaching $M$-cells, but all the ones that occur in this paper are of this form.

\begin{Prop}\label{prop:chains}
Let $X$ be an $M$-CW complex (not necessarily projective) built via attaching maps as in \eqref{eq:attach}.  Then the $n^{th}$-cellular chain group $C_n(X)$ is isomorphic as a $\mathbb ZM$-module to $\mathbb ZP_n$ where the action of $M$ on $P_n$ is extended linearly to $\mathbb ZP_n$.
\end{Prop}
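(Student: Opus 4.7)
The plan is to identify the $n$-cells of the underlying CW complex of $X$ with the elements of the $M$-set $P_n$ and then check that the $M$-action on cellular chains matches the $\mathbb{Z}M$-module structure on $\mathbb{Z}P_n$. First I would observe that, since $M$ is discrete, colimits in the category of $M$-spaces are computed on underlying topological spaces and then equipped with the induced $M$-action. Consequently the pushout \eqref{eq:attach} realises $X_n$, as an ordinary space, as $X_{n-1}$ with one $n$-cell attached for each $p\in P_n$, via the attaching map obtained by restricting $P_n\times S^{n-1}\to X_{n-1}$ to $\{p\}\times S^{n-1}$. Thus $X$ is a CW complex whose $n$-skeleton is $X_n$ and whose set of $n$-cells is in natural bijection with $P_n$ (together with the cells of $P_0\cup A$ in dimension $0$, handled analogously). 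In particular, $C_n(X)=H_n(X_n,X_{n-1})$ is free abelian on this set.

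To upgrade the bijection of $n$-cells with $P_n$ to an isomorphism of $\mathbb{Z}M$-modules, I would use the assumption that $M$ acts trivially on the ball $B^n$: the action on $P_n\times B^n$ is $m\cdot(p,b)=(mp,b)$, so the induced action of $m$ on $X_n$ sends the closed $n$-cell indexed by $p$ to the one indexed by $mp$ through a map that, under the characteristic maps coming from the pushout, is the identity on $B^n$. In particular this map is orientation-preserving, so on cellular chains the action of $m$ sends the generator $[p]$ to $[mp]$. Extending linearly yields exactly the $\mathbb{Z}M$-action on $\mathbb{Z}P_n$, giving the claimed isomorphism $C_n(X)\cong \mathbb{Z}P_n$ of $\mathbb{Z}M$-modules.

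The only points requiring care are the checks that the skeletal filtration is $M$-invariant and that the $M$-action is cellular, so that the cellular chain complex carries a well-defined $\mathbb{Z}M$-structure in the first place; both are immediate from the construction of $X_n$ as a pushout in the category of $M$-spaces and from the fact that, once the trivial action on $B^n$ has been recorded, every characteristic map is automatically $M$-equivariant on its image. There is no genuine obstacle here: the proposition is essentially the observation that, when cells are attached $M$-equivariantly with trivial action on the cell factor, the cellular chain functor sends the $M$-set of cells to its linearisation.
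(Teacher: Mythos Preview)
Your proposal is correct and follows essentially the same approach as the paper: identify the $n$-cells with $P_n$, use that the $M$-action on $P_n\times B^n$ is the identity on the $B^n$-coordinate to conclude that $m$ sends the cell indexed by $p$ to the cell indexed by $mp$ by an orientation-preserving map, and deduce $m[p]=[mp]$ on chains. The paper phrases this via the decomposition $C_n(X)=\bigoplus_{x\in P_n} H_n(B^n/S^{n-1})$ with generators $\eta_x$ and the identity $m\eta_x=\eta_{mx}$, but the content is the same.
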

\begin{proof}
As an abelian group, we can view $C_n(X)=\bigoplus_{x\in P_n} H_n(B^n/S^{n-1})$ where the summand for $x\in P_n$ corresponds to the cell $\{x\}\times B^n$.  Fix an orientation of $B^n$ and let $\eta_x$ be the corresponding generator of $H_n(B^n/S^{n-1})$ in the summand corresponding to $x\in P_n$.  Since the action of $M$ on $P_n\times B^n$ is given by $m(x,y)=(mx,y)$, the action of $m$ takes the copy of $B^n/S^{n-1}$ corresponding to the cell $\{x\}\times B^n$ to the copy of $B^n/S^{n-1}$ corresponding to the cell $\{mx\}\times B^n$ via the identity map in the $B^n$-coordinate.  Hence the action of $M$ is orientation preserving and so $m\eta_x=\eta_{mx}$.  Thus $C_n(X)\cong \mathbb ZP_n$ as a $\mathbb ZM$-module.
\end{proof}

If $A$ is a right $M$-set and $X$ is a left $M$-space, then $A\otimes_M X$ is $(A\times X)/{\sim}$, equipped with the quotient topology, where $\sim$ is the least equivalence relation such that $(am,x)\sim (a,mx)$ for all $a\in A$, $m\in M$ and $x\in X$.  The class of $(a,x)$ is denoted $a\otimes x$. If $A$ is a free right $M$-set with basis $B$, then $A\otimes_M X\cong \coprod_{b\in B}b\otimes X$ and $x\mapsto b\otimes x$ is a homeomorphism from $X$ to $b\otimes X$. See~\cite[Remark~2.3]{GraySteinberg2} for details.

An \emph{$M$-homotopy} between $M$-equivariant continuous maps $f,g\colon X\to Y$, between $M$-spaces $X$ and $Y$, is an $M$-equivariant mapping $H\colon X\times I\to Y$ with $H(x,0)=f(x)$ and $H(x,1)=g(x)$ for $x\in X$ where $I$ is viewed as having the trivial $M$-action.
 We write $f\simeq_M g$ if there is an $M$ homotopy between $f$ and $g$.
 We say that the $M$-spaces $X$ and $Y$ are \emph{$M$-homotopy equivalent}, written $X\simeq_M Y$, if there are $M$-equivariant continuous mappings  $f\colon X\to Y$ and $g\colon Y\to X$ such that $gf\simeq_M 1_X$ and $fg\simeq_M 1_Y$.
In this situation the mappings $f$ and $g$ are called \emph{$M$-homotopy equivalences}.

A (left) \emph{equivariant classifying space} $X$ for a monoid $M$ is a projective $M$-CW complex which is contractible.
This notion generalises from group theory the idea of the universal cover of a classifying space of a group.
Indeed, if $Y$ is an Eilenberg--Mac Lane complex of type $K(G,1)$ then the universal cover $X$ of $Y$ is a free $G$-CW complex which is contractible.
Thus $X$ is an equivariant classifying space for the group.
It is proved in~\cite[Section~6]{GraySteinberg1} that every monoid admits an equivariant classifying space, and it is unique up to $M$-homotopy equivalence.
One of our main interests in this paper will be in certain natural topological finiteness properties for monoids defined in terms of the existence of equivariant classifying spaces satisfying certain conditions.
A finitely generated monoid $M$ is of type \emph{left-$\F_n$} if there is a left equivariant classifying space $X$ for $M$ with \emph{$M$-finite} $n$-skeleton.
This means that for all $k \leq n$ the set of $k$-cells is a finitely generated projective $M$-set.  We say that $M$ is of type \emph{left-$\F_{\infty}$} if there is a left equivariant classifying space $X$ for $M$, 
which has $M$-finite $n$-skeleton for all $n \geq 0$.    
The \emph{left geometric dimension} of a monoid $M$, denoted  $
\mathop{\mathrm{gd}^{\mathrm{(l)}}} (M)$
is the minimum dimension of a left equivariant classifying space for $M$, where
$\mathop{\mathrm{gd}^{\mathrm{(l)}}} (M) =\infty$  if there is no finite dimensional equivariant classifying space.
Both of these notions have corresponding homological counterparts,
namely $\FP_n$ and cohomological dimension,  
which have been well studied in the literature over the past few decades; see, e.g.,~\cite{Cohen1992, Guba1998, Otto1997}.
A monoid $M$ is said to be of type \emph{left-$\FPn$} (for a positive integer $n$) if there is a projective resolution $P_\bullet$ of the trivial left $\ZM$-module $\Z$  such that $P_i$ is finitely generated for $i \leq n$.
We say that $M$ is of type \emph{left-$\FPinfty$} if there is a projective resolution $P_\bullet$ of $\Z$ over $\ZM$ with $P_i$ finitely generated for all $i$.
For any monoid $M$, if $M$ is of type left-$\F_n$ for some $0 \leq n \leq \infty$, then it is of type left-$\FP_n$.
This is because the augmented cellular chain complex of an equivariant classifying space for $M$ provides a projective resolution of the trivial left $\mathbb ZM$-module $\mathbb Z$.
For finitely presented monoids the conditions left-$\F_n$ and left-$\FP_n$ are equivalent; see~\cite{GraySteinberg1}.
The \emph{left cohomological dimension} of a monoid $M$, denoted
$\mathop{\mathrm{cd}^{\mathrm{(l)}}}(M)$, is the smallest non-negative integer $n$ such that there exists a projective resolution $P_\bullet = (P_i)_{i \geq 0}$ of $\Z$ over $\ZM$ of length $\leq n$, i.e., satisfying $P_i = 0$ for $i>n$.
If no such $n$ exists, then we set  $\mathop{\mathrm{cd}^{\mathrm{(l)}}}(M)=\infty$.
The left geometric dimension is clearly an upper bound on the left cohomological dimension of a monoid.
All of these finiteness properties have obvious dual notions of right-$\FP_n$ and right cohomological dimension  $\mathrm{cd}^{\mathrm{(r)}}(M)$  working with right $M$-spaces and right modules.
The properties left-$\F_n$ and right-$\F_n$  do not coincide.
The same is true for left- and right-$\FP_n$, and for the left and right cohomological dimensions; see~\cite{Cohen1992, Guba1998}.  However, since the class of one-relator monoids is left-right dual all our results for left homological properties have analogues for right homological properties.

Recall that an \emph{isomorphism} $f\colon X\to Y$ of CW complexes is a cellular mapping which is a homeomorphism with the property that it sends (open) cells to (open) cells.  We then write $X\cong Y$ if they are isomorphic CW complexes.

\subsection*{The Squier complex of a presentation}

The \emph{free monoid} over the alphabet $A$,
which we denote by $A^*$,
consists of all words over $A$, including the empty word $\varepsilon$, with respect to the operation of concatenation of words.
We use $A^+$ to denote the free semigroup over the alphabet $A$.
A \emph{monoid presentation} is a pair $\langle A \mid R \rangle$ where $A$ is an alphabet and $R \subseteq A^* \times A^*$ is a set of defining relations which we usually write in the form $u_i=v_i$ $(i \in I)$.
The monoid defined by the presentation $\langle A \mid R \rangle$ is the quotient $A^* / \sigma$ of the free monoid $A^*$ by the congruence $\sigma$ generated by the set of defining relations $R$; see~\cite[Section~1.6]{Howie}.
If $M$ is the monoid defined by the presentation $\langle A \mid R \rangle$ we write $[w]_M$ to denote the image in $M$ of $w \in A^*$. When it is clear from context in which monoid we are working, we simply write $[w]$ rather than $[w]_M$. So, with this notation, given two words $w_1, w_2 \in A^*$ we write $w_1 = w_2$ to mean that $w_1$ and $w_2$ are equal as words from $A^*$, while $[w_1]_M = [w_2]_M$ means that $w_1$ and $w_2$ both represent the same element of the monoid $M = \langle A \mid R \rangle$.
We say that $z \in A^*$ is a \emph{factor} of the word $w \in A^*$ if $w = \alpha z \beta$ for some words $\alpha, \beta \in A^*$.

The \emph{Squier complex} of a monoid presentation $\gp = \lb A | R \rb$, where each relation $r \in R$ is written as $r_{+1} = r_{-1}$, is the $2$-complex $\Gamma(\gp)$ defined as follows.   It will be convenient to describe the $1$-skeleton using the convention of Serre~\cite{Serre}, where the edge set is given with a fixed-point-free involution and geometric edges correspond to pairs $\{e,e\inv\}$ of inverse edges.  So $\Gamma(\gp)$ has vertex set $V = A^*$ and edge set
\[
E = \{ (w_1, r, \epsilon, w_2)\mid \ w_1, w_2 \in A^*, r \in R, \ \mbox{and} \ \epsilon \in \{ +1, -1 \} \}.
\]
The incidence  functions $\iota, \tau \colon  E \rightarrow V$  are defined by $\iota \bbe = w_1 r_{\epsilon} w_2$ and $\tau \bbe = w_1 r_{- \epsilon} w_2$ for $\bbe=(w_1,r,\epsilon,w_2)$. We call these the \emph{initial} and \emph{terminal} vertices, respectively, of the edge $\bbe$. The mapping $(\,\,)^{-1} \colon  E \rightarrow E$ associates with each edge $\bbe = (w_1, r, \epsilon, w_2)$ its inverse edge $\bbe^{-1} = (w_1, r, -\epsilon, w_2)$.

Observe that two words are connected by an edge in the Squier complex exactly if one can be transformed into the other by a single application of a relation from the presentation. It follows that two words represent the same element of the monoid defined by the presentation if and only if they belong to the same connected component of the Squier complex.

A path in $\Gamma(\gp)$ is a sequence of edges $\bbp = \bbe_1 \circ \ldots \circ \bbe_n$ where $\tau \bbe_i  \equiv \iota \bbe_{i+1}$ for $i=1, \ldots, {n-1}$.  Here $\bbp$ is a path from $\iota \bbe_1$ to $\tau \bbe_n$ and we extend the mappings $\iota$ and $\tau$ to the set of paths by defining $\iota \bbp \equiv \iota \bbe_1$ and $\tau \bbp \equiv \tau \bbe_n$.  The inverse of a path $\bbp = \bbe_1 \circ \bbe_2 \circ \ldots \circ \bbe_n$ is defined by $\bbp^{-1} = \bbe_n^{-1} \circ \bbe_{n-1}^{-1} \circ \ldots \circ \bbe_1^{-1}$.  A path $\bbp$ is called   \emph{closed} if $\iota \bbp \equiv \tau \bbp$. For two paths $\bbp$ and $\bbq$ with $\tau \bbp \equiv \iota \bbq$ the composition $\bbp \circ \bbq$ is defined in the obvious way by concatenating the paths. 
For the remainder of this section let us abbreviate $\Gamma(\gp)$ to $\Gamma$.  
We denote the set of all paths in $\Gamma$ by $P(\Gamma)$, where for each vertex $w \in V$ we include a path $1_w$ with no edges, called the \emph{empty path} at $w$.

There is a natural two-sided action of the free monoid $A^*$ on the sets of vertices and  edges of $\Gamma$ defined by left and right multiplication on vertices and by
\[
x \cdot \bbe \cdot y = (x w_1, r, \epsilon, w_2 y)
\]
for an edge $\bbe = (w_1, r, \epsilon, w_2)$ and $x, y \in A^*$. This extends to paths, where for a path $\bbp = \bbe_1 \circ \bbe_2 \circ \ldots \circ \bbe_n$, we define
\[
x \cdot \bbp \cdot y = (x \cdot \bbe_1 \cdot y) \circ (x \cdot \bbe_2 \cdot y)
\circ \ldots \circ (x \cdot \bbe_n \cdot y).
\]

For every $r \in R$ and $\epsilon = \pm 1$ define $ \bba_r^\epsilon = (1, r, \epsilon, 1). $ Such edges are called \emph{elementary}. The elementary edges are in obvious correspondence with the relations of the presentation, and every edge of $\Gamma$ can be written uniquely in the form $\alpha \cdot \bba \cdot \beta$ where $\alpha, \beta \in A^*$ and $\bba$ is elementary.  Thus the elementary edges form a basis for the two-sided action on edges.

It is often useful to represent the edge $\bbe=(\alpha, r, \epsilon, \beta)$ geometrically by an object called a \emph{monoid picture} as follows:
\begin{center}
\scalebox{0.5}
{
\begin{tikzpicture}
[very thick,
blackbox/.style={draw, fill=blue!20, rectangle, minimum height=1.5cm, minimum width=3cm},
dottedbox/.style={draw, rectangle, loosely dashed, minimum height=3.5cm, minimum width=8cm}]
\node at (4.5,2.75) [dottedbox] {};
\node at (5,2.75) [blackbox] {};
\draw (1,1) -- (1,4.5);
\draw (2,1) -- (2,4.5);
\draw (3,1) -- (3,4.5);
\draw (7,1) -- (7,4.5);
\draw (8,1) -- (8,4.5);
\draw (4,3.5) -- (4,4.5);
\draw (5,3.5) -- (5,4.5);
\draw (6,3.5) -- (6,4.5);
\draw (4.5,1) -- (4.5,2);
\draw (5.5,1) -- (5.5,2);
\node at (2,5) {\huge $\alpha$};
\node at (2,0.5) {\huge $\alpha$};
\node at (5,5) {\huge $r_\epsilon$};
\node at (5,0.5) {\huge $r_{-\epsilon}$};
\node at (7.5,5) {\huge $\beta$};
\node at (7.5,0.5) {\huge $\beta$};
\end{tikzpicture}
}
\end{center}

The rectangle in the center of the picture is called a \emph{transistor}, and corresponds to the relation $r$ from the presentation, while the line segments in the diagram are called \emph{wires}, with each wire labelled by a unique letter from $A$. The monoid picture for $\bbe^{-1}$ is obtained by taking the vertical mirror image of the picture of $\bbe$. By stacking such pictures vertically, and joining corresponding wires, we obtain pictures for paths in the graph $\Gamma$. We refer the reader to~\cite{Squier1994, GubaSapir1997, GubaSapirDirected} for a more formal and comprehensive treatment of monoid pictures.  
In order to obtain that homotopy of paths corresponds to isotopy of monoid pictures (up to cancelling mirror image pairs of transistors), it is necessary to adjoin $2$-cells to $\Gamma$.

If $\bbe_1$ and $\bbe_2$ are edges of $\Gamma$, then $(\bbe_1 \cdot \iota \bbe_2) \circ (\tau \bbe_1 \cdot \bbe_2)$ and $(\iota \bbe_1 \cdot \bbe_2) \circ (\bbe_1 \cdot \tau \bbe_2 )$ are paths from $\iota\bbe_1\cdot \iota\bbe_2$ to $\tau\bbe_1\cdot \tau\bbe_2$ and so we may attach a $2$-cell $c_{\bbe_1,\bbe_2}$ with boundary path
\[
(\bbe_1 \cdot \iota \bbe_2) \circ (\tau \bbe_1 \cdot \bbe_2)\cdot
\left((\iota \bbe_1 \cdot \bbe_2) \circ (\bbe_1 \cdot \tau \bbe_2 )\right)\inv.
\]
The $2$-complex obtained 
by attaching all such $2$-cells for every pair of edges in $\Gamma$  
is called the \emph{Squier complex} of the presentation $\gp$.
Note that the Squier complex
is an $A^*\times (A^*)^{op}$-CW complex where the action of a pair $x,y\in A^*$ on cells is given by $xc_{\bbe_1,\bbe_2}y= c_{x\bbe_1,\bbe_2y}$.

If $\bbp$ and $\bbq$ are paths such that $\iota \bbp \equiv \iota \bbq$ and $\tau \bbp \equiv \tau \bbq$ then we say that $\bbp$ and $\bbq$ are \emph{parallel}, and write $\bbp \parallel \bbq$. We use $\parallel\; \subseteq P(\Gamma) \times P(\Gamma)$ to denote the set of all pairs of parallel paths in $\Gamma$. 

From the way we attached $2$-cells to obtain $\Gamma$, homotopy of paths 
in $\Gamma$ (where two paths in $P(\Gamma)$ are homotopic if they are homotopic in the Squier complex) is the smallest equivalence relation on parallel paths such that:

\begin{enumerate}[(H1)]
\item If $\bbe_1$ and $\bbe_2$ are edges of $\Gamma$, then
\[
(\bbe_1 \cdot \iota \bbe_2) \circ (\tau \bbe_1 \cdot \bbe_2) \sim
(\iota \bbe_1 \cdot \bbe_2) \circ (\bbe_1 \cdot \tau \bbe_2 ).
\]

\item For any $\bbp, \bbq \in P(\Gamma)$ and $x,y \in A^*$
\[
\bbp \sim \bbq \ \ \mbox{implies} \ \  x \cdot \bbp \cdot y \sim x \cdot \bbq \cdot y.
\]

\item For any $\bbp, \bbq, \bbr, \bbs \in P(\Gamma)$ with $\tau \bbr \equiv \iota \bbp \equiv \iota \bbq$ and $\iota \bbs \equiv \tau \bbp \equiv \tau \bbq$
\[
\bbp \sim \bbq \ \ \mbox{implies} \ \ \bbr \circ \bbp \circ \bbs \sim \bbr \circ \bbq \circ \bbs.
\]

\item If $\bbp \in P(\Gamma)$ then $\bbp \circ \bbp^{-1} \sim 1_{\iota \bbp}$, where $1_{\iota \bbp}$ denotes the empty path at the vertex $\iota \bbp$.
\end{enumerate}

Condition (H1) captures the idea that when applying non-overlapping relations to a word it does not matter in which order we apply them. In terms of monoid pictures (H1) corresponds to taking two non-overlapping transistors in two adjacent edges in a path, and then pulling the lower transistor up, whilst pushing the higher transistor down. For this reason, this operation is often called pull-up push-down (see below for more on this).  One can show that two monoids pictures with the same upper and lower boundary labels correspond to homotopic paths in $\Gamma$ if and only if they are isotopic (up to cancellation of mirror image pairs of transistors)~\cite{GubaSapir1997}.

Let $\bbp, \bbq \in P(\Gamma)$ with $\bbp \parallel \bbq$. We say that $\bbq$ can be obtained from $\bbp$ by deleting a cancelling pair of edges if there are paths $\bbp_1, \bbp_2$ and an edge $\bbe$ such that
\[
\bbp = \bbp_1 \circ \bbe \circ \bbe^{-1} \circ \bbp_2
\]
and
\[
\bbq = \bbp_1 \circ \bbp_2.
\]
In this situation we also say that $\bbp$ is obtained from $\bbq$ by inserting a cancelling pair of edges.

We say that $\bbq$ can be obtained from $\bbp$ by an application of pull-up push-down if there are paths $\bbr$ and $\bbs$, and edges $\bbe_1$ and $\bbe_2$ such that
\[
\bbp = \bbr \circ
(\bbe_1 \cdot \iota \bbe_2) \circ (\tau \bbe_1 \cdot \bbe_2) \circ
\bbs
\]
and
\[
\bbq = \bbr \circ
(\iota \bbe_1 \cdot \bbe_2) \circ (\bbe_1 \cdot \tau \bbe_2 ) \circ
\bbs.
\]
In this situation we also say that $\bbp$ can be obtained from $\bbq$ by an application of pull-up push-down, and that $\bbq$ can be obtained from $\bbp$ by an application of pull-up push-down.  
Following Kobayashi~\cite{Kobayashi1998}, we will use $\sim_0$ to denote homotopy in $\Gamma$.

The following straightforward lemma follows easily from the definitions above.

\begin{lemma}\label{lem:pupd}
Let $\bbp$ and $\bbq$ be paths in $P(\Gamma)$. Then  $\bbp \sim_0 \bbq$ if and only if there is a sequence of paths
\[
\bbp = \bbp_0, \bbp_1, \ldots, \bbp_k = \bbq
\]
such that, for all $i \in \{1,2,\ldots, k-1\}$, the path $\bbp_{i+1}$ can be obtained from $\bbp_i$ by one of (i) deletion of a cancelling pair of edges (ii) insertion of a cancelling pair of edges, or (iii) an application of pull-up push-down.
\end{lemma}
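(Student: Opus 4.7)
The relation $\sim_0$ is, by definition, the smallest equivalence relation on parallel paths satisfying axioms (H1)--(H4). Write $\sim'$ for the equivalence relation on parallel paths generated by the three elementary moves (i), (ii), (iii) in the statement; the claim is then that $\sim_0 \,=\, \sim'$, which I establish by proving both containments.

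For the inclusion $\sim' \subseteq \sim_0$, I verify that each elementary move is realised by (H1)--(H4). The pull-up push-down move (iii) is a direct application of (H1) placed inside the context $\bbr \circ (\cdot) \circ \bbs$, which is legitimate by (H3). For the insertion/deletion moves (i) and (ii), axiom (H4) applied to the one-edge path $\bbe$ gives $\bbe \circ \bbe\inv \sim_0 1_{\iota \bbe}$, and then two applications of (H3) (on the left by $\bbp_1$ and on the right by $\bbp_2$) yield $\bbp_1 \circ \bbe \circ \bbe\inv \circ \bbp_2 \sim_0 \bbp_1 \circ \bbp_2$. Since $\sim_0$ is symmetric, this covers both insertion and deletion of cancelling pairs.

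For the reverse inclusion $\sim_0 \subseteq \sim'$, it suffices to check that $\sim'$ itself satisfies (H1)--(H4). Axiom (H1) is exactly move (iii) with $\bbr$ and $\bbs$ both the empty path. For (H3), if $\bbp \sim' \bbq$ via some sequence of elementary moves, then that same sequence, applied inside the surrounding context $\bbr \circ (\cdot) \circ \bbs$, connects $\bbr \circ \bbp \circ \bbs$ to $\bbr \circ \bbq \circ \bbs$, because each decomposition of a path that witnesses one of the moves extends verbatim to the composed path. Axiom (H2) is handled analogously, using that the two-sided action by $x, y \in A^*$ respects both composition of paths and the inverse involution $\bbe \mapsto \bbe\inv$ on edges, so each of the three moves is preserved by the action. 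Finally, (H4) is verified by induction on the length of $\bbp = \bbe_1 \circ \cdots \circ \bbe_n$: the innermost pair $\bbe_n \circ \bbe_n\inv$ inside $\bbp \circ \bbp\inv$ is removed by one application of move (i), and the inductive hypothesis handles the shorter resulting cancelling path.

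\textbf{Main difficulty.} The argument is essentially formal and bookkeeping-heavy; no single step is genuinely deep. The only point that requires a little care is verifying in (H2) that the two-sided action commutes properly with both the decomposition of an edge as $\alpha \cdot \bba \cdot \beta$ and with the inverse involution, but this is immediate from the definitions given for the action and the inverse of an edge.
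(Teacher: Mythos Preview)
Your proposal is correct and is precisely the standard verification one would carry out; the paper itself omits the proof entirely, stating only that the lemma ``follows easily from the definitions above.'' Your two-containment argument, checking that $\sim'$ satisfies (H1)--(H4) and that each elementary move is an instance of $\sim_0$, is exactly the routine unpacking the authors had in mind.
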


Following Kobayashi's terminology~\cite{Kobayashi1998}, the presentation $\gp$ is \emph{strictly aspherical} if each connected component of $\Gamma$ is simply connected.  In other words, $\gp$ is strictly aspherical if $\sim_0$ and $\parallel$ coincide as equivalence relations on paths. 
So, $\gp$ is strictly aspherical if and only if 
every closed path is \emph{null-homotopic}, meaning that  
for every closed path $\bbp$ in $\Gamma$ we have $\bbp \sim_0 1_{\iota \bbp}$.
Equivalently, $\gp$ is strictly aspherical if all the diagram groups associated to it, in the sense of Guba and Sapir~\cite{GubaSapir1997}, are trivial.
We note that what we call strictly aspherical presentations here are also referred to simply as \emph{aspherical} presentations in some places in the literature.

\section{Statement of results and proof outline}
\label{sec:proof-outline}

The main objects of study in the paper are monoids defined by presentations of the form $\lb A \mid u=v \rb$ where $A$ is a finite alphabet and $u, v \in A^*$. We call a monoid defined by a presentation $\lb A \mid u=v \rb$ with a single defining relation a \emph{one-relator monoid}. These are also called \emph{one-relation monoids} in the literature.
Following~\cite{Lallement1974} we call one-relator monoids of the form $\lb A \mid u=1 \rb$ \emph{special one-relator monoids}.

As explained in the introduction, our aim is to construct suitable $M$-CW complexes for one-relator monoids which can then be used to prove Theorem~A.  Sometimes these complexes will be equivariant classifying spaces that lead directly to a proof the monoid is of type right- and left-$\F_\infty$.
In other cases we shall have to combine our topological results with algebraic methods, arguments with monoid pictures, and a general method from~\cite{Brown1982} and~\cite{Strebel1983} in order to prove that the monoid is of type $\mathrm{FP}_\infty$.
Recall from the preliminaries section that the properties $\F_\infty$ and $\FP_\infty$ are
are equivalent for finitely presented monoids~\cite{GraySteinberg1}.

Specifically we will prove the following result.

\begin{theorem}[Theorem~A]\label{theorem:A:full}
Every one-relator monoid $\lb A \mid u=v \rb$ is of type left- and right-$\F_\infty$ and of type left- and right-$\FP_\infty$.
\end{theorem}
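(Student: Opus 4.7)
The plan is to proceed by induction on the length of the defining relation, using a generalised compression theory (encompassing both Adjan--Oganesyan and Lallement compression) as the inductive step, with suitable base cases handled by direct constructions. Since $\F_n$ and $\FP_n$ coincide for finitely presented monoids and the class of one-relator monoids is left-right dual, it suffices to prove left-$\F_\infty$ for every $M = \langle A \mid u=v\rangle$. First I would set up the compression machinery in Section~\ref{s:compression}, reinterpreting compression via local divisors (in the sense of Diekert) so that, given a non-base-case one-relator monoid $M$, there is a smaller one-relator monoid $N$ together with a compatible action of $M$ on a contractible $N$-CW complex that gets "inflated" into an $M$-CW complex whose cells are projective. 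The key topological statement to prove is an analogue of Theorem~\ref{t:compress.cayley.graph.general}: the $M$-equivariant inflation of a projective classifying space for $N$ yields a projective $M$-complex with $M$-finite skeleta in each dimension whenever $N$ has that property, so that the inductive hypothesis can be pushed from $N$ to $M$.

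Next I would identify and resolve the two base cases. The first is the class of \emph{subspecial} one-relator monoids, which after compression reduce to special one-relator monoids $\langle A \mid w=1\rangle$; for these I would appeal to the results of \cite{GraySteinberg2}, which (together with the fact that the maximal group image of a special monoid is a one-relator group of type $\F_\infty$ by Lyndon's classical theorem) give an $M$-finite equivariant classifying space. The second base case consists of strictly aspherical one-relator monoid presentations, where the Squier complex is simply connected on each component; for these I would use the tree-like structure of the Cayley graph (modulo the subgraphs corresponding to base cases) promised in the introduction to build a contractible $M$-CW complex directly, attaching $2$-cells along the defining relation and higher cells as dictated by the diagram-group information, which is trivial under strict asphericity.

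The main obstacle, and the most delicate step, will be those one-relator monoids (necessarily with torsion in a maximal subgroup, or with non-trivial prefix-suffix overlap $z$) whose naturally constructed $M$-CW complex fails to be an equivariant classifying space. In such cases the augmented cellular chain complex gives a partial resolution of $\mathbb ZM$-modules, but the kernel at the top — the relation module in the sense of Ivanov — must be further resolved. Here I would compute the relation module explicitly as a principal left ideal of $\mathbb ZM$ (Theorem~\ref{t:relation.module.compress}), prove the key injectivity lemma of Section~\ref{s:injectivity} using monoid pictures (exploiting (H1)--(H4) and Lemma~\ref{lem:pupd}), and then feed this into the Brown--Strebel method \cite{Brown1982,Strebel1983}: if $\Z$ admits a partial finite-type projective resolution whose top syzygy is a finitely generated projective-like module (here a principal left ideal generated by an idempotent factor), then $M$ is of type $\FP_\infty$.

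Finally, I would assemble the pieces: the compression induction reduces every one-relator monoid to one of the two base cases in finitely many steps; the base cases are handled by equivariant classifying spaces with $M$-finite skeleta; the non-base-case monoids are handled by the relation-module computation together with Brown--Strebel. Combining these, every one-relator monoid is of type left-$\F_\infty$, and hence (by~\cite{GraySteinberg1}) of type left-$\FP_\infty$; the right-handed versions follow by left-right duality of one-relator presentations, completing the proof of Theorem~\ref{theorem:A:full}.
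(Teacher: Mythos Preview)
Your outline has the right architecture---divide into subspecial and non-subspecial, handle incompressible cases directly, and for the hard case compute the relation module and resolve it algebraically---but there are two genuine gaps in the non-subspecial compressible case.

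First, your characterisation of which monoids face the obstruction is inverted. The torsion case is \emph{subspecial}, and there the $M$-set $M[v]_M$ \emph{is} projective (because $x=[v]_M$ is a regular element, Proposition~\ref{p:is.regular}); so the inflated complex genuinely is a projective $M$-CW complex and hence an equivariant classifying space, merely infinite-dimensional. The failure of projectivity occurs precisely in the \emph{non-subspecial compressible} case: there $M[z]_M$ is \emph{not} a projective $M$-set, because $\mathbb ZM$ has only trivial idempotents (Theorem~\ref{t:no.idems}) while right multiplication by $[z]_M$ is not injective (Corollary~\ref{c:not.proj}). So your proposed inductive step---that inflating a projective classifying space for the compression $N$ yields a projective $M$-complex---simply fails here, and not as an edge case but as the generic situation. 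Relatedly, your description of the relation module as ``a principal left ideal generated by an idempotent factor'' is exactly backwards: if it were, it would be projective and there would be nothing to do.

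Second, and more seriously, you have not said how to prove that the relation module $\mathbb ZM[z]_M$ is of type $\FP_\infty$. Citing Brown--Strebel is not enough: Lemma~\ref{l:brown} needs a partial resolution whose terms are themselves of type $\FP_{n-i}$, and you have not produced one. The paper's mechanism is a second induction, this time on the \emph{number of words compressing} $u=v$. The injectivity lemma (Lemma~\ref{l:inj.cut.letter}) is used to establish a short exact sequence $0\to \mathbb ZM[z]_M\to \mathbb ZM\to \mathbb ZM[y]_M\to 0$ relating the modules for the longest and shortest compressing words (Proposition~\ref{p:key.exact}); then, tensoring a resolution of $\mathbb ZN[z'']_N$ (available by the inner induction, since the Adjan--Oganesyan compression $N$ has strictly fewer compressing words) over the free right $\mathbb ZN$-module $\mathbb ZM[y]_M$ gives a resolution of $\mathbb ZM[z]_M$ by finite direct sums of copies of $\mathbb ZM[y]_M$. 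These two facts together let Brown--Strebel bootstrap $\FP_n$ to $\FP_{n+1}$. Without this exact sequence and the inner induction, your sketch has no way to resolve the relation module, and the proof does not close.
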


We can say quite a bit more in the case of the, so-called, subspecial one-relator monoids since,
as we shall see,
these are the monoids for which
the
$M$-CW complexes
we construct
will be equivariant classifying spaces.

\begin{definition}[Subspecial relation]\label{def:subspecial}
Let $u, v \in A^*$ with $|v| \leq |u|$.
The relation $u=v$ is called \emph{subspecial} if  $u \in vA^* \cap A^* v$.
\end{definition}

The terminology `subspecial' is originally due to Kobayashi~\cite{Kobayashi2000}.
Note that in particular the relation $u=1$ is subspecial.
Subspecial one-relator monoids were first considered by Lallement~\cite{Lallement1974} where, among other things, he showed that the word problem is decidable for this class.  He also showed that any one-relator presentation of a monoid containing a non-trivial element of finite order is subspecial.

Let $M$ be the one-relator monoid defined by the presentation
\[
\langle A \mid u=v \rangle.
\]
Without loss of generality we may assume that $|v| \leq |u|$. 
The above presentation of $M$ is \emph{compressible} if there is a non-empty word $r\in A^+$ such that $u,v\in rA^*\cap A^*r$.  Otherwise, the presentation is \emph{incompressible}.  Compression first appeared in the paper of Lallement~\cite{Lallement1974} for subspecial presentations and then, more generally, in the work of Adjan and Oganesyan~\cite{Adyan1987}.  
If $u,v\in rA^*\cap A^*r$ then there  
is an associated compressed one-relator monoid $M_r$, with shorter defining relation, whose word problem is equivalent to that of $M$.  Compression is transitive (each compression of a compression of $M$ is a compression of $M$ in its own right) and confluent, and hence there is a unique incompressible monoid $M'$ to which $M$ compresses.  Moreover, the monoid $M'$ is special if and only if $M$ is subspecial.  More details on compression can be found in Section~\ref{s:compression}.

We shall prove the following result from which Theorem~B will follow. This result actually says a bit more than Theorem~B, since
(2)(ii) identifies a broad family of torsion-free one-relator monoids with infinite cohomological dimension.

\begin{theorem}[Theorem~B]\label{theorem:B:full}
Let $M$ be a monoid defined by a one-relator presentation $\lb A \mid u=v \rb$ with $|v|\leq |u|$.
\begin{enumerate}
\item  Suppose the presentation is subspecial, i.e.,  $u\in vA^*\cap A^*v$.
\begin{itemize}
\item[(i)] If  upon writing $u = vw \in A^*$ the word $w$ is a proper power, then
\[
\mathrm{cd}^{\mathrm{(l)}}(M) =
\mathrm{cd}^{\mathrm{(r)}}(M) =
\mathrm{gd}^{\mathrm{(l)}}(M) =
\mathrm{gd}^{\mathrm{(r)}}(M) =
 \infty.
\]
\item[(ii)] In all other cases
\[
\mathrm{cd}^{\mathrm{(l)}}(M) \leq  \mathrm{gd}^{\mathrm{(l)}}(M) \leq 2,
\]
and
\[
\mathrm{cd}^{\mathrm{(r)}}(M) \leq  \mathrm{gd}^{\mathrm{(r)}}(M) \leq 2.
\]
\end{itemize}
\item Suppose the presentation is not subspecial.
\begin{itemize}
\item[(i)] If the presentation is incompressible, then
\[
\mathrm{cd}^{\mathrm{(l)}}(M) \leq  \mathrm{gd}^{\mathrm{(l)}}(M) \leq 2,
\]
and
\[
\mathrm{cd}^{\mathrm{(r)}}(M) \leq  \mathrm{gd}^{\mathrm{(r)}}(M) \leq 2.
\]
\item[(ii)] If the presentation is compressible by a unique word $r\in A^+$, then
\[
\mathrm{cd}^{\mathrm{(l)}}(M) =
\mathrm{cd}^{\mathrm{(r)}}(M) =
\mathrm{gd}^{\mathrm{(l)}}(M) =
\mathrm{gd}^{\mathrm{(r)}}(M) =
 \infty.
\]
\item[(iii)] In all other cases, the left and right cohomological and geometric dimensions are at least $3$.
\end{itemize}
\end{enumerate}
\end{theorem}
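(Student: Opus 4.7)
Plan for the proof of Theorem~\ref{theorem:B:full}: I would proceed case by case, drawing on three main ingredients already prepared in the paper. These are (a) the subspecial Lyndon identity theorem from Section~\ref{sec_special}, which produces an explicit $2$-dimensional equivariant classifying space for torsion-free subspecial one-relator monoids; (b) the compression machinery of Section~\ref{s:compression}, in particular Theorem~\ref{t:compress.cayley.graph.general}, relating the equivariant CW structure of $M$ to that of a compression $M_r$; and (c) Lemma~\ref{lem:max:subgroups}, which pinpoints the maximal subgroups and their torsion.

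Case (1) is the subspecial case. In (1)(i), when $u = vw$ with $w$ a proper power, Lemma~\ref{lem:max:subgroups} exhibits a torsion element in a maximal subgroup $G$ of $M$. A standard idempotent-localization argument (via the projective module $\mathbb{Z}Me$ for the relevant idempotent $e$) shows $\cd(G) \leq \cd(M)$; since any group with torsion has infinite cohomological dimension, $\cd(M) = \gd(M) = \infty$ follows, and dually on the right. Case (1)(ii) is then just the conclusion of the subspecial theorem of Section~\ref{sec_special}, which outputs a $2$-dimensional equivariant classifying space directly.

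For case (2)(i) (non-subspecial, incompressible), I would first argue strict asphericity: the absence of any nonempty common prefix-suffix of $u$ and $v$ rules out, via the overlap/monoid-picture analysis set up in the preliminaries, any non-trivial spherical diagram for the presentation. Granted this, the Cayley $2$-complex (the Cayley graph of $M$ together with one $2$-cell attached for each translate of the relator) is a $2$-dimensional equivariant classifying space, so $\gd(M) \leq 2$. For case (2)(ii) (non-subspecial, unique compressing word $r$), I would invoke Theorem~\ref{t:compress.cayley.graph.general} to transfer the equivariant CW structure between $M$ and $M_r$, yielding $\cd(M) = \cd(M_r)$ and $\gd(M) = \gd(M_r)$. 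A direct combinatorial case analysis then shows that uniqueness of $r$, together with non-subspeciality of $M$, forces $M_r$ to be subspecial of the proper-power shape treated in case (1)(i); the infinite cohomological and geometric dimensions then propagate back to $M$.

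The most delicate part is case (2)(iii), where the lower bound $\cd(M) \geq 3$ must be established. My plan is to exploit the existence of two distinct compressing words $r$ and $r'$ to construct two non-homotopic null-homotopies of the relation $u=v$ in the Squier complex; their difference yields a non-trivial spherical monoid picture, and hence a non-zero class in $\pi_2$ of the presentation $2$-complex, which descends to a non-trivial $\mathbb{Z}M$-module class in $H_2$ of any candidate $2$-dimensional resolution, obstructing $\cd(M) \leq 2$. The hardest step will be extracting an explicit spherical diagram from a pair of distinct compressions and certifying non-triviality of its homology class; this will likely require picture-theoretic invariants in the spirit of Guba--Sapir diagram groups together with the relation module computation of Section~\ref{s:relmodule}.
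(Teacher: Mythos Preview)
Your plan for Case~(1) and Case~(2)(i) is essentially what the paper does (with strict asphericity for (2)(i) being Kobayashi's Lemma~\ref{lem:kob98}, not something you need to reprove), but your approach to (2)(ii) contains a genuine error, and your approach to (2)(iii) is far more complicated than necessary.

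In Case~(2)(ii) you claim that the compression $M_r$ is subspecial of proper-power type, but this is false: Corollary~\ref{c:stay.sub} states that a compression is subspecial if and only if the original presentation is subspecial, so a non-subspecial $M$ always compresses to a non-subspecial $M_r$. Worse, since $r$ is the \emph{unique} compressing word, Corollary~\ref{c:get.to.lalle} says $M_r$ is incompressible; combined with non-subspeciality this puts $M_r$ squarely in Case~(2)(i), where $\cd(M_r)\leq 2$. So even if your claimed transfer $\cd(M)=\cd(M_r)$ held, it would give the wrong answer. (And Theorem~\ref{t:compress.cayley.graph.general} does not yield such a transfer: the complex $K$ it produces is contractible but built from a non-projective $M$-cell $Mx\times B^2$, so its chain complex is not a projective resolution.) The paper's actual argument for (2)(ii) is purely homological: one shows $\mathbb ZM$ has only trivial idempotents (Theorem~\ref{t:no.idems}), hence the principal ideal $\mathbb ZM[y]_M$ is not projective (Corollary~\ref{c:not.proj}), and then splices the short exact sequence $0\to \mathbb ZM[y]_M\to \mathbb ZM\to \mathbb ZM[y]_M\to 0$ of Proposition~\ref{p:key.exact} onto itself arbitrarily many times to force $\cd(M)=\infty$ (Proposition~\ref{p:inf.proj.dim}).

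For Case~(2)(iii), your spherical-diagram plan is unnecessary. The paper gets $\cd(M)\geq 3$ directly from the exact sequence of Theorem~\ref{t:relation.module.compress}: since $\mathbb ZM[z]_M$ sits as the second syzygy of $\mathbb Z$ and is not projective (again by Corollary~\ref{c:not.proj}, which only needs compressibility, not two distinct compressing words), standard dimension-shifting gives $\cd(M)\geq 3$ immediately (Corollary~\ref{c:at.least.3}). No picture-theoretic invariants are needed.
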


In the rest of this section we will give an outline of our strategy for constructing $M$-CW complexes for one-relator monoids which will be used to prove Theorem~\ref{theorem:A:full} and Theorem~\ref{theorem:B:full}.
The proof of our results divides into consideration of two cases: the \emph{subspecial case} and the \emph{non-subspecial case}.

When $u=v$ is a subspecial relation, that is
$u \in vA^*  \cap A^*v$,
then clearly the
relation $u=v$ is incompressible if and only if $v=1$, that is,
if and only if
the presentation
$\langle A \mid u=v \rangle$
is special.
For special one-relator monoids
$\langle A \mid u=1 \rangle$
the two main results of this paper,
Theorems~\ref{theorem:A:full} and~\ref{theorem:B:full}, are both consequences of more general results for finitely presented special monoids proved in our earlier paper; see~\cite[Section~3]{GraySteinberg2}.

In the non-subspecial incompressible case we shall see that a natural (at most) $2$-dimensional $M$-CW complex, called the Cayley complex, is an equivariant classifying space for the monoid.
The Cayley complex, for a monoid given by a presentation, is defined in the following way.
Let $S$ be a monoid given by a presentation with generators $A$ and defining relations $u_i = v_i$ with $i\in I$.
The right \emph{Cayley (di)graph} $\Gamma(S,A)$ of $S$ with respect to $A$ is the graph with vertex set $S$ and with edges in bijection with $S\times A$ where the directed edge corresponding to $(s,a)$ starts at $s$ and ends at $sa$.
Throughout this paper we shall always work with right Cayley graphs of finitely generated monoids, and so by the Cayley graph of a finitely generated monoid, we will always mean the right Cayley graph.
The \emph{Cayley complex} $X=\Gamma(S,A)^{(2)}$ of $S$,
with respect to the presentation $\lb A \mid u_i = v_i \; (i\in I) \rb$,
is the $2$-dimensional, free $S$-CW complex with $1$-skeleton $\Gamma(S,A)$, and with a free $S$-cell $S \times B^2$ attached for each defining relation in the following way.
For each relation $u_i=v_i$, with $i\in I$, let $p_i$, $q_i$ be paths from $1$ to $m_i$ labelled by $u_i$ and $v_i$ respectively where $m_i$ is the image of $u_i$ (and $v_i$) in $M$.
We glue in $S \times B^2$ so that $\{s \} \times B^2$ is attached via the translate of the loop $p_iq_i^{-1}$ by $s$ (see~\cite[Proposition~2.3]{GraySteinberg1} for more details).
The Cayley complex $X$ is a simply connected, free $S$-CW complex of dimension at most $2$ and it is $S$-finite if the presentation is finite.
In particular, if the Cayley complex is contractible then it is an equivariant classifying space for the monoid.
Note that the Cayley complex depends on the choice of presentation for $S$.
A key fact for our proof below is that the Cayley complex of a strictly aspherical presentation turns out to be contractible; see Lemmas~\ref{lem:9} and~\ref{l:free.contract} below.

Given two words $x, y \in A^*$ we define
\[
\mathrm{OVL}(x,y) = \{ w \in A^+ :
x = x_1 w, \; y = w y_1 \ \mbox{for some} \ x_1, y_1 \in A^* \}.
\]
\begin{lemma}\label{lem:kob98}
\cite[Corollary~5.6]{Kobayashi1998}
\label{lem_Kob}  Let $\lb A \mid u=v \rb$ be a non-subspecial presentation with $|u| \geq |v| \geq 1$. Let $\lambda$ be the longest common prefix of $u$ and $v$, and let $\rho$ be their longest common suffix.  If $\mathrm{OVL}(\rho, \lambda) = \emptyset$, then $\lb A \mid u=v \rb$ is strictly aspherical.
\end{lemma}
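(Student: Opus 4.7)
The goal is to show that every closed path $\bbp$ in the Squier complex $\Gamma = \Gamma(\gp)$ is null-homotopic, which by Lemma~\ref{lem:pupd} reduces to exhibiting a sequence of pull-up push-down (PUPD) moves and deletions/insertions of cancelling pairs that reduces $\bbp$ to a trivial path. My plan is to induct on the number of edges of $\bbp$: assuming $\bbp$ has at least one edge, I would locate a cancelling pair $\bbe \circ \bbe^{-1}$ that can be brought into adjacent positions in $\bbp$ via PUPD moves and then deleted, decreasing the edge count.

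The combinatorial core of the argument is the following observation. Two edges of a path that cannot be separated by PUPD correspond to two applications of the relation at overlapping positions of some intermediate word $w'$ along $\bbp$. Writing $u = \lambda\alpha = \beta\rho$ and $v = \lambda\gamma = \delta\rho$, the overlapped piece of any such pair of applications (at positions $i < j$ in $w'$) is simultaneously a suffix of the relator read at position $i$ and a prefix of the relator read at position $j$. A short case analysis on which of $u, v$ sits at each position shows that, if the overlap is proper (neither application is contained in the other), this overlapped piece is a non-empty word which is a suffix of $\rho$ and a prefix of $\lambda$, contradicting $\mathrm{OVL}(\rho,\lambda) = \emptyset$. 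The non-subspecial hypothesis together with $|v| \geq 1$ is used here to rule out the degenerate configurations in which one application is contained in the other. Consequently, any PUPD-obstructed pair of edges must be at exactly the same position, and so the two edges are mutual inverses.

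With this in hand, I would represent $\bbp$ as a spherical monoid picture $P$ whose upper and lower boundaries are both labelled by $\iota\bbp$. Since $\bbp$ is closed, no wire from any transistor of $P$ escapes to the boundary in a way that changes the overall labelling, so every transistor $T$ must eventually meet another transistor $T'$ through a bundle of wires. The overlap analysis above, applied at the ambient intermediate word, forces $T$ and $T'$ to be mirror images at coincident positions, i.e.\ to form a dipole. Picking such a dipole $(T, T')$, all other transistors appearing between them in $\bbp$ occupy positions disjoint from $T$ and $T'$ throughout every intermediate word, so repeated applications of (H1) (PUPD) shuffle them past, bringing the edges corresponding to $T$ and $T'$ into adjacent positions in $\bbp$ as a cancelling pair $\bbe \circ \bbe^{-1}$, which is then deleted via (H4). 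Induction completes the proof.

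The main obstacle I expect is the case analysis in the middle paragraph: one must track carefully which relator ($u$ or $v$) appears at each of the two overlapping positions and in which orientation, and verify in every case that a proper overlap genuinely produces a non-empty element of $\mathrm{OVL}(\rho, \lambda)$. The non-subspecial assumption $u \notin vA^* \cap A^* v$ (with $|v|\geq 1$) is precisely what blocks the problematic configuration in which $v$ is a prefix or suffix of $u$ and an overlap between an application of $u \to v$ and an application of $v \to u$ would be absorbed without forcing a nontrivial suffix-of-$\rho$/prefix-of-$\lambda$ coincidence. A secondary technical point is confirming that the no-proper-overlap conclusion is stable along $\bbp$ --- it applies to every intermediate word, not just $\iota\bbp$ --- so that the matched partner $T'$ of a chosen transistor $T$ is genuinely reachable by PUPD moves in the path.
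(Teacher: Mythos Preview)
First, note that the paper does not prove this lemma: it is quoted directly from Kobayashi~\cite{Kobayashi1998}, whose argument proceeds through his framework of homotopy reduction systems rather than by direct picture manipulation, so there is no in-paper proof to compare against.

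Your central overlap claim is false. You assert that whenever two relator applications occupy properly overlapping positions in an intermediate word, the overlapped piece is necessarily a suffix of $\rho$ and a prefix of $\lambda$. Take $A=\{a,b,c\}$, $u=aba$, $v=c$. Here $\lambda=\rho=\varepsilon$, so $\mathrm{OVL}(\rho,\lambda)=\emptyset$, the presentation is non-subspecial, and all hypotheses of the lemma hold. Consider the consecutive edges
\[
cba \xrightarrow{\ (\varepsilon,\,r,\,-1,\,ba)\ } ababa \xrightarrow{\ (ab,\,r,\,+1,\,\varepsilon)\ } abc.
\]
The first writes $u=aba$ at positions $0$--$2$ of $ababa$; the second reads $u=aba$ at positions $2$--$4$. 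They overlap in the letter $a$ at position $2$, so they cannot be separated by pull-up push-down, yet they are not mutual inverses, and the overlap $a$ lies in $\mathrm{OVL}(u,u)$, not in $\mathrm{OVL}(\rho,\lambda)$. In general the overlap between the \emph{output} of one transistor and the \emph{input} of the next lies in $\mathrm{OVL}(r_{-\epsilon_1},r_{\epsilon_2})$ for some $r_{-\epsilon_1},r_{\epsilon_2}\in\{u,v\}$, and there is no case analysis forcing this into $\mathrm{OVL}(\rho,\lambda)$.

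With the overlap analysis gone, so is the claim that every PUPD-obstructed pair sits at coincident positions, and hence the argument that every transistor in a spherical picture has a dipole partner reachable by PUPD. The hypothesis $\mathrm{OVL}(\rho,\lambda)=\emptyset$ is genuinely a condition on how the \emph{common} prefix and suffix of $u$ and $v$ interact, not on arbitrary self-overlaps of the relators; Kobayashi exploits it indirectly through confluence of a homotopy reduction system, and a direct picture-theoretic proof would need a different mechanism for locating dipoles than the one you sketch.
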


\begin{proposition}\label{prop:one}
Let $M= \lb A \mid u=v \rb$ be a one-relator monoid with $|u| \geq |v|$. Then one of the following must hold:
\begin{enumerate}
\item[(a)] $\lb A \mid u=v \rb$ is compressible; or
\item[(b)] $\lb A \mid u=v \rb$ is special, i.e., $v=1$; or else
\item[(c)] $\lb A \mid u=v \rb$ is strictly aspherical.
\end{enumerate}
\end{proposition}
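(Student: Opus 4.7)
The plan is to argue that the failure of both (a) and (b) forces the presentation to satisfy the hypotheses of Kobayashi's Lemma~\ref{lem:kob98}, which then yields strict asphericity. So I would assume $\lb A \mid u=v \rb$ is both incompressible and satisfies $v \neq 1$, and verify the two conditions needed to apply Lemma~\ref{lem:kob98}: (i) the presentation is non-subspecial; and (ii) the overlap set $\mathrm{OVL}(\rho, \lambda)$ is empty, where $\lambda$ and $\rho$ are the longest common prefix and suffix of $u$ and $v$, respectively.

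For (i), suppose for contradiction that the presentation were subspecial, so $u \in vA^* \cap A^*v$. Since trivially $v = v\cdot\varepsilon = \varepsilon \cdot v$, one also has $v \in vA^* \cap A^*v$. Because (b) fails, $v \in A^+$, and so we may take $r := v$ as a non-empty witness that $u, v \in rA^* \cap A^*r$. This contradicts the incompressibility of the presentation. Hence the presentation is non-subspecial and in particular $|v| \geq 1$, so the hypothesis $|u| \geq |v| \geq 1$ of Lemma~\ref{lem:kob98} is satisfied.

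For (ii), suppose for contradiction that there is a word $w \in \mathrm{OVL}(\rho, \lambda)$. By definition $w \in A^+$ and $\rho = \rho_1 w$, $\lambda = w \lambda_1$ for some $\rho_1, \lambda_1 \in A^*$. Since $\lambda$ is a prefix of both $u$ and $v$, the word $w$ is also a prefix of each; since $\rho$ is a suffix of both $u$ and $v$, the word $w$ is also a suffix of each. Therefore $u, v \in wA^* \cap A^*w$, and taking $r := w \in A^+$ again contradicts incompressibility.

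Having verified both conditions, I would apply Lemma~\ref{lem:kob98} directly to conclude that $\lb A \mid u=v \rb$ is strictly aspherical, which is (c). I do not anticipate a real obstacle here: the content is entirely packaged inside Kobayashi's lemma, and the role of this proof is just the short bookkeeping that incompressibility rules out both subspeciality (using $r=v$) and any non-trivial overlap between the common prefix and suffix (using $r=w$).
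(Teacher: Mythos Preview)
Your proof is correct and takes essentially the same approach as the paper: both arguments reduce to Kobayashi's Lemma~\ref{lem:kob98} via the two observations that subspeciality (with $v\neq 1$) gives compressibility by $r=v$, and a nonempty $\mathrm{OVL}(\rho,\lambda)$ gives compressibility by $r=w$. The only difference is that the paper argues the contrapositive (assuming not (b) and not (c) and deducing (a)), whereas you assume not (a) and not (b) and deduce (c); the content is identical.
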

\begin{proof}
Suppose $\lb A \mid u=v \rb$ is not special and is not strictly aspherical.  A subspecial, but not special, presentation is obviously compressible.  If the presentation is not subspecial and not strictly aspherical, then by Lemma~\ref{lem_Kob} it follows that $\mathrm{OVL}(\rho,\lambda) \neq \emptyset$, with $\lambda$ and $\rho$ as in Lemma~\ref{lem:kob98}. This implies that there is a word $\alpha \in A^+$ such that
\begin{align*}
u &\in \alpha A^* \cap A^* \alpha \; \mbox{and} \; v \in \alpha A^* \cap A^* \alpha
\end{align*}
and so the presentation is compressible.
\end{proof}

It follows that every one-relator monoid can be compressed either to a special one-relator monoid or to a strictly aspherical one-relator monoid. The first case happens if and only if the original monoid is subspecial (see~\cite[Lemma~5.4]{Kobayashi2000}).

In this way, Proposition~\ref{prop:one}  allows us to divide the proofs of our main results into considering the non-subspecial and subspecial cases separately.

In both the subspecial and non-subspecial cases our general approach is the same. Let $M=\langle A \mid u=v \rangle$ be a one-relator monoid, and let $M'$ be the incompressible monoid obtained from $M$ using compression. Note that in general $M'$ will be an infinitely generated one-relator monoid. Moreover, the one-relator monoid $M'$ will be special if $M$ is subspecial, and otherwise $M'$ will be strictly aspherical by Lemma~\ref{lem:kob98}. In Theorem~\ref{t:compress.cayley.graph.general} we shall prove a key result which shows how the structure of the right Cayley graphs of $M$ and $M'$ are related to each other. Specifically, that result shows that the Cayley graph $\Gamma$ of $M$ is homotopy equivalent to an infinite star graph with disjoint copies of the Cayley graph $\Gamma'$ of $M'$ attached at each leaf of the star. We use this result to build $M$-CW complexes from equivariant classifying spaces of the compressed monoid $M'$.

While the general approach is the same, the specific constructions that we give are different in the subspecial and non-subspecial cases, as are the properties enjoyed by the resulting $M$-CW complexes. Let us now explain in more detail how the proofs will proceed in each of these two cases. This description will also explain why the result  that torsion-free subspecial one-relator monoids have cohomological dimension at most two (see Theorem~B) does not generalise to torsion-free one-relator monoids in general.

\subsubsection*{The subspecial case}

This case is dealt with in Section~\ref{sec_special}.
The idea in this case is to reduce the problem to the maximal subgroups of the monoid.
Recall that, for any idempotent $e$ in a monoid $N$, the set $eNe$ is a subsemigroup of $N$, and $eNe$ is a monoid with identity element $e$.
The group of units of $eNe$ is a subgroup
(i.e., a subsemigroup which is a group)
of $N$ which is called the \emph{maximal subgroup} of $N$ containing $e$.
The maximal subgroup containing $e$ is the largest subgroup of $N$ with identity element $e$.

We shall see that if $M$ is a subspecial, but not special, monoid then it contains non-trivial idempotents, and all of its maximal subgroups (except the group of units, which is trivial) are isomorphic to a single group $G$.  Moreover, this group $G$ is a one-relator group.  In the subspecial case we give a method for constructing an equivariant classifying space for the monoid from an equivariant classifying space for this group.  Our construction preserves dimension, and the property $\F_n$, and this will allow us to use it to prove our main theorems in the subspecial case, 
by inputting to our construction the classifying spaces for $G$ 
given by Lyndon's Identity Theorem for one-relator groups \cite{Lyndon1950}.

In more detail, the key steps for constructing the equivariant classifying space in the subspecial case are as follows.
Let $M$ be the monoid $\langle A \mid u = v \rangle$ where $|u| \geq |v|$ and $u \in vA^* \cap A^*v$.
Let $M'$ be the one-relator monoid to which $M$ compresses.
In general $M'$ will be an infinitely generated one-relator monoid, and so $M'$ is a free product of a finitely generated special monoid $S$ and a free monoid of infinite rank (see Section~\ref{s:compression} for details).
Let $G$ be the group of units of the monoid $S$.
Then $G$ is a finitely generated one-relator group.
Then the following conditions are equivalent:
\begin{itemize}
\item upon writing $u = vw$ the word $w$ is a proper power;
\item the group $G$ is a one-relator group with torsion;
\item all maximal subgroups of $M$ are one-relator groups with torsion (in fact, all of these groups are isomorphic to $G$).
\end{itemize}

To construct an equivariant classifying space for $M$, with the properties satisfying the main theorems, we begin with an equivariant classifying space for $G$ given by Lyndon's results, from this we construct an equivariant classifying space for the one-relator special monoid $S$ using the method given in our paper~\cite{GraySteinberg2}.
This is then extended to give an equivariant classifying space for the compressed monoid $M'$.
We then use this space together with Theorem~\ref{t:compress.cayley.graph.general},
described above, which relates the structure of the Cayley graph of $M$ to that of $M'$, to
build an equivariant classifying space for $M$.
The constructions we give preserve $M$-finiteness, and they do not increase the dimension of the space.
This will then suffice to prove our main results
Theorem~\ref{theorem:A:full} and Theorem~\ref{theorem:B:full}
in the subspecial case.

\subsubsection*{The non-subspecial case}

In this case the compressed monoid $M'$ is strictly aspherical (or, more precisely, the finitely generated one-relator free factor is).
It follows
(see Lemma~\ref{lem:9})
that the Cayley complex $Y$ of $M'$ with respect to the compressed presentation is contractible.
We use this fact together with Theorem~\ref{t:compress.cayley.graph.general}
 to construct a contractible $M$-CW complex $K$ defined in the following way.
Let $z \in A^*$ be the longest word which is a prefix and a suffix of both $u$ and $v$, and write $u = zu'$ and $v=zv'$.
Then $K$ is the $2$-complex with $1$-skeleton the Cayley graph $\Gamma$ of $M$, and a $2$-cell adjoined at every vertex in $M[z]_M$ with boundary path labelled by $u'(v')^{-1}$.
(See Section~\ref{s:compression}  for full details.) Now the $M$-CW complex $K$ is contractible by Theorem~\ref{t:compress.cayley.graph.general},
since $Y$ is contractible.
However, $K$ will not in general be a projective $M$-CW complex and hence will not be an $M$-equivariant classifying space in this case.
Despite this, since $K$ is contractible its augmented cellular chain complex gives an exact sequence of $\mathbb{Z}M$-modules.
In Theorem~\ref{t:relation.module.compress}
this exact sequence is used to reduce the problem of proving Theorem A in the non-subspecial case to the problem of showing that the left $\mathbb{Z}M$-module  $\mathbb{Z}M[z]_M$ is of type $\mathrm{FP}_\infty$.
In this case we shall see that $\mathbb{Z}M[z]_M$ is in fact isomorphic to the relation module of $M$ in the sense of Ivanov~\cite{Ivanov}.
Using algebraic methods, arguments with monoid pictures, and a general result from~\cite{Brown1982} and~\cite{Strebel1983}, which gives a useful general criterion for a module to be of type $\mathrm{FP}_\infty$, we shall prove in Theorem~\ref{t:complete.fp.infinity}
that the left $\mathbb{Z}M$-module $\mathbb{Z}M[z]_M$ is of type $\mathrm{FP}_\infty$.
This is then combined with Theorem~\ref{t:relation.module.compress} to deduce that $M$ is of type left- and right-$\mathrm{FP}_\infty$; see Theorem~\ref{t:nonsub}.

As noted above, the $M$-CW $2$-complex $K$ which is used to prove that $M$ is of type $\mathrm{FP}_\infty$ is contractible and has dimension at most $2$, but is not projective and hence not an $M$-equivariant classifying space.
More generally, we shall see in Section~\ref{s:cd} that not only does $K$ fail to be an equivariant classifying space for $M$, but in fact it will not be possible to construct any two-dimensional equivariant classifying space for a non-subspecial compressible one-relator monoid.  And in some cases, it will not even be possible to construct a finite dimensional one. For example we will see in Section~\ref{s:cd} that the monoid $\langle a,b,c \mid aba = aca \rangle$ is a torsion-free non-subspecial one-relator monoid of infinite cohomological dimension.
This example is in fact a special case of a much more general result about the cohomological dimension of compressible non-subspecial one-relator monoids which we prove in Section~\ref{s:cd}; see Proposition~\ref{p:inf.proj.dim}.
A result of independent interest that we prove in the process of studying cohomological dimension is that the monoid ring $\mathbb ZM$ of a non-subspecial one-relator monoid has no idempotents except $0$ and $1$; see Theorem~\ref{t:no.idems}.
This strongly generalises Lallement's result~\cite{Lallement1974} that
a non-subspecial one-relator monoid
has no idempotents other than $1$.

This completes the outline of the proof of our main results.

\section{Compression and local divisors}\label{s:compression}

\subsection*{Compression}
We simultaneously generalise here some results from Lallement~\cite{Lallement1974} and from Adjan and Oganesyan~\cite{Adyan1987}.  Note that we frequently use the dual formulation of Lallement's results since we are working with left actions.
Recall that a submonoid $N$  of $A^*$ is \emph{left unitary} if $z,z w \in N$ implies $w \in N$.
Let $N$ be a left unitary submonoid of $A^*$.
A word $z \in N$ is said to be \emph{irreducible} if it belongs to the set
$(N\setminus \{\varepsilon\})\setminus (N\setminus \{\varepsilon\})^2$.
Let $P$ be the set of
irreducible elements of $N$.
Then $P$ is a prefix code (i.e., no element of $P$ is a prefix of another element of $P$); see~\cite[Proposition~7.2.2]{Howie}.  Indeed, if $y\in P$ and $y=z w $ with $z\in P$ and $w\neq \varepsilon$, then $w \in N$ and so $y$ is not irreducible.  Thus $N$ is a free monoid, freely generated by the prefix code $P$. Indeed, if $w\in N$ is non-empty, then its unique factorization as a product of elements of $P$ can be found as follows.  Since $P$ is a prefix code, there is a unique prefix $u$ of $w$ belonging to $P$.  If $w=uv$, then $v\in N$ by the left unitary property and has a unique factorization as a product of elements of $P$ by induction on word length.

Let $r\in A^+$. We say that $r$ \emph{seals} the word $w\in A^*$ if $w\in rA^*\cap A^*r$.  For example, $aba$ seals $ababa$.  In particular, the initial and final occurrences of $r$ may overlap.  If both $r,s\in A^+$ seal $w$ and $|r|\leq |s|$, then $r$ seals $s$.  Indeed, $w=rw'=sw''$ and $w=v'r=v''s$ shows that
 $s\in rA^*\cap A^*r$.  Conversely, if $s$ seals $w$ and $r$ seals $s$, then $r$ seals $w$ as $s=rx=yr$ and $w=sw'=w''s$ implies $rxw'=w=w''yr$.  Thus sealing is transitive.

Throughout this section $M$ will denote the one-relator monoid defined the presentation  $\lb A\mid u=v\rb$.
The presentation
$\lb A \mid u=v \rb$
is said to be \emph{compressible} if there is a non-empty word $r\in A^+$  sealing $u$ and $v$.
In this case we say that \emph{$r$ compresses $u=v$}.  Otherwise, we say that the presentation is \emph{incompressible}.

If both $r,s\in A^+$  compress $u=v$ and $|r|\leq |s|$, then $r$ seals $s$, as observed above.
Note that if a presentation is compressible there is a unique longest word and a unique shortest word compressing $u=v$.   A word $r\in A^+$ is \emph{self-overlap-free} (SOF) if no proper non-empty prefix of $r$ is also a suffix, that is, $r$ is not sealed by any non-empty word except itself.  It follows immediately that if $x$ is the shortest word compressing a relation $u=v$,  then $x$ is SOF and it is the only SOF word compressing the relation.

Let $r\in A^+$ and put
\begin{align*}
T(r)&=\{w \in A^*\mid rw \in A^*r\}.
\end{align*}

Observe that $T(r)$ is a submonoid of $A^*$.  Indeed, $r\varepsilon =r\in A^*r$, so $\varepsilon\in T(r)$, and if $z,w \in T(r)$, then $rz w =z'rw =z'w'r$ for some $z',w'\in A^*$.  So $z w \in T(r)$.  Also $T(r)$ is left unitary.  If $z,z w \in T(r)$, then $r z=z'r$ and so $rz w =z'rw \in A^*r$.  It follows that $rw \in A^*r$ by length considerations.
So $w \in T(r)$ and this completes the proof the $T(r)$ is left unitary.

Since $T(r)$ is freely generated by the prefix code $\Delta_r$ of its irreducible elements, we shall often identify $T(r)$ with $\Delta_r^*$.
Throughout this section we shall endeavor to use lower case Latin letters for elements of $A$ and lower case Greek letters for elements of $\Delta_r$.  In general, the prefix code $\Delta_r$ is infinite.  The code $\Delta_r$ is easiest to describe when $r$ is SOF. In this case, $T(r) = A^*r\cup \{\varepsilon\}$ because if $rw=w'r$ with $w$ non-empty, then the suffix $r$ of $rw$ cannot overlap the prefix $r$ and so $w\in A^*r$.
Therefore, $\Delta_r = (A^*\setminus A^*rA^*)r$ as all occurrences of $r$ in an element of $A^*r$ are disjoint and so an element of $(T(r)\setminus \{\varepsilon\})^2$ is a word of the form $wr$ where $w$ contains a factor $r$.
The description of the prefix code $\Delta_r$ is more complicated when $r$ is not SOF, see~\cite[Section~3]{Lallement1974} for details.
We shall not require a detailed description of the set $\Delta_r$ here.
In our proofs we will only need to refer to the existence of the set $\Delta_r$ and some of its basic properties, e.g., the fact that it is a prefix code which freely generates $T(r)$.
Notice that membership in $T(r)$ is decidable 
and given a word $w\in T(r)$, one can effectively find its factorization into elements of $\Delta_r$ by finding the first non-empty prefix of $w$ in $T(r)$, which will necessarily belong to $\Delta_r$, and then repeating the process on the remainder of the word.

If $r$ compresses $u=v$, then we can write $u=ru'=u_2r$ and $v=rv'=v_2r$ and so $u',v'\in T(r)$.   Thus $u'=\alpha_1\cdots \alpha_k$  and $v'=\beta_1\cdots \beta_\ell$ with $\alpha_i,\beta_j\in \Delta_r$ for all $i,j$.
We can thus form the infinitely generated one-relator monoid
\begin{equation}
L = \lb \Delta_r\mid \alpha_1\cdots \alpha_k=\beta_1\cdots \beta_\ell\rb.
\label{eqn:L}
\end{equation}
Note that the $\alpha_i$ and $\beta_j$ appearing in $u'$ and $v'$ need not be distinct.
Also, note that if $r=u$ or $r=v$, then one of the sides of the defining relation of $L$ can be the empty word (and that is the only way that this can happen).  If we identify $T(r)$ with $\Delta_r^*$, we can, abusing notation, write the presentation $L=\lb \Delta_r\mid u'=v'\rb$.
Notice that the word length of $u'$ and $v'$ over $\Delta_r$ is bounded above by the corresponding word length over $A$ and hence $k<|u|$ and $\ell<|v|$ as $r$ is non-empty.

Let
\[
\Lambda_r =
\left(
\bigcup_{1 \leq i \leq k} \{\alpha_i \}
\right)
\cup
\left(
\bigcup_{1 \leq j \leq l} \{\beta_j \}
\right)
\]
noting that $\Lambda_r$ is a finite subset of $\Delta_r$,
and put $\Psi_r=\Delta_r\setminus \Lambda_r$.
Then $L$ has a free product decomposition $L=\Psi_r^*\ast S$ where $S$ is the finitely presented one-relator monoid
\begin{equation}
S=\lb \Lambda_r\mid \alpha_1\cdots \alpha_k=\beta_1\cdots \beta_\ell\rb.
\label{eqn:S}
\end{equation}
We call $S$ the \emph{compression of $M$ with respect to $r$}.  Notice that $S$ has a shorter defining relation than $M$ since $k<|u|$ and $\ell<|v|$.

\begin{Lemma}\label{l:useful}
Suppose that $r$ seals $s$, and that $s$ seals $u$.  Write $s=rs'$ and $u=ru'$.  Then $s',u'\in T(r)$ and $s'$ seals $u'$ in $\Delta_r^*$.
\end{Lemma}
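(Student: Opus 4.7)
The plan is to verify the three parts of the conclusion separately: that $s',u' \in T(r) = \Delta_r^*$, that $u' \in s'\Delta_r^*$, and that $u' \in \Delta_r^* s'$. The first two follow quickly from the hypotheses together with the left-unitary property of $T(r)$ already established in this section. The third is the substantive part and will require a short case analysis, since $T(r)$ is not right unitary in general.

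For membership in $T(r)$, I would invoke the transitivity of sealing established earlier in this section to obtain that $r$ seals $u$. Then $s = rs' = s_2 r$ gives $rs' \in A^* r$, so $s' \in T(r) = \Delta_r^*$; and $u = ru' = u_3 r$ likewise yields $u' \in T(r) = \Delta_r^*$. For the prefix direction, the assumption that $s$ seals $u$ gives $u = sx$ for some $x \in A^*$; hence $ru' = rs'x$ in $A^*$ and $u' = s'x$. Since both $s'$ and $u' = s'x$ lie in $T(r)$ and $T(r)$ is left unitary, we conclude $x \in T(r) = \Delta_r^*$. Unique factorization over the prefix code $\Delta_r$ then expresses $u'$ as the concatenation $s'\cdot x$ in $\Delta_r^*$, i.e.\ $u' \in s'\Delta_r^*$.

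The main obstacle is the suffix direction $u' \in \Delta_r^* s'$, since $T(r)$ is not right unitary in general: with $r = ab$, both $ab$ and $aab$ lie in $T(ab)$, but $a$ does not, so the previous argument cannot simply be dualized. Since $s$ seals $u$, write $u = ws$ for some $w \in A^*$, giving the identity $wrs' = u = ru'$ in $A^*$. I would split on the length of $w$. If $|w| \geq |r|$, matching the initial $r$ of $u$ forces $w = rw'$ for some $w' \in A^*$, so $u' = w'rs'$; since $r\cdot(w'r)$ ends in $r$, the word $w'r$ lies in $T(r) = \Delta_r^*$, and $u' = (w'r)\cdot s'$ in $\Delta_r^*$. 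If $0 < |w| < |r|$, matching the first $|r|$ characters of $wrs'$ with $r$ forces $w$ to be both a prefix and a suffix of $r$ and reveals that $r$ has period $|w|$; a short length computation then gives $u' = t\cdot s'$, where $t$ is the suffix of $r$ of length $|w|$, and the periodicity of $r$ implies $rt \in A^*r$, so $t \in T(r)$. The remaining possibility $w = \varepsilon$ is immediate, as it gives $u = s$ and $u' = s'$. Combining this with the prefix direction yields $u' \in s'\Delta_r^* \cap \Delta_r^* s'$, which is exactly the statement that $s'$ seals $u'$ in $\Delta_r^*$.
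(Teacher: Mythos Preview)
Your proof is correct and follows essentially the same approach as the paper for membership in $T(r)$ and for the prefix direction $u'\in s'\Delta_r^*$.  For the suffix direction the paper avoids your case analysis with a single observation: from $wrs'=ru'$ one sees immediately that $r$ is a prefix of $wr$ (regardless of how $|w|$ compares to $|r|$), so one may write $wr=r\tilde w$ with $\tilde w\in T(r)$, and then $u'=\tilde w s'\in \Delta_r^*s'$ directly.  Your three cases are correct but simply amount to computing $\tilde w$ explicitly as $w'r$, $t$, or $\varepsilon$; in particular, in your middle case the identity $rt=wr$ (from $r=wp=pt$) gives $t\in T(r)$ without needing to invoke periodicity.
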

\begin{proof}
Since $r$ seals $s$ and $u$, we have that $rs'=s\in A^*r$ and $ru'=u\in A^*r$, whence $s',u'\in T(r)$.  Also we have that $ru'=u=su''=rs'u''$ and $u=u_0s=u_0rs'$ for some $u'',u_0\in A^*$ as $s$ seals $u$.  Thus we have that $u'=s'u''$ and hence, since $u',s'\in T(r)$, we have that $u''\in T(r)$ because $T(r)$ is left unitary.  Also from $u_0rs'=u=ru'$ we have that $r$ is a prefix of $u_0r$, so write $u_0r=rw$.  Then $rws'=u_0rs'=ru'$ and so $ws'=u'$.  Since $rw=u_0r$, clearly $w\in T(r)$.
Since $ru'=rs'u''$ it follows that $u' = s'u''$ with $u'' \in T(r)$.
Also, $u' = ws'$ with $w \in T(r)$.
Combining these yields
$u'\in s'\Delta_r^*\cap \Delta_r^*s'$, as required.
\end{proof}

The following proposition expresses a transitivity property of compression that, in particular, implies that every iterated compression of $M$ is a compression of $M$.

\begin{Prop}\label{p:compress.transitivity}
Let $M=\lb A\mid u=v\rb$ be compressed by $r$ with compression $M'=\lb A'\mid u'=v'\rb$.
 Then there is a prefix preserving bijection $\varphi$ from the set of words $s$
from $A^+$
compressing $u=v$ with $|s|>|r|$ and the set of words
from $(A')^+$
compressing the presentation of $M'$.
 If $s$ is such a word and $M''=\lb A''\mid u''=v''\rb$ is the compression of
$M$ with respect to $s$, then the compression $S$ of $M'$ with respect to $\varphi(s)$ is given by $\lb B\mid x=y\rb$ where there is a bijection $\Phi\colon A''\to B$ such that $\Phi(u'')=x$ and $\Phi(v'')=y$.
 In other words, up to relabelling the alphabet, the compressions of $M$ with respect to $s$ and of $M'$ with respect to $\varphi(s)$ are the same.
\end{Prop}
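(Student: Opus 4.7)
The plan is to define $\varphi(s)=s'$ where $s=rs'$, using Lemma~\ref{l:useful} to establish well-definedness in both directions, and then to reduce the compression correspondence to the assertion that the sealing-submonoid $T(s)\subseteq A^*$ and the analogous sealing-submonoid of $s'$ computed inside the free monoid $\Delta_r^*$ are the same subset of $A^*$.

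First I would define $\varphi$. Let $s\in A^+$ with $|s|>|r|$ seal both $u$ and $v$. Since $r$ and $s$ both seal $u$ with $|r|<|s|$, the transitivity of sealing noted just before Lemma~\ref{l:useful} gives that $r$ seals $s$, so $s=rs'$ with $s'\in T(r)=\Delta_r^*$ and $s'\neq \varepsilon$. Applying Lemma~\ref{l:useful} to the pairs $(s,u)$ and $(s,v)$ shows that $s'$ seals $u'$ and $v'$ in the free monoid $\Delta_r^*$; since $u'=\alpha_1\cdots\alpha_k$ and $v'=\beta_1\cdots\beta_\ell$ are the unique $\Delta_r$-factorizations, the $\Delta_r$-letters of $s'$ must appear among the $\alpha_i$'s and $\beta_j$'s, and so $s'\in \Lambda_r^+=(A')^+$. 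Set $\varphi(s)=s'$. Conversely, given $s'\in (A')^+$ sealing $u'=v'$, set $\psi(s')=rs'$: writing $u'=s'p=qs'$ in $\Delta_r^*$, one gets in $A^*$ that $u=ru'=(rs')p$ (prefix) and, using $rq=q_0r$ from $q\in T(r)$, that $u=rqs'=q_0(rs')$ (suffix); the same argument handles $v$. The maps $\varphi$ and $\psi$ are mutually inverse by construction, and both are prefix-preserving because on words of the form $rs'$ the $A^*$-prefix order reduces to the $A^*$-prefix order on the $s'$-parts, which by uniqueness of $\Delta_r$-factorization coincides with the $\Delta_r^*$-prefix order.

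For the compression correspondence, fix $s$ as above and put $s'=\varphi(s)$. Write $u=s\,u_\#$ in $A^*$ and $u'=s'\,u'_\#$ in $\Delta_r^*$; then $u=ru'=rs'u'_\#=s u'_\#$ forces $u_\#=u'_\#$ as elements of $A^*$, and similarly $v_\#=v'_\#$. The relation $u''=v''$ defining $M''$ is the $\Delta_s$-factorization of $u_\#$ and $v_\#$ (giving words over $\Lambda_s$), while the relation $x=y$ defining $S$ is obtained by factoring the \emph{same} elements $u_\#=u'_\#$ and $v_\#=v'_\#$ but in $\Delta_r^*$ using the irreducibles of the submonoid $\{w\in \Delta_r^*: s'w\in \Delta_r^*\, s'\}$. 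The statement therefore reduces to showing that this submonoid equals $T(s)=\{w\in A^*:sw\in A^*s\}$ as a subset of $A^*$, which then forces the prefix codes of irreducibles to coincide; the bijection $\Phi\colon A''\to B$ is the induced identification of alphabets, under which the common factorization of $u_\#$ yields $\Phi(u'')=x$ and similarly $\Phi(v'')=y$.

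The main obstacle is this submonoid equality. The inclusion ``sealing in $\Delta_r^*$ implies sealing in $A^*$'' is the calculation of the inverse-map step: $s'w=zs'$ with $z\in T(r)$ gives $sw=rzs'=z_0s$ via $rz=z_0r$. For the reverse, the equation $sw=\alpha s$ becomes $rs'w=\alpha rs'$, so $rs'w\in A^*r$, hence $s'w\in T(r)$; left-unitarity of $T(r)$ together with $s'\in T(r)$ gives $w\in T(r)$. It then remains to produce $\gamma\in T(r)$ with $s'w=\gamma s'$ in $A^*$, and this is the technical heart. The clean case $|\alpha|\geq|r|$ is handled by writing $\alpha=r\alpha'$, cancelling $r$ from the left, and taking $\gamma=\alpha'r\in A^*r\subseteq T(r)$; the delicate overlap case $|\alpha|<|r|$ forces $r$ to be a prefix of $\alpha r$ and hence $r$ to have period $|\alpha|$, and this periodicity combined with $w,s'\in T(r)$ yields the required $\gamma$. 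Once this equality of submonoids is in place, the remainder of the proposition is bookkeeping on coinciding alphabets and factorizations.
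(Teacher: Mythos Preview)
Your proposal is correct and follows essentially the same route as the paper: define $\varphi(s)=s'$ via $s=rs'$, use Lemma~\ref{l:useful} for well-definedness, construct the inverse explicitly, and reduce the compression correspondence to the equality of submonoids $T(s)=\{w\in \Delta_r^*: s'w\in \Delta_r^*s'\}$, whence the prefix codes of irreducibles coincide and the factorizations of $u_\#=u'_\#$, $v_\#=v'_\#$ match up.

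The one place you diverge is in proving the inclusion $T(s)\subseteq T'(s')$. The paper simply reapplies Lemma~\ref{l:useful}: since $r$ seals $s$ and $s$ seals $sw$ (the latter because $sw\in sA^*\cap A^*s$), the lemma gives directly that $s'$ seals $s'w$ in $\Delta_r^*$, and hence $w\in T'(s')$. You instead do a direct case analysis on $|\alpha|$ versus $|r|$ where $sw=\alpha s$. Your analysis is correct, but you have made the overlap case $|\alpha|<|r|$ look harder than it is: once you note that $r$ is a prefix of $\alpha r$, writing $\alpha r=r\gamma$ gives $s'w=\gamma s'$ and $r\gamma=\alpha r\in A^*r$ immediately yields $\gamma\in T(r)$; the periodicity observation, while true, is not needed. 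In short, both cases collapse to ``$r$ is a prefix of $\alpha r$'', which is exactly the content of Lemma~\ref{l:useful} in this situation. Reusing the lemma, as the paper does, saves you the case split entirely.
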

\begin{proof}
Throughout this proof we identify $\Delta_r^*$ with $T(r)$.  Then $u=ru'$ and $v=rv'$.
Let $s \in A^*$ be a word compressing $u=v$ with $|s| > |r|$.
We have already observed that $r$ seals $s$ and so $s=rs'=s''r$ for some $s',s''\in A^*$.   Lemma~\ref{l:useful} implies that $u',v',s'\in T(r)$ and that $s'$ seals
both $u'$ and $v'$ over $\Delta_r$ and so $s'$ compresses $u'=v'$.
For every word $s \in A^*$ compressing $u=v$ with $|s| > |r|$
define
 $\varphi(s)=s'$ where
$s = rs'$.
Then $\varphi$ defines a map from the set of all such words $s$ to the set of words from $(A')^+$ compressing $u'=v'$.
Our aim is to prove that $\varphi$ is a
prefix preserving
bijection.

Clearly $\varphi$ is injective on the set of all such words $s$. Moreover, cancelling $r$ is clearly prefix preserving from $rA^*\to A^*$ and so $\varphi$ is prefix preserving.   We show that $\varphi$ is surjective onto the set of words compressing $u'=v'$.

Suppose that $s'\in \Delta_r^*$ with $u',v'\in s'\Delta_r^*\cap \Delta_r^*s'$.
We show that $s=rs'$ seals $u$.  A similar argument will show that it seals $v$ and hence $s$ compresses $r$ with $\varphi(s)=s'$.
Write $u'=u_1s'=s'u_2$ in $\Delta_r^*$ and note that we can view these as factorizations over $A^*$, as well.  Since $ru'=u$, we have that if $s=rs'$, then $u=ru' = rs'u_2\in sA^*$ and also $u=ru' = ru_1s'$.  But $u_1\in T(r)$ and so $ru_1s'\in A^*rs'=A^*s$.  This shows that $s=rs'$ seals $u$.
This completes the proof that $\varphi$ is surjective.

Fix now a word $s$ compressing $u=v$ with $|s|>|r|$ and let $s'=\varphi(s)$.  First note that since $s$ is sealed by $r$, and by definition of $\varphi$, we can write $s=rs'=s''r$ with $s''\in A^*$.
Let us put
\[T'(s') = \{w\in \Delta_r^*\mid s'w'\in \Delta_r^*s'\}.\]
We show that under the identification of $T(r)$ with $\Delta_r^*$, we have that $T'(s')=T(s)$. If $w\in T'(s')$, then $s'w\in \Delta_r^*s'$ and  hence $sw=rs'w\in r\Delta_r^*s'\subseteq A^*rs'=A^*s$, as $\Delta_r^*=T(r)$.  Therefore, $w\in T(s)$.  Conversely, let $w\in T(s)$.  So $sw=w's$.  Thus $s$ seals $sw=rs'w$ and so, by Lemma~\ref{l:useful}, we have that $s'$ seals $s'w$ over $\Delta_r^*$, whence $w\in \Delta_r^*$ and $s'w\in \Delta_r^*s'$.  Thus $w\in T'(s')$.

 It now follows that the free basis $\Delta_s$ of $T(s)$, as a free submonoid of $A^*$, can be identified with the free basis of $T'(s')$ over $\Delta_r^*$.  Note that $ru'=u=su''=rs'u''$ and $rv'=v=sv''=rs'v''$ with $u'',v''\in T(s)=T'(s')$.  Thus $M$ compresses to $\lb \Lambda_s\mid u''=v''\rb$ under $s$.   Also note that  $u'=s'u''$ and $v'=s'v''$ with $s',u'',v''\in \Delta_r^*$ and hence $s',u'',v''\in \Lambda_r^*$.  Then under our identification of $T(s)$ with $T'(s')$ (and hence $\Delta_s$ with $\Delta_{s'}\subseteq \Delta_r^*$), we have that $u'=v'$ rewrites under compression by $\varphi(s) = s'\in \Lambda_r^*$ to $u''=v''$.

Hence the compressions of
$M$ with respect to $s$, and of $M'$ with respect to $\varphi(s)$, are the same.
This completes the proof of the proposition.
\end{proof}

The following corollary will be essential.

\begin{Cor}\label{c:get.to.lalle}
Let $M=\lb A\mid u=v\rb$ be a compressible one-relator monoid.  Suppose that $r$ compresses $M$ and let $M'=\lb A'\mid u'=v'\rb$ be the compression of $M'$ with respect to $r$.  Then $M'$ is incompressible if and only if $r$ is the unique maximal length word $z$ compressing $u=v$. Moreover, any iterated compression of $M$ that is incompressible is, up to relabelling, the compression of $M$ by $z$.
\end{Cor}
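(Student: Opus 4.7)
The corollary will follow from Proposition~\ref{p:compress.transitivity} together with the observation, made earlier in the section, that if the presentation $\lb A\mid u=v\rb$ is compressible then among all non-empty words from $A^+$ compressing $u=v$ there is a unique longest one, which we call $z$ (this is the uniqueness half of the fact that any two compressing words of comparable length seal one another).

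For the biconditional, I will unwind the bijection $\varphi$ supplied by Proposition~\ref{p:compress.transitivity}. The monoid $M'$ is incompressible precisely when no non-empty word from $(A')^+$ compresses its defining relation $u'=v'$. But the proposition furnishes a bijection $\varphi$ from the set of words $s \in A^+$ with $|s|>|r|$ compressing $u=v$ onto exactly this set of words. Hence $M'$ is incompressible if and only if no such $s$ exists, i.e., if and only if $r$ itself is a word of maximal length compressing $u=v$. Since the maximal length compressor is unique, this amounts to saying $r=z$.

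For the \emph{moreover} clause, consider any iterated compression chain $M=M_0\to M_1\to\cdots\to M_k$, where each $M_{i+1}$ is the compression of $M_i$ by some non-empty word $t_i$ in the alphabet of $M_i$, and suppose $M_k$ is incompressible. I claim by induction on $i\geq 1$ that $M_i$ is, up to relabelling of the alphabet, the compression of $M$ by a word $s_{i-1}\in A^+$, and that $|r|=|s_0|<|s_1|<\cdots<|s_{i-1}|$. The base case is immediate with $s_0=t_0$. For the inductive step, the inductive hypothesis identifies $M_i$ (after relabelling) with the compression of $M$ by $s_{i-1}$, so Proposition~\ref{p:compress.transitivity} applied to $M$ and $s_{i-1}$ provides a prefix-preserving bijection between words in $A^+$ of length greater than $|s_{i-1}|$ compressing $u=v$ and words in the alphabet of $M_i$ compressing its presentation. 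Under this bijection $t_i$ corresponds to a word $s_i\in A^+$ with $|s_i|>|s_{i-1}|$, and the proposition tells us that $M_{i+1}$ (the compression of $M_i$ by $t_i$) coincides, up to relabelling, with the compression of $M$ by $s_i$, completing the induction.

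Finally, since $M$ is compressible we must have $k\geq 1$, and $M_k$ is, up to relabelling, the compression of $M$ by $s_{k-1}$. Applying the already-proved first part of the corollary with $r$ replaced by $s_{k-1}$ and $M'$ replaced by $M_k$, the incompressibility of $M_k$ forces $s_{k-1}=z$, which is the required conclusion. The only delicate point in the argument is the need to carry the \emph{up to relabelling} identification through the induction in such a way that Proposition~\ref{p:compress.transitivity} can be reapplied at each stage to $M$ (rather than to the latest $M_i$); this is handled cleanly because the proposition's bijection $\Phi$ of alphabets preserves the defining relation, so compressibility data is transported faithfully.
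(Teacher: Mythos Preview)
Your proof is correct and follows the intended approach: the paper states this result as a corollary of Proposition~\ref{p:compress.transitivity} without proof, and your argument is precisely the straightforward unwinding of that proposition's bijection that the authors have in mind. One small notational slip: in your induction claim you write $|r|=|s_0|$, but $r$ is the fixed compressor from the first part of the corollary, whereas the ``moreover'' clause concerns an \emph{arbitrary} iterated chain starting with $t_0$; you should simply write $|s_0|<|s_1|<\cdots<|s_{i-1}|$ (the equality $s_0=t_0$ already pins down $s_0$).
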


 We call the compression of $u=v$ with respect to the maximum length word compressing $u=v$ the \emph{Lallement} compression of $u=v$, as it was first considered by Lallement~\cite{Lallement1974} in the case of a subspecial presentation.  We call the compression of $u=v$ with respect to the unique SOF word compressing it the Adjan-Oganesyan compression of $u=v$, as this compression was first considered by Adjan and Oganesyan in~\cite{Adyan1987}. Corollary~\ref{c:get.to.lalle} implies that the Lallement compression of the Adjan-Oganesyan compression of $u=v$ is the Lallement compression of $u=v$.

Notice that a compressible one-relator presentation is subspecial if and only if its Lallement compression is special.  Thus Corollary~\ref{c:get.to.lalle} has the following consequence.

\begin{Cor}\label{c:stay.sub}
Let $M=\lb A\mid u=v\rb$ be a compressible one-relator monoid presentation and let $N$ be any compression of $M$.  Then $M$ is subspecial if and only if $N$ is subspecial.
\end{Cor}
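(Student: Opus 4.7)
The plan is to reduce to a single compression step and then handle the two directions separately. By definition, any compression of $M$ arises as a finite sequence of one-step compressions, so by induction on the length of this sequence it suffices to prove the equivalence when $N=M'$ is the compression of $M$ by a single $r\in A^+$ that seals both $u$ and $v$. Throughout, write $u=ru'$ and $v=rv'$ with $u',v'\in T(r)=\Delta_r^*$; since $|v|\leq |u|$, we have $|v'|\leq |u'|$, so the length convention in the subspecial definition is satisfied on both sides.

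For the forward direction, suppose $M$ is subspecial, i.e., $v$ seals $u$. Since $r$ compresses $u=v$, the word $r$ also seals $v$, so the hypotheses of Lemma~\ref{l:useful} hold with $s=v$. That lemma then yields that $v'$ seals $u'$ in $\Delta_r^*$; as $u',v'\in \Lambda_r^*$ by construction, this is precisely the statement that $M'=\lb \Lambda_r\mid u'=v'\rb$ is subspecial.

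For the converse, assume $M'$ is subspecial, so that $u'=v'x=yv'$ for some $x,y\in \Lambda_r^*\subseteq A^*$. The first equation gives $u=ru'=rv'x=vx$, so $v$ is a prefix of $u$. For the suffix, observe that $y\in \Delta_r^*=T(r)$, so by the definition of $T(r)$ we have $ry=y'r$ for some $y'\in A^*$; thus $u=ru'=ryv'=y'rv'=y'v$ and $v$ is a suffix of $u$. Combined with $|v|\leq |u|$, this gives $u\in vA^*\cap A^*v$, so $M$ is subspecial.

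The main subtlety in the argument is the careful translation between factorizations over the original alphabet $A^*$ and over the prefix code $\Delta_r$, but this is exactly what Lemma~\ref{l:useful} and the defining property of $T(r)$ are designed to handle, so no serious obstacle arises.
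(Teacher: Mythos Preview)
Your proof is correct, but it takes a different route from the paper's. The paper derives the corollary from Corollary~\ref{c:get.to.lalle}: it uses the observation that a compressible presentation is subspecial if and only if its Lallement compression is special, together with the fact (from Proposition~\ref{p:compress.transitivity}) that the Lallement compression of any compression $N$ of $M$ coincides, up to relabelling, with the Lallement compression of $M$; the equivalence then follows by passing both $M$ and $N$ to their common incompressible quotient. Your argument is more direct and elementary: you stay at the level of a single compression step, invoking only Lemma~\ref{l:useful} for the forward direction and the defining relation $ry=y'r$ of $T(r)$ for the converse, without ever needing to reach the incompressible stage. The paper's approach is more conceptual and fits its narrative of confluence of compression, while yours has the advantage of not depending on the heavier transitivity result.

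Two minor remarks. First, in the paper's terminology a ``compression of $M$'' already means compression with respect to a single word $r$, so your inductive reduction to one step is not actually needed (though it does no harm). Second, in the forward direction you obtain that $v'$ seals $u'$ in $\Delta_r^*$, and then assert this is the subspecial condition for $M'=\lb \Lambda_r\mid u'=v'\rb$; strictly speaking one should note that since $\Delta_r^*$ is free on $\Delta_r$ and $u',v'\in \Lambda_r^*$, the witnessing words $x,y$ with $u'=v'x=yv'$ automatically lie in $\Lambda_r^*$. This is immediate from uniqueness of factorization, but worth a half-sentence.
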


Let us consider an example.
 Let $M=\lb a,b\mid ababa=ababbaba\rb$.
 The relation $ababa=ababbaba$ can be compressed by $y=a$ and $z=aba$.
 If we compress with respect to $y=a$ we obtain $M'=\lb
\beta, \gamma \mid
\beta^2=\beta\gamma\beta\rb$ where $\beta=ba$ and $\gamma = bba$.
 If we compress with respect to $z=aba$, then we obtain $M'' = \lb c,d\mid c=d \rb$ where $c=ba$ and $d=bbaba$.
Here it is easy to verify that we have $\Lambda_z = \{c,d \} = \{ ba, bbaba \}$.
Notice that $M'$ can be compressed with respect to $\beta$ and the resulting presentation is $\lb\rho,\tau\mid \rho =\tau\rb$ where $\rho = \beta$ and $\tau = \gamma\beta$, which is the same as the presentation of $M''$ up to relabelling.

Throughout the rest of this section we shall suppose that the word $r \in A^+$ compresses the defining relation $u=v$ in the presentation $\lb A \mid u=v \rb$ of the monoid $M$.
As usual, we write $[w]_M$ to denote the image in $M$ of $w \in A^*$.
Furthermore, with $L$ and $S$ defined as above
in equations \eqref{eqn:L} and \eqref{eqn:S},
we use $[w]_L$ for the image of $w\in \Delta_r^*$ in $L$, and $[w]_S$ for the image of $w\in \Lambda_r^*$ in $S$.
 Because it plays a very important role, we put $x=[r]_M$.
All of this notation will remain in force throughout the rest of this section.

Let us now generalise some lemmas from Lallement~\cite{Lallement1974} reducing the word problem of $M$ to its compression $S$.

For any word $w \in A^* r A^*$, by the \emph{first occurrence of $r$ in $w$} we mean the leftmost occurrence of the word $r$ in $w$, reading the word from left to right. Dually we talk about the \emph{last occurrence of the word $r$ in $w$}, which is the rightmost occurrence of $r$ in $w$.

If $s\in A^*$, then we can uniquely factor $s=y w$ where $y$ is the longest prefix of $s$ with $y \in A^*r\cup \{\varepsilon\}$ (i.e., either $s$ has no occurrence of $r$, in which case $y=\varepsilon$, or $y$ ends in the last occurrence of $r$ in $s$).  We call this the \emph{right canonical factorization} of $s$.

The following generalises the dual of~\cite[Lemma~3.1]{Lallement1974}.

\begin{Lemma}\label{lem:lalle3.1}
Let $s,s'\in A^*$ with right canonical factorizations $s=y w$ and $s'=y'w'$, respectively.  Then $[s]_M=[s']_M$ if and only if $w =w'$ and $[y]_M=[y']_M$.
\end{Lemma}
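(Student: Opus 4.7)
The plan is to prove the backward implication by direct computation and the forward implication by induction on the length of a derivation $s = s_0 \to s_1 \to \cdots \to s_n = s'$ obtained by successively applying the defining relation $u \leftrightarrow v$. For the backward direction, if $w = w'$ and $[y]_M = [y']_M$ then $[s]_M = [y]_M[w]_M = [y']_M[w']_M = [s']_M$. For the forward direction, it suffices to check that a single rewriting step preserves the $w$-part of the right canonical factorization and alters the $y$-part only by an application of the same relation (so in particular leaves the class $[y]_M$ unchanged).

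So assume $s_i = \alpha u\beta$ and $s_{i+1} = \alpha v\beta$ (the other direction of the relation being symmetric), with right canonical factorizations $s_i = y_iw_i$ and $s_{i+1} = y_{i+1}w_{i+1}$. Since $r$ compresses $u=v$, both $u$ and $v$ end in $r$; in particular the occurrence of $u$ in $s_i$ contributes an occurrence of $r$ ending at position $|\alpha u|-1$. By definition of $y_i$ as the longest prefix of $s_i$ ending in $r$, this forces $|\alpha u|\leq |y_i|$, so $y_i = \alpha u\gamma$ and $\beta = \gamma w_i$ for some $\gamma\in A^*$. Moreover, the last $|r|$ letters of $\alpha u\gamma$ depend only on the trailing $r$ of $u$ together with $\gamma$ (when $|\gamma|<|r|$ they equal the appropriate suffix of $r$ followed by $\gamma$; when $|\gamma|\geq |r|$ they are a suffix of $\gamma$). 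The same letters are therefore the last $|r|$ of $\alpha v\gamma$, so $\alpha v\gamma$ also ends in $r$.

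It remains to verify that $\alpha v\gamma$ is the \emph{longest} prefix of $s_{i+1}$ ending in $r$. Suppose for contradiction that an occurrence of $r$ in $s_{i+1} = \alpha v\gamma w_i$ ends strictly past position $|\alpha v\gamma|-1$. If the occurrence lies entirely inside $w_i$ then $w_i$ contains $r$ as a factor, contradicting maximality of $y_i$ (which forces $w_i$ to be $r$-free). Otherwise the occurrence straddles the $\alpha v\gamma / w_i$ boundary, contributing some suffix of $\alpha v$ of length $k$ and some prefix of $w_i$ of length $|r|-k$. Because $v$ ends in $r$, the contributing suffix of $\alpha v$ equals the suffix of $r$ of length $k$, and this equals the analogous suffix of $\alpha u$ since $u$ also ends in $r$. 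Hence the same pattern of letters produces an occurrence of $r$ in $s_i = \alpha u\gamma w_i$ at the corresponding shifted position; a direct position count shows that its right end lies strictly past $|y_i|-1$, contradicting maximality of $y_i$. This yields $y_{i+1} = \alpha v\gamma$ and $w_{i+1} = w_i$, while $[y_{i+1}]_M = [\alpha v\gamma]_M = [\alpha u\gamma]_M = [y_i]_M$ since the defining relation $u=v$ holds in $M$.

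The main technical obstacle is precisely the overlap bookkeeping in the straddling case: one must match the hypothetical occurrence in $s_{i+1}$ to an occurrence in $s_i$ at the correct shifted position, and check that its right end is strictly past $|y_i|-1$. This reduces to elementary position-counting that uses only that $u$ and $v$ share $r$ as a common suffix, so the right-hand $|r|$ letters of both words — and hence of $\alpha u\gamma$ and $\alpha v\gamma$ — agree.
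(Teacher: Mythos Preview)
Your approach matches the paper's: reduce to a single rewriting step $s_i=\alpha u\beta\to\alpha v\beta=s_{i+1}$, use that $u$ ends in $r$ to locate the rewrite inside $y_i$ (so $y_i=\alpha u\gamma$ and $\beta=\gamma w_i$), and then argue that the right canonical factorization of $s_{i+1}$ is $(\alpha v\gamma)w_i$.

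However, your straddling case has a bookkeeping error. The boundary in question is $\alpha v\gamma/w_i$, so a straddling occurrence of $r$ consists of a length-$k$ suffix of $\alpha v\gamma$ together with a length-$(|r|-k)$ prefix of $w_i$; you instead take a suffix of $\alpha v$, silently dropping $\gamma$. When $\gamma\neq\varepsilon$ this breaks the argument: if $k\le|\gamma|$ the relevant suffix lies entirely in $\gamma$ and has nothing to do with $v$, while if $k>|\gamma|$ it is a suffix of $v$ of length $k-|\gamma|$ followed by all of $\gamma$. In neither case does the sentence ``because $v$ ends in $r$, the contributing suffix of $\alpha v$ equals the suffix of $r$ of length $k$'' apply, and the transfer to $s_i$ no longer lines up as stated.

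The fix is simpler than your case split and is essentially what the paper does. You have already shown that the last $|r|$ letters of $\alpha v\gamma$ agree with the last $|r|$ letters of $\alpha u\gamma=y_i$, both being $r$. Hence the last $|r|+|w_i|$ letters of $s_{i+1}$ and of $s_i$ are identical, namely $rw_i$. Any occurrence of $r$ in $s_{i+1}$ ending strictly past $|\alpha v\gamma|$ lies entirely within this common tail $rw_i$, and therefore corresponds to an occurrence of $r$ in $s_i$ ending strictly past $|y_i|$, contradicting maximality of $y_i$. Equivalently: since $y_iw_i$ is right canonical, $rw_i$ contains no occurrence of $r$ other than its prefix, and that settles it without splitting into ``entirely in $w_i$'' versus ``straddling''.
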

\begin{proof}
Trivially, if $w=w'$ and $[y]_M=[y']_M$, then $[s]_M=[s']_M$.
For the converse, assume that $[s]_M=[s']_M$.
 If $s=s'$, then clearly $w=w'$ and $y=y'$, whence $[y]_M=[y']_M$.
 Thus we may assume that $s\neq s'$.
 In particular, it must be possibly to apply the defining relation to both $s$ and $s'$.
 Since $u,v\in A^*r$, it follows that $r$ occurs as a factor of both $s$ and $s'$ and so $y\neq \varepsilon$ and $y'\neq \varepsilon$.
It clearly suffices to prove the result when $s'$ may be obtained from $s$ by just a single application of the defining relation $u=v$ in the presentation defining $M$.
Thus, by symmetry, it will suffice to show that if $s=x_1ux_2$ has right canonical factorization $s=yw$ with $y\in A^*r$ and if $s'=x_1vx_2$, then $s'$ has right canonical factorization of the form $y'w$ with $y'\in A^*r$ and $[y']_M=[y]_M$.
 Write $y=y_0r$, $u=u_0r$ and $v=v_0r$.
 Then $s=x_1u_0rx_2 = y_0rw$ where the $r$ displayed on the right hand side is the last $r$ in $s$.
 It follows that $u_0$ is a factor of $y_0$ and $rw$ is a suffix of $rx_2$, and so $s'=x_1vx_2=x_1v_0rx_2=y_0'rw$,
for some $y_0' \in A^*$,
and so the displayed $r$ in the right hand side is the last occurrence of $r$ in $s'$ as $rw$ contains no other occurrence of $r$ as $yw$ is a right canonical factorization of $s$.
 Thus, putting $y'=y_0'r$, we have that $s'$ has right canonical factorization $y'w$ and $[y']_M=[y]_M$ as $u=u_0r$ was a factor of $y_0r=y$ and $y'$ is obtained by replacing $u$ in $y$ by $v$.
\end{proof}

\begin{remark}\label{r:last.part}
It follows from Lemma~\ref{lem:lalle3.1} that if $w\in A^*r$, then every word equivalent to $w$ in $M$ belongs to $A^*r$.
\end{remark}

Lemma~\ref{lem:lalle3.1} reduces the word problem for $M$ to consideration of words in $A^*r$, which leads to the second lemma that we shall need, generalizing the dual of~\cite[Lemma~3.2]{Lallement1974}.  Suppose that $s\in A^*r$.  Then $s$ has a unique factorization $s=y rw$ with the displayed $r$ the first occurrence of $r$ in $s$ (it has at least one since it ends in $r$).  Note that $w \in T(r)$, since $s,yr\in A^*r\subseteq T(r)$ and $T(r)$ is left unitary. Thus $w \in \Delta^*_r$ and so $[w]_L$ makes sense. We call $s=y rw$ the \emph{left canonical factorization} of $s$.

\begin{Lemma}\label{lem:lalle3.2}
Let $s,s'\in A^*r$ with respective left canonical factorizations $s=y rw$ and $s'=y'rw'$.  Then $[s]_M=[s']_M$ if and only if $y=y'$ and $[w]_L=[w']_L$.
\end{Lemma}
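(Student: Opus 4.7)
The backward direction is immediate. By induction on the length of a derivation in $L$, it suffices to handle a single rewrite: $w = \gamma u'\gamma'$ and $w' = \gamma v'\gamma'$ with $\gamma,\gamma'\in T(r)$, interpreted as factorizations in $\Delta_r^*$. Since $\gamma\in T(r)$, we can write $r\gamma = \delta r$ for some $\delta\in A^*$, and then
\[
rw = r\gamma\cdot u'\cdot\gamma' = \delta\cdot ru'\cdot\gamma' = \delta u\gamma',
\]
which rewrites in $M$ via $u=v$ to $\delta v\gamma' = \delta r v'\gamma' = r\gamma v'\gamma' = rw'$. Hence $[rw]_M = [rw']_M$ and so $[yrw]_M = [y'rw']_M$ when $y=y'$.

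For the forward direction, induction on the length of an $M$-derivation reduces everything to the single-step case $s = x_1 u x_2$, $s' = x_1 v x_2$ (the opposite direction is symmetric). Two claims must be established. \emph{First}, $y' = y$: since $u$ begins with $r$, the position $|x_1|$ satisfies $|x_1|\geq |y|$; and since $u$ and $v$ share the common $r$-prefix and $x_1$ is unchanged, the initial segment of $s'$ of length $|x_1|+|r|$ coincides with that of $s$, so the leftmost $r$-occurrence in $s'$ is still at position $|y|$. \emph{Second}, $[w]_L = [w']_L$; I split on $|x_1|$.

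\emph{Case A} ($|x_1|=|y|$): here $w = u'x_2$, and left-unitariness of $T(r)$ forces $x_2\in \Delta_r^*$; thus $w = \alpha_1\cdots\alpha_k\cdot x_2$ and $w' = \beta_1\cdots\beta_\ell\cdot x_2$, related by one application of the defining relation of $L$. \emph{Case B} ($|x_1|\geq |y|+|r|$): writing $x_1 = yr\xi$, one gets $w = (\xi r)\,u'\,x_2$ and $w' = (\xi r)\,v'\,x_2$, with $\xi r,\,u'x_2,\,v'x_2 \in T(r)$ (the last two because $s\in A^*r$ forces $x_2\in A^*r\cup\{\varepsilon\}$); again $[w]_L=[w']_L$ via one application of the defining relation. \emph{Case C} ($|y|<|x_1|<|y|+|r|$): the rewrite straddles the first $r$, which forces $r$ to admit a non-trivial self-overlap and hence a period $d=|x_1|-|y|$; exploiting this periodicity, the prefix of $w$ of length $d$ is forced to coincide with the corresponding factor of $r$, and a careful refactoring of $w$ and $w'$ inside $\Delta_r^*$ realises $[w]_L = [w']_L$.

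The principal obstacle is Case C, where the self-overlap of $r$ decouples the $u$-occurrence in $s$ from the clean $\Delta_r$-factorization of $w$, making the simulation of the single $M$-rewrite by $L$-rewrites genuinely subtle (possibly requiring more than one application of the defining relation of $L$). In the self-overlap-free (Adjan--Oganesyan) setting this case cannot occur, and only Cases A and B need to be handled, giving a much simpler argument.
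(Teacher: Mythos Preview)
Your backward direction and your Cases A and B are essentially correct, though the justification in Case B that ``$s\in A^*r$ forces $x_2\in A^*r\cup\{\varepsilon\}$'' is false when $0<|x_2|<|r|$ (then $x_2$ is only a proper suffix of $r$); the right way to get $x_2\in T(r)$ is left unitariness of $T(r)$, using $w,\,\xi r,\,u'\in T(r)$ and $w=(\xi r)u'x_2$.

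The genuine gap is Case C. Your sketch invokes periodicity of $r$ and a ``careful refactoring'', and even allows for multiple $L$-rewrites, but none of this is needed and the sketch as written is not a proof. The paper's observation dissolves the case split entirely: write $x_1=yx_1'$ (so $x_1'$ may be empty, or a prefix of $r$, or begin with $r$---covering your three cases at once). Then $rw=x_1'ru'x_2$. Since the left side begins with $r$ and $|x_1'r|\geq |r|$, the word $r$ is a prefix of $x_1'r$; thus $x_1'r=rx'$ for some $x'$, which by definition means $x'\in T(r)$. Cancelling $r$ gives $w=x'u'x_2$ and (by the same computation) $w'=x'v'x_2$. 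Now $w,x',u'\in T(r)$, so left unitariness yields $x_2\in T(r)$, and hence $w$ and $w'$ differ by a \emph{single} application of the defining relation of $L$. So the forward direction establishes a step-for-step correspondence, with no periodicity argument and no exceptional case. You were missing exactly this: the trivial observation that $r$ is a prefix of $x_1'r$ is what puts $x'$ into $T(r)$ and makes your Case C no harder than the others.
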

\begin{proof}
By Remark~\ref{r:last.part} every word equivalent in $M$ to $s$ or $s'$ belongs to $A^*r$.
It then clearly suffices to show that $s'$ can be obtained from $s$ via one application of the defining relation
$u=v$
if and only if $y=y'$ and $w'$ can be obtained from $w$ by one application of the defining relation for $L$.
Assume first that $s$ is obtained from $s'$ via one application of the defining relation $u=v$ in the presentation of $M$.
Up to symmetry we may assume that $yrw=x_1ux_2$ and $y'rw'=x_1vx_2$.  Since $u,v\in rA^*$ and the left canonical factorization displays the first occurrence of $r$, we must have that $y=y'$ and that $rw=x_1'ux_2$ and $rw'=x_1'vx_2$ where $x_1=yx_1'$.  Recall that $w,w'\in T(r)$ and $u=ru'$, $v=rv'$.  From $rw=x_1'ru'x_2$ and $rw'=x_1'rv'x_2$, we deduce that $r$ is a prefix of $x_1'r$.
Therefore,
writing $x_1' r= rx'$, substituting into the previous two equations, and deleting the $r$-prefixes, we see that
we can find $x'\in A^*$ with $w=x'u'x_2$ and $w'=x'v'x_2$ and $rx'=x_1'r$, whence $x'\in T(r)$.  Since $w,x',u'\in T(r)$, we deduce that $x_2\in T(r)$ as $T(r)$ is left unitary.  Thus we have factorizations $w=x'u'x_2$ and $w'=x'v'x_2$ with $x',x_2\in \Delta_r^*$ and so $[w]_L=[w']_L$ via one application of the defining relation
$u'=v'$ in the presentation of
$L$.

Conversely, suppose that $y=y'$ and $w'$ can be obtained from $w$ by one application of the defining relation
$u'=v'$ in the presentation of
$L$.  Without loss of generality, we may assume that $w=x'u'x_2$ and $w'=x'v'x_2$ with $x',x_2\in \Delta_r^*$.  Write $rx' = x''r$.  Then $s=yrw=yrx'u'x_2= yx''ru'x_2=yx''ux_2$ and $s'=y'rw'=yrx'v'x_2=yx''rv'x_2=yx''vx_2$ and so $s'$ can be derived from $s$ via one application of the defining relation
$u=v$ in the presentation
of $M$.  This completes the proof.
\end{proof}

Notice that these two lemmas reduce the word problem from $M$ to $S$ (as $L$ is a free product of $S$ with a free monoid on a recursive set).
In~\cite{Lallement1974} Lallement proved the above two lemmas in the particular case that $r=v$, where $u=v$ is a subspecial relation.
In this situation $S$ is a speical one-relator monoid.
Since
Adjan~\cite{Adjan1966} had previously proved that word problem is decidable for special one-relator monoids, this allowed Lallement to conclude that subspecial one-relator monoids also have decidable word problem.

Let $P_{\Delta_r}$ be the collection of proper prefixes of elements of $\Delta_r$.   Because $\Delta_r$ is a prefix code, no element of $\Delta_r$ belongs to $P_{\Delta_r}$.
Let us provide an alternate description of $P_{\Delta_r}$.

\begin{Lemma}\label{l:char.Delta.r}
Let $w\in A^*$.  Then $w\in P_{\Delta_r}$ if and only if no non-empty prefix of $w$ belongs to $T(r)$.
\end{Lemma}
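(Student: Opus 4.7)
The plan is to verify the two implications separately, exploiting the left unitary property of $T(r)$ and the fact that $\Delta_r$ freely generates $T(r)$ as a prefix code.

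For the forward direction, I would assume $w\in P_{\Delta_r}$, so there is some $\alpha\in\Delta_r$ and some non-empty $w'\in A^+$ with $\alpha=ww'$. Suppose for contradiction that some non-empty prefix $p$ of $w$ lies in $T(r)$. Writing $w=pq$, we obtain $\alpha=pqw'$ with $p,\alpha\in T(r)$ and $p\neq\varepsilon$. Since $T(r)$ is left unitary, $qw'\in T(r)$, and $qw'\neq\varepsilon$ because $w'\neq\varepsilon$. This exhibits $\alpha$ as a product of two non-empty elements of $T(r)$, contradicting the irreducibility of $\alpha$ (i.e., $\alpha\in\Delta_r$).

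For the reverse direction, assume no non-empty prefix of $w$ lies in $T(r)$. First I would observe that $\Delta_r$ is non-empty: since $r\cdot r\in A^*r$, we have $r\in T(r)\setminus\{\varepsilon\}$, and any minimum-length element of $T(r)\setminus\{\varepsilon\}$ must be irreducible. In particular, if $w=\varepsilon$, then $w$ is a proper prefix of every element of $\Delta_r$, so $w\in P_{\Delta_r}$ and we are done. Assume now $w\neq\varepsilon$. The word $wr$ satisfies $r\cdot wr=(rw)r\in A^*r$, so $wr\in T(r)\setminus\{\varepsilon\}$, and we may write $wr=\alpha_1\alpha_2\cdots\alpha_n$ with each $\alpha_i\in\Delta_r$. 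Comparing the prefix $\alpha_1$ of $wr$ to $w$, there are three possibilities: (i) $\alpha_1=w$, which would force the non-empty prefix $w$ of $w$ into $T(r)$, contradicting the hypothesis; (ii) $\alpha_1$ is a proper non-empty prefix of $w$, again contradicting the hypothesis since $\alpha_1\in\Delta_r\subseteq T(r)$; (iii) $w$ is a proper prefix of $\alpha_1\in\Delta_r$, which yields $w\in P_{\Delta_r}$ as required.

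I don't anticipate any serious obstacle; the only mild subtlety is remembering to treat the empty-word case separately and to justify that $\Delta_r\neq\emptyset$ so that $\varepsilon$ genuinely is a proper prefix of some element of $\Delta_r$. Everything else is a direct application of left unitarity together with the uniqueness of factorizations over the prefix code $\Delta_r$, both of which were established earlier in the section.
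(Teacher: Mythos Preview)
Your proof is correct and follows essentially the same route as the paper. The reverse direction is identical (consider $wr\in T(r)$, factor over $\Delta_r$, and observe $w$ must be a proper prefix of the first factor); the forward direction differs only cosmetically, with you invoking left-unitarity plus irreducibility of $\alpha$ directly, whereas the paper instead extracts a $\Delta_r$-prefix of the offending $p$ and appeals to $P_{\Delta_r}\cap\Delta_r=\emptyset$. Your extra care with the $w=\varepsilon$ case and $\Delta_r\neq\emptyset$ is not strictly needed (the paper's argument covers it implicitly since $\varepsilon$ is a proper prefix of any $\alpha\in\Delta_r$), but it does no harm.
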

\begin{proof}
Suppose first that $w\in P_{\Delta_r}$ has a non-empty prefix $w'$ with $w'\in T(r)$.  Then $w'\in \Delta_r^+$ and so $w'$ has a prefix $w''$ belonging to $\Delta_r$.   But then $w''\in P_{\Delta_r}\cap \Delta_r=\emptyset$, a contradiction.  Thus no non-empty prefix of $w$ belongs to $T(r)$.

Conversely, suppose that no non-empty prefix of $w$ belongs to $T(r)$.   Consider $wr$. Trivially, $wr\in A^*r\subseteq T(r)$ and so $wr\in \Delta_r^*$.  Write $wr=\alpha z$ with $\alpha\in \Delta_r$ and $z\in \Delta_r^*$.  Then since no non-empty prefix of $w$ is in $T(r)$, we must have that $w$ is a proper prefix of $\alpha$.  Thus $w \in P_{\Delta_r}$.
\end{proof}

The following two lemmas will be used
below
when we analyse compression from a topological perspective.

\begin{Lemma}\label{l:last.occurrence}
Suppose that $z\in A^*$ with $z=t r w$.  Then this factorization is right canonical if and only if $w \in P_{\Delta_r}$.
\end{Lemma}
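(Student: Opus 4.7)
The plan is to unwind both sides of the equivalence and reduce them to the same statement about prefixes of $w$, then appeal to Lemma~\ref{l:char.Delta.r}.

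First I would rewrite the definition of right canonical factorization: the factorization $z=trw$ is right canonical precisely when $tr$ is the longest prefix of $z$ lying in $A^*r\cup\{\varepsilon\}$. Equivalently, no strictly longer prefix of $z$ ends in $r$, i.e., there is no non-empty prefix $w''$ of $w$ with $trw''\in A^*r$.

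The next step is to observe that for any non-empty $w''$, membership $trw''\in A^*r$ is equivalent to $rw''\in A^*r$, that is, to $w''\in T(r)$. This is because $|rw''|\ge |r|$, so whether $trw''$ ends in $r$ depends only on the last $|r|$ letters of $trw''$, and these coincide with the last $|r|$ letters of $rw''$ (since the prefix $t$ is never touched when we look at a suffix of length $|r|\le |rw''|$). Thus right canonicity of $z=trw$ is equivalent to the assertion that no non-empty prefix of $w$ lies in $T(r)$.

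Finally I would invoke Lemma~\ref{l:char.Delta.r}, which says exactly that $w\in P_{\Delta_r}$ if and only if no non-empty prefix of $w$ lies in $T(r)$. Chaining the two equivalences gives the statement. There is no real obstacle here; the whole argument is a direct unpacking of the definition of right canonical factorization together with the previous lemma characterizing $P_{\Delta_r}$.
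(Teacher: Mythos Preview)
Your proposal is correct and follows essentially the same approach as the paper: both reduce the right-canonicity condition to the statement that no non-empty prefix of $w$ lies in $T(r)$, then invoke Lemma~\ref{l:char.Delta.r}. Your length argument showing $trw''\in A^*r\iff rw''\in A^*r$ is a slightly more explicit phrasing of what the paper does when it writes $t'r=trw_0$ and observes that $r$ is a suffix of $rw_0$.
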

\begin{proof}
Suppose first that the factorization is right canonical.  Then the displayed occurrence of $r$ is the last one in $z$ and so  no non-empty prefix of $w$ belongs to $T(r)$.  Therefore, $w\in P_{\Delta_r}$ by Lemma~\ref{l:char.Delta.r}.  Conversely, if $w\in P_{\Delta_r}$ and if $z=tr w=t'r w'$ with $tr$ a proper prefix of $t'r$, then writing $t'r=trw_0$, we see that $r$ is a suffix of $rw_0$ and so $w_0\in T(r)$.  But $w_0$ is a non-empty prefix of $w$.  This contradicts Lemma~\ref{l:char.Delta.r} and so we conclude that  the factorization $z=tr w$ is right canonical.
\end{proof}

We shall also need the following lemma.

\begin{Lemma}\label{l:embed.prefix}
Let $t,t'\in A^*$ and $w ,w'\in P_{\Delta_r}$. Then $[t rw]_M=[t'rw']_M$ if and only if $w =w'$ and $[t r]_M=[t'r]_M$.
\end{Lemma}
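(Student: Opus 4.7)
The plan is to deduce this lemma directly by combining the preceding two lemmas. The key observation is that the hypothesis $w, w' \in P_{\Delta_r}$ is designed precisely so that the factorizations $trw$ and $t'rw'$ are right canonical in the sense of Lemma~\ref{lem:lalle3.1}.

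First I would invoke Lemma~\ref{l:last.occurrence}: since $w \in P_{\Delta_r}$, the factorization $trw$ displays the last occurrence of $r$ in the word, i.e., $s := trw$ has right canonical factorization with $y = tr$ and tail $w$. Similarly for $s' := t'rw'$ with $y' = t'r$ and tail $w'$.

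Now the reverse direction of the biconditional is trivial: if $w = w'$ and $[tr]_M = [t'r]_M$, then multiplying on the right by $[w]_M = [w']_M$ gives $[trw]_M = [t'rw']_M$. For the forward direction, assume $[trw]_M = [t'rw']_M$. Applying Lemma~\ref{lem:lalle3.1} to the right canonical factorizations $s = (tr)w$ and $s' = (t'r)w'$ immediately yields $w = w'$ and $[tr]_M = [t'r]_M$, as required.

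I do not anticipate a genuine obstacle here; the proof is essentially a one-line consequence of Lemmas~\ref{l:last.occurrence} and~\ref{lem:lalle3.1}. The only subtlety to double-check is that these lemmas genuinely apply when $t$ or $t'$ is empty, or when $w$ or $w'$ is empty, but in all degenerate cases the factorization is still right canonical (the empty word is a proper prefix of every element of $\Delta_r$, and $tr \in A^*r$ so it qualifies as the longest $A^*r \cup \{\varepsilon\}$-prefix).
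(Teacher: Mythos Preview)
Your proposal is correct and matches the paper's proof essentially verbatim: invoke Lemma~\ref{l:last.occurrence} to see that $trw$ and $t'rw'$ are right canonical factorizations, then apply Lemma~\ref{lem:lalle3.1}. The paper omits the trivial reverse direction and the edge-case remarks, but the argument is identical.
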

\begin{proof}
By Lemma~\ref{l:last.occurrence}, we have that $t r w$ and $t'r w'$ are right canonical factorizations and hence if $[t rw]_M=[t'vw']_M$, then $w =w'$ and $[t r]_M=[t'r]_M$ by Lemma~\ref{lem:lalle3.1}.
\end{proof}

\subsection*{The compressed monoid as a local divisor}

If $n\in M$, then there is a well-defined mapping $\circ \colon Mn\times nM\to MnM$ given by $m\circ m' = anb$ for $m=an$ and $m'=nb$.
 This mapping restricts to an associative multiplication on the  \emph{local divisor} $M_n=nM\cap Mn$  with identity element $n$ (if $an=na'$ and $nb=b'n$ then $na'b=anb=ab'n$).
So, by the \emph{local divisor} $M_n$ we mean the monoid $(nM \cap Mn, \circ)$ where is the multiplication $\circ$ is defined as above.
Local divisors were introduced by Diekert and studied further
in~\cite{Diekert2006, Diekert2016, CS15}.
It is easily checked that  by restricting $\circ$ we obtain a left action $M_n\times nM\to nM$ of $M_n$ on $nM$ extending the regular action, 
and dually a right action $Mn\times M_n\to Mn$ of $M_n$ on $Mn$. 
We aim to identify $M_x$
with $L$.
Recall here that $x = [r]_M$ where $r$ is a word compressing the defining relation $u=v$ of the monoid $M$.
This will provide an algebraic interpretation of compression that was lacking in previous work
of others on this topic.

\begin{Prop}\label{p:define.map.to.divisor}
Define $\varphi\colon T(r)\to M_x$ by $\varphi(w )=[rw]_M$.  Then $\varphi$ is a surjective homomorphism factoring through $L$ as an isomorphism $\Phi\colon L\to M_x$.
\end{Prop}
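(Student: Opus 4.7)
The plan is to verify the four claims in sequence: (i) $\varphi$ lands in $M_x$; (ii) $\varphi$ is a monoid homomorphism (where $T(r)$ carries concatenation and $M_x$ carries $\circ$); (iii) $\varphi$ is surjective; (iv) $\varphi$ descends to an isomorphism $\Phi\colon L\to M_x$. The main conceptual content is (iv), and the main technical tool will be Lemma~\ref{lem:lalle3.2}.

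For (i), if $w\in T(r)$ then $rw\in rA^*\cap A^*r$ by definition of $T(r)$, so $[rw]_M\in xM\cap Mx=M_x$. For (ii), note first that $\varphi(\varepsilon)=[r]_M=x$, the identity of $M_x$. Given $w,w'\in T(r)$, pick $w''\in A^*$ with $rw=w''r$; then in $M_x$ we have $\varphi(w)\circ \varphi(w')=[w''r]_M\circ[rw']_M=[w''rw']_M=[rww']_M=\varphi(ww')$, using the definition of $\circ$ together with $w''r=rw$. For (iii), let $m\in M_x=xM\cap Mx$. Since $m\in xM$, there exists $w_0\in A^*$ with $m=[rw_0]_M$. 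Since $m\in Mx$, there exists $w'\in A^*$ with $m=[w'r]_M$, so $rw_0$ is equivalent in $M$ to a word in $A^*r$. By Remark~\ref{r:last.part}, this forces $rw_0\in A^*r$ as a word, i.e.\ $w_0\in T(r)$, and $\varphi(w_0)=m$.

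For (iv), we first check that $\varphi$ factors through $L=\langle\Delta_r\mid u'=v'\rangle$, where $u=ru'$ and $v=rv'$. Indeed $\varphi(u')=[ru']_M=[u]_M=[v]_M=[rv']_M=\varphi(v')$, so since $T(r)$ is freely generated by $\Delta_r$ and $L$ is defined by imposing the single relation $u'=v'$, the universal property of the presentation yields a well-defined monoid homomorphism $\Phi\colon L\to M_x$ with $\Phi([w]_L)=[rw]_M$. Surjectivity of $\Phi$ is inherited from (iii). For injectivity, suppose $w,w'\in T(r)=\Delta_r^*$ satisfy $[rw]_M=[rw']_M$. Each of the words $rw$ and $rw'$ lies in $A^*r$, and in each case the displayed initial $r$ is clearly the first occurrence of $r$, so $rw=\varepsilon\cdot r\cdot w$ and $rw'=\varepsilon\cdot r\cdot w'$ are the respective left canonical factorizations. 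Lemma~\ref{lem:lalle3.2} then gives $[w]_L=[w']_L$, as required.

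The step I expect to need the most care is (iv), specifically the injectivity: it is crucial that the two prefixes in the left canonical factorization agree (they do, both being empty) so that Lemma~\ref{lem:lalle3.2} applies cleanly and reduces the question in $M$ to the question in $L$. Beyond that, the argument is a straightforward assembly of earlier lemmas, with Remark~\ref{r:last.part} handling the passage from equivalence in $M$ to membership in $A^*r$ as a word, which is what makes surjectivity and the use of canonical factorizations legitimate.
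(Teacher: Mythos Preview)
Your proof is correct and follows essentially the same approach as the paper's: both verify well-definedness, the homomorphism property, surjectivity via Remark~\ref{r:last.part}, factoring through the relation $u'=v'$, and injectivity via the left canonical factorizations $\varepsilon\cdot r\cdot w$ together with Lemma~\ref{lem:lalle3.2}. The structure and the key lemmas invoked are identical.
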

\begin{proof}
First note that if $w \in T(r)$, then $rw \in rA^*\cap A^*r$ and so $[rw ]_M\in M_x$.   Let $w_1,w_2\in T(r)$.  Then $rw_1=w_1'r$ for some $w_1'\in A^*$.  We compute $\varphi(w_1)\varphi(w_2) = [rw_1]_M\circ [rw_2]_M = [w_1'r]_M\circ [rw_2]_M = [w_1'rw_2]_M=[rw_1w_2]_M=\varphi(w_1w_2)$.  Also $\varphi(\varepsilon) =[r]_M=x$.  Thus $\varphi$ is a homomorphism.

Let $m\in M_x=xM\cap Mx$.
Then there exist $y,w \in A^*$ with $m=[ry]_M=[w r]_M$. Then  $ry\in A^*r$ by Remark~\ref{r:last.part}.  Thus $y\in T(r)$ and $m=[ry]_M=\varphi(y)$. Therefore, $\varphi$ is onto.

Note that $\varphi(u') = [ru']_M =[u]_M=[v]_M=[rv']_M=\varphi(v')$ and so $\varphi$ factors through $L$.  Suppose that $\varphi(s)=\varphi(s')$ with $s,s'\in T(r)$.  Then $[rs]_M=[rs']_M$.  Now $rs,rs'\in A^*r$ and obviously their left canonical factorizations are $rs,rs'$, respectively.  Thus $[s]_L=[s']_L$ by Lemma~\ref{lem:lalle3.2}.  This establishes that $\Phi$ is an isomorphism.
\end{proof}

Since $M_x$ acts on the left of $xM$ and the right of $Mx$, it follows that $L$ does as well, where we retain the above notation.  We now describe that action.

\begin{Prop}\label{p:action.of.L}
If $y\in \Delta_r^*$ and $m\in M$, then $\Phi([y]_L)\circ xm = [ry]_Mm=x[y]_Mm$ and $mx\circ \Phi([y]_L) = mx[y]_M$.
\end{Prop}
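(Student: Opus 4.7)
The plan is to unpack the definition of the local divisor action and to use the fact that $\Phi([y]_L)=[ry]_M$ can be written in two equivalent ways: as an element of $Mx$ and as an element of $xM$. Recall that $\circ\colon Mx\times xM\to MxM$ is defined by $\alpha x\circ x\beta = \alpha x\beta$ whenever the left factor has the form $\alpha x\in Mx$ and the right factor has the form $x\beta\in xM$; restricting to $M_x=xM\cap Mx$ gives the multiplication on the local divisor, and the same formula extends to the left action of $M_x$ on $xM$ and the right action of $M_x$ on $Mx$.

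First I would fix $y\in \Delta_r^*=T(r)$ and use the defining property of $T(r)$ to pick $y'\in A^*$ with $ry=y'r$. This immediately gives the two presentations
\[
\Phi([y]_L)=[ry]_M=[y'r]_M=[y']_Mx\in Mx \qquad\text{and}\qquad \Phi([y]_L)=[ry]_M=[r]_M[y]_M=x[y]_M\in xM.
\]
Next, to compute $\Phi([y]_L)\circ xm$, I would apply the $Mx$-form of $\Phi([y]_L)$ against the element $xm\in xM$: by the definition of $\circ$ we get $[y']_Mx\circ xm=[y']_Mxm=[y'r]_Mm=[ry]_Mm$, and this equals $x[y]_Mm$ by the second presentation above.

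For the right-action identity, I would instead pair the $xM$-form $x[y]_M$ of $\Phi([y]_L)$ with the element $mx\in Mx$: the definition of $\circ$ gives $mx\circ x[y]_M=mx[y]_M$, completing the calculation.

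The argument really is just bookkeeping with the two factorisations $ry=y'r$ and the definition of the $\circ$ operation; the only point that needs minor care is to verify that both equalities $\Phi([y]_L)\circ xm=x[y]_Mm$ and $mx\circ\Phi([y]_L)=mx[y]_M$ are well-defined, i.e. independent of the choice of $y'$. This follows because once the value is rewritten inside $M$ (as $[ry]_Mm$ or as $mx[y]_M$) the ambiguity in choosing $y'$ has disappeared, so no obstacle actually arises.
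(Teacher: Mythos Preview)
Your proof is correct and follows essentially the same approach as the paper's own proof: both use the factorisation $ry=y'r$ to write $\Phi([y]_L)=[y']_Mx\in Mx$ for the left action and $\Phi([y]_L)=x[y]_M\in xM$ for the right action, then apply the definition of $\circ$ directly. Your remark about well-definedness is a harmless addition; the paper omits it since the well-definedness of $\circ$ on $Mx\times xM$ was already asserted when the local divisor was introduced.
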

\begin{proof}
We have that $ry=y'r$ for some $y'\in A^*$.  Thus $\Phi([y]_L)\circ xm = [ry]_M\circ xm=[y'r]_M\circ xm = [y'r]_Mm=[ry]_Mm$.  On the other hand, $mx\circ \Phi([y]_L) = mx\circ [ry]_M = mx[y]_M$.  This completes the proof.
\end{proof}

Now we show that $Mx$ is a free right $M_x$-set and $xM$ is a free left $M_x$-set.  Let $\mathcal B$ consists of all $y\in A^*$ such that $y r$ contains no occurrence of $r$ except as a suffix.
Note that in particular if $r$ is a SOF word then
$\mathcal B = A^*\setminus A^*rA^*$.

\begin{Prop}\label{p:free.over.local}
The set $Mx$ is a free right $M_x$-set with basis $B$ the set of elements $[y r]_M$ with $y\in \mathcal B$.  Moreover, if $y,y'\in \mathcal B$, then $[y r]_M=[y'r]_M$ if and only if $y=y'$.
\end{Prop}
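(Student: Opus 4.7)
The plan is to show that every element of $Mx$ has a unique expression of the form $[yr]_M \cdot n$ with $y \in \mathcal{B}$ and $n \in M_x$, where $\cdot$ denotes the right action of $M_x$ on $Mx$ described in Proposition~\ref{p:action.of.L}. First note that $[yr]_M = [y]_M x$ certainly lies in $Mx$.

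For existence, given $mx \in Mx$, pick any representative of the form $s = s_0 r \in A^* r$ (such a representative exists since $mx = [s_0]_M x = [s_0 r]_M$). Factor $s = y r w$ where the displayed $r$ is the first occurrence of $r$ in $s$, i.e.\ $yr$ is the shortest prefix of $s$ ending in $r$. Then $y \in \mathcal{B}$, for any occurrence of $r$ inside $yr$ other than the suffix would give an earlier occurrence of $r$ in $s$. Since $yr$ and $s = yrw$ both lie in $A^* r \subseteq T(r)$ and $T(r)$ is left unitary, we conclude $w \in T(r) = \Delta_r^*$. Setting $n = \Phi([w]_L) \in M_x$, Proposition~\ref{p:action.of.L} yields
\[
[yr]_M \circ n \;=\; [yr]_M [w]_M \;=\; [yrw]_M \;=\; [s]_M \;=\; mx.
\]

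For uniqueness, suppose $[yr]_M \cdot n = [y'r]_M \cdot n'$ with $y, y' \in \mathcal{B}$ and $n, n' \in M_x$. By the surjectivity of $\Phi$ established in Proposition~\ref{p:define.map.to.divisor}, write $n = \Phi([w]_L)$ and $n' = \Phi([w']_L)$ for some $w, w' \in \Delta_r^*$. Then Proposition~\ref{p:action.of.L} gives $[yrw]_M = [y'rw']_M$. Since $y, y' \in \mathcal{B}$, both $yrw$ and $y'rw'$ are left canonical factorizations (the displayed $r$ really is the first occurrence of $r$), so Lemma~\ref{lem:lalle3.2} forces $y = y'$ and $[w]_L = [w']_L$, and hence $n = n'$. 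The moreover statement is the special case of uniqueness when $w = w' = \varepsilon$.

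The main subtlety lies in checking that the factorization of $s$ at its first occurrence of $r$ really does yield $y \in \mathcal{B}$ and a suffix $w \in T(r)$; once these facts are in hand, Proposition~\ref{p:action.of.L} and the left canonical factorization machinery of Lemma~\ref{lem:lalle3.2} do essentially all the work.
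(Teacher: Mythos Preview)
Your proof is correct and follows essentially the same approach as the paper's own proof: both establish existence by taking the left canonical factorization $s=yrw$ of a representative in $A^*r$, and both establish uniqueness by applying Proposition~\ref{p:action.of.L} and then Lemma~\ref{lem:lalle3.2} to the resulting left canonical factorizations. The only cosmetic difference is that the paper handles the ``moreover'' statement first and works directly with elements of the form $\Phi([w]_L)$, whereas you invoke the surjectivity of $\Phi$ to reduce a general $n\in M_x$ to that form and derive the ``moreover'' statement as the special case $w=w'=\varepsilon$.
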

\begin{proof}
The final statement follows from Lemma~\ref{lem:lalle3.2} because $y r\varepsilon$ and $y'r\varepsilon$ are left canonical factorizations of $y r$ and $y'r$.
For the other part, first note that if $y,y'\in \mathcal B$ and $w ,w'\in T(r)$ with $[y r]_M\circ \Phi([w ]_L) = [y'r]_M\circ \Phi([w']_L)$, then $[y rw ]_M=[y'rw']_M$ by Proposition~\ref{p:action.of.L}.  Since $w ,w'\in T(r)$, we have $y rw,y'rw'\in A^*r$ and, by definition of $\mathcal B$, clearly $y rw $ and $y'rw'$ are left canonical factorizations.  Thus $y=y'$ and $[w ]_L=[w']_L$ by Proposition~\ref{lem:lalle3.2}.  Therefore, $B=\{[y r]_M\mid y\in \mathcal B\}$ is a basis for a free sub-$M_x$-set of $Mx$.

Let $m\in Mx$; say $m=[zr]_M$.  Let the left canonical factorization of $zr$ be $y rw $.  Then $y\in \mathcal B$ and $w \in T(r)$.  Then we have, by Proposition~\ref{p:action.of.L}, that $[y r]_M\circ \Phi([w ]_L) = [y rw ]_M = [zr]_M=m$.  This completes the proof that $B$ is a basis for $Mx$.
\end{proof}

Of course the dual of Proposition~\ref{p:free.over.local} holds: the left action of $M_x$ on $xM$ is free.

We next want to check that $Mx$ is a free right $S$-set, where $S\leq L$ acts via $\Phi|_S$ and the action of $M_x$.  For this, the following elementary observation will be useful.

\begin{Lemma}\label{l:free.iterate}
Let $N$ be a monoid and $T\leq N$ a submonoid such that $N$ is a free right $T$-set
with basis $B$,
under the right multiplicative action of $T$ on $N$.
Let $X$ be a free right $N$-set with basis $B'$.  Then $X$ is a free right $T$-set with basis $B'B$.
\end{Lemma}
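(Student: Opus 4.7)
The plan is to verify directly that every element of $X$ admits a unique factorization through $B'B$ by iterating the two given free decompositions. First I would propose the candidate basis: for each $b'\in B'$ and $b\in B$, form the element $b'\cdot b\in X$ obtained by applying $b\in N$ to $b'$ via the right $N$-action on $X$. The set $B'B\subseteq X$ is defined to be the collection of all such products.

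Next I would prove existence of factorizations. Given $x\in X$, the freeness of $X$ as right $N$-set with basis $B'$ yields a unique representation $x=b'\cdot n$ with $b'\in B'$ and $n\in N$. The freeness of $N$ as right $T$-set with basis $B$ then yields a unique representation $n=bt$ with $b\in B$ and $t\in T$. Since $T$ acts on $X$ via its inclusion into $N$, associativity of the $N$-action gives $x=b'\cdot(bt)=(b'\cdot b)\cdot t$ in $X$, exhibiting $x$ as the image of $(b'\cdot b)\in B'B$ acted on by $t\in T$.

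For uniqueness, suppose $(b_1'\cdot b_1)\cdot t_1 = (b_2'\cdot b_2)\cdot t_2$ with $b_i'\in B'$, $b_i\in B$, $t_i\in T$. Re-bracketing via the $N$-action gives $b_1'\cdot(b_1 t_1)=b_2'\cdot(b_2 t_2)$ in $X$, so freeness of $X$ over $N$ with basis $B'$ forces $b_1'=b_2'$ and $b_1 t_1 = b_2 t_2$ in $N$. Freeness of $N$ over $T$ with basis $B$ then forces $b_1=b_2$ and $t_1=t_2$. Specialising this uniqueness to $t_1=t_2=1_N$ also shows that the assignment $(b',b)\mapsto b'\cdot b$ is injective, so $B'B$ is in bijection with $B'\times B$ and the factorization $x=(b'\cdot b)\cdot t$ of each $x\in X$ is truly unique.

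There is no substantive obstacle here; the argument is a routine iteration of the two freeness properties, and the only point requiring care is the compatibility of the $T$-action on $X$ with the $N$-action (which is automatic since $T$ acts as a submonoid of $N$) and the injectivity of $B'\times B\to X$ (which follows from the uniqueness step above). Thus $X$ is a free right $T$-set with basis $B'B$.
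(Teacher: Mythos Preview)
Your proof is correct and follows essentially the same approach as the paper: decompose $x\in X$ first via freeness over $N$, then via freeness over $T$, and verify uniqueness by reversing the re-bracketing. You include a few extra remarks (compatibility of the $T$-action, injectivity of $B'\times B\to X$) that the paper leaves implicit, but the argument is the same.
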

\begin{proof}
If $y\in X$, then $y=b'n$ with $b'\in B'$ and $n\in N$.  But $n=bt$ with $b\in B$ and $t\in T$.  Thus $y=b'bt$.  If $b_1'b_1t_1=b_2'b_2t_2$ with $b_i'\in B'$, $b_i\in B$ and $t_i\in T$, for $i=1,2$, then $b_1'=b_2'$ and $b_1t_1=b_2t_2$ by freeness of the $N$-action.  But then $b_1=b_2$ and $t_1=t_2$ by freeness of the $T$-action.  This completes the proof.
\end{proof}

\begin{Cor}\label{c:free.action}
The compressed one-relator monoid $S$ acts freely on the right of $Mx$ via $(m,[q]_S)\mapsto m[q]_M$,
for $q \in \Lambda_r^*$,
with basis $C$ the elements of the form $[t rw]_M$ with $t\in \mathcal B$ and $w \in \Delta_r^*$ with $w =\varepsilon$ or the last $\Delta_r$-letter of $w $ belonging to $\Psi_r$. Moreover, $[t_1 r w_1]_M = [t_2 r w_2]_M$ for $t_1,t_2\in \mathcal B$ and $w_1,w_2\in \Delta_r^*\Psi_r\cup \{\varepsilon\}$ if and only if $t_1=t_2$ and $[w_1]_L=[w_2]_L$.
\end{Cor}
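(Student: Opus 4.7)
The strategy is to combine Proposition~\ref{p:free.over.local} with Lemma~\ref{l:free.iterate}, where we take $N=M_x$, identify $N$ with $L$ via $\Phi$ (Proposition~\ref{p:define.map.to.divisor}), and take $T=S\leq L$. Since Proposition~\ref{p:free.over.local} already gives that $Mx$ is a free right $M_x$-set with basis $B=\{[yr]_M\mid y\in \mathcal B\}$, what remains is to describe $L$ as a free right $S$-set, under right multiplication, in a way compatible with the identification $\Phi$.

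For that, I would use the free product decomposition $L=\Psi_r^*\ast S$. By the normal form theorem for free products of monoids, every element of $L$ has a unique expression as an alternating product of non-trivial factors from $\Psi_r^+$ and from $S\setminus\{1\}$, and so $L$ is a free right $S$-set whose basis consists of $1$ together with all elements whose normal form ends with a $\Psi_r^+$-factor. Lifting normal forms back to $\Delta_r^*$, this basis is precisely $B_L=\{[w]_L\mid w\in \Delta_r^*\Psi_r\cup\{\varepsilon\}\}$: an element of $L$ lies in $B_L$ iff one can choose (any) representative $w\in \Delta_r^*$ whose last letter is in $\Psi_r$ or which is empty, and by uniqueness of normal form, $[w_1]_L=[w_2]_L$ for such $w_1,w_2$ exactly when the corresponding normal forms agree.

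Applying Lemma~\ref{l:free.iterate} then shows that $Mx$ is a free right $S$-set with basis $B\cdot B_L$. Using Proposition~\ref{p:action.of.L}, the product $[yr]_M\circ \Phi([w]_L)$ equals $[yr]_M[w]_M=[yrw]_M$ for $y\in \mathcal B$ and $w\in \Delta_r^*$, and therefore $B\cdot B_L=\{[trw]_M\mid t\in \mathcal B,\ w\in \Delta_r^*\Psi_r\cup\{\varepsilon\}\}=C$, which identifies the action of $[q]_S\in S$ on $m\in Mx$ with $m[q]_M$ (here $q\in\Lambda_r^*$).

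For the final clause, suppose $[t_1rw_1]_M=[t_2rw_2]_M$ with $t_i\in\mathcal B$ and $w_i\in\Delta_r^*\Psi_r\cup\{\varepsilon\}$. Translating back to the action, this reads $[t_1r]_M\circ \Phi([w_1]_L)=[t_2r]_M\circ \Phi([w_2]_L)$, so by freeness of $Mx$ as a right $M_x$-set (Proposition~\ref{p:free.over.local}) we get $[t_1r]_M=[t_2r]_M$ and $\Phi([w_1]_L)=\Phi([w_2]_L)$. The first equality gives $t_1=t_2$ by the last clause of Proposition~\ref{p:free.over.local}, and the second gives $[w_1]_L=[w_2]_L$ since $\Phi$ is an isomorphism by Proposition~\ref{p:define.map.to.divisor}. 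The converse is immediate from the definitions. The main conceptual step is the free product normal form argument identifying $B_L$; everything else is bookkeeping via the isomorphism $\Phi$ and the iterated freeness lemma.
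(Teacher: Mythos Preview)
Your proof is correct and follows essentially the same approach as the paper: both use Proposition~\ref{p:free.over.local} to get $Mx$ free over $M_x\cong L$ with basis $B$, the free product normal form for $L=\Psi_r^*\ast S$ to identify $L$ as a free right $S$-set, and Lemma~\ref{l:free.iterate} to combine these. The only cosmetic difference is that for the final clause the paper cites Lemma~\ref{lem:lalle3.2} directly, whereas you route through the freeness in Proposition~\ref{p:free.over.local} and the isomorphism $\Phi$; since Proposition~\ref{p:free.over.local} is itself proved via Lemma~\ref{lem:lalle3.2}, these are equivalent.
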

\begin{proof}
By Proposition~\ref{p:free.over.local} and Proposition~\ref{p:action.of.L}, we have that $L$ acts freely on the right of $Mx$ via $(m,[q]_L)\mapsto m[q]_M$ with basis elements of the form $[t r]_M$ with $t\in \mathcal B$ (and these are distinct).  But $L=\Psi_r^*\ast S$ and hence is a free right $S$-set on $1$ and those elements of $L$ ending in a $\Psi_r^*$-syllable in their free product normal form.  The freeness of the action now follows from Lemma~\ref{l:free.iterate}.  The final statement follows from Lemma~\ref{lem:lalle3.2}.
\end{proof}

Of course, $S$ acts freely on the left of $xM$ by the dual of Corollary~\ref{c:free.action}.

\subsection*{Compression from a topological point of view}
We now interpret compression from a topological viewpoint.  We retain the previous notation of this section. Let $\Gamma$ be the Cayley graph of $M$ with respect to $A$.  Notice that if $m\in Mx$, say $m=nx$, then $m[u']_M=n[ru']_M=n[rv']_M=m[v']_M$.  Let $p_{m,u'}$ and $p_{m,v'}$ be the paths starting at $m\in Mx$ labelled by $u'$ and $v'$, respectively; they both end at the same vertex by the previous discussion.  Let $K$ be the result of attaching $2$-cells $c_m$ to $\Gamma$, for each $m\in Mx$,  with boundary path $p_{m,u'}p_{m,v'}\inv$.   Notice that $M$ acts on the left of $K$ by cellular maps, extending the action on $\Gamma$, with $nc_m = c_{nm}$ for $n\in M$ and  $m\in Mx$ and that $K$ is, in fact, obtained from $\Gamma$ by adjoining a not necessarily projective $M$-cell $Mx\times B^2$.  In particular, the $M$-CW complex $K$ need not be free, nor projective.
By Proposition~\ref{prop:chains}, the group of $2$-chains $C_2(K)$ of $K$ is isomorphic to $\mathbb ZMx$ as a left $\mathbb ZM$-module.

Let $\Gamma'$ be the graph with vertex set $\{\ast\}\cup (B\times L)$ with $B$ as in Proposition~\ref{p:free.over.local}.  The edges of $\Gamma'$ are as follows.  There is an edge from $\ast$ to $(b,1)$ labelled by $b$, for each $b\in B$.  Also, for each $\alpha\in \Delta_r$ and $(b,n)\in B\times L$, there is an edge labelled by $\alpha$ from $(b,n)$ to $(b,n[\alpha]_L)$.  So $\Gamma'$ consists of a distinguished vertex $\ast$, which we have connected by edges to $|B|$ disjoint copies of the Cayley graph of $L$ with respect to $\Delta_r$.

\begin{remark}
\label{rmk:before:thm}
Alternatively, the vertices of $\Gamma'$ can be identified with $\{\ast\}\cup Mx$ via the mapping $(b,[w]_L)\mapsto b[w]_M$ by Proposition~\ref{p:free.over.local}.  From this point of view, there is an edge from $\ast$ to $b$ labelled by each element of $B$ and an edge from $m$ to $m[\alpha]_M$ labelled by $\alpha$ for each $m\in Mx$ and $\alpha\in \Delta_r$.
\end{remark}

Let $q_{(b,n),u'}$ be the
unique
path
in $\Gamma'$
starting at $(b,n)$ labelled by $u'\in \Delta_r^*$ and $q_{(b,n),v'}$ be the
unique
path in $\Gamma'$
starting at $(b,n)$
labelled by $v'$ over the alphabet $\Delta_r$. These paths are coterminal since $L$ satisfies the relation $u'=v'$, so we may attach a $2$-cell $c_{(b,n)}$ with boundary path $q_{(b,n),u'}q_{(b,n),v'}\inv$ for each $(b,n)\in B\times L$.
We denote the resulting $2$-complex by $K'$.
Thus $K'$ consists of a distinguished vertex $\ast$ attached by edges to $|B|$ disjoint copies of the Cayley complex of $L$ with respect to its presentation $\lb \Delta_r\mid u'=v'\rb$.  We aim to show that there is a forest $F\subseteq \Gamma$ such that if we contract each component of $F$ to a point to form $\Gamma/F$ and $K/F$, then
$\Gamma/F\cong \Gamma'$ and $K/F\cong K'$.

\begin{Thm}\label{t:compress.cayley.graph.general}
Let $\Gamma$ be the Cayley graph of the one-relator monoid $M=\lb A\mid u=v\rb$.  Let $r\in A^+$ compress $u=v$ and $L=\lb \Delta_r\mid u'=v'\rb$ where $u=ru'$ and $v=rv'$. Let $\Gamma'$ consist of $|B|$ copies of the Cayley graph of $L$ together with a new vertex attached by an edge to each identity vertex and $K'$ consist of $|B|$ copies of the Cayley complex of $L$ together with a new vertex attached by an edge to each identity vertex, where $B$ is as in  Proposition~\ref{p:free.over.local}.
Then there is a forest $F\subseteq \Gamma$ such that $\Gamma/F\cong \Gamma'$ and
$K/F\cong K'$. 
Denote the composition of the projection $\Gamma\to \Gamma/F$ with the isomorphism $\Gamma/F\to \Gamma'$ by $\psi$ (and similarly for the extension $K\to K'$).  Then $\psi$ enjoys the following properties (where we retain the above notation).
\begin{enumerate}
\item $\psi(M\setminus MxM) = \{\ast\}$ where $x=[r]_M$.
\item   If $w\in A^*rA^*$ has right canonical factorization $w=y z$ with $y\in A^*r$ and $y$ has left canonical factorization $y=w_1rw_2$, then $\psi([w]_M)=([w_1r]_M,[w_2]_L)$.
\item An edge of $\Gamma$ is sent to the same point as its initial vertex unless either
(i) it is an edge from an element of $M\setminus MxM$ to an element $b\in B$, in which case it is mapped to the edge $\ast\to (b,1)$ labelled by $b$, or (ii) it is an edge $m[w]_M\xrightarrow{\,\,a\,\,} m[wa]_M$ with $m\in Mx$ and $wa\in \Delta_r$, in which case it is mapped to the edge labelled by $\alpha=wa\in \Delta_r$ from $\psi(m[w]_M)=\psi(m)$ to $\psi(m[wa]_M)$.
\item A $2$-cell $c_m$ of $K$ with $m\in Mx$ is sent to $c_{\psi(m)}$. 
\item  $\psi$ is injective on the subset of vertices belonging to $Mx$ and bijective on the set of $2$-cells. 
\end{enumerate}
In particular,
the $M$-CW complex
$K$ is homotopy equivalent to $K'$.
\end{Thm}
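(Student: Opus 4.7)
The plan is to exhibit an explicit cellular map $\psi\colon K\to K'$, to identify the subcomplex $F\subseteq K$ as the collection of edges that $\psi$ collapses, to check that $F$ is a disjoint union of trees, and then to invoke the fact that collapsing a disjoint union of contractible subcomplexes of a CW complex is a homotopy equivalence.

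I would begin by defining $\psi$ on vertices by the recipe forced by property (2): send $m\in M\setminus MxM$ to $\ast$; and for $m\in MxM$ pick any representative $w\in A^*rA^*$, take its right canonical factorization $w=yz$ (which is well defined on $M$ by Lemma~\ref{lem:lalle3.1}), take the left canonical factorization $y=w_1 rw_2$ of $y$ (well defined by Lemma~\ref{lem:lalle3.2}), and output $([w_1 r]_M,[w_2]_L)\in B\times L$. Property (5) for vertices comes for free because a word $m_0\in A^*r$ is its own right canonical factorization (with trailing empty suffix), so $\psi$ sends $Mx$ to itself bijectively under the identification of $B\times L$ with $Mx$ from Remark~\ref{rmk:before:thm}. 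The edge map is then forced by cellularity: an edge between two $M\setminus MxM$-vertices collapses to $\ast$; an edge $m\to ma$ with $m\in M\setminus MxM$ and $ma\in MxM$ must have $ma\in B$, because $wa$ (for the unique $r$-free representative $w$ of $m$) can only pick up $r$ at the very end, forcing $ma=[yr]_M$ with $y\in\mathcal B$, yielding the type~(i) edges; and an edge between two $MxM$-vertices either completes a $\Delta_r$-letter (type~(ii), mapped to the corresponding edge in the copy of the Cayley graph of $L$) or else lies inside a single proper-prefix run, in which case it collapses. Lemma~\ref{l:char.Delta.r} and Lemma~\ref{l:last.occurrence} handle the bookkeeping of when a reading completes a $\Delta_r$-letter. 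Finally, a $2$-cell $c_m$ is mapped to $c_{\psi(m)}$, justified by observing that the path $p_{m,u'}$ projects to the path in the copy of the Cayley graph of $L$ at $\psi(m)$ reading $\alpha_1\cdots\alpha_k$ (the $\Delta_r$-factorization of $u'$) and similarly for $v'$, and these projected paths are coterminal because $L$ satisfies $u'=v'$.

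Next I would define $F$ to be the union of all collapsed edges and show it is a forest whose components are the vertex fibres of $\psi$, namely the full subgraph on $M\setminus MxM$ and, for each $m'\in Mx$, the full subgraph on $\psi^{-1}(m')$. The first is a tree because $u,v\in rA^*\cap A^*r$ forbids any application of the defining relation inside an $r$-free word, so the map $A^*\setminus A^*rA^*\to M\setminus MxM$ sending $w\mapsto [w]_M$ is a bijection and the induced subgraph is isomorphic to the prefix tree of $A^*\setminus A^*rA^*$ rooted at $\varepsilon$. The second is a tree because Lemma~\ref{l:embed.prefix} identifies the vertices of $\psi^{-1}(m')$ with $P_{\Delta_r}$ via $z\mapsto m'[z]_M$, and the induced subgraph is the prefix tree of $P_{\Delta_r}$ rooted at $\varepsilon$. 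Once $F$ is seen to be a forest and $\psi$ to be a bijection on edges and $2$-cells off $F$, the isomorphisms $\Gamma/F\cong\Gamma'$ and $K/F\cong K'$ are routine, and the homotopy equivalence $K\simeq K'$ follows from the standard CW fact that collapsing a disjoint union of contractible subcomplexes along a cofibration is a homotopy equivalence.

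The main obstacle is verifying that the fibres of $\psi$ really do give trees, and in particular the component on $M\setminus MxM$: a priori the Cayley graph of a monoid may have many cycles arising from distinct derivations of the same element, and the key input that kills every such cycle inside the $r$-free part of $\Gamma$ is that the defining relation can only act on words which contain $r$, so no non-trivial relation is ever applied inside $A^*\setminus A^*rA^*$. The analogous statement for the $P_{\Delta_r}$-fibres rests on Lemma~\ref{l:embed.prefix}, which itself rests on the canonical-factorization machinery of Lemmas~\ref{lem:lalle3.1} and~\ref{lem:lalle3.2}.
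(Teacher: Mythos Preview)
Your approach is essentially the same as the paper's, only with the order of presentation reversed: the paper first defines the forest $F$ explicitly (as the image of the prefix tree on $P_{\Delta_r}$ translated by each $m\in Mx$, together with the prefix tree on $A^*\setminus A^*rA^*$), then checks that contracting $F$ yields $\Gamma'$ and $K'$; you instead define $\psi$ first via the canonical factorizations and recover $F$ as the set of collapsed edges. The key inputs are identical in both versions: Lemma~\ref{l:embed.prefix} for injectivity of $z\mapsto m'[z]_M$ on $P_{\Delta_r}$, Lemma~\ref{l:char.Delta.r} and Lemma~\ref{l:last.occurrence} for the dichotomy ``either $wa\in P_{\Delta_r}$ or $wa\in\Delta_r$'', and the observation that no relation applies inside $A^*\setminus A^*rA^*$.

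There is one imprecision worth tightening. You write that the components of $F$ are ``the full subgraph on $\psi^{-1}(m')$'' and then that this induced subgraph ``is the prefix tree of $P_{\Delta_r}$''. These two descriptions need not coincide: if some $\alpha\in\Delta_r$ satisfies $n[\alpha]_L=n$ for some $n\in L$ (which can happen, e.g., when the presentation is subspecial with $r=v$ and $u'$ a single letter of $\Delta_r$), then the type-(ii) edge completing $\alpha$ has both endpoints in the same fibre $\psi^{-1}(m')$ yet is sent by $\psi$ to a \emph{loop} in $\Gamma'$, not to a point. Such an edge lies in the induced subgraph on the fibre but not in $F$. The fix is simply to say that the component of $F$ at $m'$ is the subgraph of collapsed edges within $\psi^{-1}(m')$, and then your identification with the prefix tree on $P_{\Delta_r}$ (rooted at $m'$) goes through exactly as you describe. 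With this adjustment the argument is complete and matches the paper's.
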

\begin{proof}
As before, denote by $P_{\Delta_r}$ the collection of proper prefixes of elements of $\Delta_r$.  This is a prefix closed subset of $A^*$ and hence induces a subtree $T_{\Delta_r}$ of the Cayley graph of $A^*$
with respect to generating set $A$.
If $m\in Mx$, then there is a graph morphism $\rho_m\colon T_{\Delta_r}\to \Gamma$ sending a vertex $t$ to $m[t]_M$ and an edge $t\xrightarrow{\,\,a\,\,}t a$ to the edge $m[t]_M\xrightarrow{\,\,a\,\,}m[ta]_M$.

\begin{claim}
For each $m\in Mx$, the graph morphism $\rho_m\colon T_{\Delta_r}\to \Gamma$ is injective.
Furthermore, if we put $T_m=\rho_m(T_{\Delta_r})$, then $T_m\cap T_{m'}=\emptyset$ unless $m=m'$.
\end{claim}
\begin{proof}[Proof of Claim]
It is enough to show that $\rho_m$ is injective on vertices since $T_{\Delta_r}$ is a tree.  Write $m=[tr]_M$ with $t\in A^*$ and let $w ,w'\in P_{\Delta_r}$.  Suppose that $m[w]_M=m[w']_M$.  Then $[trw]_M=[trw']_M$ and so $w =w'$ by Lemma~\ref{l:embed.prefix}.  Thus $\rho_m$ is injective.  Next suppose that $T_m\cap T_{m'}\neq \emptyset$ with $m,m'\in X$.  Then we can find $t,t'\in A^*$ and $w ,w'\in P_{\Delta_r}$ with $[t r]_M=m$, $[t'r]_M=m'$ and $m[w]_M=m'[w']_M$. Then $[t rw]_M=[t'rw']_M$ and so $m=[t r]_M=[t'r]_M=m'$ by Lemma~\ref{l:embed.prefix}.  This establishes the claim.
\end{proof}

There is another subtree of $\Gamma$ that we shall need, disjoint from the previous ones.  Let $Q= A^*\setminus A^*rA^*$.  Then $Q$ is prefix closed and hence induces a subtree $T_Q$ for the Cayley graph of $A^*$.  Elements of $Q$ are obviously in singleton classes for the congruence associated to $M$,
since both the words $u$ and $v$ in the defining relation $u=v$ in the presentation for $M$ contain $r$ as a subword.  These congruence classes make up precisely the elements of $M\setminus MxM$.   Thus there is a natural embedding $T_Q\to \Gamma$ sending $t$ to $[t]_M$ and $t\xrightarrow{\,\,a\,\,}t a$ to $[t]_M\xrightarrow{\,\,a\,\,}[ta]_M$.  We denote the image tree by $T'\subseteq \Gamma$.  Notice that $T'$ is disjoint from $\coprod_{m\in Mx} T_m$ by construction as the vertices of $T_m$ are contained in $MxM$ and $T'$ is disjoint from $MxM$.  Thus $F=T'\amalg \coprod_{m\in Mx}T_m$ is a forest in $\Gamma$ (and hence a subcomplex of $K$) and so we can contract each of the component trees to a point (we prefer to think of the trees as being contracted to their roots) without changing the homotopy type of $\Gamma$ (respectively $K$).

The first thing to observe is that contracting these trees identifies no two vertices
from $Mx$ (since the various trees of $F$ are disjoint and $m\in T_m$ for $m\in Mx$). All vertices in $M\setminus MxM$ are identified with the vertex $1$, let us call this class $\ast$.  We claim that all other vertices of $\Gamma$ get identified with a unique element of $Mx$.  Indeed, if $m\in MxM$, we can write $m=[t rw]_M$ and without loss of generality we can assume that the factorization exhibits the last occurrence of $r$. Then $w \in P_{\Delta_r}$ by Lemma~\ref{l:last.occurrence}.  Therefore, $m$ belongs to the tree $T_{[t r]_M}$ and hence gets identified with $[t r]_M$ in the quotient.  Thus $\Gamma/F$ has vertices (the classes of elements of) $Mx$, all of which are distinct, together with an additional vertex $\ast$ to which $T'$ was contracted.  Note that $Mx$ can be identified with $B\times L$ by Proposition~\ref{p:free.over.local} by sending $[w]_M$ to $([yr]_M,[z]_L)$ where $w\in A^*r$ has left canonical factorization $w=yrz$ and this identification is one of right $L$-sets (see the last paragraph of the proof of Proposition~\ref{p:free.over.local}). This allows us to define $\psi$ on vertices and establishes (1) and (2).

Let us next see what happens to the edges of $\Gamma$.  We claim that each edge of $\Gamma$ belongs to some $T_m$ or $T'$ except edges of the form $m[w ]\xrightarrow{\,\,a\,\,}m[w a]$ with $m\in Mx$ and $w a\in \Delta_r$, which we label in the quotient by $\alpha=w a\in \Delta_r$, or edges of the form $[t]\xrightarrow{\,\,a\,\,}[t a]$ where $t \notin A^*rA^*$ and $t a\in A^*r$, which we label in the quotient by $[ta]_M\in B$.
So the edges of $\Gamma/F$ will be labelled over the infinite alphabet $\Delta_r\cup B$.

Indeed, first note that these edges do not belong to any of the trees and so survive contracting the trees.  For instance, for an edge of the form $m[w ]\xrightarrow{\,\,a\,\,}m[w a]_M$ with $m\in Mx$ and $w a\in \Delta_r$, the geodesic in $T_m$ from $m$ to $m[w]_M$ is labelled by $w\in P_{\Delta_r}$ and there is no edge in $T_{\Delta_r}$ from $w $ labelled by $a$.  Note that in the quotient $\Gamma/F$, this edge goes from (the class of) $m$ to (the class of) $m[wa]_M$ and is labelled by $wa\in \Delta_r$. Similarly, if we have an edge of the form $[t]_M\xrightarrow{\,\,a\,\,}[t a]_M$ where $t\notin A^*rA^*$ and $t a\in A^*r$, then the geodesic in $T'$ from $1$ to $[t]_M$ is labelled by $t$ and there is no edge in $T_Q$ from $t$ with label $a$.    Moreover, $t a\in \mathcal Br$ since $t$ does not have $r$ as a factor and so $[ta]_M\in B$. In $\Gamma/F$ this edge goes from $\ast$ to (the class of) $[ta]_M\in B$ and is labelled by $[ta]_M$.  Under $\psi$ it maps to $([ta]_M,1)$.

  The remaining edges of $\Gamma$ are either of the form $[t]_M\xrightarrow{\,\,a\,\,}[t a]_M$ with $t,t a\notin A^*rA^*$, and hence are contracted to $1$, i.e., the class $\ast$, or of the form $m\xrightarrow{\,\,a\,\,}m[a]_M$ with $m\in MxM$. Writing $m=[t rw]_M$ with the displayed $r$ the last occurrence, we have that $w \in P_{\Delta_r}$ by Lemma~\ref{l:last.occurrence}.  If $w a\in P_{\Delta_r}$, then the edge    $m\xrightarrow{\,\,a\,\,}m[a]_M$  belongs to $T_{[t r]_M}$ and is contracted to $[tr]_M$.  Otherwise, $wa$ must have a non-empty prefix in $T(r)$ by Lemma~\ref{l:char.Delta.r}. But no proper non-empty prefix of $wa$ belongs to $T(r)$ since $w\in P_{\Delta_r}$ (again by Lemma~\ref{l:char.Delta.r}).  Thus we conclude that $wa\in T(r)$ and is irreducible in $T(r)$, that is,   $w a\in \Delta_r$.  Therefore, our edge is of the form $[t r]_M[w ]_M\xrightarrow{\,\,a\,\,}[t r]_M[w a]_M$ with $[t r]_M\in Mx$ and $w a\in \Delta_r$, as desired.

 We have thus far established items (1)--(3).

Let $u'=\alpha_1\cdots \alpha_k$ and $v'=\beta_1\cdots \beta_{\ell}$ with the $\alpha_i,\beta_j\in \Delta_r$ and note that each proper prefix of $\alpha_i,\beta_j$ belongs to $P_{\Delta_r}$. If $m\in Mx$, then each edge of $p_{m,u'}$ is contracted except for the last edge of the subpath labelled by $\alpha_i$ from $m[\alpha_1\cdots\alpha_{i-1}]_M$ to $m[\alpha_1\cdots\alpha_i]_M$, and this edge is labelled by $\alpha_i$ in the quotient and goes from (the class of) $m[\alpha_1\cdots \alpha_{i-1}]_M$ to (the class of) $m[\alpha_1\cdots \alpha_i]_M$. Thus the path $p_{m,u'}$ is mapped under $\psi$ to a reparameterization of $q_{\psi(m),u'}$.    Similarly, $p_{m,v'}$ is mapped under $\psi$ to a reparameterization of $q_{\psi(m),v'}$.  Thus the $2$-cell $c_m$ maps to the cell $c_{\psi(m)}$ on the level of sets, but due to the contraction of edges, the attaching map for $c_{\psi(m)}$ in $K'$ is a reparameterization of the attaching map for the image of $c_m$ in $K/F$.  Since $\psi$ is injective on $Mx$, we have the $\psi$ is injective on the set of $2$-cells.  On the other hand, $c_{(b,[w]_L)} = \psi(c_{b[w]_M})$ for $b\in B$ and $w\in \Delta_r^*$ and so $\psi$ is also surjective on the set of $2$-cells.  This completes the proof.
\end{proof}

Note that
in the above theorem
$K$ is homotopy equivalent to $K'$, which is homotopy equivalent to a wedge of copies of the Cayley complex of $L$.

Theorem~\ref{t:compress.cayley.graph.general} forms the topological underpinnings of our approach, but to proceed further we need to distinguish the subspecial and the non-subspecial cases.

\section{The case of subspecial monoids}\label{sec_special}

Let $M=\lb A\mid u=v\rb$ be a finite one-relator presentation such that the relation $u=v$ is subspecial with $|v|<|u|$.
This means we can write
$u=u''v=vu'$ with $u'',u'\in A^+$.
 Since the case of special monoids, when $v$ is empty, was handled in~\cite[Section~3]{GraySteinberg2}, we shall tacitly assume that $v$ is non-empty throughout the rest of this section.  Also, this notation shall be fixed for the section.

\subsection*{Preliminaries on subspecial monoids}

As already explained in Section~\ref{s:compression} above, the word problem for subspecial one-relator monoids was solved by Lallement~\cite{Lallement1974} via a reduction to the case of special one-relator monoids, which was solved by Adjan~\cite{Adjan1966}. Note that $v$ is the maximum length word compressing $u=v$ and so the corresponding compression is the Lallement compression of $M$.  Put $\Delta=\Delta_v$, $\Lambda=\Lambda_v$ and $\Psi=\Psi_v$, where we retain the notation of
Section~\ref{s:compression}.
If $u'=\alpha_1\cdots \alpha_k$ with $\alpha_i\in \Delta$, then we have that
\[L=\lb \Delta\mid \alpha_1\cdots \alpha_k=1\rb,\] $\Lambda=\{\alpha_1,\ldots,\alpha_k\}$, $\Psi=\Delta\setminus \Lambda$ and the Lallement compression of $M$ is
\[
S=\lb \Lambda\mid \alpha_1\cdots \alpha_k=1\rb
\]
which is a special one-relator monoid.  To decongest notation
throughout this section
we write $[w]$, rather than $[w]_M$, to denote the image in $M$ of $w \in A^*$.
However, we will continue to use the notation $[w]_S$ and $[w]_L$ for the images in $S$ and $L$ of words over the alphabets $\Lambda$ and $\Delta$, respectively.
Following our previous notation (where $r=v$), we  put $x=[v]$.

We recall that an element $m \in M$ is said to be (von Neumann) \emph{regular} if there exists an element $n \in M$ such that $mnm=m$.
We shall need later that $x$ is a regular element of $M$.

\begin{Prop}\label{p:is.regular}
The element $x\in M$ is regular. Thus $Mx$ is a projective left $M$-set.
\end{Prop}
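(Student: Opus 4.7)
The plan is to establish regularity of $x$ by exhibiting an explicit $n \in M$ with $xnx = x$, and to deduce projectivity of $Mx$ from a standard idempotent argument. Specifically, once $xnx = x$, the element $e := nx$ is an idempotent in $M$ (since $e^2 = n(xnx) = nx = e$) satisfying $Mx = Me$: the inclusion $Me \subseteq Mx$ holds because $e = nx \in Mx$, while $x = xnx = xe \in Me$ gives the reverse inclusion. Hence $Mx$ is the direct summand of the free left $M$-set $M$ cut out by the idempotent $e$, and thus projective.

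The key input for producing such an $n$ is the word equation $u''v = vu'$, which holds in $A^*$ because both sides equal $u$. By the classical characterization of solutions to the conjugate word equation $xz = zy$ (Lyndon--Sch\"utzenberger), there exist $p, q \in A^*$ with $pq \neq \varepsilon$ and an integer $k \geq 0$ such that
\[
u'' = pq, \qquad u' = qp, \qquad v = p(qp)^k = (pq)^k p,
\]
whence $u = u''v = (pq)^{k+1} p$. The defining relation $u=v$ of $M$ thus takes the form $(pq)^{k+1}p = (pq)^k p$, which upon iteration gives $[(pq)^m p]_M = [(pq)^k p]_M$ for every $m \geq k$.

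With these ingredients in place, I take $n = [q]_M$ and compute
\[
xnx = [(pq)^k p \cdot q \cdot (pq)^k p]_M = [(pq)^{2k+1} p]_M = [(pq)^k p]_M = x,
\]
the last equality holding by the iterated relation since $2k+1 \geq k$. This establishes that $x$ is regular, and projectivity of $Mx$ follows as above.

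The main obstacle is isolating the correct $n$. The obvious candidates $n = 1$, $n = [u']_M$, and $n = [u'']_M$ all fail in general: $n = 1$ requires $x$ to be idempotent, which fails whenever $k \geq 1$, while $n = [u']_M$ produces $x[u']_M x = x \cdot x = x^2$, which is typically not equal to $x$. The conjugate-word parameterization exposes the rotational symmetry of the relation and singles out $[q]_M$ as the unified choice that works in both the `split' case $|u''| \geq |v|$ (where $k = 0$ and $u = vqv$) and the `overlapping' case $|u''| < |v|$ (where $k \geq 1$ and $v$ itself has periodic structure). Degenerate sub-cases such as $q = \varepsilon$ (forcing $u$ and $v$ to be powers of a single word $p$) reduce immediately to $x$ being idempotent, so no separate treatment is needed.
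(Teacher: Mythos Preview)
Your proof is correct and takes a genuinely different route from the paper's. The paper argues as follows: from $[v]=[vu']$ one gets $[v]=[v(u')^k]$ for all $k\geq 0$; since $u'\in T(v)$, the word $v(u')^k$ lies in $A^*v$, and for $k>|v|$ a length count forces the terminal occurrence of $v$ to fall entirely inside $(u')^k$, so $v(u')^k=vyv$ for some $y\in A^+$ and hence $x=x[y]_Mx$. Your argument instead invokes the conjugate-word parameterisation $u''=pq$, $u'=qp$, $v=(pq)^kp$ coming from $u''v=vu'$, which turns the defining relation into $(pq)^{k+1}p=(pq)^kp$ and lets you read off the explicit witness $n=[q]_M$ with $xnx=[(pq)^{2k+1}p]_M=[(pq)^kp]_M=x$. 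Both deductions of projectivity of $Mx$ from regularity are the same standard idempotent trick.

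What each approach buys: the paper's argument stays inside the $T(v)$ machinery already set up for compression and needs no outside input, but produces the witness $y$ only implicitly. Your argument is self-contained, avoids $T(v)$ entirely, and yields a uniform closed-form witness $[q]_M$ valid across the overlapping and non-overlapping cases simultaneously; the cost is importing the (admittedly elementary) Fine--Wilf/Lyndon--Sch\"utzenberger description of solutions to $xz=zy$. Your final paragraph about degenerate cases and failed candidates is commentary rather than argument, but the core proof stands without it.
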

\begin{proof}
We have $[v]=[u]=[vu']$ and so $[v]=[v][u']^n$ for all $n\geq 0$.  Choose $k>|v|$.  Since $u'\in T(v)$, we have that $(u')^k\in T(v)$ and so $v(u')^k\in A^*v$.  But since $k>|v|$, this means the suffix $v$ of $v(u')^k$ is a proper suffix of $(u')^k$.  So $v(u')^k = vy v$ with $y\in A^+$.  Thus $x=[v]=[v][u']^k=[vy v] =x[y]x$ and so $x$ is regular.  The final statement follows because $Mx=M[y]x$ and $[y]x$ is an idempotent.
\end{proof}

Note that it follows, cf.~\cite{CS15}, that $M_x\cong [y]xM[y]x$ (with $y$ as in the proof above) and hence the group of units of $S$ (which equals the group of units of $L$) is a maximal subgroup of $M$.
In fact, it is a one-relator group and has torsion if and only if $u'$ is a proper power, as the following lemma shows.

\begin{lemma}\label{lem:max:subgroups}
Let $M=\lb A\mid u=v\rb$ be a subspecial, but not special, one-relator monoid with $u = u'' v = v u'$ where $u'', u' \in A^+$
and let $S$ be the associated finitely generated special one-relator monoid obtained by compressing $M$ with respect to $v$.
Let $G$ be the group of units of $S$.
Then, for every non-identity idempotent of $M$, the maximal subgroup of $M$ containing that idempotent is isomorphic to $G$.
Moreover, the following are equivalent:
\begin{enumerate}
\item the word $u'$ is a proper power in $A^+$;
\item the group of units $G$ of  $S=\lb \Lambda\mid \alpha_1\cdots \alpha_k=1\rb$ is a one-relator group with torsion;
\item all maximal subgroups of $M$ are one-relator groups with torsion (except the group of units, which is trivial).
\end{enumerate}
\end{lemma}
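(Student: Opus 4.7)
The first statement, that every non-identity idempotent of $M$ has maximal subgroup isomorphic to $G$, is the most substantial. The strategy is to work in the local divisor at $x=[v]$. By Proposition~\ref{p:is.regular}, $x$ is regular, and the proof of that proposition produces an explicit idempotent $e_0=[y]x$ in the $\mathcal{H}$-class of $x$. By Proposition~\ref{p:define.map.to.divisor}, the local divisor $M_x$ is isomorphic to $L=\Psi^*\ast S$, and since the free monoid $\Psi^*$ has trivial group of units, the group of units of $L$ coincides with that of $S$, namely $G$. The standard correspondence between the group of units of a local divisor and the maximal subgroup at the associated idempotent (see~\cite{CS15}) then identifies the maximal subgroup of $M$ at $e_0$ with $G$. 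To extend to an arbitrary non-identity idempotent $e$, my plan is to show that $e$ is $\mathcal{D}$-equivalent to $e_0$ in $M$, from which the conclusion follows by Green's theorem. The containment $e\in MxM$ follows from Lemma~\ref{lem:lalle3.1}: if $e=[w]$ with $w\neq\varepsilon$ and $[w^2]=[w]$, then comparing right canonical factorizations of $w^2$ and $w$ forces $v$ to occur as a factor of $w$, so $e\in MxM$. The $\mathcal{J}$-upward direction—producing $a,b\in M$ with $aeb=x$—is more delicate, and is where I would use the freeness of the right $M_x$-action on $Mx$ from Proposition~\ref{p:free.over.local} (and its dual on $xM$) together with the regularity of the idempotent $e$.

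For the implication (1)~$\Leftrightarrow$~(2), I would first invoke the structure theory of special one-relator monoids developed in~\cite{GraySteinberg2} to identify $G$ with the one-relator group $\langle\Lambda\mid\alpha_1\cdots\alpha_k\rangle$. By the classical theorem of Magnus (see, e.g.,~\cite{LyndonAndSchupp}), a one-relator group $\langle A\mid r\rangle$ has torsion if and only if $r$ is a proper power in $F(A)$. Since $\alpha_1\cdots\alpha_k$ is a positive word, hence freely reduced in $F(\Lambda)$, being a proper power in $F(\Lambda)$ is equivalent to being a proper power in $\Lambda^+$. It therefore suffices to show that $u'$ is a proper power in $A^+$ if and only if $\alpha_1\cdots\alpha_k$ is a proper power in $\Lambda^+$. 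The forward direction is immediate. For the converse, suppose $u'=w^n$ in $A^+$ with $n\geq 2$; I would show $w$ lies in $\Delta^*$ by verifying that $T(v)$ is \emph{pure} in $A^*$ (that is, $w^n\in T(v)$ implies $w\in T(v)$), which rests on a combinatorial analysis of the defining condition $vw\in A^*v$ together with the left-unitary structure of $T(v)$. Given $w\in\Delta^*$, uniqueness of the $\Delta$-factorization of $u'=w^n=\alpha_1\cdots\alpha_k$ forces every letter of $w$ to appear among $\alpha_1,\ldots,\alpha_k$, whence $w\in\Lambda^+$.

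Finally, (2)~$\Leftrightarrow$~(3) is immediate from the first statement: since every non-identity maximal subgroup of $M$ is isomorphic to $G$, the condition that all non-identity maximal subgroups are one-relator groups with torsion is equivalent to the same condition on $G$ alone. The principal obstacle in the proof plan is the $\mathcal{J}$-upward direction in the first paragraph, i.e.\ promoting $e\leq_{\mathcal{J}} x$ to $e\mathrel{\mathcal{J}} x$ for arbitrary non-identity idempotents $e$; handling this requires a careful exploitation of the free $L$-action structure rather than purely syntactic arguments. A secondary, but still non-trivial, combinatorial point is the purity of $T(v)$ needed to transfer proper powers between $A^+$ and $\Lambda^+$.
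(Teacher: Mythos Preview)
Your overall architecture is sound and closely parallels the paper's, but the place you correctly flag as the ``principal obstacle'' is a genuine gap, and your proposed attack on it is not the one that works.

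For the first statement, the paper does \emph{not} try to prove the $\mathscr{J}$-upward inequality $x\in MfM$ at all. Instead, having written the arbitrary non-identity idempotent $f$ as $f=m_1em_2$ (where $e$ is the distinguished idempotent with $eMe\cong \Psi^*\ast S$), it constructs the element $e_0=em_2fm_1e\in eMe$ and checks by direct computation that $e_0$ is idempotent with $e_0\gr em_2f\gl f$, giving $e_0\mathrel{\mathscr{D}}f$ in $M$. Then it invokes a structural result about special monoids (Malheiro~\cite{Malheiro2005}): in $eMe\cong \Psi^*\ast S$, every idempotent is $\mathscr{D}$-related to the identity $e$, so $f\mathrel{\mathscr{D}}e_0\mathrel{\mathscr{D}}e$. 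Your plan to exploit freeness of the $M_x$-action on $Mx$ to produce $a,b$ with $aeb=x$ is not obviously workable---freeness of a one-sided action gives you cancellation-type information, not the two-sided divisibility you need---and even if you obtained $e\mathrel{\mathscr{J}}e_0$, you would still owe an argument that $\mathscr{J}=\mathscr{D}$ here before concluding anything about maximal subgroups. The paper's ``sandwich idempotent'' trick plus the Malheiro input bypasses both difficulties.

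Two smaller points. First, in your (1)$\Leftrightarrow$(2) discussion you have the labels reversed: the direction that is immediate is ``$\alpha_1\cdots\alpha_k$ a proper power in $\Lambda^+$ implies $u'$ a proper power in $A^+$'' (since $\Lambda^+\subseteq A^+$), while the direction requiring the purity of $T(v)$ is the one you then go on to sketch. Second, your purity strategy is exactly right and matches the paper's: the paper proves $z\in T(v)$ from $u'=z^n$ via the identity $v(u')^k=(u'')^kv$, which for $k>|v|$ forces $v$ to end in a power of $z$ up to a suffix of $z$, from which $vz\in A^*v$ follows by a short rearrangement. So your combinatorial plan there would go through once made explicit.
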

\begin{proof}
Since $u,v$ are non-empty, the group of units of $M$ is clearly trivial.
We begin by proving that all other maximal subgroups of $M$ are isomorphic to $G$.
To prove this we use an argument which is similar to~\cite[Corollary~3.11]{Lallement1974}.
Once proved, the equivalence of (2) and (3) will then immediately follow.
In this proof we will make use of Green's relations  $\gl$, $\gr$, and $\mathscr{D}$ on a semigroup $S$.
These are natural equivalence relations defined on a semigroup defined in terms of the ideal structure.
See~\cite[Chapter~2]{Howie} for definitions of these relations.

By Proposition~\ref{p:is.regular}, we have that $x$ is regular and so $xyx=x$ for some $y\in M$.  Then $e=xy$ is an idempotent with $ex=x$ and $e\mathrel{\mathscr D} x$.  It follows from~\cite{CS15} that the local divisor $M_x$ is isomorphic to the local divisor $M_e$, which in turn is just $eMe$ with its usual product as a subsemigroup of $M$.  Thus $eMe\cong \Psi^*\ast S$ by Proposition~\ref{p:define.map.to.divisor} and so the group of units of $eMe$ is the group $G$ of units of $S$.

Suppose that $f\in M\setminus \{1\}$ is an idempotent.
We show that $f\mathrel{\mathscr D}e_0$ with $e_0$ an idempotent of $eMe$.
Since $eMe \cong \Psi^* \ast S$ where $S$ is a finitely presented special monoid, it follows from~\cite[Theorem~4.6]{Malheiro2005} that every idempotent in $eMe$ is $\mathscr{D}$-related in the monoid $eMe$ to the idempotent $e$, and hence every maximal subgroup of $eMe$ is isomorphic to its group of units.
From this it will then follow that in the monoid $M$ we have $f\mathrel{\mathscr D}e_0\mathrel{\mathscr D}e$.
Since the maximal subgroups at $\mathscr D$-equivalent idempotents are isomorphic, this will complete the proof that the maximal subgroup of $M$ containing $f$ is isomorphic to $G$.
Let $p \in A^+$ with $f=[p]$.
If $p$ does not contain $v$ as a subword, then neither does $p^2$ and hence $[p] \neq [p^2]$ since no relations can be applied to either of the words $p$ or $p^2$.  Thus $p = p_1 v p_2$ and so $f=m_1e m_2$ for some $m_1,m_2\in M$ using that $ex=x$.  Let $e_0 = em_2fm_1e\in eMe$.  Then $e_0^2=em_2fm_1e m_2fm_1e= em_2f^3m_1e=em_2fm_1e=e_0$.  Also note that $e_0\gr em_2f\gl f$.  Indeed, $e_0\in em_2fM$ trivially.  But also $e_0e m_2f = em_2fm_1e m_2f = em_2f^3=em_2f$ and so $e_0\gr em_2f$.  Trivially, $em_2f\in Mf$.  But $f=f^2=m_1e m_2f$ and so $f\gl em_2f$.  Thus $e_0\mathrel{\mathscr D} f$, as required.

To finish the proof we shall show the equivalence of (1) and (2).
We have $M=\lb A\mid u=v\rb$ with $u = u'' v = v u'$ where $u'', u' \in A^+$.
Suppose that $u'$ is a proper power, say $u' = z^n$ for some word $z \in A^+$ and some $n > 1$.
As observed above we have $z^n = u' \in T(v) = \Delta^*$.
We claim that in fact $z \in T(v)$.
To prove this observe that for all $k \in \mathbb{N}$ we have
\[
v(u')^k = u'' v (u')^{k-1} = \ldots = (u'')^k v.
\]
Choosing $k > |v|$ this implies that $v = u_2 (u')^m$ for some $m \geq 0$ and some suffix $u_2$ of $u'$.
Since $u' = z^n$, this in turn implies that $v = z'z^t$ for some $t \geq 0$ and some suffix $z'$ of $z$.
It now follows, writing $z = z''z'$, that
\[
vz = z'z^tz = z'zz^t = z'z''z'z^t = z'z''v,
\]
which proves that $z \in T(v)$.
Now $z \in T(v) = \Delta^*$ so we can write $z = \gamma_1 \ldots \gamma_q$ with $\gamma_i \in \Delta^*$.
Hence
\[
u' = z^n = (\gamma_1 \ldots \gamma_q)^n
\]
with $n>1$. In this case this implies that
\[
S=\lb \Lambda\mid
(\gamma_1 \ldots \gamma_q)^n=1
\rb
\]
is a special one-relator monoid with torsion.
It then follows
that the group of units $G$ of $S$ is a one-relator group with torsion (see, e.g.,  the argument in~\cite[Section~3]{GraySteinberg2}).
This proves that if $u'$ is a proper power, then $M$ has maximal subgroups with torsion.
The converse is also clearly true: if the group of units of $S$ has torsion then the word $u' = \alpha_1 \ldots \alpha_k$ must be a proper power in $\Lambda^+$ and hence in $A^+$; see~\cite[Section~3]{GraySteinberg2}.
This completes the proof of the lemma.
\end{proof}

Note that the above argument shows that all non-trivial idempotents in a subspecial one-relator monoid are $\mathscr D$-equivalent.

If $V$ is a left $\mathbb ZM$-module, then $xV$ is naturally a left $\mathbb ZM_x$-module, and hence a $\mathbb ZS$-module, via $m\circ w  = rxy=rw $ where $m=rx\in M_x$ and $w =xy$.  Moreover, we have $x\mathbb ZM\cong \mathbb ZxM$.  As $xM$ is a free left $S$-set by the dual of Proposition~\ref{p:free.over.local}, it follows that  if $F$ is a free $\mathbb ZM$-module, then $xF$ is a free $\mathbb ZS$-module and if $P$ is a projective $\mathbb ZM$-module, then $xP$ is a projective $\mathbb ZS$-module.

\begin{Prop}\label{p:nat.iso}
Let $V$ be a $\mathbb ZM$-module.  Then $xV$ is naturally isomorphic to $\Hom_{\mathbb ZM}(\mathbb ZMx,V)$ as a left $\mathbb ZS$-module.  Hence the functor $V\mapsto xV$ is exact.
\end{Prop}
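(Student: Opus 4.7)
The plan is to exhibit $\mathbb{Z}Mx$ as a cyclic projective $\mathbb{Z}M$-module generated by $x$, then apply the classical ``evaluate at a generator'' isomorphism, and finally track the local-divisor action in order to obtain $\mathbb{Z}S$-equivariance. By Proposition~\ref{p:is.regular}, $x$ is regular, so I pick $y \in M$ with $xyx = x$ and set $e = xy \in E(M)$, giving $ex = x$. The map $Me \to Mx$, $ae \mapsto ax$, is then an isomorphism of left $M$-sets (injectivity: if $a_1 x = a_2 x$, right-multiplying by $y$ gives $a_1 e = a_2 e$), so $\mathbb{Z}Mx \cong \mathbb{Z}Me$ is a direct summand of $\mathbb{Z}M$ and therefore projective.

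I would then define $\Phi \colon \Hom_{\mathbb{Z}M}(\mathbb{Z}Mx, V) \to xV$ by $\Phi(\phi) = \phi(x)$; this lands in $xV$ because $\phi(x) = \phi(ex) = e\phi(x) = x \cdot (y\phi(x))$. Its inverse $\Psi$ sends $w = xv \in xV$ to the $\mathbb{Z}M$-linear map determined by $ax \mapsto a w$, the well-definedness reducing to the observation that $a_1 x = a_2 x$ implies $a_1 xv = a_2 xv$, i.e.\ $a_1 w = a_2 w$. Mutual inversion ($\Phi\Psi = \mathrm{id}$ via $\Psi(w)(x) = w$, and $\Psi\Phi = \mathrm{id}$ via $\mathbb{Z}M$-linearity) and naturality in $V$ are immediate from the formulas. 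For $\mathbb{Z}M_x$-equivariance, which then restricts to $\mathbb{Z}S$-equivariance through the embedding $S \hookrightarrow L \cong M_x$ of Proposition~\ref{p:define.map.to.divisor}, the Hom side carries the action $(m\phi)(y) = \phi(y \circ m)$ coming from the right $M_x$-action on $Mx$ of Proposition~\ref{p:free.over.local}. Writing $m = ax = xb \in M_x$ in its two natural parametrizations, the formula $y \circ m = \alpha x b$ (applied with $y = \alpha x$ and $\alpha = 1$) gives $x \circ m = m$, so $\Phi(m\phi) = \phi(m) = \phi(ax) = a\phi(x)$; on the other side, the defining formula $m \circ w = rw$ for $m = rx$ applied to $w = \phi(x) \in xV$ yields $m \circ \Phi(\phi) = a \phi(x)$ directly. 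The two expressions coincide.

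Finally, since $\mathbb{Z}Mx$ is projective by the first step, the functor $\Hom_{\mathbb{Z}M}(\mathbb{Z}Mx, -)$ is exact, and the natural isomorphism transports this exactness to $V \mapsto xV$. The main obstacle is the equivariance check: the two actions of $M_x$ are presented asymmetrically --- local-divisor multiplication $m \circ w = rw$ on $xV$ versus pre-composition with the right $\circ$-action on the Hom side --- and reconciling them cleanly requires deliberate use of both parametrizations $m = ax = xb$ of an element of $M_x = Mx \cap xM$, so that the equivariance identity ultimately collapses to a single application of the $\mathbb{Z}M$-linearity of $\phi$.
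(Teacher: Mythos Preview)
Your proof is correct and follows essentially the same approach as the paper: evaluate at $x$, use regularity of $x$ to land in $xV$, construct the inverse explicitly, and verify $\mathbb ZS$-equivariance via the two parametrizations $m = ax = xb$ of an element of $M_x$, with exactness deduced from projectivity of $\mathbb ZMx$. The only cosmetic issue is that you reuse the symbol $y$ both for the fixed element with $xyx = x$ and as a dummy variable in the Hom action formula, which could cause confusion.
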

\begin{proof}
Map $\psi\in \Hom_{\mathbb ZM}(\mathbb ZMx,V)$ to $\psi(x)$.  Note that since $x$ is regular, $x=xyx$ with $y\in M$. So $\psi(x) = xy\psi(x)\in xV$.  If $s\in S$, write $s=rx$.  Then $(s\psi)(x) =\psi(x\circ s) = \psi(s)=\psi(rx) = r\psi(x)= s\circ \psi(x)$. Thus we have defined a $\mathbb ZS$-module homomorphism $\Hom_{\mathbb ZM}(\mathbb ZMx,V)\to xV$.  To see that it is injective, if $\psi(x)=0$, then $\psi(\mathbb ZMx)=\mathbb ZM\psi(x)=0$ and so $\psi=0$.

Suppose that $w =xz\in xV$.  Define $\psi\colon \mathbb ZMx\to V$ by $\psi(m) = myw $ (where $xyx=x$).  Then $\psi(x)=xyw =xyxz=xz=w $ and $\psi$ is clearly a $\mathbb ZM$-module homomorphism.  The final statement follows because $\mathbb ZMx$ is projective by Proposition~\ref{p:is.regular} and we have just shown that the functors $\Hom_{\mathbb ZM}(\mathbb ZMx,-)$ and $x(-)$ are isomorphic.
\end{proof}

\subsection*{Constructing an equivariant classifying space}
In this subsection we start with an equivariant classifying space $Y$ for the special one-relator monoid $S$.  By one of the main results of our previous paper (see~\cite[Section~3]{GraySteinberg2}), we can assume that $Y$ is of $S$-finite type and that $\dim Y\leq 2$ unless $u'$ is a proper power (by Lemma~\ref{lem:max:subgroups}).

Since, as $S$ is defined by a special presentation, every projective $S$-set is free by the results of~\cite[Section~3]{GraySteinberg2}, $Y$ is a free $S$-CW complex. Fix a vertex $y_0\in Y$ that is part of the basis for the $0$-skeleton $Y^0$.  Put
\[\til Y= Mx\otimes_S Y.\]
Here $Y$ is a left $S$-space, $Mx$ is an $M$-$S$-biset, and $\til Y$ is a left $M$-set with action $m(a\otimes b) = ma\otimes b$. Also  $\til Y$ is a topological space with the quotient topology, and furthermore is a projective $M$-CW complex of $M$-finite type by~\cite[Corollary~3.2]{GraySteinberg1} and Proposition~\ref{p:is.regular}, and $\dim \til Y=\dim Y$.  Moreover since
$Mx$ is a free right $S$-set,
by Corollary~\ref{c:free.action},
 we have as a topological space that
\[\til Y = \coprod_{[t vw ]\in C} [t vw] \otimes Y\cong \coprod_{Mx/S} Y\]
where $C$ is as in Corollary~\ref{c:free.action} with $r=v$.
So, $C$ is the set of elements of the form $[t vw]_M$ with $t\in \mathcal B$ and $w \in \Delta^*$ with $w =\varepsilon$ or the last $\Delta$-letter of $w $ belonging to $\Psi$.
Recall that the $Mx/S$ denotes the weak orbits of $Mx$ as as right $S$-set, as defined in Section~\ref{sec:preliminaries} above.
Observe that $x\in C$.  Note that $M(x\otimes y_0)\cong (Mx\otimes_S S)\cong Mx$ via an isomorphism  $\gamma\colon M(x\otimes y_0)\to Mx$ sending $mx\otimes y_0$ to $mx$.
However, $\til Y$ is not connected; it is homotopy equivalent to $|C|$ points.

Let $\Gamma$ be the Cayley graph of $M$ with respect to $A$.  It is a free connected $M$-CW complex of $M$-finite type.  Note that $M(x\otimes y_0)\cong Mx$ is a projective $M$-CW
subcomplex of the $0$-skeleton of $\til Y$,
and $Mx$ embeds into the $0$-skeleton of $\Gamma$ in the obvious way. It follows that
using $\gamma$ we can form the pushout $X=\til Y\coprod_{M(x\otimes y_0)} \Gamma$ and this will be a connected projective $M$-CW complex by~\cite[Lemma~2.1]{GraySteinberg1} and clearly is of $M$-finite type.  It will be convenient to identify $Y$ with the copy $x\otimes Y$ in $X$.  It is important to note that two vertices of $\Gamma$ in $Mx$ belong to the same component of $\til Y$ (under the identification $\gamma$) if and only if they belong to the same class in $Mx/S$, or equivalently are $S$-translates of the same basis element from $C$.

Unfortunately, the complex $X$ is not contractible.  The problem is that if $\alpha\in \Lambda$, then there is a path  labelled by $\alpha$ from $mx$ to $mx[\alpha]$ in $\Gamma$, and also a path in $mx\otimes Y$ from $mx\otimes y_0$ to $mx\otimes [\alpha]_Sy_0=mx[\alpha]\otimes y_0$, and the endpoints of these two paths are identified in $X$.  To rectify this problem, we need to adjoin some $2$-cells.  Fix, for each $\alpha\in \Lambda$, a shortest length path $p_{\alpha}$ in the $1$-skeleton $Y^1$ of $Y$ from $y_0$ to $[\alpha]_Sy_0$ and let $q_{\alpha}$ be the path from $x$ to $x[\alpha]$ labelled by $\alpha$ in $\Gamma$.  Note that the vertices visited by the proper prefixes of $mq_{\alpha}$, with $m\in Mx$, are distinct and do not belong to $Mx$ (except the initial vertex $m$) by Lemma~\ref{l:embed.prefix}. Recall that we are identifying $Y$ with $x\otimes Y$ and so we can think of $p_{\alpha}$ as a path from $x\otimes y_0$ to $x\otimes [\alpha]_Sy_0 = x[\alpha]\otimes y_0$.  Thus $q_{\alpha}p_{\alpha}\inv$ is a loop at $x\otimes y_0$ in $X$.   We now attach a projective $M$-cell $Mx\times B^2\to X$ (projective by Proposition~\ref{p:is.regular}) so that the $2$-cell $\{mx\}\times B^2$ has boundary path $m\left(q_{\alpha}p_{\alpha}\inv\right)$ to obtain a projective $M$-CW complex $Z$ of $M$-finite type and $\dim Z\leq \max\{2,\dim Y\}$.  Also  notice that the attaching map of no higher dimensional cell maps into the open $2$-cells corresponding to the $2$-cells we added in this final stage, nor into the paths $mq_{\alpha}$ except perhaps at the endpoints.  This will allow us later to collapse these $2$-cells into $\til Y$.

\begin{Thm}\label{t:subspecial.equiv}
Let $M=\lb A\mid u=v\rb$ with $u\in vA^*\cap A^*v$ and $u\neq v$.  Write $u=vu'$ and assume that $v\neq 1$.
The $M$-finite projective $M$-CW complex $Z$ constructed above is an equivariant classifying space for $M$.  It has dimension at most $2$ unless $u'$ is a proper power.
\end{Thm}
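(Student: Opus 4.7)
The structural claims—that $Z$ is a projective $M$-CW complex of $M$-finite type with $\dim Z \leq \max\{2, \dim Y\}$—are routine consequences of the construction. Since $x$ is regular (Proposition~\ref{p:is.regular}), the set $Mx$ is a projective $M$-set, so $\til Y = Mx \otimes_S Y$ is a projective $M$-CW complex of $M$-finite type (because $Y$ is a free $S$-CW complex of $S$-finite type). The Cayley graph $\Gamma$ is free $M$-CW of $M$-finite type, the pushout $X$ preserves both properties by~\cite[Lemma~2.1]{GraySteinberg1}, and attaching the $Mx$-parameterised $2$-cells (one family per $\alpha \in \Lambda$, finitely many) does as well. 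The bound $\dim Y \leq 2$ (unless $u'$ is a proper power) is provided by~\cite[Section~3]{GraySteinberg2} via Lyndon's Identity Theorem applied to the group of units $G$ of $S$; by Lemma~\ref{lem:max:subgroups}, $G$ is torsion-free precisely when $u'$ is not a proper power.

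The main content is contractibility. The plan is to collapse the contractible components of $\til Y$ in $Z$ and to compare the resulting complex with the complex $K'$ arising from Theorem~\ref{t:compress.cayley.graph.general} applied with $r = v$ (so $v' = \varepsilon$ and $u' = \alpha_1 \cdots \alpha_k$). Each $c \otimes Y \subset \til Y$ is a contractible subcomplex, so collapsing all of them yields a homotopy equivalence $Z \simeq \bar Z$. In $\bar Z$, the vertices of $Mx$ are identified along right $S$-orbits to the basis points $c \in C$ (from Corollary~\ref{c:free.action}), each $m p_\alpha$ (being interior to a contractible component) collapses to a point, and the attached $2$-cells consequently fill the loops $m q_\alpha$ based at the corresponding $c(m) \in C$. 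The remark in the construction—that no higher-dimensional cell has its attaching map hitting the interior of these $2$-cells or the paths $m q_\alpha$ except perhaps at endpoints—is precisely what allows these equivariant collapses to be carried out cleanly.

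These new $2$-cells of $\bar Z$ refine the $2$-cells of the complex $K$ of Theorem~\ref{t:compress.cayley.graph.general}: the $K$-cell at $m$ bounds the loop $p_{m, u'}$, which decomposes as the concatenation of translates of $q_{\alpha_1}, \dots, q_{\alpha_k}$, each of which is now filled individually in $\bar Z$. After further deformation-retracting $\bar Z$ by collapsing the forest $F$ of Theorem~\ref{t:compress.cayley.graph.general}, that theorem's homotopy equivalence $K \simeq K'$ shows the result is homotopy equivalent to a modification of $K'$—a wedge of $|B|$ copies of the Cayley complex of $L = \Psi^* \ast S$ at a central vertex—in which the $\til Y$-collapse has effectively replaced the $S$-structure of each Cayley complex of $L$ by a contractible copy of $Y$ (the $\Psi^*$-factor contributing only tree structure). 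Since $Y$ is contractible, each wedge summand is contractible, so $\bar Z$, and hence $Z$, is contractible.

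The main obstacle is making this final identification precise and $M$-equivariant—specifically, verifying that the $\til Y$-collapse, the forest collapse of Theorem~\ref{t:compress.cayley.graph.general}, the $Mx \to C$ orbit identification, and the refinement of $K$'s $2$-cells by those of $\bar Z$ all interact compatibly to replace each Cayley complex of $L$ with a copy of $Y$. This relies on the explicit free-basis description of $Mx$ as a right $S$-set from Corollary~\ref{c:free.action} together with the cellular description of $\psi$ in Theorem~\ref{t:compress.cayley.graph.general}.
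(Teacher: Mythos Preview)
Your overall strategy---collapse the contractible pieces and use Theorem~\ref{t:compress.cayley.graph.general}---is the right one, and essentially matches the paper. But the execution in the final paragraph is incoherent, and the incoherence hides the step that actually does the work.

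You collapse $\til Y$ \emph{first} to form $\bar Z$, so the copies of $Y$ are gone. Then, after collapsing $F$, you claim the result is ``a modification of $K'$ \ldots\ in which the $\til Y$-collapse has effectively replaced the $S$-structure \ldots\ by a contractible copy of $Y$'' and conclude ``since $Y$ is contractible, each wedge summand is contractible.'' But there is no copy of $Y$ left to be contractible: you already collapsed them. More seriously, in the subspecial case the Lallement compression $S$ is \emph{special}, not strictly aspherical, so the Cayley complex of $S$ (and hence of $L$, and hence $K'$) need \emph{not} be contractible---think of $\lb a\mid a^2=1\rb$. So relating $\bar Z/F$ to $K'$ via the refinement of $2$-cells does not, by itself, buy contractibility; the contractibility of $Y$ is genuinely needed, and you have thrown it away.

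What is missing is the free-face collapse. After contracting $F$ (the paper does this \emph{first}), each path $mq_\alpha$ for $\alpha\in\Lambda$ becomes a single edge labelled $\alpha$, and this edge lies on the boundary of exactly one $2$-cell (the one you added with boundary $m(q_\alpha p_\alpha^{-1})$) and is touched by no higher cell---this is what the remark in the construction is for. Collapse each such $2$-cell across that free face; this removes \emph{both} the $2$-cells and all $\Lambda$-labelled edges. What remains is precisely $\til Y$ together with the vertex $\ast$, the edges $\ast\to b\otimes y_0$ for $b\in B$, and the $\Psi$-labelled edges. \emph{Now} collapse each contractible component $c\otimes Y$ of $\til Y$; the result is a graph on vertex set $\{\ast\}\cup C$ which one checks directly is a tree (it is the Hasse diagram of a prefix-closed language over $\Psi\cup(S\setminus\{1\})\cup\mathcal Bv$). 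Your order of operations can be made to work too, but then after both collapses each $\Lambda$-edge is a loop bounding a single disk, and you still need the free-face collapse to reach the tree---not an appeal to $K'$.
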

\begin{proof}
To prove the contractibility of $Z$, let $F\subseteq \Gamma$ be the forest from Theorem~\ref{t:compress.cayley.graph.general} with $r=v$.  Then $F$ is a subcomplex of $Z$ and we can form $Z/F$ by contracting each of the trees of $F$, which is homotopy equivalent to $Z$ as $F$ is a forest.  In this proof it will be convenient to identify the vertices of $\Gamma'$ (from Theorem~\ref{t:compress.cayley.graph.general}) with $Mx$, as described
in
Remark~\ref{rmk:before:thm}.
Notice that contracting $F$ does not affect  $\til Y$ (since the vertices of $Mx$ are in distinct equivalence classes). So
by Theorem~\ref{t:compress.cayley.graph.general},
$Z/F$ can be identified with the complex obtained from $\til Y$ by adding a new vertex $\ast$ with an edge from $\ast$ to $b\otimes y_0$ for each $b\in B$ (with $B$ as in Proposition~\ref{p:free.over.local}) and adjoining, for each $m\in Mx$ and $\alpha\in \Delta$, an edge from $m\otimes y_0$ to $m[\alpha]\otimes y_0 = m\otimes [\alpha]_Ly_0$ and adjoining $2$-cells corresponding to the images under the contraction of $F$ of the $2$-cells we had adjoined to form $Z$ from $X$.

Let us now see what happens to those $2$-cells we added in constructing $Z$,
with boundary paths $m\left(q_{\alpha}p_{\alpha}\inv\right)$ with $m\in Mx$ and $\alpha\in \Lambda$,
when we contract $F$.
By Theorem~\ref{t:compress.cayley.graph.general}(3) the only edge of $mq_{\alpha}$ that survives is its final edge, which is labelled by $\alpha$ and goes in the quotient from $m\otimes y_0$ to $m[\alpha]\otimes y_0=m\otimes [\alpha]_Ly_0$. (Recall that $q_{\alpha}$ is the path labelled by $\alpha$ from $m$ to $m[\alpha]$ in $\Gamma$ and each proper prefix of $\alpha$ belongs to $P_{\Delta}$.)  Nothing happens to $mp_{\alpha}\inv$. Since no higher dimensional cell maps into the corresponding open $2$-cell under its characteristic map, nor into the edge labelled by $\alpha$ obtained from $mq_{\alpha}$ under contraction except perhaps at its endpoints, and the edge labelled by $\alpha$ is only on the boundary of this one $2$-cell (and is not used in $mp_{\alpha}\inv$), it is a free face of the image of the original $2$-cell in $Z/F$ and so we can perform an elementary collapse of the $2$-cell along this face.   Thus we can remove each of these $2$-cells from $Z/F$ and at the same time each edge labelled by an element of $\Lambda$ without changing the homotopy type of $Z/F$.  Thus we get a new CW complex $Z'$, homotopy equivalent to $Z$ and which consists of $\til Y$, a vertex $\ast$, an edge from $\ast$ to $[t v]\otimes y_0$ for each $t\in \mathcal B$ (with $\mathcal B$ as in Proposition~\ref{p:free.over.local}), which we view as labelled by $t$, and, for each $\alpha \in \Psi$ and $m\in Mx$, an edge labelled by $\alpha$ from $m\otimes y_0$ to $m[\alpha]\otimes y_0$.  For the rest of the proof it will be convenient to identify $B$ with $\mathcal Bv$
as per Proposition~\ref{p:free.over.local}, with $r=v$.
Since $Y$ is contractible, we can now contract each connected component of $\til Y =\coprod_{c\in C}c\otimes Y\cong \coprod_{Mx/S} Y$ in $Z'$ to get a new CW complex $Z''$ that is homotopy equivalent to $Z$ and which, in fact, is a graph since we have contracted all higher dimensional cells.  We shall see that $Z''$ is a tree,
and hence is contractible.  It will then follow that $Z$ is contractible.

For each $c\in C$, we view the component $c\otimes Y$ as being contracted to $c\otimes y_0$.  We then have that the vertices of $Z''$ can be identified with the set consisting of $\ast$ and the elements of $C$.  For each $t\in \mathcal B$, there is an edge from $\ast$ to $[t v]\in C$ labelled by $t$ (and note that $[tv]$ uniquely determines $t$ by Corollary~\ref{c:free.action}). The remaining edges are labeled by elements of $\Psi$.  Suppose in $Z'$ we have an edge from $m\otimes y_0$ to $m[\alpha]\otimes y_0$ labelled by $\alpha\in \Psi$.  We can write $m$ uniquely as $cs$ with $c\in C$ and $s\in S$.  Then the corresponding edge in $Z''$ goes from $c$ to $cs[\alpha]\in C$ and is still considered labelled by $\alpha$. Note that $c$, $\alpha$ and $s$ are uniquely determined by $cs[\alpha]$, by Lemma~\ref{lem:lalle3.2}, Corollary~\ref{c:free.action} and the definition of $C$ since $L=\Psi^*\ast S$.  Conversely, if $c\in C$, $s\in S$ and $\alpha\in \Psi$, then $cs\otimes y_0$ has an edge labelled by $\alpha$ to $cs[\alpha]\otimes y_0$ in $Z'$ and so there is an edge labelled $\alpha$ from $c$ to $cs[\alpha]$ in $Z''$. In other words, $Z''$ is isomorphic to the following tree.  Consider the alphabet $\Psi\cup (S\setminus \{1\})\cup \mathcal Bv$. Consider the set of words over this alphabet $U=\{\varepsilon\}\cup \mathcal Bv\Psi^*((S\setminus \{1\})\Psi^+)^*$. Then $Z''$ is isomorphic to the Hasse diagram of $U$ under the prefix order, which is a tree.

This completes the proof that $Z$ is contractible.
\end{proof}

We thus have the following theorem which, together with Lemma~\ref{lem:max:subgroups}, proves Theorems~\ref{theorem:A:full} and~\ref{theorem:B:full} in the subspecial case.

\begin{theorem}\label{t:subspecial}
Let $M$ be a subspecial one-relator monoid.
 Then $M$ is of type left- and right-$\F_\infty$, and thus also of type left- and right-$\FP_\infty$.
 Moreover,
\[
\mathrm{cd}^{\mathrm{(l)}}(M) \leq  \mathrm{gd}^{\mathrm{(l)}}(M) \leq 2,
\]
and
\[
\mathrm{cd}^{\mathrm{(r)}}(M) \leq  \mathrm{gd}^{\mathrm{(r)}}(M) \leq 2,
\]
unless $M$ has
a maximal subgroup with torsion, in which case
\[
\mathrm{cd}^{\mathrm{(l)}}(M) =
\mathrm{cd}^{\mathrm{(r)}}(M) =
\mathrm{gd}^{\mathrm{(l)}}(M) =
\mathrm{gd}^{\mathrm{(r)}}(M) =
 \infty.
\]
\end{theorem}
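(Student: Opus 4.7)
The plan is to use the classifying space construction of Theorem~\ref{t:subspecial.equiv} for the upper-bound assertions, and to transfer infinite cohomological dimension from the compressed special monoid back to $M$ via the exact functor of Proposition~\ref{p:nat.iso} for the lower-bound assertion. The case $v=\varepsilon$ (the special case) is covered entirely by \cite[Section~3]{GraySteinberg2}, so I assume $v$ is non-empty and write $u=vu'$ with $u'\in A^+$.

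For the positive statements I would simply invoke Theorem~\ref{t:subspecial.equiv}: it produces a projective $M$-CW complex $Z$ which is an equivariant classifying space for $M$, is of $M$-finite type, and has $\dim Z\leq 2$ unless $u'$ is a proper power. The augmented cellular chain complex of $Z$ is a projective resolution of the trivial left $\mathbb ZM$-module $\mathbb Z$ by Proposition~\ref{prop:chains}, with each $P_n$ finitely generated; hence $M$ is of type left-$\F_\infty$ (and so left-$\FP_\infty$). When $u'$ is not a proper power --- equivalently, by Lemma~\ref{lem:max:subgroups}, when no maximal subgroup of $M$ has torsion --- we get $\mathrm{gd}^{(l)}(M)\leq 2$ and therefore $\mathrm{cd}^{(l)}(M)\leq 2$. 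The right-handed analogues follow by applying the entire construction to the opposite monoid $M^{op}=\langle A\mid u^{\mathrm{rev}}=v^{\mathrm{rev}}\rangle$, which is again a subspecial one-relator monoid (from $u=vu'=u''v$ one reads off $u^{\mathrm{rev}}=(u')^{\mathrm{rev}}v^{\mathrm{rev}}=v^{\mathrm{rev}}(u'')^{\mathrm{rev}}$) and which satisfies the same torsion dichotomy, since reversal preserves being a proper power.

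The main remaining point is the lower bound: if $M$ has a maximal subgroup $G$ with torsion, then $\mathrm{cd}^{(l)}(M)=\infty$ (from which $\mathrm{gd}^{(l)}(M)=\infty$ follows by the standard inequality, and the right-handed statements follow by the dual argument applied to $M^{op}$). By Lemma~\ref{lem:max:subgroups}, $u'$ is a proper power and the Lallement compression $S$ is a special one-relator monoid whose group of units is $G$, itself a one-relator group with torsion; by \cite[Section~3]{GraySteinberg2} we have $\mathrm{cd}^{(l)}(S)=\infty$. The bridge between $M$ and $S$ is the functor $V\mapsto xV$ of Proposition~\ref{p:nat.iso}: it is exact, and it sends projective $\mathbb ZM$-modules to projective $\mathbb ZS$-modules (the discussion immediately preceding Proposition~\ref{p:nat.iso} establishes this on free modules using that $xM$ is a free left $S$-set by the dual of Proposition~\ref{p:free.over.local}, and the projective case follows by passing to retracts). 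Since the trivial $\mathbb ZM$-module $\mathbb Z$ restricts to the trivial $\mathbb ZS$-module $\mathbb Z$ (because $M$ acts trivially, so does $S$), any projective $\mathbb ZM$-resolution of $\mathbb Z$ of finite length $n$ would produce a projective $\mathbb ZS$-resolution of $\mathbb Z$ of length $n$, contradicting $\mathrm{cd}^{(l)}(S)=\infty$. The whole proof is thus an assembly of existing pieces: the one genuinely new check is that $V\mapsto xV$ plays well with the trivial module and with projectives, and this is routine from Proposition~\ref{p:nat.iso} and the observations preceding it, so no serious obstacle arises.
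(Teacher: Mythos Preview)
Your proposal is correct and follows essentially the same route as the paper: invoke Theorem~\ref{t:subspecial.equiv} (together with Lemma~\ref{lem:max:subgroups}) for the upper bounds and the $\F_\infty$ property, and for the infinite-dimension claim apply the exact functor $V\mapsto xV$ of Proposition~\ref{p:nat.iso} to a hypothetical finite projective resolution of $\mathbb Z$ to contradict $\mathrm{cd}^{(l)}(S)=\infty$ from \cite{GraySteinberg2}. One small wrinkle in your $M^{op}$ argument: the analogue of $u'$ for $M^{op}$ is $(u'')^{\mathrm{rev}}$, not $(u')^{\mathrm{rev}}$, so ``reversal preserves being a proper power'' is not quite enough on its own; you also need that $u'$ is a proper power iff $u''$ is, which follows since $vu'=u''v$ forces $u'$ and $u''$ to be conjugate words (alternatively, just note that $M$ and $M^{op}$ share maximal subgroups, so the torsion condition transfers automatically).
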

\begin{proof}
Without loss of generality we may assume that $M$ is subspecial but not special as the special case was handled in~\cite[Section~3]{GraySteinberg2}.  We retain the above notation.
By Lemma~\ref{lem:max:subgroups}, $M$ has a maximal subgroup with torsion if and only if $u'$ is a proper power, which occurs precisely when the group of units $G$ of $S$ has torsion.

The claims made in the statement of the theorem now all follow from Theorem~\ref{t:subspecial.equiv} except the claim that if $M$ has a maximal subgroup which is a one-relator group with torsion, then it has infinite left cohomological dimension.
To prove this claim, suppose that $G$ has torsion and that
\[P_n\rightarrow P_{n-1}\rightarrow \cdots\rightarrow P_0\rightarrow \mathbb Z\] is a finite projective resolution of $\mathbb Z$ over $\mathbb ZM$.
Then by Proposition~\ref{p:nat.iso}, the dual of Corollary~\ref{c:free.action}, and the observation that $x\mathbb Z=\mathbb Z$ as a $\mathbb ZS$-module, we obtain a finite projective resolution
\[xP_n\rightarrow xP_{n-1}\rightarrow \cdots\rightarrow xP_0\rightarrow \mathbb Z\] over $\mathbb ZS$.
 This contradicts the result of~\cite[Section~3]{GraySteinberg2} that $S$ has infinite left  cohomological dimension.
 Thus $M$ has infinite left cohomological dimension.
The argument for the right cohomological dimension is dual.
\end{proof}

\begin{remark}\label{rem:subspecial:genealised}
While it is not needed for the main results of this paper, it is worth noting that the construction and results given in this section (and Section~\ref{s:compression}) may be generalised in a natural way to non-one-relator monoids which are subspecial, in the following sense.
Let $M = \lb A \mid u_i = v \; (i \in I) \rb$, with $I$ finite, and where $u_i \in vA^* \cap A^*v$ for all $i \in I$.
Define $T(v)$ and $\Delta$ as above.
For each $i \in I$ we can then decompose $u_i = v w_i$ with $w_i = \alpha_{i, 1} \ldots \alpha_{i, k_i}$ where $\alpha_{i,j} \in \Delta$ for $1 \leq j \leq k_i$.
Set
\[L=\lb \Delta\mid \alpha_{i,1}\cdots \alpha_{i,k_i}=1 \; (i \in I) \rb\]
and
\[S=\lb \Lambda\mid \alpha_{i,1}\cdots \alpha_{i,k_i}=1 \; (i \in I) \rb\]
where $\Lambda \subseteq \Delta$ is the finite subset of elements appearing in the relations in the presentation of $L$.
Let $G$ be the Sch\"{u}tzenberger group of the $\gh$-class of $v$ in $M$.
It may be shown that the group of units of the special monoid $S$ is isomorphic to $G$.
Then by combining the arguments given in this section with the results proved in~\cite[Section~3]{GraySteinberg2} it may be shown that:

\begin{enumerate}
\item If $G$ is of type $\FP_n$ with $1 \leq n \leq \infty$, then $M$ is of type left- and right-$\FP_n$; and
\item We have \[\mathrm{cd}(G) \leq  \mathrm{cd}^{\mathrm{(l)}}(M) \leq \mathrm{max}\{2, \mathrm{cd}(G)\}\]
and
\[\mathrm{cd}(G) \leq  \mathrm{cd}^{\mathrm{(r)}}(M) \leq \mathrm{max}\{2, \mathrm{cd}(G)\}.\]
  \end{enumerate}
The analogous statements also hold for the topological finiteness property $\F_n$ and the geometric dimension.
\end{remark}

\section{The relation module}
\label{s:relmodule}

In this section we apply the results of Section~\ref{s:compression} to compute the relation module, in the sense of Ivanov~\cite{Ivanov}, of a torsion-free one-relator monoid.
This will be a key stepping stone
towards proving that one-relator monoids are of type left- and right-$\FP_\infty$ in the non-subspecial case.
It will also lead to another proof that if $M$ is subspecial, but not special, and torsion-free, then $M$ is of type right- and left-$\F_{\infty}$ and $\FP_\infty$,
and $\mathrm{cd}^{\mathrm{(l)}}(M) \leq  \mathrm{gd}^{\mathrm{(l)}}(M) \leq 2$, by showing that the $2$-complex $K$ in Theorem~\ref{t:compress.cayley.graph.general}, with $r=v$, is an equivariant classifying space for $M$.  We begin by giving a topological interpretation of the relation module in terms of the homology of the Cayley graph of the monoid.  This is completely analogous to a well-known result concerning group relation modules.

The following lemma is well known and will be useful several times.
The proof is straightforward and is omitted.

\begin{lemma}\label{l:map.of.set}
Let $f\colon X\to Y$ be a mapping of sets.  Let $\varphi\colon \mathbb ZX\to \mathbb ZY$ be the $\mathbb Z$-linear extension of $f$.  Then $\ker \varphi$ is spanned as an abelian group by the elements $x-x'$ with $x,x'\in X$ and $f(x)=f(x')$.
\end{lemma}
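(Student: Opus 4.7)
The plan is to partition $X$ by the fibres of $f$, which induces a direct sum decomposition of $\mathbb ZX$ that is compatible with $\varphi$, and then to reduce the statement to the well-known description of the augmentation ideal of a free abelian group.

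First I would write $X=\coprod_{y\in f(X)}f^{-1}(y)$, giving the direct sum decomposition
\[
\mathbb ZX \;=\; \bigoplus_{y\in f(X)} \mathbb Z(f^{-1}(y))
\]
as abelian groups. Since $Y$ is a basis of $\mathbb ZY$, the restriction of $\varphi$ to the summand $\mathbb Z(f^{-1}(y))$ lands in the cyclic subgroup $\mathbb Zy\subseteq \mathbb ZY$, and is given by $\sum_{x\in f^{-1}(y)} n_x x \mapsto \bigl(\sum_x n_x\bigr) y$. Under the identification $\mathbb Zy\cong \mathbb Z$ this is precisely the augmentation map $\varepsilon_y\colon \mathbb Z(f^{-1}(y))\to \mathbb Z$. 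Hence $\varphi=\bigoplus_y \varepsilon_y$, and so $\ker\varphi=\bigoplus_y \ker\varepsilon_y$.

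Next I would invoke the standard fact that for any non-empty set $S$, the augmentation ideal of $\mathbb ZS$ is generated as an abelian group by the differences $\{s-s':s,s'\in S\}$. Indeed, fixing any $s_0\in S$, an element $\sum_{s\in S} n_s s$ with $\sum_s n_s=0$ can be rewritten as $\sum_{s}n_s(s-s_0)$. Applying this to each fibre $f^{-1}(y)$ gives a generating set of $\ker\varepsilon_y$ consisting of differences $x-x'$ with $f(x)=f(x')=y$, and assembling these over all $y$ yields the required generating set of $\ker\varphi$.

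The result is essentially a triviality, so there is no real obstacle; the only points to be mildly careful about are that empty fibres contribute the zero summand (and hence nothing to the generating set), and that differences with $x=x'$ are simply zero and may be ignored. Allowing $x,x'$ to range over all ordered pairs with $f(x)=f(x')$, as in the statement, poses no issue.
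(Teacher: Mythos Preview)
Your proof is correct and complete. The paper itself omits the proof of this lemma, calling it straightforward, so there is nothing to compare against; your fibre-by-fibre reduction to the augmentation ideal is exactly the standard argument one would expect here.
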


If $M$ is a monoid, then the \emph{augmentation ideal} of $\mathbb ZM$ is the two-sided ideal $\omega(\mathbb ZM)$
generated as an abelian group by all differences $m-n$ with $m,n\in M$, i.e., it is the kernel of the natural homomorphism $\mathbb ZM\to \mathbb Z$ mapping each element of $m$ to $1$ by Lemma~\ref{l:map.of.set}.  It is well known that $\omega(\mathbb ZM)$ can be generated as a left $\mathbb ZM$-module by the elements of the form $a-1$ where $a$ ranges over a generating set of $M$, cf.~\cite{Pride95}.

Let $\lb A \; | \; R \rb$ be a presentation of a monoid $M$.  Let $I$ be the kernel of the natural homomorphism $\gamma\colon \mathbb ZA^*\to \mathbb ZM$.  Note that $I$ is generated as an abelian group by all differences $u-v$ with $u,v\in A^*$ representing the same element of $M$ by Lemma~\ref{l:map.of.set}.
Ivanov~\cite{Ivanov} defined the (left) relation module $\mathcal R$ of the presentation to be the quotient $I/I\omega(\mathbb ZA^*)$.  Since $I\subseteq \omega(\mathbb ZA^*)$, we have $I^2\subseteq I\omega(\mathbb ZA^*)$ and so $\mathcal R$ is a left $\mathbb ZM$-module in a natural way. The right relation module is defined dually.  He showed this notion generalises the notion of the relation module of a group presentation and initiated the study of the relation module.  We shall prove here that if the Cayley complex of a presentation is contractible, then the relation module is freely generated by a set in bijection with the relations. 
 We shall also identify the relation module of a torsion-free one-relator monoid.

Let $\Gamma$ be the Cayley graph of $M$ with respect to $A$.  Our first goal is to identify the relation module with $H_1(\Gamma)$ with respect to the natural module structure coming from the left action of $M$ on $\Gamma$; the corresponding result for groups is well known (it can be found on~\cite[Page~43]{BrownCohomologyBook}).  For monoids, the result has appeared in other guises in various sources, e.g.~\cite{Pride95}.  Since $\Gamma$ is a $1$-complex, we can identify $H_1(\Gamma)$ with the group $Z_1(\Gamma)$ of $1$-cycles.  

\begin{Prop}\label{p:Cayley.identification}
Let  $\lb A \; | \; R \rb$ be a presentation of a monoid $M$. Let $\Gamma$ be the Cayley graph of $M$ with respect to $A$ and let $\mathcal R$ be the relation module.  Then $\mathcal R\cong H_1(\Gamma)$ as $\mathbb ZM$-modules. 
\end{Prop}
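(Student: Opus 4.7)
The plan is to exhibit both $\mathcal{R}$ and $H_1(\Gamma)$ as the kernel of the same natural surjection $\bigoplus_{a \in A}\mathbb{Z}M \cdot (a-1) \to \omega(\mathbb{Z}M)$, via a Fox-derivative style computation. First I would read off the cellular chains of $\Gamma$: by Proposition~\ref{prop:chains}, $C_0(\Gamma) \cong \mathbb{Z}M$ and $C_1(\Gamma) = \bigoplus_{a \in A}\mathbb{Z}M \cdot e_a$ is the free left $\mathbb{Z}M$-module on the edges $e_a \colon 1 \to a$, with boundary $\partial e_a = a - 1$. Since $\im(\partial) = \omega(\mathbb{Z}M)$, one obtains $H_1(\Gamma) = Z_1(\Gamma) = \ker\bigl(\partial \colon C_1(\Gamma) \to \omega(\mathbb{Z}M)\bigr)$.

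Next I would verify the classical Fox-calculus fact that $\omega(\mathbb{Z}A^*)$ is a \emph{free} left $\mathbb{Z}A^*$-module with basis $\{a - 1 : a \in A\}$: spanning follows from the telescoping identity
\[
a_{i_1}\cdots a_{i_n} - 1 \; = \; \sum_{j=1}^{n} a_{i_1}\cdots a_{i_{j-1}}(a_{i_j} - 1),
\]
and linear independence from a short coefficient comparison using unique factorisation of words in $A^*$. With this in hand, restricting $\gamma\colon \mathbb{Z}A^* \to \mathbb{Z}M$ to augmentation ideals gives the short exact sequence of left $\mathbb{Z}A^*$-modules
\[
0 \longrightarrow I \longrightarrow \omega(\mathbb{Z}A^*) \longrightarrow \omega(\mathbb{Z}M) \longrightarrow 0,
\]
and quotienting through by the submodule $I\omega(\mathbb{Z}A^*) \subseteq I$ yields an exact sequence of left $\mathbb{Z}M$-modules
\[
0 \longrightarrow \mathcal{R} \longrightarrow \omega(\mathbb{Z}A^*)/I\omega(\mathbb{Z}A^*) \longrightarrow \omega(\mathbb{Z}M) \longrightarrow 0.
\]
The free decomposition gives $\omega(\mathbb{Z}A^*)/I\omega(\mathbb{Z}A^*) \cong \bigoplus_a (\mathbb{Z}A^*/I)(a-1) \cong \bigoplus_a \mathbb{Z}M \cdot (a-1)$, the free left $\mathbb{Z}M$-module on $A$, and the $\mathbb{Z}M$-linear bijection $(a-1) \leftrightarrow e_a$ identifies it with $C_1(\Gamma)$ in a manner intertwining the two surjections onto $\omega(\mathbb{Z}M)$. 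Comparing kernels then gives $\mathcal{R} \cong H_1(\Gamma)$ as left $\mathbb{Z}M$-modules.

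The bookkeeping care I expect is in checking that the quotient sequence is indeed a sequence of left $\mathbb{Z}M$-modules: the point is that on both $\omega(\mathbb{Z}A^*)/I\omega(\mathbb{Z}A^*)$ and $I/I\omega(\mathbb{Z}A^*)$ the left action of $I \subseteq \mathbb{Z}A^*$ is trivial, because $I \cdot \omega(\mathbb{Z}A^*) = I\omega(\mathbb{Z}A^*)$ by definition and $I\cdot I \subseteq I\omega(\mathbb{Z}A^*)$ since $I \subseteq \omega(\mathbb{Z}A^*)$, so both quotients are naturally left $\mathbb{Z}A^*/I = \mathbb{Z}M$-modules compatibly with all the maps in the sequence.
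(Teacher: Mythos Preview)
Your proof is correct and follows essentially the same route as the paper: identify $C_1(\Gamma)$ with the free $\mathbb{Z}M$-module on $A$, note that $\partial_1(e_a)=[a]_M-1$ so $H_1(\Gamma)=\ker\partial_1$, and then recognise this kernel as the relation module $\mathcal{R}$. The only difference is packaging: the paper obtains the identification $\mathcal{R}\cong\ker\partial_1$ by citing \cite[Theorem~3.3]{Pride95}, whereas you supply a self-contained Fox-calculus argument (freeness of $\omega(\mathbb{Z}A^*)$ on $\{a-1\}$, then quotient the short exact sequence $0\to I\to\omega(\mathbb{Z}A^*)\to\omega(\mathbb{Z}M)\to 0$ by $I\omega(\mathbb{Z}A^*)$). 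Your version is longer but has the virtue of not relying on an external reference; the paper's version is terse because this is a known fact in the monoid homology literature.
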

\begin{proof}
Note that $C_1(\Gamma)$ can be identified with a free $\mathbb ZM$-module with basis $\{e_a\mid a\in A\}$ by sending the edge $m\xrightarrow{\,\,a\,\,}[ma]_M$ to $me_a$  and that $C_0(\Gamma)\cong \mathbb ZM$.  The boundary map $\partial_1$, under these identifications, takes the basis element $e_a$ to $[a]_M-1$, and hence $\partial_1$ has image the augmentation ideal $\omega(\mathbb ZM)$.  The kernel of this homomorphism is $\mathcal R$ according to~\cite[Theorem~3.3]{Pride95} and so $H_1(\Gamma)=\ker \partial_1\cong \mathcal R$.
\end{proof}

\begin{Cor}\label{c:free.relation}
Let $\lb A \; | \; R \rb$ be a presentation of $M$ such that the corresponding Cayley complex is contractible.  Then the relation module $\mathcal R$ of the presentation is a free $\mathbb ZM$-module on $|R|$-generators.
\end{Cor}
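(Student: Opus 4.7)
The plan is to use the cellular chain complex of the Cayley complex $X$ together with Proposition~\ref{p:Cayley.identification}. Recall that $X$ is a $2$-dimensional free $M$-CW complex obtained from the Cayley graph $\Gamma$ by attaching one free $M$-cell $M \times B^2$ for each defining relation $r \in R$. Hence by Proposition~\ref{prop:chains}, $C_2(X)$ is a free $\mathbb{Z}M$-module with basis indexed by $R$ (of rank $|R|$), and $C_1(X) = C_1(\Gamma)$.

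Since $X$ is contractible by hypothesis, the augmented cellular chain complex
\[
0 \to C_2(X) \xrightarrow{\partial_2} C_1(X) \xrightarrow{\partial_1} C_0(X) \to \mathbb Z \to 0
\]
is exact. Exactness at $C_2(X)$ (which uses that $X$ is $2$-dimensional, so there are no $3$-cells to kill $\ker\partial_2$) shows that $\partial_2$ is injective. Exactness at $C_1(X)$ gives $\operatorname{im}\partial_2 = \ker\partial_1 = Z_1(\Gamma) = H_1(\Gamma)$, where the last equality uses that $\Gamma$ is a $1$-complex so $H_1(\Gamma)$ coincides with the $1$-cycles.

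Combining these two facts, $\partial_2$ restricts to a $\mathbb{Z}M$-module isomorphism from $C_2(X)$ onto $H_1(\Gamma)$. By Proposition~\ref{p:Cayley.identification}, $H_1(\Gamma) \cong \mathcal R$ as $\mathbb{Z}M$-modules, so $\mathcal R \cong C_2(X)$ is a free $\mathbb{Z}M$-module of rank $|R|$, as required. There is no real obstacle here: everything is a direct assembly of Proposition~\ref{prop:chains} (which identifies the chain groups of an $M$-CW complex), Proposition~\ref{p:Cayley.identification} (which identifies the relation module with $H_1(\Gamma)$), and the fact that contractibility of $X$ forces the cellular chain complex to be exact.
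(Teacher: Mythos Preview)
Your proof is correct and follows essentially the same approach as the paper: identify $\mathcal R$ with $H_1(\Gamma)=Z_1(\Gamma)$ via Proposition~\ref{p:Cayley.identification}, then use contractibility of the Cayley complex to get $Z_1=B_1\cong C_2$, and finally note that $C_2$ is free of rank $|R|$. The paper's write-up is slightly terser but the content is identical.
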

\begin{proof}
Let $K=\Gamma(M,A)^{(2)}$ be the Cayley complex of $M$. By Proposition~\ref{p:Cayley.identification}, we can identify $\mathcal R$ as a $\mathbb ZM$-module with $H_1(K^1)$, which is just the group of $1$-cycles $Z_1(K)$.  Since $K$ is a contractible $2$-complex, we have that $Z_1(K)=B_1(K)\cong  C_2(K)$.  But $C_2(K)$ is a free $\mathbb ZM$-module with basis the cells with boundary path $p_up_v\inv$ with $u=v\in R$ where $p_w$ is the path starting at $1$ labelled by $w$.
\end{proof}

We shall need that the Cayley complex of a strictly aspherical presentation is contractible.

\begin{lemma}\label{lem:9}
Let $N$ be the monoid defined by the presentation
$\lb A \mid R \rb$.
If   $\lb A \mid R \rb$ is strictly aspherical, then the Cayley complex $\Gamma(N,A)^{(2)}$ is contractible, and thus is a left equivariant classifying space for $N$.
\end{lemma}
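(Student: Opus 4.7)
My plan is to establish that $K := \Gamma(N, A)^{(2)}$ is simply connected and has $H_2(K) = 0$, whence Whitehead's theorem implies contractibility of this $2$-dimensional CW complex. Because $K$ is a free $N$-CW complex by construction (its $1$- and $2$-cells are indexed by the free $N$-sets $N\times A$ and $N\times R$), contractibility immediately yields the second conclusion that $K$ is a left equivariant classifying space for $N$. Simple connectedness is a general fact about Cayley complexes of monoid presentations (see \cite[Proposition~2.3]{GraySteinberg1}): any loop in $K^1$ spells a word equal to the trivial word under the congruence generated by $R$, and each single application of a defining relation is exactly the boundary of one of the attached $2$-cells. The remaining work therefore lies in establishing $H_2(K) = 0$.

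Since $\dim K \leq 2$, we have $H_2(K) = \ker \partial_2$, and so it suffices to show that $\partial_2 \colon C_2(K) \to C_1(K)$ is injective. I exploit strict asphericity by comparing $K$ with the Squier complex $\Gamma(\gp)$ through a map $c$ on paths of $\Gamma(\gp)$: send an edge $\bbe = (\alpha, r, \epsilon, \beta)$ to
$$
c(\bbe) \;=\; \epsilon\, [\alpha]_N \!\cdot\! r \;\in\; C_2(K),
$$
and extend additively over concatenation. Then $c(\bbe \circ \bbe^{-1}) = 0$, and for any pair of non-overlapping edges $\bbe_1, \bbe_2$ the two sides of the pull-up/push-down square (H1) have equal $c$-image: in both traversals the two transistors contribute $2$-cells at positions $[\alpha_1]_N$ and $[\alpha_1 r_{1,\epsilon_1}\beta_1\alpha_2]_N = [\alpha_1 r_{1,-\epsilon_1}\beta_1\alpha_2]_N$ (these elements of $N$ coincide, as $r_1$ is a defining relation) with matching signs. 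Combined with additivity, Lemma~\ref{lem:pupd} then shows that $c$ descends to a well-defined map on $\sim_0$-homotopy classes of paths, and on a closed path it lands in $\ker \partial_2$.

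To finish I show that every element of $\ker \partial_2$ is of the form $c(\bbp)$ for some closed path $\bbp$ in $\Gamma(\gp)$. Given $z = \sum_i \epsilon_i m_i \!\cdot\! r_i \in \ker\partial_2$, I interpret $z$ as a signed spherical monoid picture in the sense of Guba--Sapir~\cite{GubaSapir1997}, with one transistor per summand labelled by $r_i$ and placed at the vertex $m_i$; the condition $\partial_2 z = 0$ is precisely that the wires emanating from these transistors close up without free ends. Reading off the transistors along a traversal of this picture produces a closed path $\bbp$ in $\Gamma(\gp)$ with $c(\bbp) = z$. Strict asphericity then gives $\bbp \sim_0 1_{\iota \bbp}$, so $z = c(\bbp) = 0$, establishing $\ker \partial_2 = 0$ and hence $H_2(K) = 0$.

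The main obstacle is the realisation step in the final paragraph: turning an abstract $2$-cycle of $K$ into an honest closed path of $\Gamma(\gp)$. The cleanest route is through spherical monoid pictures as described, but the bookkeeping needed to choose representatives $w_i \in A^{*}$ for each $m_i$, to splice the resulting edges together with auxiliary zero-$c$-image segments, and to verify that the traversal produces a path whose $c$-image is exactly the original cycle, is the technical heart of the argument.
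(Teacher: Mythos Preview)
Your overall strategy is sound and is in fact the substance of the results the paper cites: the paper's own proof simply invokes \cite[Theorem~7.2]{Kobayashi1998} (and the earlier papers \cite{Cremanns94,Lafont95,Pride95}) to conclude that the augmented cellular chain complex of $K$ is exact, then applies Hurewicz and Whitehead. You are attempting to reprove that exactness statement by hand, which is a legitimate and more self-contained route.

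Your map $c$ is correctly defined, the verification that it respects $\sim_0$ is fine, and one checks (via the identity $\partial_2 c(\bbe)=\psi(\iota\bbe)-\psi(\tau\bbe)$ with $\psi(w)$ the $1$-chain of the path in $K$ labelled $w$) that $c$ of a closed path lands in $\ker\partial_2$. The genuine gap is the realisation step, and your stated justification for it is not correct. The assertion that ``$\partial_2 z=0$ is precisely that the wires emanating from these transistors close up without free ends'' conflates two different levels: wire-matching in a monoid picture is a condition on words in $A^*$, whereas $\partial_2 z=0$ is a cancellation condition in $C_1(K)\cong\bigoplus_{a\in A}\mathbb ZN$, i.e.\ a condition on elements of $N$. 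If you choose word representatives $w_i$ for the $m_i$, the transistors you draw will in general have mismatched wires even when $\partial_2 z=0$. Relatedly, there are no ``auxiliary zero-$c$-image segments'': every single edge $\bbe$ of the Squier complex has $c(\bbe)=\pm[\alpha]\cdot r\neq 0$, and a connecting path $\bbq$ between two equivalent words typically has $c(\bbq)\neq 0$ as well (indeed $\partial_2 c(\bbq)=\psi(\iota\bbq)-\psi(\tau\bbq)$, which need not vanish).

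What one actually needs is that $c$ induces a surjection $Z_1(\Gamma(\gp))\to\ker\partial_2$, equivalently that $\ker\psi\cap\mathrm{im}\,\partial_1^S\subseteq\partial_1^S(\ker c)$ where $\partial_1^S$ is the Squier-complex boundary. This is true but requires a genuine argument, essentially the exactness proved in the cited papers; it does not drop out of a picture interpretation. If you want to keep your direct approach you should either supply that argument or, as the paper does, invoke \cite{Cremanns94,Lafont95,Pride95,Kobayashi1998} for the exactness of the partial resolution $0\to(\mathbb ZN)^R\to(\mathbb ZN)^A\to\mathbb ZN\to\mathbb Z\to 0$ at $(\mathbb ZN)^R$ under strict asphericity.
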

\begin{proof}
Let $X = \Gamma(N,A)^{(2)}$.
It follows from the proof of~\cite[Theorem~6.14]{GraySteinberg1} that $X$ is an $N$-finite simply connected free $N$-CW complex of dimension at most 2.
Since the presentation $\lb A \mid R \rb$ is strictly aspherical it follows that the cellular chain complex of $X$ gives the free resolution displayed in Equation~(7.2) in~\cite[Theorem~7.2]{Kobayashi1998}.
This resolution was originally discovered in the papers
\cite{Cremanns94,Lafont95,Pride95}.
This shows that $X$ is acyclic.
Since $X$ is acyclic and simply connected it follows from the Whitehead and Hurewicz theorems that $X$ is contractible, and hence $X$ is a left equivariant classifying space for the monoid $N$.
\end{proof}

Next we discuss the topology of the Cayley complex of a free product.

\begin{lemma}\label{l:free.contract}
Let $L=\Psi^*\ast S$ where $S=\lb \Lambda\mid R\rb$ with $\Psi\cap \Lambda=\emptyset$.  Suppose that the Cayley complex $\Gamma(S,\Lambda)^{(2)}$ of $S$ with respect to this presentation is contractible. Put $\Sigma=\Psi\cup \Lambda$.
Then the Cayley complex $\Gamma(L, \Sigma)^{(2)}$ with respect to the presentation $\lb \Sigma\mid R\rb$ is contractible.
\end{lemma}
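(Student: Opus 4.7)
The plan is to show that $X := \Gamma(L,\Sigma)^{(2)}$ is contractible by collapsing a disjoint union of translates of the contractible complex $Y := \Gamma(S,\Lambda)^{(2)}$, reducing $X$ up to homotopy to a tree.

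First I would exploit the free product decomposition $L = \Psi^{*}\ast S$: every element of $L$ admits a unique normal form as a reduced alternating product of syllables from $\Psi^+$ and $S\setminus\{1\}$. From this uniqueness it follows that $L$ is a free right $S$-set on the basis
\[
\mathcal B = \{1\}\cup\{\,m\in L \mid \text{the normal form of } m \text{ ends in a }\Psi^+\text{-syllable}\,\},
\]
with the factorization $m = b\cdot s$ read off from $m$'s normal form by taking $s$ to be the trailing $S$-syllable (or $s=1$ if there is none).

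Next, let $X_\Lambda\subseteq X$ be the subcomplex consisting of all vertices, all $\Lambda$-labelled edges, and all $2$-cells of $X$; every $2$-cell lies in $X_\Lambda$ because the relations in $R$ involve only letters from $\Lambda$. The connected components of $X_\Lambda$ are precisely the right $S$-orbits $bS$ with $b\in\mathcal B$, and each component (with its induced CW-structure: vertices $bs$, $\Lambda$-edges $bs\to bs[\lambda]_L$, and $2$-cells attached via the relations of $R$) is isomorphic to $Y$ via $y\mapsto b\cdot y$. By hypothesis each component is contractible, so collapsing each to a point is a homotopy equivalence (they are disjoint contractible CW subcomplexes of $X$), producing a quotient $X'$ with $X\simeq X'$. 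The space $X'$ is a graph with vertex set $\mathcal B$ whose edges come from the $\Psi$-edges of $X$: a $\Psi$-edge $m\xrightarrow{\psi}m\psi$ descends to an edge from $\pi(m)$ to $\pi(m\psi)$, where $\pi\colon L\to\mathcal B$ is the projection $bs\mapsto b$. Since $m\psi$ always ends in a $\Psi^+$-syllable, writing $m=bs$ gives $\pi(m\psi)=m\psi=bs\psi$; so $X'$ has one edge from $b$ to $bs\psi$ for each triple $(b,s,\psi)\in\mathcal B\times S\times\Psi$.

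The main task, and the main obstacle, is then to show that $X'$ is a tree. The crucial point is that each $c\in\mathcal B\setminus\{1\}$ has a unique expression $c=bs\psi$ with $(b,s,\psi)\in\mathcal B\times S\times\Psi$; equivalently, $c$ has a unique incoming edge. I would prove this by inspecting the last syllable of $c$'s normal form: if the final $\Psi^+$-syllable of $c$ has length at least $2$, then $s$ is forced to be $1$ and $b$ is $c$ with its terminal $\psi$ stripped; if the final syllable is a single letter $\psi$ and $c$ has more than one syllable, then $s$ is forced to be the preceding $S$-syllable and $b$ is $c$ with these last two syllables removed; finally if $c=\psi$ then $b=s=1$. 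In each case uniqueness is an immediate consequence of uniqueness of normal forms in $L=\Psi^{*}\ast S$. Combined with the obvious fact that $1$ has no incoming edge in $X'$ (any image $bs\psi$ has a non-trivial $\Psi$-syllable), this exhibits $X'$ as a tree rooted at $1$. Therefore $X'$ is contractible, and consequently so is $X$.
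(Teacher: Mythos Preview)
Your argument is correct and follows essentially the same route as the paper's proof: identify the subcomplexes $bS$ (for $b$ ranging over the free right $S$-basis $\mathcal B$ of $L$) as disjoint copies of $\Gamma(S,\Lambda)^{(2)}$, collapse them, and verify that the resulting graph $X'$ on vertex set $\mathcal B$ is a tree. The only cosmetic difference is in the last step: the paper observes directly that $X'$ is the Hasse diagram of $\mathcal B$ under the prefix order on free product normal forms over the alphabet $\Psi\cup S$, whereas you verify the equivalent unique-parent property by a case analysis on the final syllable of $c\in\mathcal B\setminus\{1\}$; these are the same observation in different clothing.
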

\begin{proof}
Let $X= \Gamma(L, \Sigma)^{(2)}$ and put $\mathcal B=\{1\}\cup \Sigma^*\Psi$.  Then $\mathcal B$ is a basis for $L$ as a free right $S$-set by the free product normal form.  It follows easily from this that if $b\in \mathcal B$ and we consider the induced subcomplex $K_b$ of $X$ on the vertex set $bS$, then we obtain an isomorphic copy of $\Gamma(S, \Lambda)^{(2)}$ via the map defined on vertices by $s\mapsto bs$ with the obvious extension on edges and $2$-cells.  Moreover, if $b\neq b'$, then $K_b\cap K_{b'}=\emptyset$.  By assumption, each $K_b$ is contractible and hence if we contract each of these subcomplexes to a point, we obtain a $2$-complex $X'$ that is homotopy equivalent to $X$. Moreover, each $2$-cell of $X$ belongs to one of the $K_b$ and so $X'$ is a graph.  In fact, it is a tree.  If we view $K_b$ as being contracted to the vertex $b$, then we see that $X'$ has vertex set $\mathcal B$.  The only edges that survive the contraction of $\coprod_{b\in \mathcal B} K_b$ are the edges labeled by elements of $\Psi$.  For each $b\in \mathcal B$, $s\in S$ and $x\in \Psi$, there is an edge $b\to bsx$ (the image of the edge $bs\xrightarrow{\,\,x\,\,}bsx$) and this edge is uniquely determined by its endpoints.  Thus $X'$ is the Hasse diagram for the free product normal forms of the elements represented by $\mathcal B$ over the alphabet $\Psi\cup S$ under the prefix ordering, which is a rooted tree.
\end{proof}

We recall that if $M=\lb A\mid u=v\rb$ is an incompressible non-special one-relator monoid or if $M=\lb A\mid u=1\rb$ and $u$ is not a proper power, then the one-relator presentation is strictly aspherical by the results of
Kobayashi~\cite{Kobayashi1998,Kobayashi2000}.  It follows that in either of these cases the relation module is $\mathbb ZM$ by Corollary~\ref{c:free.relation}.

\begin{Thm}\label{thm:not:the:clas:space}
Let $M = \lb A \mid u=v \rb$ be torsion-free,
let   $z \in A^*$ be the longest word which is a prefix and a suffix of both $u$ and $v$, and write $u = zu'$ and $v=zv'$.
Let $K$ be the $2$-complex with $1$-skeleton the Cayley graph $\Gamma$ of $M$, and a $2$-cell adjoined at every vertex in $m\in M[z]_M$ with boundary path $p_{m,u'}p_{m,v'}\inv$ where $p_{m,w}$ is the path labelled by $w\in A^*$ beginning at $m$.
Then $K$ is contractible.
\end{Thm}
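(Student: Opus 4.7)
The plan is to use Theorem~\ref{t:compress.cayley.graph.general} with $r=z$ (when $z\neq \varepsilon$) to reduce the contractibility of $K$ to that of the Cayley complex of the compression $L=\langle \Delta_z\mid u'=v'\rangle$. Throughout, assume without loss of generality that $|v|\leq |u|$. Observe that the complex $K$ in the theorem statement is literally the complex denoted $K$ in Theorem~\ref{t:compress.cayley.graph.general} (with $r=z$), since $Mx=M[z]_M$ and the attached $2$-cells have boundary $p_{m,u'}p_{m,v'}\inv$.

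If $z=\varepsilon$ then the presentation $\langle A\mid u=v\rangle$ is incompressible and $K$ is just the Cayley complex of $M$. By Proposition~\ref{prop:one} the presentation is either special or strictly aspherical. In the strictly aspherical sub-case Lemma~\ref{lem:9} immediately gives contractibility. In the special sub-case $v=1$, and by Lemma~\ref{lem:max:subgroups} (applied to torsion-freeness) the relator $u$ is not a proper power; the classical fact that a special one-relator presentation with non-proper-power relator is strictly aspherical then reduces us again to Lemma~\ref{lem:9}.

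If $z\neq \varepsilon$, Theorem~\ref{t:compress.cayley.graph.general} gives $K\simeq K'$, where $K'$ consists of $|B|$ disjoint copies of the Cayley complex of $L$ joined at a common new vertex by an edge to each identity. Contracting those edges realizes $K'$ as the wedge of these copies at a point, so $K\simeq K'$ is contractible provided the Cayley complex of $L$ is. Writing $L=\Psi_z^*\ast S$ with $S$ the compression and applying Lemma~\ref{l:free.contract}, this reduces further to the contractibility of the Cayley complex of $S$.

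By construction $S$ is an incompressible one-relator monoid (Corollary~\ref{c:get.to.lalle}) which, by Corollary~\ref{c:stay.sub}, is subspecial if and only if $M$ is. If $M$ is non-subspecial then so is $S$; being incompressible and non-subspecial, Lemma~\ref{lem:kob98} yields strict asphericity, and Lemma~\ref{lem:9} finishes this case. If $M$ is subspecial then $z=v$, $v'=\varepsilon$, $u=vu'$, and the incompressible subspecial compression $S=\langle \Lambda\mid u'=1\rangle$ is special; torsion-freeness of $M$ together with Lemma~\ref{lem:max:subgroups} gives that $u'$ is not a proper power in $A^+$, and since concatenation in $\Lambda^+$ agrees with concatenation in $A^+$ under $\Lambda\subseteq A^*$, the word $u'$ is also not a proper power over $\Lambda$. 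The expected main obstacle is precisely this last subcase: one must invoke (or verify) the classical fact that a special one-relator presentation $\langle \Lambda\mid w=1\rangle$ with $w$ not a proper power is strictly aspherical, after which Lemma~\ref{lem:9} closes the proof.
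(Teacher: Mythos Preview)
Your proof is correct and follows essentially the same approach as the paper's own argument: reduce via Theorem~\ref{t:compress.cayley.graph.general} and Lemma~\ref{l:free.contract} to the contractibility of the Cayley complex of the Lallement compression $S$, then invoke Kobayashi's asphericity results together with Lemma~\ref{lem:9}. Two small citation adjustments: in the incompressible special sub-case Lemma~\ref{lem:max:subgroups} does not literally apply (it assumes subspecial but \emph{not} special), so you should instead cite the standard fact for special monoids directly; and when applying Lemma~\ref{lem:kob98} to $S$ in the non-subspecial case you should note (or cite Proposition~\ref{prop:one}) that incompressibility forces $\mathrm{OVL}(\rho,\lambda)=\emptyset$, since the lemma has that as a hypothesis rather than incompressibility.
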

\begin{proof}
Note that the boundary paths of the adjoined cells are indeed closed paths since $m[u']_M=m[v']_M$ for $m\in M[z]_M$.
If the presentation is incompressible (i.e., $z$ is empty), then $M$ is strictly aspherical by Kobayashi's results~\cite[Corollary~5.6]{Kobayashi1998} and~\cite[Corollary~7.5]{Kobayashi2000} and hence $K$, which is the Cayley complex in this case, is contractible by Lemma~\ref{lem:9}.  So assume that $M$ is compressible.
As usual put $L=\lb \Delta_z\mid u'=v'\rb$ and $S=\lb \Lambda_z\mid u'=v'\rb$ where $u=zu'$ and $v=zv'$.
The presentation for $S$ is incompressible by Corollary~\ref{c:get.to.lalle} and is either not special or it is special with defining relation $u'=1$ with $u'$ not a proper power by Lemma~\ref{lem:max:subgroups}.  Thus the presentation of $S$ is strictly aspherical by Kobayashi's results~\cite{Kobayashi1998,Kobayashi2000} and so the Cayley complex $\Gamma(S,\Lambda_z)^{(2)}$ is contractible by Lemma~\ref{lem:9}.  Therefore, the Cayley complex   $\Gamma(L,\Delta_z)^{(2)}$ of $L$ is contractible by Lemma~\ref{l:free.contract}.  Theorem~\ref{t:compress.cayley.graph.general} implies that $K$ (which is the $2$-complex from that theorem with $r=z$) is homotopy equivalent to a wedge of copies of the Cayley complex $\Gamma(L,\Delta_z)^{(2)}$ and so we deduce that $K$ is contractible
\end{proof}

Notice that if $z$ is non-empty, then $K$ in the theorem above is not the Cayley complex of $M$.  Recall that $C_2(K)\cong \mathbb ZM[z]_M$ as a left $\mathbb ZM$-module by Proposition~\ref{prop:chains}.

\begin{Thm}\label{t:relation.module.compress}
Let $M=\lb A\mid u=v\rb$ be a one-relator presentation of a torsion-free one-relator monoid.  Let $z\in A^*$ be the longest word with $u,v\in zA^*\cap A^*z$.  Then the relation module of the presentation is isomorphic to $\mathbb ZM[z]_M$.  Moreover, there is an exact sequence
\[0\longrightarrow \mathbb ZM[z]_M\longrightarrow \mathbb ZM^{|A|}\longrightarrow \mathbb ZM\longrightarrow \mathbb Z\longrightarrow 0\] of $\mathbb ZM$-modules.
\end{Thm}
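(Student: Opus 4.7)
The plan is to deduce both claims directly from the augmented cellular chain complex of the contractible $M$-CW complex $K$ of Theorem~\ref{thm:not:the:clas:space}, so essentially all the real work has already been done in establishing contractibility.

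First I would identify the three chain groups of $K$ as $\mathbb ZM$-modules using Proposition~\ref{prop:chains}. The $0$-cells of $K$ are the vertices of $\Gamma$, forming the single free $M$-orbit $M\cdot 1$, so $C_0(K)\cong \mathbb ZM$. The $1$-cells are the edges of $\Gamma$, which split as $|A|$ free $M$-orbits (one per generator $a\in A$, indexed by the edge $1\xrightarrow{\,\,a\,\,}[a]_M$), so $C_1(K)\cong \mathbb ZM^{|A|}$. The $2$-cells are indexed (with trivial stabiliser in the sense of the attaching map of Theorem~\ref{thm:not:the:clas:space}) by the set $M[z]_M$, so Proposition~\ref{prop:chains} gives $C_2(K)\cong \mathbb ZM[z]_M$.

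Second, since $K$ is contractible by Theorem~\ref{thm:not:the:clas:space}, its reduced cellular chain complex is exact, i.e.\
\[
0\longrightarrow C_2(K)\longrightarrow C_1(K)\longrightarrow C_0(K)\longrightarrow \mathbb Z\longrightarrow 0
\]
is an exact sequence of $\mathbb ZM$-modules. Substituting the three identifications above yields exactly the displayed exact sequence of the theorem.

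Finally, for the relation module claim, I would combine this exact sequence with Proposition~\ref{p:Cayley.identification}, which identifies $\mathcal R$ with $H_1(\Gamma)=Z_1(\Gamma)$. Since $\Gamma$ is the $1$-skeleton of $K$, the groups $C_0,C_1$ and the boundary $\partial_1$ are the same for $\Gamma$ and for $K$, so $Z_1(\Gamma)=\ker\partial_1^K$. By the exact sequence this kernel equals $\operatorname{im}\partial_2^K$, and because $K$ is $2$-dimensional and contractible we have $\ker\partial_2^K=H_2(K)=0$, so $\partial_2^K$ is injective and hence $\operatorname{im}\partial_2^K\cong C_2(K)\cong \mathbb ZM[z]_M$. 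Chaining these isomorphisms gives $\mathcal R\cong \mathbb ZM[z]_M$.

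The only step that requires any care is the identification of $C_2(K)$: one must check that each $2$-cell is attached at a distinct vertex of $M[z]_M$ and that $M$ permutes these cells via its action on $M[z]_M$, so that $P_2=M[z]_M$ in the sense of Proposition~\ref{prop:chains}. This is immediate from the definition of $K$ in Theorem~\ref{thm:not:the:clas:space}. There is no genuine obstacle beyond invoking the contractibility of $K$ as a black box.
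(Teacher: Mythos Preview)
Your proof is correct and follows essentially the same approach as the paper: both arguments invoke the contractibility of $K$ from Theorem~\ref{thm:not:the:clas:space}, identify $C_2(K)\cong \mathbb ZM[z]_M$ via Proposition~\ref{prop:chains}, read off the exact sequence from the augmented cellular chain complex, and then combine injectivity of $\partial_2$ with Proposition~\ref{p:Cayley.identification} to identify the relation module. The only difference is that you spell out the identifications of $C_0(K)$ and $C_1(K)$ a bit more explicitly, whereas the paper leaves these implicit.
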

\begin{proof}
Let $K$ be the $2$-complex from Theorem~\ref{thm:not:the:clas:space} and $\Gamma$ the Cayley graph of $M$.  Then $K$ is contractible by Theorem~\ref{thm:not:the:clas:space} and so the boundary mapping $\partial_2\colon C_2(K)\to C_1(K)$ is injective and has image $\ker \partial_1$ where $\partial_1\colon C_1(K)\to C_0(K)$ is the boundary mapping.  But $\ker \partial_1\cong H_1(\Gamma)$ and $C_2(K)\cong \mathbb ZM[z]_M$ and so we deduce that $H_1(\Gamma)\cong \mathbb ZM[z]_M$ as $\mathbb ZM$-modules.  This completes the proof of the first statement by Proposition~\ref{p:Cayley.identification}.

The sequence is exact because it is the augmented cellular chain complex of $K$, which is contractible.
\end{proof}

Note that when $z$ is empty, i.e., when the presentation is incompressible then the conclusion of Theorem~\ref{t:relation.module.compress} can also be deduced from Corollary~\ref{c:free.relation}.

We now deduce that $K$ in the above theorem is an equivariant classifying space when $M$ is subspecial and torsion-free.  This provides another proof of Theorem~\ref{t:subspecial} in the torsion-free case.

\begin{Cor}\label{c:subspecial.case.relation.module.complex}
Let $M=\lb A\mid u=v\rb$ be a subspecial one-relator monoid such that $u'$ is not a proper power where $u=vu'$.  Let $x=[v]_M$.  Then the $2$-complex $K$ obtained by adjoining at each vertex of $Mx$ a $2$-cell with boundary path labelled by $u'$ is an equivariant classifying space for $M$.
\end{Cor}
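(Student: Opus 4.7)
The plan is to assemble three previously established facts: (a) the hypotheses of Corollary~\ref{c:subspecial.case.relation.module.complex} match those of Theorem~\ref{thm:not:the:clas:space} so that $K$ is already known to be contractible; (b) $Mx$ is a projective left $M$-set by Proposition~\ref{p:is.regular}; and (c) attaching a projective $M$-cell to a free $M$-CW complex yields a projective $M$-CW complex. Combining these, $K$ is a contractible projective $M$-CW complex, which is precisely an equivariant classifying space for $M$.

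First, I would verify that the torsion-free hypothesis of Theorem~\ref{thm:not:the:clas:space} is in force. By Lemma~\ref{lem:max:subgroups}, the assumption that $u'$ is not a proper power is equivalent to all maximal subgroups of $M$ being torsion-free (or trivial), which gives that $M$ is torsion-free. Next I would identify the word $z$ of Theorem~\ref{thm:not:the:clas:space} in this setting. Since $M$ is subspecial with $u = vu' = u''v$, the word $v$ is both a prefix and a suffix of $u$, and trivially of itself; any word that is simultaneously a prefix and a suffix of $v$ has length at most $|v|$, so $v$ is the longest such word and hence $z = v$. With this identification, $u = zu'$ and $v = z\varepsilon$, so the boundary path $p_{m,u'}p_{m,\varepsilon}^{-1}$ from Theorem~\ref{thm:not:the:clas:space} reduces to the closed path $p_{m,u'}$ based at $m \in M[z]_M = Mx$, labelled by $u'$. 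Thus the $2$-complex built in the corollary coincides with the $2$-complex $K$ of Theorem~\ref{thm:not:the:clas:space}, so $K$ is contractible.

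It remains to show $K$ is a projective $M$-CW complex, and this is really the only place where anything new (if modest) happens. The $1$-skeleton is the Cayley graph $\Gamma = \Gamma(M,A)$, which is a free $M$-CW complex of dimension $\leq 1$. The $2$-cells are attached along $M$-equivariant maps from $Mx\times S^{1}$ to $\Gamma$: specifically, the cell indexed by $m \in Mx$ is attached along $p_{m,u'}$, and these attaching data together define an $M$-equivariant map $Mx \times S^{1} \to \Gamma$ (equivariance follows because $p_{nm,u'} = n\cdot p_{m,u'}$). By Proposition~\ref{p:is.regular}, $Mx$ is a projective left $M$-set, so $Mx \times B^{2}$ is a projective $M$-cell in the sense of Section~\ref{sec:preliminaries}. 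Hence $K$ is obtained from $\Gamma$ by the projective cell attachment pushout of~\eqref{eq:attach}, making $K$ a projective $M$-CW complex.

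Putting everything together, $K$ is a contractible projective $M$-CW complex, hence an equivariant classifying space for $M$. The only step requiring any thought is the matching of $K$ with the complex of Theorem~\ref{thm:not:the:clas:space}, but this is routine from the definitions of subspecial and Lallement compression.
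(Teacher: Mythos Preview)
Your proof is correct and follows essentially the same approach as the paper's: contractibility of $K$ via Theorem~\ref{thm:not:the:clas:space}, and projectivity of the $M$-CW structure via Proposition~\ref{p:is.regular} applied to $Mx$. You simply spell out explicitly (the identification $z=v$, the torsion-freeness via Lemma~\ref{lem:max:subgroups}, and the $M$-equivariance of the attaching maps) what the paper's three-sentence proof leaves to the reader.
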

\begin{proof}
The $2$-complex $K$ is contractible by Theorem~\ref{thm:not:the:clas:space}.  It just suffices to verify that it is a projective $M$-CW complex. Since $\Gamma$ is a free $M$-CW complex, we need only observe that the $2$-cells are obtained by attaching the $M$-cell $Mx\times B^2$, which is projective by Proposition~\ref{p:is.regular}.
\end{proof}

Note that the $2$-complex
constructed in the above corollary is not the Cayley complex of the one-relator presentation defining the monoid $M$ unless the presentation is special. Indeed, in general for a torsion-free subspecial one-relator monoid $M$ the Cayley complex of $M$ is not an equivariant classifying space for the monoid.  For example, the one-relator subspecial presentation $\lb a\mid a^2=a\rb$ presents the two-element monoid $M=\{0,1\}$ under multiplication.  This monoid has a zero element and hence, by the results of~\cite[Section~6]{GraySteinberg1}, has no finite dimensional, contractible free $M$-CW complex.  In particular, the Cayley complex is not contractible, as is easy to verify directly.

It follows from Theorem~\ref{t:relation.module.compress} that to build a free resolution of $\mathbb Z$ for a compressible, but not subspecial, one-relator presentation, it suffices to build a free resolution of $\mathbb ZM[z]_M$, which we shall proceed to do algebraically.  To make this precise, recall that if $R$ is a ring and
\begin{align*}
E_1\colon &\cdots\longrightarrow C_1\longrightarrow C_0\xrightarrow{\,\,f\,\,} A\longrightarrow 0,\\
E_2\colon &  0\longrightarrow A\xrightarrow{\,\,g\,\,} B_0\longrightarrow B_1\longrightarrow \cdots
\end{align*}
are exact sequences of $R$-modules (finite or infinite in length), then the \emph{Yoneda splice} $E_1E_2$ of these exact sequences is the exact sequence
\[\cdots\longrightarrow C_1\longrightarrow C_0\xrightarrow{\,\,gf\,\,} B_0\longrightarrow B_1\longrightarrow\cdots.\]
Although the Yoneda splice is typically used to define the bilinear mapping $\mathrm{Ext}^m(A,B)\times \mathrm{Ext}^n(B,C)\to \mathrm{Ext}^{m+n}(A,C)$, we shall use it here to build resolutions.
Given a free or projective resolution of $\mathbb ZM[z]_M$, we can splice it onto the exact sequence of Theorem~\ref{t:relation.module.compress} to obtain a free or projective resolution of $\mathbb Z$.

The Yoneda splice can also be applied to  an exact sequence of modules
\[0\longrightarrow A\longrightarrow F_n\longrightarrow F_{n-1}\longrightarrow\cdots\longrightarrow F_0\longrightarrow A\longrightarrow 0\] via infinite iteration to yield a periodic resolution
\[\cdots\longrightarrow F_n\longrightarrow\cdots\longrightarrow F_0\longrightarrow F_n\longrightarrow\cdots\longrightarrow F_0\longrightarrow A\longrightarrow 0,\]
cf.~\cite[Section~I.6]{BrownCohomologyBook}.

\section{An injectivity lemma}
\label{s:injectivity}

The following lemma is crucial in order to prove our main results;  its proof is a surprisingly intricate use of monoid pictures.

\begin{Lemma}\label{l:inj.cut.letter}
Let $N=\langle A\mid u=v\rangle$ be an incompressible one-relator monoid with $u=u'a$ and $v=v'a$ with $a\in A$.  Then the mapping $\psi\colon \mathbb ZN\to \mathbb ZN$ given by $\psi(\alpha) = \alpha([u']-[v'])$ for $\alpha \in \ZN$ is injective.
\end{Lemma}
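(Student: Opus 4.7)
The plan is to reduce the statement to strict asphericity of the presentation and then bridge the gap between injectivity of the boundary map $\partial_2$ in the Cayley chain complex and injectivity of $\psi$ using a monoid-picture argument. Since $u=u'a$ and $v=v'a$, the word $v$ is non-empty, so $\langle A\mid u=v\rangle$ is not special; together with incompressibility, Proposition~\ref{prop:one} forces the presentation to be strictly aspherical, and then Lemma~\ref{lem:9} gives that the Cayley complex $X=\Gamma(N,A)^{(2)}$ is contractible. Its augmented cellular chain complex
\[
0\to C_2(X)\xrightarrow{\partial_2} C_1(X)\xrightarrow{\partial_1} C_0(X)\to \mathbb Z\to 0
\]
is therefore exact, so $\partial_2\colon \mathbb ZN\to \bigoplus_{b\in A}\mathbb ZN\cdot e_b$, which sends $\alpha$ to $\alpha\cdot\bigl(\sum_{i}[u_1\cdots u_{i-1}]e_{u_i}-\sum_{j}[v_1\cdots v_{j-1}]e_{v_j}\bigr)$, is injective.

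The obstacle is that the coefficient of $e_a$ in $\partial_2(1)$ is $[u']-[v']+\eta$, where $\eta$ collects the contributions of those occurrences of $a$ in $u$ and $v$ other than the final letters. Hence the injectivity of $\partial_2$ does not immediately imply the injectivity of $\psi$: a priori there could be $\alpha\neq 0$ with $\alpha([u']-[v'])=0$ but $\alpha\eta\neq 0$, so the $e_a$-component of $\alpha\partial_2(1)$ remains non-zero. To close this gap, I would argue directly with monoid pictures in the Squier complex $\Gamma(\mathcal P)$ of $\mathcal P=\langle A\mid u=v\rangle$. Strict asphericity says that any two parallel paths in $\Gamma(\mathcal P)$ are homotopic via the three moves of Lemma~\ref{lem:pupd}, so every equality $[w_1]=[w_2]$ in $N$ is witnessed by a monoid picture that is essentially unique.

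The technical heart will proceed as follows. Assuming $\alpha=\sum n_g[g]$ satisfies $\psi(\alpha)=0$, lift to $\tilde\alpha=\sum n_g g\in\mathbb ZA^*$; the hypothesis then reads
\[
\sum n_g(gu'-gv')=\sum_k m_k(x_k u' a y_k - x_k v' a y_k)
\]
in $\mathbb ZA^*$ for some integers $m_k$ and words $x_k,y_k\in A^*$, where each right-hand summand encodes a single transistor in a monoid picture. Following the rightmost $a$-wires of such a picture, I would exploit the incompressibility of $u=v$---via Lemma~\ref{lem:kob98}, which prevents any non-empty common suffix of $u,v$ from also being a common prefix---to rule out cascading cancellations of trailing $a$-wires between distinct transistors. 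This will force a sign-respecting pairing between left-hand summands and those right-hand summands with $y_k=\varepsilon$, after which comparing coefficients in $\mathbb ZA^*$ yields $n_g=0$ for every $g$. The main obstacle will be the careful combinatorial case analysis of how trailing $a$-wires propagate through transistors, particularly when $u'$ or $v'$ itself ends in $a$, which forces one to track the positions of individual $a$-letters rather than merely the final letters; this is where the intricacy alluded to in the lemma's preamble arises.
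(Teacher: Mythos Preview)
Your first two paragraphs are correct and well-observed: incompressibility plus non-specialness forces strict asphericity, the Cayley complex is contractible, $\partial_2$ is injective, and you correctly identify why injectivity of $\partial_2$ does not immediately give injectivity of $\psi$.

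The gap is in your third paragraph. The displayed equation in $\mathbb ZA^*$ is a correct restatement of $\psi(\alpha)=0$, but $\mathbb ZA^*$ is an abelian group: once you pass to that equation you have thrown away all picture structure, so phrases like ``following the rightmost $a$-wires of such a picture'' and ``cascading cancellations between transistors'' have no referent. There is no picture encoded by a $\mathbb Z$-linear combination of elementary differences; there are infinitely many pictures giving the same element of the kernel. Your hoped-for pairing between left-hand summands and those right-hand summands with $y_k=\varepsilon$ also cannot be read off from the equation, because a term $gu'$ on the left can perfectly well equal $x_kuy_k$ or $x_kvy_k$ with $y_k\neq\varepsilon$, and nothing in incompressibility rules this out (incompressibility is a statement about common prefixes and suffixes of $u$ and $v$, not about how arbitrary words factor). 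The case you flag at the end, where $u'$ or $v'$ ends in $a$, is precisely where any naive last-letter bookkeeping collapses, and you give no mechanism for handling it.

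What the paper does instead is stay in the Squier complex and define an actual homotopy invariant. Call an edge \emph{rightmost} if it is not of the form $\mathbb F\cdot x$ for any letter $x$; for $m\in N$, let $f_m(\bbp)$ count rightmost edges of $\bbp$ whose left multiplier represents $m$. One checks directly from Lemma~\ref{lem:pupd} that $f_m$ is invariant mod $2$ under $\sim_0$: deleting a cancelling pair changes $f_m$ by $0$ or $2$, and in pull-up push-down the two edges $\bbe_1\cdot\iota\bbe_2$, $\bbe_1\cdot\tau\bbe_2$ are never rightmost while the two edges $\tau\bbe_1\cdot\bbe_2$, $\iota\bbe_1\cdot\bbe_2$ are simultaneously rightmost with equal $L$-value. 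Now assume $\sum z_i m_i([u']-[v'])=0$ with the $m_i$ distinct and $z_i\neq 0$. Since $[wu']\neq[wv']$ for all $w$ (itself a quick application of the $f_m$-invariant), each $m_i[u']$ must coincide with some $m_j[u']$ or $m_j[v']$ with $j\neq i$; iterating and extracting a cycle yields equalities $[w_{j_l}\varphi(\beta_{j_l})]=[w_{j_{l+1}}\beta_{j_{l+1}}]$ with the $[w_{j_l}]$ pairwise distinct. Multiplying by $a$ on the right and gluing the resulting Squier-complex paths with single rightmost edges $w_{j_l}\cdot\bbe_l$ produces a closed path $\mathcal C$ in which each rightmost edge has a distinct $L$-value, so $f_m(\mathcal C)=1$ for some $m$, contradicting strict asphericity. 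The crucial content you are missing is this mod-$2$ invariant and the cycle-extraction step; neither is recoverable from your $\mathbb ZA^*$-equation.
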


The rest of this section will be devoted to proving this lemma.

Throughout the rest of this section $N$ will denote the monoid defined by an incompressible one-relator presentation $\langle A\mid u=v\rangle$ where $u=u'a$ and $v=v'a$ with $a\in A$. 
We use $[w]$ to denote the image of the word $w\in A^*$ in the monoid $N$. Note that since $u=v$ is incompressible, $u'\neq v'$.
Let us denote the presentation $\lb A \mid u=v \rb$ by $\gp$ and use $\Gamma(\gp)$ to denote the Squier complex of this presentation. Recall  that given two paths $\bbp$ and $\bbq$ in $\Gamma(\gp)$ we shall write $\bbp \sim_0 \bbq$ to mean that $\bbp$ and $\bbq$ are homotopic in the Squier complex. The presentation $\gp$ is strictly aspherical by~\cite[Corollary~5.6]{Kobayashi1998}), a fact that we shall use throughout the proof of Lemma~\ref{l:inj.cut.letter}.
Recall that this means that for every closed path $\bbp$ in $\Gamma(\gp)$ we have $\bbp \sim_0 1_{\iota \bbp}$. 

The following is a reformulation of Lemma~\ref{l:inj.cut.letter}.

\begin{proposition}\label{prop:injective}
Let $N=\langle A\mid u=v\rangle$ be an incompressible one-relator monoid with $u=u'a$ and $v=v'a$ with $a\in A$.  
For every finite non-empty subset $F = \{ m_1, \ldots, m_k \}$ of $N$ if $z_1, z_2, \ldots, z_k \in \Z \setminus \{ 0 \}$ is a list of (not necessarily distinct) integers, then in $\ZN$ we have
\[
z_1m_1[u'] +
z_2m_2[u'] +
\ldots +
z_km_k[u']
\neq
z_1m_1[v'] +
z_2m_2[v'] +
\ldots +
z_km_k[v'].
\]
\end{proposition}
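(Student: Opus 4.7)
The plan is to argue by contradiction. Assume that for some choice of distinct $m_i \in N$ and non-zero integers $z_i$ the identity $\sum_{i=1}^{k} z_i m_i [u'] = \sum_{i=1}^{k} z_i m_i [v']$ holds in $\ZN$, and minimise a complexity measure such as $\sum_i |z_i|$ over all such counterexamples. Choose words $w_i \in A^*$ representing each $m_i$; the identity then reads $\sum z_i [w_i u'] = \sum z_i [w_i v']$ in $\ZN$. Grouping by elements of $N$, the identity forces a matching between the two sides, producing in particular at least one pair of indices $(i,j)$ with $[w_i u'] = [w_j v']$ in $N$ (after a routine preliminary reduction to eliminate any pure $u'$-side cancellations among the $m_i$).

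The central tool is strict asphericity of $\gp = \lb A \mid u = v \rb$, which holds by Lemma~\ref{lem:kob98} since $\gp$ is incompressible. For each matched pair $(i,j)$, strict asphericity supplies a monoid picture $P_{ij}$ with upper boundary $w_i u'$ and lower boundary $w_j v'$, and any two such pictures are homotopic in the Squier complex $\Gamma(\gp)$ by Lemma~\ref{lem:pupd}. The crucial rigidity comes from the shared last letter: since $u = u'a$ and $v = v'a$, every transistor appearing in any monoid picture over $\gp$ carries the letter $a$ at the rightmost position of both its top and its bottom labels.

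Tracing the rightmost wire of the upper boundary of $P_{ij}$, which is labelled by the last letter of $u'$, the no-crossing property of monoid pictures shows that either this wire runs straight to the lower boundary (forcing the last letters of $u'$ and $v'$ to coincide) or it enters the rightmost input of a transistor, forcing its label to be $a$; the rightmost output of that transistor is then a new $a$-wire, and applying the same reasoning recursively yields a ``rightmost $a$-cascade'' of transistors connecting the top-right of the picture to the bottom-right. Using the pull-up/push-down homotopies of Lemma~\ref{lem:pupd}, we push this entire cascade to the bottom of $P_{ij}$, separating the picture into a ``prefix part'' acting on $w_i$ to produce $w_j$ and a ``suffix part'' consisting of the $a$-cascade which transforms $u'$ into $v'$.

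The main obstacle will be to orchestrate these surgeries simultaneously across all matched pairs so as to produce a strictly smaller counterexample contradicting minimality. After separation, the suffix parts cancel from both sides of the identity, reducing the problem to an identity involving only the prefixes $w_i$ together with a pairing on the indices; upon grouping terms this yields a strictly smaller element of $\ker \psi$, contradicting the minimality of our chosen counterexample. Incompressibility enters crucially a second time to rule out the degenerate scenario in which a suffix part alone witnesses $[u'] = [v']$ in $N$, since by the rightmost wire analysis such a picture would exhibit a common non-empty seal of $u$ and $v$ in the sense of Section~\ref{s:compression}, contradicting incompressibility of $\gp$.
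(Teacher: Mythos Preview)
Your proposal isolates the right structural feature—transistors that touch the rightmost position are special because $u$ and $v$ share the last letter $a$—but the surgery you attempt does not go through. The separation you describe would factor $P_{ij}$ through an intermediate word of the form $w_j u'$, with the bottom ``suffix part'' then witnessing $[w_j u']=[w_j v']$; but this equality is false in $N$ for every $w_j$ (it is exactly what you label the degenerate scenario, and it cannot be confined to a single exceptional case). If instead your prefix part is meant to leave $u'$ untouched, it would give $[w_i]=[w_j]$, contradicting distinctness of the $m_i$. More basically, the $a$-cascade cannot in general be pushed below all other transistors: a cascade transistor may emit a non-rightmost wire that feeds into a non-cascade transistor, and such a pair is not of the disjoint form $(\bbe_1\cdot\iota\bbe_2)\circ(\tau\bbe_1\cdot\bbe_2)$ to which pull-up/push-down applies. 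So the intermediate word you need does not exist and the descent never gets started. Your final appeal to incompressibility (that a picture witnessing $[u']=[v']$ would produce a common seal of $u$ and $v$) is also unsubstantiated.

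The paper uses the same rightmost-transistor observation but turns it into a parity invariant rather than a surgery. For $m\in N$ let $f_m(\bbp)$ count the rightmost edges of $\bbp$ whose left context represents $m$; one checks directly from (H1)--(H4) that $f_m$ is invariant modulo $2$ under homotopy. From the assumed equality, a pigeonhole argument forces a cycle of equalities $[w_{j_l}\varphi(\beta_{j_l})]=[w_{j_{l+1}}\beta_{j_{l+1}}]$ with the $[w_{j_l}]$ distinct; right-multiplying by $a$ and inserting a single elementary edge between consecutive terms produces a closed path $\mathcal C$ in the Squier complex whose rightmost edges have pairwise distinct $L$-values. Hence $f_m(\mathcal C)=1$ for some $m$, so $\mathcal C$ is not null-homotopic, contradicting strict asphericity. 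The invariant replaces your attempted global reordering of transistors with a count that is manifestly homotopy-stable.
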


The rest of the discussion in this section will be devoted to proving this proposition, from which we can then deduce Lemma~\ref{l:inj.cut.letter}.

Choose and fix words $w_i$ such that $m_i = [w_i]$ for $1 \leq i \leq k$. By assumption, for all $i$ and $j$, if $i \neq j$, then this implies that $[w_i] \neq [w_j]$. Seeking a contradiction suppose that we do have
\begin{equation}
z_1[w_1u'] +
\ldots +
z_k[w_ku']
=
z_1[w_1v'] +
\ldots +
z_k[w_kv']
\label{eqn:injective}
\end{equation}
in $\ZN$.

As already observed in Section~\ref{sec:preliminaries}, every edge $\mathbb{E}$ in $\Gamma(\gp)$ can be written uniquely in the form $\alpha \cdot \mathbb{A} \cdot \beta$ with $\alpha, \beta \in A^*$ where $\mathbb{A}$ is an elementary edge. This means that $\mathbb{A}$ is equal to $(1, (u=v), \epsilon, 1)$ for some $\epsilon \in \{+1, -1\}$. We use this unique decomposition to define a map $\lambda$ from the set of edges of the Squier complex to $A^*$ where for $\mathbb{E} = \alpha \cdot \mathbb{A} \cdot \beta$,  with $\bba$ elementary, we define $\lambda(\mathbb{E}) = \alpha$. So $\lambda$ maps each edge to the word to the left of the transistor in the picture of the edge. For any edge $\bbe$ define $L(\bbe) = [\lambda(\bbe)]$. So $L$ defines a mapping from the set of edges of $\Gamma(\gp)$ into the monoid $N$.

Let us call an edge $\mathbb{E}$ in $\Gamma(\gp)$  a \emph{rightmost edge} if it is not of the form $\mathbb{F} \cdot x$ for some edge $\mathbb{F}$ and some letter $x \in A$. So a rightmost edge $\mathbb{E}$ (since the defining relation is $u=v$ with $u$ and $v$ both non-empty) has the property that its transistor has a wire connected to the rightmost letter of $\iota \bbe$ and to the rightmost letter of $\tau \bbe$. 
It is immediate from the definition that $\bbe$ is a rightmost edge if and only if its inverse $\bbe^{-1}$ is a rightmost edge.

In addition, for each $m \in N$, we define a function $f_m$ from the set of paths in $\Gamma(\gp)$ to $\mathbb Z$ where $f_m(\bbp)$ is defined to be the number of rightmost edges $\bbf$ in $\bbp$ such that $L(\bbf) = m$.
Note that  $f_m(\bbp)\geq 0$ for every path $\bbp$ in $\Gamma(\gp)$ and every element $m \in N$.   

\begin{lemma}\label{lem:parity:lemma} For every $m \in N$ and every pair of paths $\bbp$ and $\bbq$ in $\Gamma(\gp)$, if $\bbp \sim_0 \bbq$, then $f_m(\bbp) \equiv f_m(\bbq)    \pmod{2}$.  \end{lemma}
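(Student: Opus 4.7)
The plan is to apply Lemma~\ref{lem:pupd} and reduce the problem to verifying that $f_m$ is invariant modulo $2$ under each of the three elementary moves: insertion of a cancelling pair, deletion of a cancelling pair, and pull-up push-down. Since $f_m$ is additive on path composition, it will also suffice to check invariance locally, i.e.\ for the moves applied to a sub-path.

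First I record the trivial observation that an edge $\bbe = \alpha\cdot\bba\cdot\beta$ (with $\bba$ the elementary edge for the relation $u=v$) is \emph{rightmost} if and only if $\beta=\varepsilon$. Moreover, $\bbe$ and $\bbe^{-1}$ have the same decomposition up to the sign $\epsilon$, so $\lambda(\bbe)=\lambda(\bbe^{-1})$ and hence $L(\bbe)=L(\bbe^{-1})$; also $\bbe$ is rightmost precisely when $\bbe^{-1}$ is. Therefore inserting or deleting a cancelling pair $\bbe\circ\bbe^{-1}$ changes $f_m$ by $0$ (if $\bbe$ is not rightmost or if $L(\bbe)\neq m$) or by $\pm 2$ (if $\bbe$ is rightmost with $L(\bbe)=m$); in either case the parity of $f_m$ is preserved.

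The key step is analysing pull-up push-down. Write $\bbe_1=\alpha_1\cdot\bba^{\epsilon_1}\cdot\beta_1$ and $\bbe_2=\alpha_2\cdot\bba^{\epsilon_2}\cdot\beta_2$, where $\bba^{\epsilon}=(1,(u=v),\epsilon,1)$. Then the four edges appearing in the move have canonical decompositions
\begin{align*}
\bbe_1\cdot\iota\bbe_2 &= \alpha_1\cdot\bba^{\epsilon_1}\cdot(\beta_1\alpha_2 r_{\epsilon_2}\beta_2),\\
\tau\bbe_1\cdot\bbe_2 &= (\alpha_1 r_{-\epsilon_1}\beta_1\alpha_2)\cdot\bba^{\epsilon_2}\cdot\beta_2,\\
\iota\bbe_1\cdot\bbe_2 &= (\alpha_1 r_{\epsilon_1}\beta_1\alpha_2)\cdot\bba^{\epsilon_2}\cdot\beta_2,\\
\bbe_1\cdot\tau\bbe_2 &= \alpha_1\cdot\bba^{\epsilon_1}\cdot(\beta_1\alpha_2 r_{-\epsilon_2}\beta_2).
\end{align*}
Because $u=u'a$ and $v=v'a$ are both non-empty, the words $r_{\epsilon_2}$ and $r_{-\epsilon_2}$ are non-empty, so the first and fourth edges are never rightmost. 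The second and third edges are rightmost precisely under the same condition $\beta_2=\varepsilon$. If $\beta_2\neq\varepsilon$ there are no rightmost edges in either the old or the new subpath, and $f_m$ is trivially unchanged. If $\beta_2=\varepsilon$, each subpath contains exactly one rightmost edge, and the two rightmost edges have $\lambda$-values $\alpha_1 r_{-\epsilon_1}\beta_1\alpha_2$ and $\alpha_1 r_{\epsilon_1}\beta_1\alpha_2$; but these two words represent the same element of $N$ since they differ by one application of the defining relation. Hence their $L$-values coincide, so each rightmost edge contributes to $f_m$ if and only if the other does. Again $f_m$ is preserved on the nose, a fortiori modulo $2$.

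I do not anticipate a serious obstacle: the whole argument is bookkeeping, and the only substantive point is the coincidence of $L$-values for the two rightmost edges appearing in a pull-up push-down, which follows directly from the fact that $L$ factors through $N$. Combining the three cases with Lemma~\ref{lem:pupd} yields the claim.
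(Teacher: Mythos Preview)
Your proof is correct and follows essentially the same approach as the paper: reduce via Lemma~\ref{lem:pupd} to the elementary moves, observe that cancelling pairs contribute equally to $f_m$, and for pull-up push-down note that the $\bbe_1\cdot w$ edges are never rightmost while the $w\cdot\bbe_2$ edges are rightmost simultaneously with coinciding $L$-values. Your presentation is slightly more explicit in writing out the canonical decompositions of the four edges, but the underlying argument is identical.
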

\begin{proof}
By Lemma~\ref{lem:pupd} it suffices to prove the result for the case that $\bbp$ can be transformed into $\bbq$ either by the deletion of a single cancelling pair of edges, or by a single application of pull-up push-down.

First suppose that $\bbp$ can be transformed into $\bbq$ by deletion of a cancelling pair of edges $\bbe \circ \bbe^{-1}$. Then since $[\lambda(\bbe)] = [\lambda(\bbe^{-1})]$ in $N$ it is immediate that $f_m(\bbp) \equiv f_m(\bbq)$ modulo $2$.

Now suppose that $\bbp$ can be transformed into $\bbq$ by a single application of pull-up push-down. So up to symmetry we have
\[\bbp = \bbr \circ (\bbe_1 \cdot \iota \bbe_2) \circ (\tau \bbe_1 \cdot \bbe_2) \circ \bbs\]
and
\[\bbq = \bbr \circ (\iota \bbe_1 \cdot \bbe_2) \circ (\bbe_1 \cdot \tau \bbe_2) \circ \bbs.\]
Observe that $\lambda (\bbe_1 \cdot \iota \bbe_2) = \lambda(\bbe_1 \cdot \tau \bbe_2)= \lambda(\bbe_1)$ by definition. Also by definition we have $\lambda(\tau \bbe_1 \cdot \bbe_2) = (\tau \bbe_1)(\lambda(\bbe_2))$  and $\lambda(\iota \bbe_1 \cdot \bbe_2) = (\iota \bbe_1)(\lambda(\bbe_2))$ and hence it follows that
\[
[\lambda(\tau \bbe_1 \cdot \bbe_2)] =
[\lambda(\iota \bbe_1 \cdot \bbe_2)]
\]
in the monoid $N$.

Furthermore, it is clear that $\tau \bbe_1 \cdot \bbe_2$ is a rightmost edge if and only if $\iota \bbe_1 \cdot \bbe_2$ is a rightmost edge (if and only if $\bbe_2$ is a rightmost edge), and  neither of the edges $\bbe_1 \cdot \iota \bbe_2$ nor $\bbe_1 \cdot \tau \bbe_2$ is a rightmost edge, since $u$ and $v$ are both non-empty words by assumption which implies that both $\iota \bbe_2$ and $\tau \bbe_2$ are non-empty words.

It follows from these observations that $f_m(\bbp) \equiv f_m(\bbq) \pmod{2}$, which completes the proof of this case, and of the lemma.
\end{proof}

\begin{lemma}\label{lem:TheMiltisetProof}
For every word $w \in A^{*}$ we have $[w u'] \neq [w v']$.
\end{lemma}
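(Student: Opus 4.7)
The plan is to argue by contradiction using the parity invariant $f_m$ developed in Lemma~\ref{lem:parity:lemma}, exploiting the strict asphericity of the presentation $\gp = \lb A \mid u = v\rb$ provided by Kobayashi's result. The key observation is that the hypothetical equality $[wu']=[wv']$ produces a path from $wu'$ to $wv'$ in the Squier complex, and right-multiplying such a path by $a$ forbids rightmost edges from appearing, which will be incompatible with the obvious one-edge path applying the relation at position $w$.

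In more detail, suppose for contradiction that $[wu'] = [wv']$ in $N$. Then $wu'$ and $wv'$ lie in the same connected component of $\Gamma(\gp)$, so there exists a path $\bbp$ in $\Gamma(\gp)$ from $wu'$ to $wv'$. Consider the right-translated path $\bbp\cdot a$, which goes from $wu'a = wu$ to $wv'a = wv$. On the other hand, let $\bba = (1,(u=v),+1,1)$ be the elementary edge corresponding to the defining relation, and set $\bbq = w \cdot \bba \cdot 1$; this is a single-edge path from $wu$ to $wv$. Thus $\bbp\cdot a$ and $\bbq$ are parallel paths, so by strict asphericity of $\gp$ (invoking~\cite[Corollary~5.6]{Kobayashi1998}) we obtain $\bbp\cdot a \sim_0 \bbq$.

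Now I compute the invariant $f_{[w]}$ on both paths and derive a parity contradiction. The single edge $\bbq$ is of the form $(w, (u{=}v), +1, 1)$; since its right factor is the empty word, it is a rightmost edge in the sense of the preceding discussion, and $L(\bbq) = [\lambda(\bbq)] = [w]$. Hence $f_{[w]}(\bbq) = 1$. By contrast, every edge of $\bbp$ has the form $\alpha \cdot \bba' \cdot \beta$ with $\alpha,\beta \in A^*$ and $\bba'$ elementary, so the edges of $\bbp \cdot a$ have the form $\alpha\cdot \bba'\cdot(\beta a)$; the right factor $\beta a$ is non-empty, so no edge of $\bbp \cdot a$ is rightmost. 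Therefore $f_{[w]}(\bbp\cdot a) = 0$. But by Lemma~\ref{lem:parity:lemma} the relation $\bbp \cdot a \sim_0 \bbq$ forces $f_{[w]}(\bbp\cdot a) \equiv f_{[w]}(\bbq) \pmod 2$, i.e.\ $0 \equiv 1 \pmod 2$, a contradiction. Hence $[wu'] \neq [wv']$.

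The only step requiring any care is verifying that the edges of $\bbp\cdot a$ are never rightmost, and this is immediate because $a$ is appended as a genuine non-empty suffix on the right of every transistor; the rest is a clean application of the invariant already set up. No use is made of the value of $f_m$ at $m \neq [w]$ in this argument.
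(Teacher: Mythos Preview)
Your proof is correct and follows essentially the same approach as the paper's. The only cosmetic difference is that the paper concatenates $\bbp\cdot a$ with the single edge $\bbq$ to form a closed path $\mathcal{C}$ and then compares $f_{[w]}(\mathcal{C})=1$ with $f_{[w]}(1_{\iota\mathcal{C}})=0$, whereas you compare the two parallel paths $\bbp\cdot a$ and $\bbq$ directly; these are trivially equivalent formulations of the same argument via strict asphericity and Lemma~\ref{lem:parity:lemma}.
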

\begin{proof}
  Suppose, seeking a contradiction, that we have $[wu'] = [wv']$. Right multiplying by $a$ gives $[wu'a]=[wv'a]$. Since $u'a = v'a$ is the defining relation $u=v$ this gives a pair of parallel paths from $wu'a$ to $wv'a$ in $\Gamma(\gp)$: one given by a single edge corresponding to applying the relation $u'a = v'a$, and the other is given by taking a path in $\Gamma(\gp)$ from $wu'$ to $wv'$ and right multiplying by $a$. More precisely, choose and fix a path $\mathbb{P}$ from $wv'$ to $wu'$. Define $\mathcal{C} = (\mathbb{P} \cdot a) \circ (w \cdot (1, (u=v), +1, 1))$ which is a closed path in $\Gamma(\gp)$ based at $(\iota \bbp \cdot a) = wv'a$. Observe that $\mathcal{C}$ has exactly one rightmost edge, namely the edge $\bbe = w \cdot (1, (u=v), +1, 1)$. Clearly this edge satisfies $L(\bbe)=m$ where $m = [w] \in N$. It follows that $f_m(\mathcal{C})=1$. On the other hand, $f_m(1_{\iota \mathcal{C}})=0$ since the empty path has no edges. It then follows from Lemma~\ref{lem:parity:lemma} that $\mathcal{C} \not\sim_0 1_{\iota \mathcal{C}}$. But this contradicts strict asphericity of the presentation $\lb A \mid u=v \rb$.
\end{proof}

\begin{lemma}
\label{lem:aspherical:bipartite}
Assume that \eqref{eqn:injective} holds. 
\begin{enumerate}
\item[(i)] For every $i \in \{1, \ldots, k \}$ there exists a $j \in \{1, \ldots, k \}$ with $j \neq i$ such that either
$[w_i u'] = [w_j u']$ or $[w_i u'] = [w_j v']$. 
\item[(ii)] For every $i \in \{1, \ldots, k \}$ there exists a $j \in \{1, \ldots, k \}$ with $j \neq i$ such that either
$[w_i v'] = [w_j v']$ or $[w_i v'] = [w_j u']$.
  \end{enumerate}
\end{lemma}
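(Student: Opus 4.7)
My plan is to interpret the hypothesized equation~\eqref{eqn:injective} as an equality in the free abelian group $\mathbb{Z}N$ and compare coefficients at a single chosen element of $N$. Rewriting \eqref{eqn:injective} as $\sum_{j} z_j([w_ju'] - [w_jv']) = 0$ and extracting the coefficient at $[w_iu']$ for a fixed $i$, I obtain the identity
\[
\sum_{\substack{1 \le j \le k \\ [w_ju'] = [w_iu']}} z_j \;=\; \sum_{\substack{1 \le j \le k \\ [w_jv'] = [w_iu']}} z_j.
\]
This single coefficient identity will carry all the content of part (i); part (ii) will follow by running the same argument at $[w_iv']$.

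The crucial input that makes this collapse work is Lemma~\ref{lem:TheMiltisetProof}, applied with $w = w_i$, giving $[w_iu'] \neq [w_iv']$. Because of this, the index $j = i$ contributes $z_i$ to the left-hand sum above but contributes nothing to the right-hand sum. Consequently, if no $j \neq i$ satisfied either $[w_ju'] = [w_iu']$ or $[w_jv'] = [w_iu']$, the displayed identity would reduce to $z_i = 0$, contradicting the standing hypothesis $z_i \neq 0$. Hence some $j \neq i$ must witness one of the two equalities listed in (i). For (ii), I would compare coefficients at $[w_iv']$ instead, again invoking Lemma~\ref{lem:TheMiltisetProof} to exclude $j = i$ from the cross-sum on the $u'$-side, and conclude in exactly the same fashion.

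The hard part of the overall argument really lies in Lemma~\ref{lem:TheMiltisetProof} (the parity invariant $f_m$ together with strict asphericity of the incompressible one-relator presentation) rather than in the present lemma. Once $[wu'] \neq [wv']$ is in hand for every word $w$, Lemma~\ref{lem:aspherical:bipartite} is a one-step coefficient comparison in the free abelian group $\mathbb{Z}N$, and I do not anticipate any further obstacles.
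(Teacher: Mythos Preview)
Your proposal is correct and is essentially the same argument as the paper's: both observe, via Lemma~\ref{lem:TheMiltisetProof}, that $[w_iu']\neq [w_iv']$, and then compare the coefficient of $[w_iu']$ (respectively $[w_iv']$) on both sides of \eqref{eqn:injective} to force the existence of some $j\neq i$. The paper phrases this as ``$z_i[w_iu']$ would be the only term of the form $zm$ with $m=[w_iu']$,'' whereas you write out the coefficient identity explicitly, but the content is identical.
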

\begin{proof}
First note that we have already proved (see Lemma~\ref{lem:TheMiltisetProof} above) that for all $i \in \{ 1, \ldots, k \}$ we have $[w_i u'] \neq [w_i v']$. Now let $i \in \{ 1, \ldots, k \}$. If (i) does not hold this would mean that in equation \eqref{eqn:injective} the expression $z_i [w_i u']$ is the only term of the form $zm$ with with $z \in \Z \setminus \{ 0 \}$ and $m = [w_i u']$, which would mean \eqref{eqn:injective} cannot hold, a contradiction. The proof for (ii) is similar.
\end{proof}

Define $\varphi\colon \{ u', v' \} \rightarrow \{u', v' \}$ by $\varphi(u') = v'$ and $\varphi(v') = u'$. Applying Lemma~\ref{lem:aspherical:bipartite} repeatedly starting at $[w_1 u']$ gives an infinite sequence of equalities
\begin{align*}
[w_1 u'] &= [w_{i_1} \beta_{i_1}]
& \mbox{ where
$i_1 \neq 1$ and $\beta_{i_1} \in \{u', v'\}$ } \\
[w_{i_1} \varphi(\beta_{i_1}) ] &= [w_{i_2} \beta_{i_2}]
& \mbox{ where
$i_2 \neq i_1$ and $\beta_{i_2} \in \{u', v'\}$ } \\
[w_{i_2} \varphi(\beta_{i_2}) ] &= [w_{i_3} \beta_{i_3}]
& \mbox{ where
$i_3 \neq i_2$ and $\beta_{i_3} \in \{u', v'\}$ } \\
\vdots & \\
\end{align*}

Since $\{w_1, \ldots, w_k \}$ is a finite set it follows that some repetition of indices must occur.
 That is, there must exist some $p, q \in \mathbb{N}$ with $p< q$ such that $i_p=i_q$.
It follows, by considering a first repetition, that there is a subsequence of equalities:
\begin{align*}\label{eq_graph}
[w_{j_1} \varphi(\beta_{j_1}) ] &= [w_{j_2} \beta_{j_2}]
\\ \tag{$\ast$}
[w_{j_2} \varphi(\beta_{j_2}) ] &= [w_{j_3} \beta_{j_3}]
\\
\; \; \vdots \\
[w_{j_{n-1}} \varphi(\beta_{j_{n-1}}) ] &= [w_{j_n} \beta_{j_n}]
\\
 [w_{j_n} \varphi(\beta_{j_n}) ] &= [w_{j_1} \varphi^{e}(\beta_{j_1})]
\end{align*}
with $\beta_{j_t} \in \{u', v'\}$  for all $1 \leq t \leq n$,  where all $j_l$ are distinct elements of $\{1,\ldots,k\}$ for $1 \leq l \leq n$, $e\in \{0,1\}$ and $n\geq 2$.
Here $\varphi^{0}(\beta_{j_1}) = \beta_{j_1}$ while $\varphi^{1}(\beta_{j_1}) = \varphi(\beta_{j_1})$.

Right multiplying all of these equations by $a$, and using the relation $u' a = v' a$, gives the following cycle of equalities in the monoid $N$.
\begin{align*}
 & \; \; \; \; \; [w_{j_1} \varphi(\beta_{j_1}) a] = [w_{j_2} \beta_{j_2}a]
\\
& = [w_{j_2} \varphi(\beta_{j_2}) a] = [w_{j_3} \beta_{j_3}a]
\\
& \; \; \vdots \\
& = [w_{j_{n-1}} \varphi(\beta_{j_{n-1}}) a] = [w_{j_n} \beta_{j_n}a]
 \\
& = [w_{j_n} \varphi(\beta_{j_n}) a] = [w_{j_1} \varphi^{e}(\beta_{j_1})a]
\end{align*}
with $\beta_{j_t} \in \{u', v'\}$ for all $1 \leq t \leq n$,  where all $j_l$ are distinct elements of $\{1,\ldots,k\}$ for $1 \leq l \leq n$,  $e\in\{0,1\}$ and $n\geq 2$.

In the Squier complex this corresponds to a closed path $\mathcal{C}$ from $[w_{j_1} \varphi(\beta_{j_1}) a]$ to itself. To describe this closed path,
it will be helpful to introduce some notation. For each $l \in \{1, \ldots, n \}$ choose and fix a path $\bbp_l$ in the Squier complex with initial vertex $w_{j_l} \varphi(\beta_{j_l})$ to $w_{j_{1+1}} \beta_{j_{l+1}}$, for $1\leq l\leq n-1$ and a path $\bbp_n$ from $w_{j_n} \varphi(\beta_{j_n})$ to $w_{j_{1}} \varphi^e(\beta_{j_{1}})$.  Such paths exist because of the first list of equalities above in ($\ast$).
The resulting closed path $\mathcal{C}$ in the Squier complex arising from the above cycle of equalities is:
\[\mathcal{C} = (\bbp_1 \cdot a) \circ (w_{j_2} \cdot \bbe_2) \circ (\bbp_2 \cdot a) \circ (w_{j_3} \cdot \bbe_3) \circ \ldots \circ (\bbp_n \cdot a) \circ (w_{j_1} \cdot \bbe_1) \; (\dagger) \] if $e=0$ and
\[\mathcal{C} = (\bbp_1 \cdot a) \circ (w_{j_2} \cdot \bbe_2) \circ (\bbp_2 \cdot a) \circ (w_{j_3} \cdot \bbe_3) \circ \ldots \circ (\bbp_n \cdot a)\; (\ddagger) \] if $e=1$, where $\bbe_l=(1,(u=v),\epsilon_l,1)$, for all $l$, with
$\epsilon_l=1$ if $\beta_{j_l}=u'$ and $\epsilon_l=-1$ if $\beta_{j_l}=v'$.
See Figure~\ref{fig_C} for an illustration of this closed path in the case $e=0$.
Note that by assumption all the elements $[w_{j_r}]$ are distinct elements of $N$ for $1 \leq r \leq n$. Clearly the closed path $\mathcal{C}$ has exactly $n$ rightmost edges if $e=0$, namely the edges $w_{j_r} \cdot \bbe_r$ for $1 \leq r \leq n$; when $e=1$, there are $n-1$ rightmost edges, namely the edges $w_{j_r} \cdot \bbe_r$ for $1 \leq r \leq n-1$.  Since $n\geq 2$, there is always at least one rightmost edge.

\begin{figure}[tb]
\begin{center}
\scalebox{0.8}{
\begin{tikzpicture}
[blackbox/.style={draw, fill=gray!20, rectangle, minimum height=2cm, minimum width=9cm},
transistorbox/.style={draw, fill=blue!20, thick, rectangle, minimum height=0.8cm, minimum width=2.5cm},
dottedbox/.style={draw, rectangle, loosely dashed, minimum height=3.5cm, minimum width=8cm},
dottedboxsmall/.style={draw, rectangle, loosely dashed, minimum height=3.5cm, minimum width=4cm},
clearbox/.style={draw, rectangle, minimum height=2cm, minimum width=7cm},
clearboxsmall/.style={draw, rectangle, minimum height=2cm, minimum width=1cm}]
%
%
%
%
%
%
\draw (1,1) -- (1,3);
\draw (2,1) -- (2,3);
\draw (3,1) -- (3,3);
\draw (4,1) -- (4,3);
\draw (5,1) -- (5,3);
\draw (6,1) -- (6,3);
\draw (7,1) -- (7,3);
\draw (8,1) -- (8,3);
%
%
\draw (1,1-10) -- (1,3-10);
\draw (2,1-10) -- (2,3-10);
\draw (3,1-10) -- (3,3-10);
\draw (4,1-10) -- (4,3-10);
\draw (5,1-10) -- (5,3-10);
\draw (6,1-10) -- (6,3-10);
\draw (7,1-10) -- (7,3-10);
\draw (8,1-10) -- (8,3-10);
%
%
\draw (1,-3) -- (1,-1);
\draw (2,-3) -- (2,-1);
\draw (3,-3) -- (3,-1);
\draw (4,-3) -- (4,-1);
\draw (5,-3) -- (5,-1);
\draw (6,-3) -- (6,-1);
\draw (7,-3) -- (7,-1);
\draw (8,-3) -- (8,-1);
%

%
%
\draw[dotted] (1,-5) -- (1,-3);
\draw[dotted] (3,-5) -- (3,-3);
\draw[dotted] (5,-5) -- (5,-3);
\draw[dotted] (7,-5) -- (7,-3);
\draw[dotted] (9,-5) -- (9,-3);
\draw[dotted] (11,-5) -- (11,-3);
%
%
%
%
\draw (10.5,3) -- (10.5,5);
\draw (10.5,3-10) -- (10.5,5-10);
%
%
\draw (10.5,-1) -- (10.5,1);
%
%
%
\draw (10.5,3) -- (10.5,2.4);
\draw (9.5,3) -- (9.5,2.4);
\draw (8.5,3) -- (8.5,2.4);
%
%
\draw (10.5,3-10) -- (10.5,2.4-10);
\draw (9.5,3-10) -- (9.5,2.4-10);
\draw (8.5,3-10) -- (8.5,2.4-10);
%
%
\draw (10.5,-1) -- (10.5,-1.6);
\draw (9.5,-1) -- (9.5,-1.6);
\draw (8.5,-1) -- (8.5,-1.6);
%
%
%
\draw (10.5,1.6) -- (10.5,1);
\draw (9.5,1.6) -- (9.5,1);
\draw (8.5,1.6) -- (8.5,1);
%
%
\draw (10.5,1.6-10) -- (10.5,1-10);
\draw (9.5,1.6-10) -- (9.5,1-10);
\draw (8.5,1.6-10) -- (8.5,1-10);
%
%
\draw (10.5,1.6-4) -- (10.5,1-4);
\draw (9.5,1.6-4) -- (9.5,1-4);
\draw (8.5,1.6-4) -- (8.5,1-4);
%
%
%
%
%
\node[blackbox] (midleft) at (5.5,4)  {$\bbp_1$};
\node[blackbox] (midleft) at (5.5,4-10)  {$\bbp_n$};
\node[blackbox] (midleft2) at (5.5,0)  {$\bbp_2$};
\node[clearboxsmall] (midright) at (10.5,4)  {};
\node[clearboxsmall] (bottom) at (10.5,4-10)  {};
\node[clearboxsmall] (secondright) at (10.5,0)  {};
\node (label) at (10.7,4)  {$a$};
\node (label) at (10.7,4-10)  {$a$};
\node (label) at (10.7,0)  {$a$};
\node[transistorbox] (firsttransistor) at (9.5,2)  {};
\node[transistorbox] (lasttransistor) at (9.5,2-10)  {};
\node[transistorbox] (2ndtransistor) at (9.5,2-4)  {};
%
%
%
%
\draw[red, very thick]   (1,5) -- node[above]  {$w_{j_1}$} (8,5);
\draw[red, very thick]   (1,5-10) -- node[above]  {$w_{j_n}$} (8,5-10);
\draw[red, very thick]   (1,3) -- node[above]  {$w_{j_2}$} (8,3);
\draw[red, very thick]   (1,3-10) -- node[above]  {$w_{j_1}$} (8,3-10);
\draw[very thick]        (8,5) -- node[above]  {$\varphi(\beta_{j_1})$} (10,5);
\draw[very thick]        (8,5-10) -- node[above]  {$\varphi(\beta_{j_n})$} (10,5-10);
\draw[very thick]                    (10,5) -- node[above]  {$a$} (11,5);
\draw[very thick]                    (10,5-14) -- node[below]  {$a$} (11,5-14);
\draw[red, very thick]   (1,1) -- node[below]  {$w_{j_2}$} (8,1);
\draw[red, very thick]   (1,1-10) -- node[below]  {$w_{j_1}$} (8,1-10);
\draw[red, very thick]   (1,-1) -- node[above]  {$w_{j_3}$} (8,-1);
\draw[red, very thick]   (1,-3) -- node[below]  {$w_{j_3}$} (8,-3);
\draw[very thick]        (8,1) -- node[below]  {$\varphi(\beta_{j_2})$} (10,1);
\draw[very thick]        (8,1-10) -- node[below]  {$\varphi(\beta_{j_1})$} (10,1-10);
\draw[very thick]        (8,-1) -- node[above]  {$\beta_{j_3}$} (10,-1);
\draw[very thick]        (8,-3) -- node[below]  {$\varphi(\beta_{j_3})$} (10,-3);
\draw        (10,-3) -- node[below]  {} (11,-3);
\draw        (10,-3-6) -- node[below]  {} (11,-3-6);
\draw[very thick]        (8,3) -- node[above]  {$\beta_{j_2}$} (10,3);
\draw[very thick]        (8,3-10) -- node[above]  {$\beta_{j_1}$} (10,3-10);
%
%
\end{tikzpicture}
}
\end{center}
\caption{The closed path $\mathcal{C}$ in the Squier complex.}
\label{fig_C}
\end{figure}
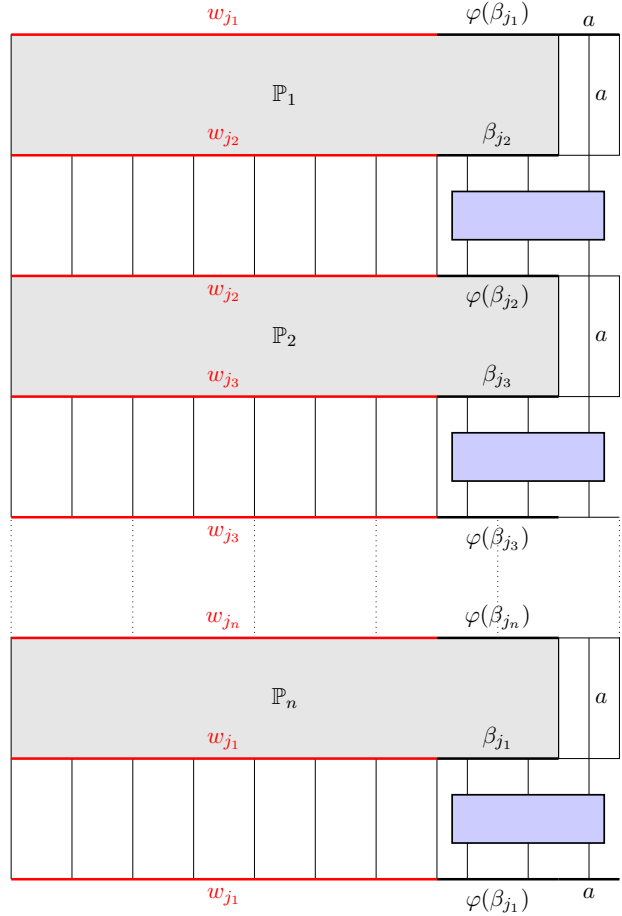

\begin{lemma}\label{lem:not:nul:hom} The closed path $\mathcal{C}$ is not null-homotopic, that is, $\mathcal{C} \not\sim_0 1_{\iota \mathcal{C}}$.    \end{lemma}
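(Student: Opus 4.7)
The plan is to apply the parity invariant from Lemma~\ref{lem:parity:lemma} to locate an element $m \in N$ for which $f_m(\mathcal{C})$ is odd; since the empty path has no edges at all, $f_m(1_{\iota \mathcal{C}}) = 0$, and this parity mismatch will immediately force $\mathcal{C} \not\sim_0 1_{\iota \mathcal{C}}$. The argument should look structurally identical to the one used in Lemma~\ref{lem:TheMiltisetProof}, but with several rightmost edges distributed along a much longer closed path.

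The first step is to classify the rightmost edges of $\mathcal{C}$ from its explicit description $(\dagger)/(\ddagger)$. Any edge appearing inside a factor of the form $\bbp_l \cdot a$ has the shape $\alpha \cdot \bba \cdot (\beta a)$ for some edge $\alpha \cdot \bba \cdot \beta$ of $\bbp_l$ (with $\bba$ elementary); the word $\beta a$ to the right of the transistor is non-empty, so such an edge is not rightmost. Hence the rightmost edges of $\mathcal{C}$ are exactly the edges $w_{j_r} \cdot \bbe_r$ sitting between successive $\bbp_l \cdot a$ factors: all $n$ of them when $e=0$, and the $n-1$ edges indexed by $r \in \{2, \ldots, n\}$ when $e=1$. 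For each of these, $\lambda(w_{j_r}\cdot \bbe_r) = w_{j_r}$, so $L(w_{j_r}\cdot \bbe_r) = [w_{j_r}]$.

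The next step is to exploit distinctness. The $j_l$ are distinct indices in $\{1, \ldots, k\}$, and by hypothesis the $[w_i]$ are pairwise distinct in $N$, so the values $[w_{j_1}], \ldots, [w_{j_n}]$ are distinct. Since $n \geq 2$, the index $r=2$ is present in the list of rightmost-edge indices in both cases $e=0$ and $e=1$; set $m := [w_{j_2}]$. Then exactly one rightmost edge of $\mathcal{C}$, namely $w_{j_2} \cdot \bbe_2$, has $L$-value $m$, so $f_m(\mathcal{C}) = 1$ is odd while $f_m(1_{\iota \mathcal{C}}) = 0$. Lemma~\ref{lem:parity:lemma} then gives $\mathcal{C} \not\sim_0 1_{\iota \mathcal{C}}$, as required.

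I do not anticipate a serious obstacle here: the genuinely creative work (defining the counting function $f_m$, verifying its parity invariance, and extracting the cyclic sequence of equalities $(\ast)$) has already been carried out. The only pitfall is bookkeeping — making sure no rightmost edge is hidden inside some $\bbp_l \cdot a$ block, and keeping track of which $r$'s actually index rightmost edges of $\mathcal{C}$ in the two cases $e \in \{0,1\}$ so that $n \geq 2$ genuinely guarantees a usable edge. Both points are settled by the observations above.
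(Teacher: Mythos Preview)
Your proposal is correct and follows essentially the same approach as the paper: identify the rightmost edges of $\mathcal{C}$, observe that their $L$-values are pairwise distinct elements of $N$, pick one (the paper says ``choose some rightmost edge'', you specifically pick $r=2$), and apply the parity invariant of Lemma~\ref{lem:parity:lemma}. Your explicit verification that no edge inside a block $\bbp_l\cdot a$ is rightmost, and your care in checking that $r=2$ is present in both cases $e\in\{0,1\}$, are exactly the bookkeeping the paper glosses over in the sentence preceding the lemma.
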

\begin{proof}
Choose some rightmost edge $\bbe$ in $\mathcal{C}$ and let $m = L(\bbe) \in N$. By the definition of $\mathcal{C}$ (as the $[w_{j_r}]$ are distinct) we have $f_m(\mathcal{C}) = 1$, that is, $\bbe$ is the unique rightmost edge in $\mathcal{C}$ with image $m$ under the function $L$. On the other hand, the empty path $1_{\iota \mathcal{C}}$ satisfies $f_m(1_{\iota \mathcal{C}}) = 0$ since it has no edges. Now it follows from Lemma~\ref{lem:parity:lemma} that $\mathcal{C} \not\sim_0 1_{\iota \mathcal{C}}$, which completes the proof of the lemma.
\end{proof}

But Lemma~\ref{lem:not:nul:hom} is a contradiction since, by assumption, the presentation $\lb A \mid u=v \rb$ is strictly aspherical. This completes the proof of Proposition~\ref{prop:injective} and hence of Lemma~\ref{l:inj.cut.letter}.

\section{Resolving the relation module
and completing the proof in the non-subspecial case
}
\label{sec:resolving:relation:module}
In this section we will bring together several of the results from previous sections to complete the proof that all one-relator monoids are of type left- and right-$\F_\infty$ and $\FP_\infty$.

Recall that a module $V$ over a ring $R$ is of \emph{type $\FP_n$} if it has a projective resolution
\[\cdots\longrightarrow  P_j\longrightarrow  P_{j-1}\longrightarrow \cdots \longrightarrow P_0\longrightarrow V\longrightarrow 0 \] with $P_j$ finitely generated for $0\leq j\leq n\leq \infty$.  
It is well known that replacing the word ``projective'' by ``free'' in the definition of $\FP_n$ gives the same notion; see~\cite[Proposition~VIII.4.5]{BrownCohomologyBook}.
A useful fact, cf.~\cite{Brown1982} or~\cite[Proposition~2.2]{Strebel1983}, is the following.

\begin{Lemma}\label{l:brown}
Suppose that $R$ is a ring and
\[C_n\longrightarrow C_{n-1}\longrightarrow\cdots\longrightarrow C_0\longrightarrow V\] is a partial resolution of an $R$-module $V$.
 If $C_i$ is of type  $\FP_{n-i}$, for $0\leq i\leq n$, then $V$ is of type $\FP_n$.
\end{Lemma}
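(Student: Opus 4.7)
The plan is to argue by induction on $n$, reducing to the standard two-out-of-three behaviour of $\FP_n$ under short exact sequences. For the base case $n=0$, since $C_0$ is finitely generated (being of type $\FP_0$) and surjects onto $V$, the module $V$ is finitely generated, i.e.\ of type $\FP_0$.

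For the inductive step, assume the statement holds for all values smaller than $n$, and set $Z = \ker(C_0 \to V)$, which coincides with $\mathrm{im}(C_1 \to C_0)$ by exactness of the given partial resolution at $C_0$. Then the truncated complex
\[C_n \to C_{n-1} \to \cdots \to C_1 \to Z \to 0\]
is a partial resolution of $Z$ of length $n-1$. After reindexing by $C'_i := C_{i+1}$, the hypothesis that $C_{i+1}$ is of type $\FP_{n-i-1} = \FP_{(n-1)-i}$ is exactly what is required to apply the inductive hypothesis to this partial resolution, so we conclude that $Z$ is of type $\FP_{n-1}$.

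It then remains to show that the short exact sequence $0 \to Z \to C_0 \to V \to 0$, with $Z$ of type $\FP_{n-1}$ and $C_0$ of type $\FP_n$, forces $V$ to be of type $\FP_n$. This is the standard dimension-shifting result: choose a finitely generated projective partial resolution $P_\bullet \to C_0$ through degree $n$ and a finitely generated projective partial resolution $Q_\bullet \to Z$ through degree $n-1$; then the horseshoe construction applied to the above short exact sequence produces a projective partial resolution of $V$ through degree $n$ whose term in degree $k$ is $P_k \oplus Q_{k-1}$ (with $Q_{-1} := 0$), and this is finitely generated for each $0 \le k \le n$.

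The only point that requires care is the horseshoe step, but it is entirely routine, since finite direct sums of finitely generated modules remain finitely generated, and the lifting of boundary maps presents no obstruction because $P_k$ is projective. Hence no real difficulty arises: the argument is purely formal, and works over an arbitrary ring $R$ and for any $R$-module $V$, as advertised.
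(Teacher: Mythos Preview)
Your proof is correct. The paper does not actually prove this lemma: it merely cites it as a known fact from Brown~\cite{Brown1982} and Strebel~\cite[Proposition~2.2]{Strebel1983}, so there is no in-text argument to compare against. Your induction on $n$, reducing via the short exact sequence $0\to Z\to C_0\to V\to 0$ to the two-out-of-three behaviour of $\FP_n$, is exactly the standard approach found in those references.

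One small terminological quibble: the construction you describe in the final step, producing a resolution of $V$ with degree-$k$ term $P_k\oplus Q_{k-1}$, is the \emph{mapping cone} of a chain map $Q_\bullet\to P_\bullet$ lifting $Z\hookrightarrow C_0$, rather than the horseshoe lemma proper (which builds a resolution of the middle term from resolutions of the two outer terms). The substance of your argument is unaffected, since the mapping cone of a lift between projective resolutions of $Z$ and $C_0$ is indeed a projective resolution of the cokernel $V$, with the finite-generation in each degree following exactly as you say.
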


Fix now a non-subspecial compressible presentation $M=\langle A\mid u=v\rangle$. Let $y$ be the shortest word compressing $u=v$
(noting that $y$ is
necessarily SOF) and $z$ be the longest word compressing $u=v$.  Assume that $u=zu'=u''y$ and $v=zv'=v''y$.  Put $\Delta=\Delta_z$ and $\Lambda=\Lambda_z$.
Write $z=z'y$. Put $u'=\alpha_1\cdots \alpha_k$ and $v'=\beta_1\cdots\beta_\ell$ with $\alpha_i,\beta_j\in \Lambda$.  Since the presentation is not subspecial, we have that $k,\ell\geq 1$.  Let $S=\lb \Lambda\mid u'=v'\rb$ be the Lallement compression of $M$.

\begin{Prop}\label{p:finding.y.in.delta}
We have $\Delta\subseteq A^*y$.
\end{Prop}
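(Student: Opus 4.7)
The goal is to show that every irreducible element $\alpha$ of $T(z)$ ends in the word $y$. My approach is a direct piece of string combinatorics, relying essentially on one earlier observation: because $y$ is the shortest word compressing $u=v$ and $z$ is another such word, we have (by the transitivity of sealing noted at the start of Section~\ref{s:compression}) that $y$ seals $z$, so we can write $z=z'y=yz''$ for some $z',z''\in A^*$. Furthermore, since $y$ was chosen shortest, $y$ is the unique SOF word compressing $u=v$: no non-empty proper prefix of $y$ is also a suffix of $y$.

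Given $\alpha\in\Delta$, we have $\alpha\in T(z)\setminus\{\varepsilon\}$, so by definition $z\alpha\in A^*z$, and in particular, since $z$ ends in $y$, the word $z\alpha$ ends in $y$. I then split on the length of $\alpha$ relative to $|y|$. If $|\alpha|\geq |y|$, then the suffix $y$ of $z\alpha$ lies entirely inside $\alpha$, so $\alpha\in A^*y$ as desired, and we are done.

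The remaining case, $0<|\alpha|<|y|$, is where I expect the proof to do its work, and it is the one genuine step. Write the length-$|y|$ suffix of $z\alpha$ as $z_2\alpha$, where $z_2$ is the suffix of $z$ of length $|y|-|\alpha|$. Since this suffix has length strictly less than $|y|$ and $z$ ends in $y$, the word $z_2$ is a (possibly empty) suffix of $y$; in fact it is non-empty because $|\alpha|<|y|$. On the other hand, this length-$|y|$ suffix of $z\alpha$ is forced to equal $y$, so $z_2\alpha=y$, and because $\alpha\neq\varepsilon$ this exhibits $z_2$ as a non-empty proper prefix of $y$. Thus $z_2$ is a non-empty proper word that is simultaneously a prefix and a suffix of $y$, contradicting that $y$ is SOF.

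The main (and really only) obstacle is making sure the SOF property of $y$ is the correct hypothesis to kill the small-$\alpha$ case; the length bookkeeping with $z=z'y=yz''$ together with $z\alpha\in A^*z$ forces the offending overlap to land strictly inside $y$, which is exactly what SOF forbids. Hence $|\alpha|\geq|y|$ always holds, and $\alpha\in A^*y$.
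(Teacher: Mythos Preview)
Your proof is correct and follows essentially the same approach as the paper's. Both arguments use that $z$ ends in $y$ (since $y$ seals $z$), that $z\alpha\in A^*z$ forces $z\alpha$ to end in $y$, and then invoke the SOF property of $y$ to rule out the case where this terminal $y$ overlaps the copy of $y$ sitting at the end of $z$; the paper phrases this last step as ``the occurrence of $y$ as a suffix of $z'y\alpha$ does not overlap the displayed occurrence of $y$,'' while you make the case split on $|\alpha|$ versus $|y|$ explicit and name the overlap word $z_2$.
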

\begin{proof}
Let $\alpha\in \Delta$ and note that $\alpha$ is
not the empty word.
Then $z'y\alpha=z\alpha = \alpha'z=\alpha'z'y$ with $\alpha'\in A^*$.  Since $|\alpha'|=|\alpha|>0$ and $y$ is SOF, the occurrence of $y$ as a suffix of $z'y\alpha$ does not overlap the displayed occurrence of $y$ and so $\alpha\in A^*y$, as required.
\end{proof}

As a consequence of Proposition~\ref{p:finding.y.in.delta} we can write $\alpha_k = \overline{\alpha_k}y$ and $\beta_{\ell} = \overline{\beta_\ell}y$. It follows that $u'' = z\alpha_1\cdots \alpha_{k-1}\overline{\alpha_k}$ and $v'' = z\beta_1\cdots \beta_{\ell-1}\overline{\beta_\ell}$.

\begin{Prop}\label{p:first.kernel}
Let $\varphi\colon \mathbb ZM\to \mathbb ZM[y]_M$ be the epimorphism given by $\varphi(b) = b[y]_M$.  Then $\ker \varphi =   \mathbb ZM([u'']_M-[v'']_M)$.
\end{Prop}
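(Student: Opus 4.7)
The plan is to show the two inclusions separately, with the forward direction essentially immediate and the reverse direction reducing, via Lemma~\ref{l:map.of.set}, to a combinatorial analysis of rewriting paths in $A^*$ that ends in $y$.

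For $\mathbb{Z}M([u'']_M - [v'']_M) \subseteq \ker\varphi$: since $u = u''y$ and $v = v''y$ and $[u]_M = [v]_M$, we have $[u'']_M[y]_M = [u]_M = [v]_M = [v'']_M[y]_M$, so $\varphi([u'']_M - [v'']_M) = 0$ and the inclusion follows because $\ker\varphi$ is a left $\mathbb{Z}M$-submodule.

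For the reverse inclusion, I first apply Lemma~\ref{l:map.of.set} to the set map $f\colon M\to M[y]_M$, $m\mapsto m[y]_M$, whose $\mathbb{Z}$-linear extension is $\varphi$. This reduces the problem to showing that whenever $m,m'\in M$ satisfy $m[y]_M = m'[y]_M$, the difference $m-m'$ lies in $\mathbb{Z}M([u'']_M-[v'']_M)$. Choose representatives $w,w'\in A^*$ with $[w]_M=m$ and $[w']_M=m'$; then $[wy]_M=[w'y]_M$, so by the definition of the monoid congruence there exists a finite sequence $wy=s_0,s_1,\ldots,s_n=w'y$ in $A^*$ in which each $s_{i+1}$ is obtained from $s_i$ by a single application (in either direction) of the defining relation $u=v$. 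The key claim is that each $s_i$ can be written as $s_i=t_iy$ with $t_i\in A^*$, and that $[t_i]_M-[t_{i+1}]_M\in\mathbb{Z}M([u'']_M-[v'']_M)$ for each $i$. Granted this, the desired conclusion follows by a telescoping sum, since $t_0=w$ and $t_n=w'$.

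The heart of the argument, and where the main obstacle lies, is proving this key claim. Here I will use crucially that $y$ is self-overlap-free, which holds because $y$ is (by hypothesis) the shortest word compressing $u=v$. Suppose inductively that $s_i = t_i y$. A single application of the defining relation replaces an occurrence of $u$ (or $v$) inside $s_i$ by $v$ (or $u$). Since both $u = u''y$ and $v = v''y$ end in $y$, such an occurrence contains a distinguished $y$-suffix. The final letters of $s_i=t_iy$ also form an occurrence of $y$. Two occurrences of $y$ inside $s_i$ either are disjoint or overlap; but because $y$ is SOF, overlapping forces them to coincide. Hence exactly one of the following happens: (i) the occurrence of $u$ (or $v$) being rewritten lies entirely inside $t_i$, in which case the terminal $y$ of $s_i$ is undisturbed so $s_{i+1}=t_{i+1}y$ with $[t_{i+1}]_M=[t_i]_M$ directly (as $u=v$ in $M$), giving $[t_i]_M-[t_{i+1}]_M=0$; or (ii) the $y$-suffix of the rewritten occurrence coincides with the final $y$, forcing $s_i=w_0 u''y$ (respectively $w_0v''y$) for some $w_0\in A^*$ so that $t_i=w_0u''$ (resp.\ $w_0v''$), and then $s_{i+1}=w_0v''y$ (resp.\ $w_0u''y$) with $t_{i+1}=w_0v''$ (resp.\ $w_0u''$), giving $[t_i]_M-[t_{i+1}]_M=\pm[w_0]_M([u'']_M-[v'']_M)\in\mathbb{Z}M([u'']_M-[v'']_M)$.

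In particular, each $s_i$ does end in $y$, so the induction is valid. The hard part is the SOF dichotomy in the previous paragraph: the SOF property of $y$ is what rules out any other way a rewrite of $u=v$ could interact with the terminal $y$ of $s_i$. This is precisely the reason the Adjan--Oganesyan choice of the shortest (hence SOF) compressing word $y$ is the right one to work with when analysing right multiplication by $[y]_M$.
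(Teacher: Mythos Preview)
Your proof is correct and follows essentially the same route as the paper's own argument: both directions are handled identically, and the reverse inclusion reduces via Lemma~\ref{l:map.of.set} to analysing a rewriting sequence between $wy$ and $w'y$, with the SOF property of $y$ forcing the dichotomy that each single rewrite either lies strictly to the left of the terminal $y$ or aligns its own terminal $y$ with it. The only cosmetic difference is that the paper invokes Lemma~\ref{lem:lalle3.1} (applied with $r=y$) to assert up front that every word equivalent to $wy$ ends in $y$, whereas you establish this inductively along the sequence; the content is the same.
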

\begin{proof}
Clearly $\mathbb ZM([u'']_M-[v'']_M)\subseteq \ker \varphi$ since $[u''y]_M-[v''y]_M =[u]_M-[v]_M=0$.
Since $\varphi$ is the $\mathbb Z$-linear extension of the mapping of sets $M\to M[y]_M$ given by $m\mapsto m[y]_M$, it follows that $\ker \varphi$ is spanned by all differences $[w]_M-[w']_M$ with $[wy]_M=[w'y]_M$ by Lemma~\ref{l:map.of.set}.  If we perform compression with respect to $y$, then the right canonical form of $wy$ is $wy\varepsilon$ and so if $[wy]_M=[s]_M$, then $s=s'y$ for some $s'\in A^*$ by Lemma~\ref{lem:lalle3.1}.  Therefore, $[wy]_M=[w'y]_M$ implies that there is a sequence $w_1,\cdots, w_n\in A^*$ with $w_1=w$, $w_n=w'$ and $w_{i}y$ transformable to $w_{i+1}y$ with one application of the defining relation $u=v$.  Then we have that \[[w]_M-[w']_M = ([w_1]_M-[w_2]_M)+([w_2]_M-[w_3]_M)+\cdots + ([w_{n-1}]_M-[w_n]_M)\] and so $\ker \varphi$ is spanned by those differences $[s]_M-[s']_M$ so that $sy$ can be transformed into $s'y$ by one application of the relation.  Without loss of generality we may assume that $sy=guh$ and $s'y=gvh$.  Since $u,v\in A^*y$ and $y$ is SOF, either $y$ is a suffix of $h$, and $[s]_M=[s']_M$, or $h=\varepsilon$ and so $s=gu''$ and $s'=gv''$.  In the first case, $[s]_M-[s']_M=0\in \mathbb ZM([u'']_M-[v'']_M)$ and in the second case \[[s]_M-[s']_M = [g]_M([u'']_M-[v'']_M)\in \mathbb ZM([u'']_M-[v'']_M),\] as required.
\end{proof}

Our next goal will be to show that $\mathbb ZM([u'']_M-[v'']_M)\cong \mathbb ZM[z]_M$.  First we need to relate the algebra of a local divisor to that of the monoid.

\begin{Prop}\label{p:to.divisor.alg}
Let $r$ compress $u=v$ and let $N$ be the compression of $M$ with respect to $r$.
Let $w_1,\ldots, w_n\in \Lambda_r^*$ and $c_1,\ldots, c_n\in \mathbb Z$.  Let $a=\sum_{i=1}^n c_i[w_i]_M\in \mathbb ZM$ and let $a'=\sum_{i=1}^n c_i[w_i]_N\in \mathbb ZN$.  Note that $\mathbb ZM[r]_Ma\subseteq \mathbb ZM[r]_M$ as $\Lambda_r^*\subseteq T(r)$.  Suppose that $\gamma\colon \mathbb ZN\to \mathbb ZN$ is given by $\gamma(b')= b'a'$.  Let
\[\eta\colon \mathbb ZM[r]_M\otimes_{\mathbb ZN} \mathbb ZN\to \mathbb ZM[r]_M,\]
given by
\[m\otimes [w]_N\longmapsto m[w]_M\]
for $w \in \Lambda_r^*$
on basic tensors, be the natural isomorphism.  Let $\delta\colon \mathbb ZM[r]_M\to \mathbb ZM[r]_M$  be given by $\delta(b)= ba$.
Then $\delta = \eta(1_{\mathbb ZM[r]_M}\otimes \gamma)\eta\inv$.   In particular, $\delta$ is injective if $\gamma$ is injective and $\mathbb ZM[r]_Ma\cong \mathbb ZM[r]_M\otimes_{\mathbb ZN} \mathbb ZNa'$.
\end{Prop}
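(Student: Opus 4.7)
The plan is to recognize $\eta$ as the standard natural isomorphism $V\otimes_R R\cong V$, $v\otimes r\mapsto vr$, applied to the right $\mathbb ZN$-module $V=\mathbb ZM[r]_M$, and then exploit the freeness of this module established earlier in Section~\ref{s:compression}. Once $\eta$ is identified as this canonical isomorphism, both the factorisation of $\delta$ and the two consequences will follow essentially formally.

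First I would observe that $M[r]_M=Mx$ is a free right $N$-set under the action $(m,[w]_N)\mapsto m[w]_M$: this is Corollary~\ref{c:free.action}, which is in turn the restriction to $\mathbb ZN$-scalars of the local divisor action from Proposition~\ref{p:action.of.L}. In particular $\mathbb ZM[r]_M$ is a free, hence flat, right $\mathbb ZN$-module, and the formula defining $\eta$ is exactly the standard isomorphism $v\otimes n\mapsto v\cdot n$, with inverse $\eta\inv(b)=b\otimes 1_N$.

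Next I would verify the main identity by a direct computation on an arbitrary $b\in \mathbb ZM[r]_M$:
\[
\eta(1\otimes \gamma)\eta\inv(b) = \eta(1\otimes \gamma)(b\otimes 1_N) = \eta(b\otimes a') = \sum_{i=1}^n c_i\, b[w_i]_M = ba = \delta(b),
\]
where the third equality uses $a'=\sum_{i=1}^n c_i[w_i]_N$ together with the formula for $\eta$, and the fourth uses that the right $N$-action on $\mathbb ZM[r]_M$ coincides with ordinary right multiplication in $M$ on elements of $\Lambda_r^*$.

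Finally, the two ``in particular'' consequences follow formally from this factorisation together with flatness. Since $\mathbb ZM[r]_M$ is flat over $\mathbb ZN$, the functor $\mathbb ZM[r]_M\otimes_{\mathbb ZN}(-)$ is exact, so $1\otimes \gamma$ is injective whenever $\gamma$ is; conjugation by the isomorphism $\eta$ then transfers this to injectivity of $\delta$. For the final isomorphism, flatness also gives $(1\otimes \gamma)(\mathbb ZM[r]_M\otimes_{\mathbb ZN}\mathbb ZN)=\mathbb ZM[r]_M\otimes_{\mathbb ZN}\mathbb ZNa'$, and $\eta$ restricts to an isomorphism of this submodule onto $\delta(\mathbb ZM[r]_M)=\mathbb ZM[r]_M\cdot a$. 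There is no real obstacle; the only point demanding care is ensuring that the right $\mathbb ZN$-module structure on $\mathbb ZM[r]_M$ used in forming the tensor product really agrees with right multiplication in $M$ on elements of $\Lambda_r^*$, which is already built into the identifications of Section~\ref{s:compression}.
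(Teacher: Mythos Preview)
Your proof is correct and follows essentially the same approach as the paper: identify $\eta$ as the canonical isomorphism $V\otimes_R R\cong V$ for the free right $\mathbb ZN$-module $V=\mathbb ZM[r]_M$ (via Corollary~\ref{c:free.action}), verify $\delta=\eta(1\otimes\gamma)\eta^{-1}$ by direct computation, and then use freeness/flatness for the two consequences. Your computation via $\eta^{-1}(b)=b\otimes 1_N$ is slightly more streamlined than the paper's verification on generic basic tensors $m\otimes [w]_N$, but the content is the same.
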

\begin{proof}
Recall that $M[r]_M$ is a free right $N$-set by Corollary~\ref{c:free.action} via the action \[(m,[w]_N)\mapsto m[w]_M\]
where $w \in \Lambda_r^*$.
The action of the functor $\mathbb ZM[r]_M\otimes_{\mathbb ZN}(-)$ on the mapping $\gamma$ is as follows
\[(1_{\mathbb ZM[r]_M}\otimes \gamma)(m\otimes [w]_N)= m\otimes\sum_{i=1}^nc_i[ww_i]_N= \sum_{i=1}^nc_im[ww_i]_M\otimes 1\] which maps under the isomorphism $\eta$ to $m[w]_Ma$, whereas $\eta(m\otimes [w]_N) = m[w]_M$.  Thus we have $\delta = \eta (1_{\mathbb ZM[r]_M}\otimes \gamma)\eta\inv$.  Since $\mathbb ZM[r]_M$ is a free right $\mathbb ZN$-module, the functor $\mathbb ZM[r]_M\otimes_{\mathbb ZN}(-)$ is exact and so we deduce that $\delta$ is injective whenever $\gamma$ is injective. Clearly, the image of $1_{\mathbb ZM[r]_M}\otimes \gamma$ is isomorphic to the image $\mathbb ZM[r]_Ma$ of $\delta$ via $\eta$.  But $1_{\mathbb ZM[r]_M}\otimes \gamma$ has image isomorphic to $\mathbb ZM[r]_M\otimes_{\mathbb ZN}\mathbb ZNa'$ by right exactness of the tensor product functor.
\end{proof}

Our next proposition allows us to relate the homological finiteness properties of
the $\ZM$-modules
$\mathbb ZM[y]_M$ and $\mathbb ZM[z]_M$, and is the key technical tool in this section.

\begin{Prop}\label{p:key.exact}
Let $M=\lb A\mid u=v\rb$ be a compressible non-subspecial one-relator monoid presentation.
Let $y$ be the shortest word compressing $u=v$, let $z$ be the longest word compressing $u=v$, and write $u=u''y$ and $v=v''y$.
Then there is an isomorphism \[\mathbb ZM([u'']_M-[v'']_M)\cong \mathbb ZM[z]_M\] and hence there is an exact sequence
\[0\longrightarrow \mathbb ZM[z]_M\longrightarrow \mathbb ZM\longrightarrow \mathbb ZM[y]_M\longrightarrow 0\] of left $\mathbb ZM$-modules.
\end{Prop}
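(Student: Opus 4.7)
The exact sequence at the end of the proposition is an immediate consequence of Proposition~\ref{p:first.kernel} once we establish the claimed isomorphism $\mathbb ZM([u'']_M - [v'']_M) \cong \mathbb ZM[z]_M$, since that proposition identifies $\mathbb ZM([u''] - [v''])$ with the kernel of the epimorphism $\varphi\colon \mathbb ZM \to \mathbb ZM[y]_M$ given by $b\mapsto b[y]_M$. So I concentrate on constructing the isomorphism.

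Setting $u'_1 := \alpha_1\cdots\alpha_{k-1}\overline{\alpha_k}$ and $v'_1 := \beta_1\cdots\beta_{\ell-1}\overline{\beta_\ell}$, one has the word-level factorizations $u'' = zu'_1$ and $v'' = zv'_1$. The natural candidate is the left $\mathbb ZM$-module homomorphism
\[
\psi\colon \mathbb ZM[z]_M \longrightarrow \mathbb ZM,\qquad \psi(m[z]_M) := m([u''] - [v'']),
\]
extended $\mathbb Z$-linearly. Well-definedness is immediate since $m_i[u''] = m_i[z]_M [u'_1]_M$ depends on $m_i$ only through the product $m_i[z]_M$, and analogously for $v''$. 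Setting $b := [u'_1]_M - [v'_1]_M \in \mathbb ZM$, the identity $[u''] - [v''] = [z]_M\cdot b$ shows that the image of $\psi$ is precisely $\mathbb ZM([u''] - [v''])$ and that $\psi$ coincides with right multiplication by $b$ on $\mathbb ZM[z]_M$.

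Injectivity of $\psi$ is the substantive step. Unwinding, it reduces to the statement that for any distinct $n_1,\dots,n_r\in M[z]_M$ and any nonzero integers $d_1,\dots,d_r$, one has $\sum_{j=1}^r d_j n_j[u'_1]_M \neq \sum_{j=1}^r d_j n_j[v'_1]_M$ in $\mathbb ZM$. My plan is to transfer this to a statement inside the Lallement compression $S = \lb\Lambda\mid u'=v'\rb$: by Corollary~\ref{c:free.action} the module $\mathbb ZM[z]_M$ is a free right $\mathbb ZS$-module, and by Corollary~\ref{c:get.to.lalle} the presentation of $S$ is incompressible, so the one-relator injectivity machinery of Lemma~\ref{l:inj.cut.letter} is available on the $S$-side.

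The main obstacle is that $b$ is \emph{not} itself supported on $\Lambda^*$, because $\overline{\alpha_k}\notin T(z)$ in general, so Proposition~\ref{p:to.divisor.alg} cannot be invoked directly to turn right multiplication by $b$ into a question about $\mathbb ZS$. The resolution is to exploit the common trailing $y$: by Proposition~\ref{p:finding.y.in.delta}, $\alpha_k,\beta_\ell\in\Delta_z\subseteq A^*y$, so $u'=u'_1 y$ and $v'=v'_1 y$ end in the common word $y\in A^+$ as elements of $A^*$. Adapting the rightmost-edge parity argument from the proof of Lemma~\ref{l:inj.cut.letter} — which establishes the corresponding injectivity for an incompressible one-relator monoid whose defining relation has a shared final letter — to the ``shared trailing word $y$'' setting inside the incompressible monoid $S$, and combining this with the free right $\mathbb ZS$-module decomposition of $\mathbb ZM[z]_M$, yields the required nonvanishing and completes the proof.
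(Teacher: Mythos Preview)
Your setup is correct: defining $\psi$ as right multiplication by $b=[u'_1]_M-[v'_1]_M$ on $\mathbb ZM[z]_M$, noting that $u''=zu'_1$ and $v''=zv'_1$, and reducing the proposition to the injectivity of $\psi$ is exactly how the paper proceeds. You also correctly diagnose the obstruction to invoking Proposition~\ref{p:to.divisor.alg} directly, namely that $\overline{\alpha_k},\overline{\beta_\ell}\in P_{\Delta}$ rather than $T(z)$.

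The gap is in your proposed resolution. Your plan is to run a ``shared trailing word $y$'' version of the parity argument inside $S$, but $y$ is a word over $A$, not over $\Lambda$, and has no meaning in the Squier complex of $S=\lb\Lambda\mid u'=v'\rb$. Over the alphabet $\Lambda$, the words $u'=\alpha_1\cdots\alpha_k$ and $v'=\beta_1\cdots\beta_\ell$ need not share a final letter at all: when $\alpha_k\neq\beta_\ell$ there is simply no common suffix in $\Lambda^+$, so no variant of Lemma~\ref{l:inj.cut.letter} is available. Running the parity argument in $M$ itself is also blocked, since $M$ is compressible and hence Kobayashi's asphericity criterion does not apply (and indeed such presentations are typically not strictly aspherical, as Section~\ref{s:cd} illustrates).

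What is actually needed is a case split on whether $\alpha_k=\beta_\ell$. If $\alpha_k=\beta_\ell$, then $\overline{\alpha_k}=\overline{\beta_\ell}=:s$, so $b$ factors as $([\widetilde u']_M-[\widetilde v']_M)[s]_M$ with $\widetilde u'=\alpha_1\cdots\alpha_{k-1}$, $\widetilde v'=\beta_1\cdots\beta_{\ell-1}\in\Lambda^*$; now Lemma~\ref{l:embed.prefix} (injectivity of right multiplication by $[s]_M$ on $M[z]_M$) together with Proposition~\ref{p:to.divisor.alg} reduces the question to $\mathbb ZS$, where Lemma~\ref{l:inj.cut.letter} applies since $u',v'$ share the final $\Lambda$-letter $\alpha_k$. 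If $\alpha_k\neq\beta_\ell$, then $\overline{\alpha_k}\neq\overline{\beta_\ell}$ are distinct elements of $P_\Delta$, and Lemma~\ref{l:embed.prefix} gives $\mathbb ZM[z]_M[\overline{\alpha_k}]_M\cap \mathbb ZM[z]_M[\overline{\beta_\ell}]_M=0$; so $\psi(c)=0$ forces $c[\widetilde u']_M[\overline{\alpha_k}]_M=0$ separately, whence $c[\widetilde u']_M=0$, and this last injectivity follows (via Proposition~\ref{p:to.divisor.alg}) from the right cancellativity of $S$, which holds by Adjan's theorem because $u',v'$ end in distinct $\Lambda$-letters. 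The second case uses an entirely different mechanism---support separation plus cancellativity---that your proposal does not supply.
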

\begin{proof}
We retain the notation introduced earlier in this section.
Let $\widetilde{u} = \alpha_1\cdots \alpha_{k-1}\overline{\alpha_k}$ and $\widetilde{v}=\beta_1\cdots \beta_{\ell-1}\overline{\beta_{\ell}}$.  Then $u''=z\widetilde{u}$ and $v''=z\widetilde{v}$ and so $\mathbb ZM([u'']-[v'']) = \mathbb ZM[z]_M([\widetilde{u}]_M-[\widetilde{v}]_M)$.
Hence it suffices to show that
$\psi\colon \mathbb ZM[z]_M\to \mathbb ZM([u'']-[v''])$
given by $\psi(b)=b([\widetilde{u}]_M-[\widetilde{v}]_M)$ is injective.
We prove this using local divisor and compression techniques
and some results from earlier sections.    Put $\widetilde{u}' = \alpha_1\cdots\alpha_{k-1}\in \Delta^*=T(z)$ and $\widetilde{v}'=\beta_1\cdots\beta_{\ell-1}\in \Delta^*=T(z)$.

The first case we consider is when $\alpha_k\neq \beta_\ell$.  Note that $\alpha_k=\overline{\alpha_k}y$ and $\beta_\ell=\overline{\beta_{\ell}}y$ implies that $\overline{\alpha_k}\neq \overline{\beta_{\ell}}$ and both these elements belong to the set $P_{\Delta}$ of proper prefixes of
words from
$\Delta$.

 Suppose that $b=\sum_{m\in M[z]_M} c_mm$ with $\psi(b)=0$, where $c_m\in \mathbb Z$ and only finitely many are non-zero.  Then we have that
\begin{equation}\label{eq:help.easy.case}
b[\widetilde{u}'\overline{\alpha_{k}}]_M=b[\widetilde{v}'\overline{\beta_{\ell}}]_M.
\end{equation}
Since $\widetilde{u}',\widetilde{v}'\in T(z)$ and $b\in \mathbb ZM[z]_M$ we have that the left hand side belongs to $\mathbb ZM[z]_M[\overline{\alpha_{k}}]_M$ and the right hand side belongs to $\mathbb ZM[z][\overline{\beta_{\ell}}]_M$.  Since $\overline{\alpha_k},\overline{\beta_{\ell}}\in P_{\Delta}$ are distinct, we deduce that $\mathbb ZM[z][\overline{\alpha_{k}}]_M\cap \mathbb ZM[z]_M[\overline{\beta_{\ell}}]_M=0$ by Lemma~\ref{l:embed.prefix}.  Thus we deduce from \eqref{eq:help.easy.case} that $b[\widetilde{u}'\overline{\alpha_k}]_M=0$ and $b[\widetilde{v}\overline{\beta_\ell}]_M=0$.
We now show that $b[\widetilde{u}'\overline{\alpha_k}]_M=0$ implies $b=0$.

 Note that $M[z]_M\to M$ given by $b'\mapsto b'[\overline{\alpha_k}]_M$ is injective by Lemma~\ref{l:embed.prefix}, as $\overline{\alpha_k}\in P_{\Delta}$, and hence so is its linear extension $\mathbb ZM[z]_M\to \mathbb ZM$.  Thus it suffices to show that $\delta\colon \mathbb ZM[z]_M\to \mathbb ZM[z]_M$ given by $\delta(b) = b[\widetilde{u}']_M$ is injective.  Proposition~\ref{p:to.divisor.alg} shows that it is enough to prove that right multiplication by $[\widetilde{u}']_S$ is an injective mapping $\mathbb ZS\to \mathbb ZS$ where $S$ is the Lallement compression of $M$.  But the one-relator presentation $S=\lb \Lambda\mid u'=v'\rb$ has the property that $u'$ and $v'$ end in distinct symbols from $\Lambda$ (as $\alpha_k\neq \beta_{\ell}$).
It is a well-known result of Adjan (cf.~\cite[Corollary~1.7.8]{Higginsbook}) that such one-relator monoids are right cancellative and so right multiplication by  $[\widetilde{u}']_S$  is injective on $S$ and hence its linear extension to $\mathbb ZS$ is also injective.
This completes the proof
that $b[\widetilde{u}'\overline{\alpha_k}]_M=0$ implies $b=0$.

 Next we consider the case $\alpha_k=\beta_\ell$.  In this case, we deduce from $\overline{\alpha_k}y = \alpha_k=\beta_{\ell}=\overline{\beta_{\ell}}y$ that $\overline{\alpha_k}=\overline{\beta_\ell}$; let us denote this element of $P_{\Delta}$ by $s$.  So $[\widetilde{u}]_M-[\widetilde{v}]_M = ([\widetilde{u}']_M-[\widetilde{v}']_M)[s]_M$.  It follows from Lemma~\ref{l:embed.prefix} that right multiplication by $[s]_M$ gives an injective mapping $\mathbb ZM[z]_M\to \mathbb ZM$ and so it suffices to show that right multiplication by $[\widetilde{u}']_M-[\widetilde{v}']_M$ is an injective mapping $\mathbb ZM[z]_M\to \mathbb ZM[z]_M$.  But by Proposition~\ref{p:to.divisor.alg} to prove this, it suffices to show that right multiplication by $[\widetilde{u}']_S-[\widetilde{v}']_S$ is an injective mapping $\mathbb ZS\to \mathbb ZS$.  But this is a consequence of Lemma~\ref{l:inj.cut.letter} since $S$ is incompressible with presentation $\widetilde{u}'\alpha=\widetilde{v}'\alpha$ where $\alpha$ is the common element $\alpha_k=\beta_\ell$ of $\Lambda$.  This completes the proof that $\psi$ is injective.

It now follows that $\mathbb ZM[z]_M\cong \mathbb ZM([u'']_M-[v'']_M)$.
The exact sequence is then a consequence of Proposition~\ref{p:first.kernel}.
\end{proof}

We are nearly ready to prove that $M$ is of type left-$\FP_{\infty}$ (and, of course, right-$\FP_{\infty}$).  We need one last observation.
\begin{Prop}\label{p:iter.the.long}
Let $N$ be the Adjan-Oganesyan compression of $M$.
Write $z=yz''$ where $z$ and $y$ are the longest and shortest words compressing $u=v$, respectively.
Then $z''\in \Lambda^*_y$ and is the longest word in $\Lambda_y^+$ that compresses $N$,
and there is an isomorphism \[\mathbb ZM[y]_M\otimes_{\mathbb ZN}\mathbb ZN[z'']_{N}\cong \mathbb ZM[z]_M\] of $\mathbb ZM$-modules.
\end{Prop}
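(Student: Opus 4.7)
The plan splits naturally into two steps: first verify the combinatorial claims about $z''$, then apply the machinery of Proposition~\ref{p:to.divisor.alg} to obtain the module isomorphism.

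For the combinatorial step, I would apply Proposition~\ref{p:compress.transitivity} with $r = y$ and $s = z$. Since $y$ and $z$ both compress $u = v$, either $z = y$ (in which case $z'' = \varepsilon$, $N$ is incompressible, and the first two assertions are vacuous or trivial) or $|z| > |y|$, in which case the prefix-preserving bijection $\varphi$ of that proposition sends $z$ to $z''$, a word in $\Lambda_y^+$ compressing the defining relation of $N$; in either case $z'' \in \Lambda_y^*$. To upgrade this to the claim that $z''$ is the \emph{longest} such compressor, I would observe that words compressing a given one-relator presentation form a chain in the prefix order (of any two such compressors, the shorter seals, hence is a prefix of, the longer), so $\varphi$ is an order isomorphism between the chain of $A^+$-compressors of $u = v$ of length exceeding $|y|$ and the chain of $\Lambda_y^+$-compressors of the presentation of $N$. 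Since $z$ is the maximum of its chain, $z'' = \varphi(z)$ is the maximum, and therefore the longest, of the target chain.

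For the algebraic step, I would apply Proposition~\ref{p:to.divisor.alg} with $r = y$ and the single-term expression $a = [z'']_M$, $a' = [z'']_N$; the required hypothesis $z'' \in \Lambda_y^*$ was established above. The proposition yields the isomorphism
\[
\mathbb{Z}M[y]_M \otimes_{\mathbb{Z}N} \mathbb{Z}N[z'']_N \;\cong\; \mathbb{Z}M[y]_M \cdot [z'']_M
\]
of left $\mathbb{Z}M$-modules. It then suffices to identify the right-hand side with $\mathbb{Z}M[z]_M$, which is immediate from the pointwise computation $m[y]_M \cdot [z'']_M = m[yz'']_M = m[z]_M$ for each $m \in M$; linearizing yields $\mathbb{Z}M[y]_M \cdot [z'']_M = \mathbb{Z}M[z]_M$ as submodules of $\mathbb{Z}M$.

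I do not anticipate a genuine obstacle, since both halves of the argument are direct applications of results already established. The combinatorial half is essentially an unpacking of the prefix-preserving bijection of Proposition~\ref{p:compress.transitivity}, and the algebraic half reduces to Proposition~\ref{p:to.divisor.alg} (whose proof rests on the freeness of $M[y]_M$ as a right $N$-set from Corollary~\ref{c:free.action}). The only mild point of care is the degenerate case $z = y$, where the statement about $z''$ being the longest element of $\Lambda_y^+$ compressing $N$ is vacuous, but the tensor isomorphism reduces trivially to $\mathbb{Z}M[y]_M \otimes_{\mathbb{Z}N} \mathbb{Z}N \cong \mathbb{Z}M[y]_M = \mathbb{Z}M[z]_M$.
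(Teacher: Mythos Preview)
Your proposal is correct and follows essentially the same approach as the paper: the combinatorial claim via Proposition~\ref{p:compress.transitivity} and the module isomorphism via the final statement of Proposition~\ref{p:to.divisor.alg} with $r=y$, $a=[z'']_M$, $a'=[z'']_N$, together with $z=yz''$. The paper's proof is considerably terser (two sentences), but your added detail on why $z''$ is the \emph{longest} compressor of $N$ and your treatment of the degenerate case $z=y$ are sound and do no harm.
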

\begin{proof}
The first claim is a consequence of  Proposition~\ref{p:compress.transitivity}.
For the isomorphism, apply the final statement of Proposition~\ref{p:to.divisor.alg}  with $r=y$, $a=[z'']_M$ and $a'=[z'']_N$ noting that $z=yz''$.
\end{proof}

\begin{Thm}\label{t:complete.fp.infinity}
Let $M=\lb A\mid u=v\rb$ be a compressible non-subspecial one-relator monoid.  Let $z$ be the longest word compressing $u=v$.  Then the relation module $\mathbb ZM[z]_M$ is of type $FP_{\infty}$.
\end{Thm}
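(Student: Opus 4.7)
The plan is to proceed by induction on the word length $|u|+|v|$ of the defining relation. Let $y$ be the shortest (self-overlap-free) word compressing $u=v$, let $z$ be the longest such word, and let $N$ denote the Adjan--Oganesyan compression of $M$; by Corollary~\ref{c:stay.sub}, $N$ remains non-subspecial, and its defining relation is strictly shorter than that of $M$.

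First I would dispose of the base case in which $N$ is itself incompressible. Here Corollary~\ref{c:get.to.lalle} forces $y=z$, so the short exact sequence of Proposition~\ref{p:key.exact} collapses to
\[
0\longrightarrow \mathbb ZM[y]_M\longrightarrow \mathbb ZM\longrightarrow \mathbb ZM[y]_M\longrightarrow 0.
\]
Iterated Yoneda splicing of this sequence with itself then produces a periodic free resolution of $\mathbb ZM[z]_M = \mathbb ZM[y]_M$ in which every term is a single copy of $\mathbb ZM$, and the conclusion is immediate in this case.

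In the case that $N$ is compressible, I would invoke the outer inductive hypothesis to conclude that the $\mathbb ZN$-module $\mathbb ZN[z'']_N$ is of type $\FP_\infty$, where $z''$ is the longest word compressing the presentation of $N$. Proposition~\ref{p:iter.the.long} provides the isomorphism
\[
\mathbb ZM[z]_M \;\cong\; \mathbb ZM[y]_M \otimes_{\mathbb ZN} \mathbb ZN[z'']_N,
\]
and by Corollary~\ref{c:free.action} (applied with $r=y$), the bimodule $\mathbb ZM[y]_M$ is free, and in particular flat, as a right $\mathbb ZN$-module. Tensoring a free resolution $\cdots \to \mathbb ZN^{k_1}\to \mathbb ZN^{k_0}\to \mathbb ZN[z'']_N \to 0$ by finitely generated free $\mathbb ZN$-modules over $\mathbb ZM[y]_M$ therefore produces an exact sequence
\[
\cdots\longrightarrow (\mathbb ZM[y]_M)^{k_1} \longrightarrow (\mathbb ZM[y]_M)^{k_0} \longrightarrow \mathbb ZM[z]_M\longrightarrow 0
\]
of left $\mathbb ZM$-modules.

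The main obstacle is now apparent: in order to apply Lemma~\ref{l:brown} to conclude that $\mathbb ZM[z]_M$ is of type $\FP_\infty$ one would want $\mathbb ZM[y]_M$ itself to be of type $\FP_\infty$ over $\mathbb ZM$, yet the short exact sequence of Proposition~\ref{p:key.exact} shows that this is \emph{equivalent} to the statement we are trying to prove. The plan is to resolve this apparent circularity by a secondary induction on $n\geq 0$, proving the weaker statement that $\mathbb ZM[z]_M$ is of type $\FP_n$. The case $n=0$ is immediate since $\mathbb ZM[z]_M=\mathbb ZM\cdot [z]_M$ is cyclic. For the inductive step, from the short exact sequence $0\to \mathbb ZM[z]_M \to \mathbb ZM \to \mathbb ZM[y]_M \to 0$, the assumption that $\mathbb ZM[z]_M$ is of type $\FP_{n-1}$, together with the trivial fact that $\mathbb ZM$ is of type $\FP_\infty$, forces $\mathbb ZM[y]_M$ to be of type $\FP_n$ by the standard criterion for $\FP_n$ in short exact sequences. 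Hence each of the first $n+1$ terms of the tensored resolution displayed above is of type $\FP_n$, so Lemma~\ref{l:brown} upgrades $\mathbb ZM[z]_M$ from $\FP_{n-1}$ to $\FP_n$. Letting $n\to\infty$ will then complete the proof.
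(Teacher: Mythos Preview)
Your proposal is correct and follows essentially the same route as the paper's proof: the same base case via a periodic free resolution when $y=z$, the same tensoring of a resolution over $\mathbb ZN$ up to $\mathbb ZM$ via the free bimodule $\mathbb ZM[y]_M$, and the same inner bootstrap induction on $n$ using Proposition~\ref{p:key.exact} and Lemma~\ref{l:brown}. The only cosmetic difference is that the paper inducts on the number of words compressing $u=v$ rather than on $|u|+|v|$, so that what you call the ``base case'' (namely $N$ incompressible, equivalently $y=z$) is literally the case of a single compressing word; both induction schemes are valid and lead to the same argument.
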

\begin{proof}
We prove the result by induction on the number of words compressing $u=v$.  If there is just one, then the Adjan-Oganesyan and Lallement compressions coincide and so $y=z$ in Proposition~\ref{p:key.exact}.  Thus we have an exact sequence
\[0\longrightarrow \mathbb ZM[z]_M\longrightarrow \mathbb ZM\longrightarrow  \mathbb ZM[z]_M\longrightarrow 0.\]
Therefore, we can build a periodic resolution of the form \[\cdots\longrightarrow \mathbb ZM\longrightarrow \mathbb ZM\longrightarrow \mathbb ZM[z]_M\longrightarrow 0,\] where at each stage the kernel and image of the map $\mathbb ZM\to \mathbb ZM$ are both isomorphic to $\mathbb ZM[z]_M$, by splicing the previous exact sequence onto its left end repeatedly.

Next assume that the theorem is true for all compressible non-subspecial presentations that can be compressed by fewer words and let $y\neq z$ be the, respective, shortest and longest words compressing $u=v$ and write $z=yz''$.  Let $N$ be the
Adjan-Oganesyan compression of $M$.  Note that $N$ has fewer words compressing it than $M$ and that $z''\in \Lambda_y^+$ is the longest word compressing the defining relation of $N$ by Proposition~\ref{p:compress.transitivity} and Proposition~\ref{p:iter.the.long}.  By Corollary~\ref{c:stay.sub}, $N$ is a non-subspecial compressible one-relator monoid.
 Thus $\mathbb ZN[z'']_N$ has a free resolution $F_\bullet\to \mathbb ZM[z'']_N$ by finitely generated free $\mathbb ZN$-modules by induction.  Since $\mathbb ZM[y]_M$ is a free right $\mathbb ZN$-module by Corollary~\ref{c:free.action}, and hence flat, tensoring this resolution with $\mathbb ZM[y]_M$ yields a resolution
\begin{equation}\label{eq:resol.by.y}
\mathbb ZM[y]_M\otimes_{\mathbb ZN} F_\bullet \to \mathbb ZM[z]_M
\end{equation}
by Proposition~\ref{p:iter.the.long}.
Moreover, each module $\mathbb ZM[y]_M\otimes_{\mathbb ZN} F_n$, with $F_n$ a finitely generated free $\mathbb ZN$-module, is isomorphic to a finite direct sum of copies of $\mathbb ZM[y]_M$, since we recall if $V$ is an $S$-$R$-bimodule, then $V\otimes_R R^k\cong (V\otimes_R R)^k\cong V^k$ as an $S$-module for any $k\geq 0$.

We now prove by induction on $n$ that $\mathbb ZM[z]_M$ is of type $\FP_n$.  If $n=0$, this is clear since $\mathbb ZM[z]_M$ is a  principal left ideal and hence a finitely generated (in fact, cyclic) module.
Now assume inductively that $\mathbb ZM[z]_M$ is of type $\FP_n$ for some $n\geq 0$.  Then the exact sequence in Proposition~\ref{p:key.exact} and Lemma~\ref{l:brown} imply that $\mathbb ZM[y]_M$ is of type $\FP_{n+1}$ and hence so is any finite direct sum of copies of $\mathbb ZM[y]_M$.
But \eqref{eq:resol.by.y} is a resolution of $\mathbb ZM[z]_M$ by the modules $\mathbb ZM[y]_M\otimes_{\mathbb ZN} F_n$, which are finite direct sums of isomorphic copies of $\mathbb ZM[y]_M$,
and hence another application of Lemma~\ref{l:brown} yields that $\mathbb ZM[z]_M$ if of type $\FP_{n+1}$.
This proves by induction that $\mathbb ZM[z]_M$ is of type $\FP_n$ for all $n\geq 0$ and hence
by standard general results (see e.g.~\cite[Proposition~VIII.4.5]{BrownCohomologyBook})
it
then follows that $\mathbb ZM[z]_M$
is of type $\FP_{\infty}$.
\end{proof}

We now prove Theorem~\ref{theorem:A:full} for non-subspecial one-relator monoids, thereby completing its proof.

\begin{Thm}\label{t:nonsub}
Let $M$ be a non-subspecial one-relator monoid.  Then $M$ is of type left- and right-$\FP_{\infty}$ and of type left- and right-$\F_\infty$.
\end{Thm}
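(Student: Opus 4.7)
The plan is to split into the incompressible and compressible sub-cases, which are handled by genuinely different arguments, and then to deduce the $\F_\infty$ statements from the $\FP_\infty$ ones together with left-right duality of one-relator presentations.

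First suppose $\lb A\mid u=v\rb$ is incompressible. Then by Lemma~\ref{lem_Kob} (Kobayashi's result) the presentation is strictly aspherical, hence by Lemma~\ref{lem:9} the Cayley complex $\Gamma(M,A)^{(2)}$ is a contractible free $M$-CW complex of dimension at most $2$. Since $A$ is finite and there is a single relation, this is an $M$-finite left equivariant classifying space for $M$, so $M$ is of type left-$\F_\infty$, and hence left-$\FP_\infty$.

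Now assume the presentation is compressible. The key observation is that $M$ is automatically torsion-free: torsion in a one-relator monoid forces subspeciality, since by Lemma~\ref{lem:max:subgroups} the only way a one-relator monoid can have torsion in a maximal subgroup is to be subspecial with $u'$ a proper power (and it was noted after the statement of Theorem~B that a one-relator monoid is torsion-free iff all its maximal subgroups are). Hence Theorem~\ref{t:relation.module.compress} applies and yields an exact sequence
\[
0\longrightarrow \mathbb ZM[z]_M\longrightarrow \mathbb ZM^{|A|}\longrightarrow \mathbb ZM\longrightarrow \mathbb Z\longrightarrow 0
\]
of left $\mathbb ZM$-modules, where $z$ is the longest word compressing $u=v$. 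By Theorem~\ref{t:complete.fp.infinity} the relation module $\mathbb ZM[z]_M$ is of type $\FP_\infty$ over $\mathbb ZM$. The two middle free modules $\mathbb ZM$ and $\mathbb ZM^{|A|}$ are finitely generated and free, hence trivially of type $\FP_\infty$. Feeding this partial resolution of $\mathbb Z$ into the Brown--Strebel criterion (Lemma~\ref{l:brown}) we conclude that the trivial $\mathbb ZM$-module $\mathbb Z$ is of type $\FP_\infty$, i.e., $M$ is of type left-$\FP_\infty$.

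In both sub-cases $M$ is of type left-$\FP_\infty$. Since $M$ is finitely presented, the equivalence of $\FP_n$ and $\F_n$ for finitely presented monoids~\cite{GraySteinberg1} upgrades this to left-$\F_\infty$. Finally, the class of one-relator monoid presentations is closed under the anti-isomorphism $A^*\to A^*$ reversing words (replacing $u=v$ by its reverse), and every notion used in the proof (compressibility, subspeciality, strict asphericity) is preserved under this duality. Applying the same argument to the opposite presentation therefore yields right-$\FP_\infty$ and right-$\F_\infty$, completing the proof. The only genuinely hard step in this outline is the $\FP_\infty$-ness of the relation module $\mathbb ZM[z]_M$ in the compressible case, which is precisely the content of Theorem~\ref{t:complete.fp.infinity} and has already been dealt with by induction on the number of compressing words using the local-divisor machinery and the injectivity Lemma~\ref{l:inj.cut.letter}.
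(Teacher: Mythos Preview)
Your proof is correct and follows essentially the same route as the paper's: split into incompressible (strictly aspherical, hence contractible Cayley complex) and compressible (combine Theorem~\ref{t:relation.module.compress} with Theorem~\ref{t:complete.fp.infinity}), then pass to $\F_\infty$ via finite presentability and to the right-sided statements by duality. The only cosmetic difference is that where the paper explicitly splices a free resolution of $\mathbb ZM[z]_M$ onto the exact sequence of Theorem~\ref{t:relation.module.compress}, you invoke Lemma~\ref{l:brown}; since the sequence is exact on the left you may extend it by zeros to arbitrary length, so the Brown--Strebel criterion indeed yields $\FP_n$ for every $n$, and the two arguments amount to the same thing.
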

\begin{proof}
Recall that for finitely presented monoids, the properties left-$\F_\infty$ and left-$\FP_{\infty}$ are equivalent~\cite{GraySteinberg1}.
If the presentation is incompressible, then it is strictly aspherical by~\cite[Corollary~5.6]{Kobayashi1998} and hence of type left-$\F_\infty$ since it has a contractible Cayley complex by Lemma~\ref{lem:9}. If the presentation is compressible, this is an immediate consequence of Theorem~\ref{t:complete.fp.infinity} since we can splice
a free resolution of $\mathbb ZM[z]_M$ by finitely generated free modules onto the exact sequence of Theorem~\ref{t:relation.module.compress}.
\end{proof}

\section{Torsion-free one-relator monoids of infinite cohomological dimension}
\label{s:cd}
The proof of Theorem~\ref{t:nonsub} would suggest that a compressible non-subspecial one-relator monoid could have infinite cohomological dimension as the free resolution implicit in the proof is infinite.
We prove that this is the case for a large family of examples thereby completing the proof of Theorem~\ref{theorem:B:full} and of Theorem~B.

Let us say that a one-relator presentation $\lb A\mid u=v\rb$ is \emph{one-step compressible} if its Adjan-Oganesyan and Lallement compressions coincide, that is, there is a unique word $y\in A^+$ compressing $u=v$.  We will show that any one-step compressible non-subspecial monoid has infinite cohomological dimension.
A typical example is $M=\lb a,b,c\mid aba=aca\rb$.
In particular this shows that, in contrast to groups, there are torsion-free one-relator monoids with infinite (left and right) cohomological dimension.

First we recall the basic fact that a principal left ideal $Ra$ of a ring $R$ is a projective module if and only if the left annihilator \[\mathrm{ann}_L(a) =\{r\in R\mid ra=0\}\] of $a$ is of the form $Re$ for some idempotent $e\in R$.  Indeed, there is a short exact sequence of $R$-modules
\[0\longrightarrow \mathrm{ann}_L(a)\longrightarrow R\xrightarrow{\,\,f\,\,} Ra\longrightarrow 0\] where $f(r)=ra$.  Hence if $Ra$ is projective, then the sequence splits and thus $\mathrm{ann}_L(a)$ is a direct summand in $R$.  But all direct summands in $R$ are of the form $Re$ for some idempotent $e\in R$ because every endomorphism of the regular $R$-module $R$ is given via right multiplication by an element of $R$.  Conversely, if $\mathrm{ann}_L(a)=Re$, then $Ra\cong R/Re\cong  R(1-e)$ and hence is  projective as $1-e$ is an idempotent.

A ring $R$ has \emph{only trivial idempotents} if $0$ and $1$ are the only idempotents in $R$.  For example, a  famous conjecture of Kaplansky asserts that if $G$ is a torsion-free group, then the group ring $\mathbb ZG$ has only trivial idempotents. By the observation in the previous paragraph, if a ring $R$ has only trivial idempotents and $0\neq a\in R$, then $Ra$ is projective if and only if right multiplication by $a$ is an injective mapping.
We now aim to prove that if $M$ is a non-subspecial one-relator monoid, then $\mathbb ZM$ has  only trivial idempotents.  We shall first need the following result, which for the most part encapsulates~\cite[Theorem~2.3]{Lallement1974} and~\cite[Corollary~2.4]{Lallement1974}.  Recall that a monoid $M$ is \emph{$\gr$-trivial} if Green's relation $\gr$ is the equality relation, i.e., $mM=m'M$ implies $m=m'$; \emph{$\gl$-trivial} monoids are defined dually.

\begin{Prop}\label{p:lallement.rtriv}
Let $M=\lb A\mid u=v\rb$ be a one-relator monoid with $1\leq |v|\leq |u|$.
\begin{enumerate}
  \item[(i)] If $u\notin vA^*$, then $M$ is $\gr$-trivial and $[w]_M=[ww']_M$ implies $w'$ is empty for $w,w'\in A^*$.
  \item[(ii)] If $u\notin A^*v$, then $M$ is $\gl$-trivial and $[w]_M=[w'w]_M$ implies $w'$ is empty for $w,w'\in A^*$.
\end{enumerate}
\end{Prop}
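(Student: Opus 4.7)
My plan is to prove (i) directly and deduce (ii) as its left--right dual. Within (i), I would first establish the second clause --- that $[w]_M=[ww']_M$ forces $w'=\varepsilon$ --- since the $\mathscr R$-triviality assertion is a formal consequence: given $[w]_M M=[w']_M M$ one obtains words $\alpha,\beta\in A^*$ with $[w]=[w'\alpha]$ and $[w']=[w\beta]$, so that $[w]=[w\beta\alpha]$, and the second clause forces $\beta\alpha=\varepsilon$ and hence $[w]=[w']$.

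To prove the second clause I would first dispose of the case $|u|=|v|$: length is preserved by the defining relation, so $[w]_M=[ww']_M$ gives $|w|=|ww'|$, whence $w'=\varepsilon$. The remaining case $|v|<|u|$ is the substantive one, in which $u\to v$ is a strictly length-reducing rewriting rule. Following Lallement~\cite[Theorem~2.3, Corollary~2.4]{Lallement1974}, I would consider a Squier derivation $w=w_0\leftrightarrow w_1\leftrightarrow\cdots\leftrightarrow w_n=ww'$ and exploit the hypothesis $u\notin vA^*$ (equivalently, $v$ is not a prefix of $u$) to rule out the critical-pair overlap in which a freshly produced $v$ could serve as a prefix of a newly created occurrence of $u$. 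Concretely, I would first normalize the derivation using the pull-up push-down moves of Section~\ref{sec:preliminaries} to segregate length-increasing $v\to u$ steps from length-decreasing $u\to v$ steps, and then argue by induction on $n$ that no net growth at the right end of the word is possible.

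The main obstacle is this last combinatorial analysis: understanding how consecutive rewrites interact across the derivation and showing that the one problematic overlap type --- the one that would permit interior reductions to cascade into right-end growth --- is exactly the one forbidden by $u\notin vA^*$. The hypothesis is used to track the rightmost position affected by any rewrite and to show that this position cannot shift to the left past the boundary between $w$ and $w'$, forcing $|w'|=0$ and completing the argument.
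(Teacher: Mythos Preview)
Your proposal is correct and matches the paper's proof almost exactly: the paper also cites Lallement~\cite[Theorem~2.3, Corollary~2.4]{Lallement1974} for the case $|v|<|u|$, handles $|u|=|v|$ via the length homomorphism $\ell\colon M\to\mathbb N$, and deduces $\mathscr R$-triviality from the second clause by the same composition argument you give. The only difference is that the paper does not attempt to sketch a reproof of Lallement's result; your additional outline of the $|v|<|u|$ case via critical-pair and derivation-normalisation analysis is extra and rather vague (it is not obvious that segregating increasing and decreasing steps and tracking the rightmost affected position suffices without further work), but since both you and the paper ultimately rely on the citation, this does not affect correctness.
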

\begin{proof}
If $|v|<|u|$, then the result follows from~\cite[Theorem~2.3]{Lallement1974} and~\cite[Corollary~2.4]{Lallement1974}.  If $|u|=|v|$, then there is a well-defined homomorphism $\ell\colon M\to \mathbb N$ given by $\ell([w]_M) = |w|$.  Let us prove (i), as (ii) is dual.   If $[w]_M=[ww']_M$, then $|w|=\ell([w]_M) = \ell([ww']_M) = |w|+|w'|$ and so $w'$ is empty.  If $M$ is not $\gr$-trivial, then there exist $w,w_1,w_2\in A^*$ with $[w]_M = [ww_1w_2]_M$ and $[w]_M\neq [ww_1]_M$.  But $w_1w_2$ must be empty by what we just proved and so $[w]_M=[ww_1]_M$, a contradiction.  Thus $M$ is $\gr$-trivial.
\end{proof}

Recall that the \emph{$\gr$-order} on a monoid $M$ is defined by $m\leq_{\gr} n$ if $mM\subseteq nM$.  This is a preorder on $M$ and is a partial order precisely when $M$ is $\gr$-trivial.
We can now prove the desired result.

\begin{Thm}\label{t:no.idems}
Let $M=\lb A\mid u=v\rb$ be a non-subspecial one-relator monoid.  Then $\mathbb ZM$ has only trivial idempotents.
\end{Thm}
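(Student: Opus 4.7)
The plan is to exploit Proposition~\ref{p:lallement.rtriv} to reduce to the case where $M$ is $\GreenR$-trivial with trivial right stabilizers, and then argue coefficient-by-coefficient in the $\GreenR$-order. Since the presentation is non-subspecial with $|v|\leq|u|$, either $u\notin vA^{*}$ or $u\notin A^{*}v$. The two cases are left-right dual (and the two sides of the dichotomy give dual statements about $\ZM$), so I may assume $u\notin vA^{*}$. Then Proposition~\ref{p:lallement.rtriv}(i) gives two key facts: (a) $M$ is $\GreenR$-trivial, so $\leq_{\GreenR}$ is an honest partial order on $M$; and (b) $mn=m$ in $M$ forces $n=1$.

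Let $e=\sum_{m\in M}c_{m}m\in\ZM$ be an idempotent, and write out $e^{2}=e$. First I would compute the coefficient of $1$: if $ab=1$ in $M$, then $aM=M$, so $a\,\GreenR\,1$, hence $a=1$ by (a), and then $b=1$. Therefore $c_{1}=c_{1}^{2}$, so $c_{1}\in\{0,1\}$.

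Next I would handle the case $c_{1}=0$ and show that this forces $e=0$. Suppose for contradiction that the support of $e$ is non-empty, and pick $m_{0}$ maximal in the support with respect to $\leq_{\GreenR}$; this exists because the support is finite and the order is a genuine partial order. For any pair $(a,b)$ with $ab=m_{0}$ and $a$ in the support, we have $m_{0}\in aM$, so $a\geq_{\GreenR}m_{0}$, and maximality combined with $\GreenR$-triviality forces $a=m_{0}$. Then $m_{0}b=m_{0}$ forces $b=1$ by (b). Thus the only pair contributing to the coefficient of $m_{0}$ in $e^{2}$ is $(m_{0},1)$, but $1$ is not in the support, so that coefficient equals $0$, contradicting $c_{m_{0}}\neq 0$. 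Hence $e=0$.

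Finally, if $c_{1}=1$, apply the previous step to the idempotent $1-e=-\sum_{m\neq 1}c_{m}m$, whose coefficient of $1$ is $0$, to conclude $1-e=0$, i.e.\ $e=1$. The main technical point is really just the coefficient bookkeeping for the maximal element; once the $\GreenR$-order is a partial order and right stabilizers are trivial, the argument is forced. No additional machinery beyond Proposition~\ref{p:lallement.rtriv} is needed.
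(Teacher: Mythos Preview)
Your proof is correct and follows essentially the same approach as the paper: reduce via Proposition~\ref{p:lallement.rtriv} to the $\GreenR$-trivial case with trivial right stabilizers, pick a $\leq_{\GreenR}$-maximal element of the support, and use the $1-e$ trick to finish. The only cosmetic difference is that the paper shows directly that $1$ lies in the support of any non-zero idempotent (using a word-length argument to identify the coefficient of $1$), whereas you equivalently argue the contrapositive by first disposing of the case $c_1=0$; your use of $\GreenR$-triviality to deduce $ab=1\Rightarrow a=b=1$ is a slightly cleaner alternative to the paper's length argument, but the substance is the same.
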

\begin{proof}
Assume that $|v|\leq |u|$.  Since the relation is not subspecial, either $v$ is not a prefix of $u$ or $v$ is not a suffix of $u$.  By symmetry, we may assume that $v$ is not a prefix of $u$.  Hence $M$ is $\gr$-trivial by Proposition~\ref{p:lallement.rtriv}.
Let $e\in \mathbb ZM$ be a non-zero idempotent.  Let $X$ be the support of $e$, i.e., the finite set of elements of $M$ with non-zero coefficient in $e$.  First we claim that $1\in X$.  Choose $m\in X$ maximal with respect to the partial order $\leq_{\gr}$.  Then since $e^2=e$, there must exist $a,b\in X$ with $m=ab$.  By maximality of $m$, since $M$ is $\gr$-trivial, we must have $a=m$.  Let $w,w'\in A^*$ with $[w]_M=m$ and $[w']_M=b$.  Then $[w]_M=[ww']_M$ and so $w'$ is empty by Proposition~\ref{p:lallement.rtriv}(i).  Thus $b=1$ and so $1\in X$.

Let $c\neq 0$ be the coefficient of $1$ in $e$.  Since $M$ is not special, $|u|\geq |v|>0$.  Thus no non-empty word in $A^*$ represents $1$ in $M$.  It then follows that $c^2$ is the coefficient of $1$ in $e^2$ and hence $c^2=c$ as $e^2=e$.  Thus $c=1$ since $c$ is a non-zero integer.  We conclude that $e=1+r$ with $r\in \mathbb ZM$ having support not containing $1$.  Then $1-e = -r$ is an idempotent without $1$ in its support and hence $r=0$ by the previous paragraph.  Thus $e=1$, as required.
\end{proof}

\begin{Cor}\label{c:not.proj}
Let $M=\lb A\mid u=v\rb$ be a non-subspecial one-relator monoid.  Let $r\in A^+$ compress the relation $u=v$.  Then $\mathbb ZM[r]_M$ is not a projective module.
\end{Cor}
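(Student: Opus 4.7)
My plan is to exploit the observation made immediately before the statement: since $\mathbb ZM$ has only trivial idempotents by Theorem~\ref{t:no.idems}, the principal left ideal $\mathbb ZM[r]_M$ is projective precisely when $\mathrm{ann}_L([r]_M) = 0$. The alternative $\mathrm{ann}_L([r]_M) = \mathbb ZM$ would force $[r]_M = 0$ in $\mathbb ZM$, which is impossible because $[r]_M$ is an element of the $\mathbb Z$-basis $M$ of $\mathbb ZM$. So I would reduce the problem to producing a non-zero element of $\mathrm{ann}_L([r]_M)$.

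A natural candidate is given by the defining relation itself. Since $r$ compresses $u=v$, I can write $u = u_2 r$ and $v = v_2 r$, and then
\[
([u_2]_M - [v_2]_M)[r]_M = [u]_M - [v]_M = 0 \quad \text{in } \mathbb ZM.
\]
So the crux of the proof reduces to checking that $[u_2]_M \neq [v_2]_M$ in $M$.

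To verify this, I would bring in the unique shortest (self-overlap-free) word $y$ compressing $u=v$, which exists because $r$ does. Since $y$ seals every compressing word, in particular $r = r_2 y$ for some $r_2 \in A^*$. From $u''y = u = u_2 r = u_2 r_2 y$ I read off $u'' = u_2 r_2$, and dually $v'' = v_2 r_2$. If $[u_2]_M = [v_2]_M$ held, then right multiplication by $[r_2]_M$ would give $[u'']_M = [v'']_M$. But Proposition~\ref{p:first.kernel} identifies $\mathrm{ann}_L([y]_M)$ with $\mathbb ZM([u'']_M - [v'']_M)$, and the exact sequence of Proposition~\ref{p:key.exact} shows this annihilator is isomorphic to $\mathbb ZM[z]_M$, which is nonzero (as it contains $[z]_M$). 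Hence $[u'']_M \neq [v'']_M$, giving the desired contradiction and establishing $[u_2]_M \neq [v_2]_M$.

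The only point requiring care is making sure Proposition~\ref{p:key.exact} applies, but its hypotheses (the presentation being compressible and non-subspecial) are exactly what we have assumed, so I do not foresee a significant obstacle in that step.
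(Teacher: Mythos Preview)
Your argument is correct, but the step establishing $[u_2]_M\neq [v_2]_M$ is considerably more elaborate than necessary. The paper disposes of this point by a direct length count: writing $u=u_1r$, $v=v_1r$ (your $u_2,v_2$), one has $|v_1|<|v|\leq |u|$, so $v_1$ is strictly shorter than both sides of the defining relation and hence lies in a singleton congruence class. Since non-subspeciality forces $u\neq v$, we have $u_1\neq v_1$ as words, and therefore $[u_1]_M\neq [v_1]_M$ immediately.

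By contrast, you route through Propositions~\ref{p:first.kernel} and~\ref{p:key.exact}, the second of which rests on the injectivity Lemma~\ref{l:inj.cut.letter} proved via a nontrivial monoid-pictures argument. There is no circularity (those results appear earlier in the paper), so your proof is valid; but you are invoking substantial machinery to certify a fact that follows from a one-line observation about word lengths. The paper's approach is both shorter and more self-contained at this point in the exposition.
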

\begin{proof}
Assume that $|v|\leq |u|$.
By Theorem~\ref{t:no.idems}, the ring $\mathbb ZM$ has no non-trivial idempotents.
Hence,
by the comments preceding the statement of Proposition~\ref{p:lallement.rtriv}, we have that
$\mathbb ZM[r]_M$ is projective if and only if right multiplication by $[r]_M$ is injective on $\mathbb ZM$ or, equivalently, on $M$.  But we can write $u=u_1r$ and $v=v_1r$ and hence $[u_1]_M[r]_M = [u]_M=[v]_M=[v_1]_M[r]_M$.
It remains to show that $[u_1]_M\neq [v_1]_M$, as then right multiplication by $[r]_M$ will not be injective.  Since the relation is not subspecial, $u\neq v$ and hence $u_1\neq v_1$.  But $|v_1|<|v|\leq |u|$ and hence $v_1$ is not equivalent in $M$ to any other word.   Thus $[u_1]_M\neq [v_1]_M$ and so $\mathbb ZM[r]_M$ is not projective.
\end{proof}

As a consequence, we may now deduce that all compressible non-subspecial one-relator monoids have cohomological dimension at least $3$.

\begin{Cor}\label{c:at.least.3}
Let $M=\lb A\mid u=v\rb$ be a compressible non-subspecial one-relator monoid.  Then $\mathop{\mathrm{cd}^{\mathrm{(l)}}}(M)\geq 3$ and $\mathop{\mathrm{cd}^{\mathrm{(r)}}}(M)\geq 3$ and the same inequality applies to the left and right geometric dimensions.
\end{Cor}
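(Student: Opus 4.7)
The plan is to derive the lower bound on the left cohomological dimension by a one-line homological algebra argument combining the explicit partial resolution of $\Z$ supplied by Theorem~\ref{t:relation.module.compress} with the non-projectivity result of Corollary~\ref{c:not.proj}. The other three inequalities will then follow by left-right duality and by the basic fact that $\mathrm{cd} \le \mathrm{gd}$.

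First I would observe that a compressible non-subspecial one-relator monoid is automatically torsion-free. Indeed, by Lemma~\ref{lem:max:subgroups} (and the accompanying discussion in the introduction) the existence of torsion in a one-relator monoid $\lb A\mid u=v\rb$ with $|v|\le|u|$ forces $u\in vA^*\cap A^*v$, i.e., the presentation to be subspecial. Consequently Theorem~\ref{t:relation.module.compress} applies to $M$, yielding the exact sequence
\[
0 \longrightarrow \ZM[z]_M \longrightarrow \ZM^{|A|} \longrightarrow \ZM \longrightarrow \Z \longrightarrow 0,
\]
where $z$ is the longest word compressing $u=v$. Since the presentation is compressible, $z\in A^+$.

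Now suppose, for contradiction, that $\mathrm{cd}^{(l)}(M)\le 2$. Then $\Z$ admits a projective resolution $0 \to Q_2 \to Q_1 \to Q_0 \to \Z \to 0$ over $\ZM$. A two-fold application of Schanuel's lemma, comparing this resolution with the partial resolution above (in which $\ZM$ and $\ZM^{|A|}$ are free), gives an isomorphism $\ZM[z]_M \oplus P \cong Q_2 \oplus P'$ for suitable projective modules $P,P'$. Since $Q_2$ is projective, $\ZM[z]_M$ is a direct summand of a projective, and hence itself projective. But this contradicts Corollary~\ref{c:not.proj} applied with $r=z$. Therefore $\mathrm{cd}^{(l)}(M)\ge 3$. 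The inequality $\mathrm{cd}^{(r)}(M)\ge 3$ follows by applying the same argument to the opposite monoid $\lb A\mid u^R=v^R\rb$, which is visibly still a compressible, non-subspecial one-relator presentation. Finally, since the left (resp.\ right) geometric dimension dominates the left (resp.\ right) cohomological dimension for any monoid, the corresponding lower bounds on $\mathrm{gd}^{(l)}(M)$ and $\mathrm{gd}^{(r)}(M)$ are immediate.

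I do not anticipate a real obstacle: all of the technical content has been deposited in Theorem~\ref{t:relation.module.compress} (producing the length-three partial resolution whose kernel is $\ZM[z]_M$) and in Corollary~\ref{c:not.proj} (ruling out projectivity of that kernel). Once these are in hand, the deduction of the lower bound on cohomological dimension is a completely standard dimension-shifting argument.
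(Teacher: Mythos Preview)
Your proposal is correct and follows essentially the same approach as the paper: both combine the exact sequence of Theorem~\ref{t:relation.module.compress} with the non-projectivity of $\ZM[z]_M$ from Corollary~\ref{c:not.proj}, and then invoke the standard dimension-shifting fact (the paper cites it as \cite[Lemma~VIII.2.1]{BrownCohomologyBook}, while you spell out the Schanuel argument) to conclude $\mathrm{cd}^{(l)}(M)\geq 3$. Your explicit verification that $M$ is torsion-free, needed to invoke Theorem~\ref{t:relation.module.compress}, is a detail the paper's proof leaves implicit.
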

\begin{proof}
We just need to handle left cohomological dimension.  By Theorem~\ref{t:relation.module.compress}, we have an exact sequence
\[0\longrightarrow \mathbb ZM[z]_M\longrightarrow \mathbb ZM^{|A|}\longrightarrow \mathbb ZM\longrightarrow \mathbb Z\longrightarrow 0\] with $z\in A^+$ the longest word compressing $u=v$.  The module $\mathbb ZM[z]_M$ is not projective by Corollary~\ref{c:not.proj} and hence the left cohomological dimension of $M$ is at least $3$ by~\cite[Lemma VIII.2.1]{BrownCohomologyBook}.
The last claim on geometric dimensions then follows from the fact that in general the cohomological dimension is a lower bound for the geometric dimension.
\end{proof}

\begin{Prop}\label{p:inf.proj.dim}
Let $M=\lb A\mid u=v\rb$ be a one-step compressible, non-subspecial one-relator monoid.   Then $\mathop{\mathrm{gd}^{\mathrm{(l)}}}(M)=\mathop{\mathrm{cd}^{\mathrm{(l)}}}(M)=\infty=\mathop{\mathrm{cd}^{\mathrm{(r)}}}(M)=\mathop{\mathrm{gd}^{\mathrm{(r)}}}(M)$.
\end{Prop}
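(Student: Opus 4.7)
The plan is to exploit the fact that in the one-step compressible case, the shortest and longest compressing words coincide ($y=z$), so Proposition~\ref{p:key.exact} yields the self-referential short exact sequence
\[
0 \longrightarrow \mathbb ZM[z]_M \longrightarrow \mathbb ZM \longrightarrow \mathbb ZM[z]_M \longrightarrow 0. \tag{$\ast$}
\]
This is the key tool: it expresses $\mathbb ZM[z]_M$ as a syzygy of itself over a free module. Combined with Corollary~\ref{c:not.proj}, which says that $\mathbb ZM[z]_M$ is not projective, this sequence will force $\mathbb ZM[z]_M$ to have infinite projective dimension.

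More precisely, I would show that $\mathbb ZM[z]_M$ has infinite projective dimension by a dimension-shifting argument. Suppose for contradiction that $\mathrm{pd}_{\mathbb ZM}(\mathbb ZM[z]_M) = d < \infty$. Applying $\Hom_{\mathbb ZM}(-,N)$ to ($\ast$) and using that $\mathbb ZM$ is projective gives, for every left $\mathbb ZM$-module $N$ and every $i\geq 1$, an isomorphism
\[
\mathrm{Ext}^{i}_{\mathbb ZM}(\mathbb ZM[z]_M, N) \cong \mathrm{Ext}^{i+1}_{\mathbb ZM}(\mathbb ZM[z]_M, N).
\]
If $d\geq 1$, then the left side is nonzero for $i=d$ and some $N$, while the right side vanishes, a contradiction. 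Hence $d=0$, i.e., $\mathbb ZM[z]_M$ is projective, contradicting Corollary~\ref{c:not.proj}. Therefore $\mathrm{pd}_{\mathbb ZM}(\mathbb ZM[z]_M)=\infty$.

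Next I would transfer this infinitude of projective dimension to $\mathbb Z$ using the exact sequence of Theorem~\ref{t:relation.module.compress}:
\[
0\longrightarrow \mathbb ZM[z]_M\longrightarrow \mathbb ZM^{|A|}\longrightarrow \mathbb ZM\longrightarrow \mathbb Z\longrightarrow 0.
\]
By the standard characterisation of cohomological dimension in terms of syzygies (see, e.g., \cite[Lemma VIII.2.1]{BrownCohomologyBook}), since $\mathbb ZM[z]_M$ is a second syzygy of $\mathbb Z$ over the free modules $\mathbb ZM^{|A|}$ and $\mathbb ZM$, and since $\mathbb ZM[z]_M$ has infinite projective dimension, it follows that $\mathop{\mathrm{cd}^{\mathrm{(l)}}}(M)=\infty$. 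The right-handed statement $\mathop{\mathrm{cd}^{\mathrm{(r)}}}(M)=\infty$ follows by the left--right symmetry of the class of one-relator monoids. Finally, since cohomological dimension is always bounded above by geometric dimension, we get $\mathop{\mathrm{gd}^{\mathrm{(l)}}}(M)=\mathop{\mathrm{gd}^{\mathrm{(r)}}}(M)=\infty$ as well.

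The main obstacle is conceptually mild here: essentially everything has been set up in the previous sections. The one thing to check carefully is the hypothesis of Proposition~\ref{p:key.exact}—namely that $M$ is non-subspecial and compressible—so that ($\ast$) and Corollary~\ref{c:not.proj} apply. One-step compressibility gives $y=z$ (by Corollary~\ref{c:get.to.lalle}), collapsing the sequence in Proposition~\ref{p:key.exact} into the self-periodic form ($\ast$), and this is what makes the dimension-shifting argument close up on itself.
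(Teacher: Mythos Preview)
Your proof is correct and follows essentially the same approach as the paper: both exploit the self-periodic exact sequence $(\ast)$ together with Corollary~\ref{c:not.proj} to rule out finite projective dimension for $\mathbb ZM[z]_M$, then feed this into the exact sequence of Theorem~\ref{t:relation.module.compress}. The only cosmetic difference is that the paper splices $(\ast)$ repeatedly onto that sequence to produce arbitrarily long partial free resolutions of $\mathbb Z$ with kernel $\mathbb ZM[z]_M$ and then invokes \cite[Lemma~VIII.2.1]{BrownCohomologyBook} directly, whereas you phrase the same dimension shift via the $\mathrm{Ext}$-isomorphism $\mathrm{Ext}^i\cong\mathrm{Ext}^{i+1}$.
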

\begin{proof}
Since the hypothesis on $M$ is left-right dual, it suffices to prove that $\mathop{\mathrm{cd}^{\mathrm{(l)}}}(M)=\infty$.  Let $y\in A^+$ be the unique word compressing $u=v$.  We have exact sequences
\begin{gather*}
0\longrightarrow \mathbb ZM[y]_M\longrightarrow \mathbb ZM^{|A|}\longrightarrow \mathbb ZM\longrightarrow \mathbb Z\longrightarrow 0\\
0\longrightarrow \mathbb ZM[y]_M\longrightarrow \mathbb ZM\longrightarrow \mathbb ZM[y]_M\longrightarrow 0
\end{gather*}
by Theorem~\ref{t:relation.module.compress} and Proposition~\ref{p:key.exact}.  By repeatedly splicing the second sequence onto the first, it follows that for any $n\geq 0$, we can find an exact sequence
\[0\longrightarrow \mathbb ZM[y]_M\longrightarrow \mathbb ZM\longrightarrow \cdots \longrightarrow \mathbb ZM\longrightarrow \mathbb ZM^{|A|}\longrightarrow \mathbb ZM\longrightarrow \mathbb Z\longrightarrow 0 \] where there are $n$ occurrences of $\mathbb ZM$ between $\mathbb ZM[y]_M$ and $\mathbb ZM^{|A|}$.  If $\mathop{\mathrm{cd}^{\mathrm{(l)}}}(M)\leq n$, then it would follow from the existence of this exact sequence
and~\cite[Lemma VIII.2.1]{BrownCohomologyBook} that $\mathbb ZM[y]_M$ is projective, contradicting Corollary~\ref{c:not.proj}.  Thus $\mathop{\mathrm{cd}^{\mathrm{(l)}}}(M)=\infty$, as required.
\end{proof}

This completes the proof of Theorem~\ref{theorem:B:full}.

For example, $M=\lb a,b,c\mid aba=aca\rb$ meets the conditions of Proposition~\ref{p:inf.proj.dim} and hence has infinite left and right cohomological dimensions.  This example was first considered by Ivanov~\cite{Ivanov}, who observed that its relation module is not freely generated by the coset of the relation.
Since $M$ has infinite left cohomological dimension, it cannot have a contractible Cayley complex. Let us see that directly.  The reader should draw the cells $c_1,c_2$ in the Cayley complex of this presentation based at $[ab]_M$ and at $[ac]_M$ and notice that they create a $2$-sphere with two thorns sticking out.  Thus this Cayley complex is not contractible, in fact $c_1-c_2$ is a $2$-cycle, and, in particular, the presentation is not strictly aspherical.
For this particular example,
the fact that the presentation is not strictly aspherical
was originally
observed by Ivanov in~\cite{Ivanov}.

It would be interesting to know whether all compressible non-subspecial one-relator monoids have infinite cohomological dimension. We suspect this to be the case.

\end{document}